\newtheorem{theorem}{Theorem}[section]
\newtheorem{lemma}[theorem]{Lemma}
\newtheorem{corollary}[theorem]{Corollary}
\newtheorem{proposition}[theorem]{Proposition}
\theoremstyle{definition}
\newtheorem{definition}[theorem]{Definition}
\theoremstyle{remark}
\newtheorem{example}[theorem]{Example}
\theoremstyle{remark}
\newtheorem{remark}[theorem]{Remark}
\newcommand{\Set}{\ensuremath{\mathsf{Set}}}
\newcommand{\s}{\ensuremath{\mathsf{s}}}
\newcommand{\TOP}{\ensuremath{\mathsf{TOP}}}
\newcommand{\Sh}{\mathsf{Sh}}
\newcommand{\C}{\mathbb{C}}
\newcommand{\R}{\mathbb{R}}
\newcommand{\Z}{\mathbb{Z}}
\newcommand{\A}{\mathcal{A}}
\newcommand{\E}{\mathcal{E}}
\newcommand{\F}{\mathcal{F}}
\newcommand{\T}{\mathcal{T}}
\newcommand{\U}{\mathcal{U}}
\newcommand{\op}{\mathrm{op}}
\newcommand{\Ocal}{\mathcal{O}}
\newcommand{\Q}{\mathbb{Q}}
\newcommand{\PSh}{\mathsf{PSh}}
\newcommand{\SLC}{\mathsf{SLC}}
\newcommand{\Cont}{\mathsf{Cont}}
\newcommand{\HH}{\mathrm{H}}
\newcommand{\Hom}{\mathrm{Hom}}
\newcommand{\HOM}{\mathrm{HOM}}
\newcommand{\Spec}{\mathrm{Spec}}
\newcommand{\etale}{\mathrm{\acute{e}t}}
\newcommand{\sh}{\mathrm{sh}}
\newcommand{\cosk}{\mathrm{cosk}}
\begin{document}

\title{The smallest $n$-pure subtopos and dimension theory}
\author{Jens Hemelaer}
\email{hemelaerjens@gmail.com}

\begin{abstract}
We introduce the notion of $n$-pure geometric morphism between Groth\-en\-dieck toposes, over a Grothendieck base topos $\T$. This is a higher-dimensional analogue of the concepts of dense and pure geometric morphism. We extend the construction of the smallest dense subtopos and smallest pure subtopos by constructing a smallest $n$-pure subtopos, for each natural number $n$. Based on this, we then propose a concept of dimension for a Grothendieck topos, in this way also arriving naturally at a distinction between toposes with boundary and toposes without boundary. We show that the zero-dimensional toposes without boundary are precisely the Boolean toposes, and that the topos associated to an $n$-manifold is again $n$-dimensional (with boundary if the manifold has a boundary). Some other toposes for which we calculate the dimension are the topos associated to the rational line and the toposes associated to a right Ore monoid or free monoid. Finally, we move to algebraic geometry: for a scheme $X$ of characteristic $0$ and Krull dimension $d$, we prove that the dimension of the associated petit \'etale topos is $2d$, assuming that $X$ is excellent and regular, or that $X$ is variety. As a first example in mixed characteristic, we show that the petit \'etale topos associated to $\Spec(\Z)$ is two-dimensional.
\end{abstract}

\maketitle

\tableofcontents

\section{Introduction}

\subsection{Motivation}

The notion of dimension has been studied extensively for topological spaces \cite{engelking} \cite{nagata-dimension} and more recently in the setting of frames \cite{charalambous} \cite{isbell-dimension} \cite{banaschewski-gilmour} \cite{brijlall-baboolal}. In this paper, we would like to extend this concept to (Grothendieck) toposes.

We will propose a definition of dimension for toposes that is closely related to the construction of the smallest dense subtopos \cite{blass-scedrov} and the smallest pure subtopos (also called Lebesgue subtopos) \cite[Theorem 2.5]{funk-branched} \cite[Proposition 9.2.10]{SCT}. For each integer $n\geq -1$, we will give a definition of \emph{$n$-pure geometric morphism} and then show that each Grothendieck topos has a smallest $n$-pure subtopos $\E_{\leq n+1}$. The $(-1)$-pure geometric morphisms are precisely the dense geometric morphisms, and the $0$-pure geometric morphisms agree with the pure geometric morphisms, under the assumption that the codomain is locally connected. In general, we get a chain of subtoposes
\begin{equation*}
\E_{\leq 0} \subseteq \E_{\leq 1} \subseteq \dots \subseteq \E_{\leq n} \subseteq \dots
\end{equation*}
and we will say that $\E$ is \emph{$n$-dimensional without boundary}, precisely if $\E_{\leq n} = \E$ and $n$ is the smallest integer with this property. In this terminology, the $0$-dimensional toposes without boundary are then the Boolean toposes. Following \cite[Example 9.2.11]{SCT}, the topos of sheaves on the real line is $1$-dimensional without boundary but the topos of sheaves on the real plane is not.

The topological intuition for $n$-pure geometric morphisms is that base change of cohomology classes along the geometric morphism works bijectively up to degree $n$ and injectively in degree $n+1$. For example, distinct $0$-cocycles remain distinct after base change along a dense morphism. Very loosely, the $n$-dimensional toposes without boundary will have the property that puncturing them creates new $(n-1)$-cycles. For example, puncturing the real plane results in new $1$-cocycles corresponding to nontrivial covering spaces. For the connection between pure geometric morphisms and branched covering spaces, see \cite{funk-branched} \cite{SCT}.

If we apply this dimension theory to toposes of the form $\Sh(X)$ for $X$ a topological space, then the hope is that the dimension of $\Sh(X)$ agrees with that of $X$. We show that, to some extent, this is the case. If $X$ is an $n$-dimensional manifold without boundary, then we demonstrate that $\Sh(X)$ is $n$-dimensional without boundary as well. 
This is related to the typical proof that $\R^n$ and $\R^m$ are not homeomorphic if $n\neq m$: puncturing $\R^n$ creates a new cohomology class in degree $n-1$.

To account for manifolds with boundary, we will introduce the notion of toposes with boundary. We define the \emph{content} $\E_{< \infty}$ of a topos $\E$ to be the union of the subtoposes $\E_{\leq n}$ for $n\geq -1$, 
and we say that $\E$ is a topos with boundary if $\E_{< \infty} \neq \E$.
In this case, we say that $\E$ is $n$-dimensional if $n$ is the smallest integer with $\E_{\leq n} = \E_{<\infty}$, or $\infty$-dimensional if such an integer does not exist.
With this definition we prove that for $X$ an $n$-dimensional manifold with boundary, the topos $\Sh(X)$ is also $n$-dimensional with boundary.

It will not be true in general that the dimension of $\Sh(X)$ agrees with that of the topological space $X$. For example, we will show that $\Sh(\Q)$ is $1$-dimensional without boundary, for $\Q$ the rational number line. In topology however, $\Q$ is thought of as zero-dimensional: the small and large inductive dimension and the covering dimension are all zero in this case.

Dimension is also a fundamental concept in algebraic geometry. One of the motivations for the development of scheme theory was the analogy between number fields and function fields of curves over finite fields. In both cases we can use some of the methods that were originally developed for complex curves (genus, ramification, Riemann--Roch...). This means that we consider number fields and function fields to be $2$-dimensional from a topological point of view.

In the context of this article, we have the advantage of being able to explicitly calculate a (topological) dimension for these schemes, by looking at the dimension of the associated petit \'etale topos. It turns out that the petit \'etale toposes of complex curves and of $\Spec(\Z)$ are both $2$-dimensional. We did not manage to calculate the topos-theoretic dimension in the case of curves over finite fields, but it would be interesting to see if in that case the dimension is $2$ as well (the main difficulty is calculating cohomology with $p$-group coefficients for $p$ the characteristic, this calculation is usually avoided).

There is also a Riemann Hypothesis for function fields, proven by Weil in the case of curves and by Deligne in general. An intriguing question is now whether the original Riemann Hypothesis (which would correspond to the field $\Q$) can be solved using similar geometric methods as in the function field case, applied to a different topos. This is an avenue that was explored in \cite{connes-essay}, inspired by the ideas of Soul\'e \cite{soule} and Manin \cite{manin}. Intuitively, the right topos would then (aside from other properties) have topological dimension $2$. The theory of dimension we propose in this article could help point in the right direction.

Another place where we would like to apply dimension theory of toposes, is in the theory of monoids. In \cite{hr1}, a dictionary was built between algebraic properties of a monoid $M$ and geometric properties of its associated topos of right $M$-sets. In the same spirit, it is natural to calculate the dimension of these associated toposes, for various families of monoids. We start in this article with two classes of monoids in particular: right Ore monoids and free monoids. For a right Ore monoid $M$ we find that the associated topos is $0$-dimensional, and that it has a boundary precisely if $M$ is not a group. For a free monoid $M$ on at least two generators, we calculate that the associated topos is $1$-dimensional without boundary (the free monoid on one generator is right Ore so it already falls in the previous category).

The proposed definition of dimension depends on the choice of base topos. For a geometric morphism $\rho: \F \to \E$, we can take $\E$ as base topos and then the calculated dimension for $\F$ can be seen as the relative dimension of $\F$ over $\E$. As an example, we show that a projection map $\R^2 \to \R$ turns $\Sh(\R^2)$ into a one-dimensional topos over $\Sh(\R)$. So in this case, the relative dimension agrees with geometric intuition.

\subsection{Outline of the article}

A considerable amount of background is needed to state the definitions and to prove that every Grothendieck topos has a smallest $n$-pure subtopos. Results from higher category theory and higher topos theory are needed, and we do not assume previous familiarity with these topics. Because we are not fluent in the language of higher category theory, we will also not expect this from the reader. Results on higher category theory are taken from the literature, and if needed they are translated to more classical language.

We avoid the more challenging aspects of higher category theory by working with simplicial toposes equipped with a fixed model structure (the injective model structure). In this sense, we stay close to the original approach of Joyal \cite{joyal-letter-grothendieck}, and to that of Jardine \cite{jardine-book} \cite{jardine-lectures} and Rezk \cite{rezk}.

In Section 2, we first show what we can do without using these homotopical methods, in dimension $0$ and $1$. For a class of objects $\A$ in a Grothendieck topos $\E$, we will define $\A$-dense and $\A$-pure geometric morphisms, generalizing dense and pure geometric morphisms. We are mainly interested in the case where $\A$ is the class of objects that are constant over a base topos $\T$. In this case, the $\A$-dense geometric morphisms coincide with the dense geometric morphisms over $\T$, while the $\A$-pure geometric morphisms coincide with what we will call the $0$-pure geometric morphisms over $\T$. The $0$-pure geometric morphisms were already studied in the literature, as those geometric morphisms $\rho$ for which $\rho$ preserves $\T$-indexed coproducts \cite[p.~14]{BF}. We construct, for each choice of $\A$, the smallest $\A$-dense subtopos and the smallest $\A$-pure subtopos. In this way, we recover the construction of the smallest dense subtopos and the smallest pure subtopos \cite[Theorem 2.5]{funk-branched}, while also tackling the case $n=0$ for the construction of the smallest $n$-pure subtopos later on.

In Section 3, we start building the necessary theory in order to extend this proof to higher dimensions. We will give an overview of the homotopy theory of toposes, following the approach of Joyal \cite{joyal-letter-grothendieck}, Jardine \cite{jardine-book} \cite{jardine-lectures} and Rezk \cite{rezk}. Starting from a Grothendieck topos $\E$, we construct the topos $\s\E$ of simplicial objects in $\E$, and we equip $\s\E$ with the injective model structure as introduced by Joyal. We give some background on model categories and Quillen adjunctions, with a focus on simplicial toposes and the geometric morphisms between them.

In Section 4, we give an overview of the theory of localization and descent. This is the homotopical analogue to the classical story regarding Grothendieck topologies and subtoposes. Since the goal is to construct a smallest $n$-pure subtopos, these ideas play a central role in this article. In particular, we have to take into account the issue of hypercompleteness, a common topic in higher topos theory.

In Section 5, we discuss sheaf cohomology, using the language that was introduced in the previous two sections. This notion of sheaf cohomology agrees with the more classical notion via derived categories and injective resolution. There are two main reasons for that we use model categories and simplicial objects instead. The first reason is that, in the definition of $n$-pure geometric morphisms, we also need cohomology with nonabelian coefficients. The second reason is that the language of model categories and simplicial objects is close to, and sometimes agrees with, the language used by experts in higher topos theory, so we can more easily build on existing results the literature.

In Section 3, 4 and 5 the point is to discuss existing results in the literature, as background and to set the notations. For that reason, it might be helpful to skip these three sections during a first reading. We do not claim to introduce any new results in these sections.

In Section 6, we arrive at our definition of $n$-pure geometric morphisms. We start with generalizing the notion of $\A$-pure geometric morphisms from Section 2. Whereas $\A$ was a family of objects in Section 2, now it is a family of simplicial objects, more specifically Eilenberg--Mac Lane objects. As a special case, $n$-pure geometric morphisms are introduced. Roughly, the $n$-pure geometric morphisms are the ones that induce bijections on cohomology with coefficients in the base topos, for degrees up to $n$. So the base topos plays an important role for $n$-pure geometric morphisms. This is the main difference between $n$-pure geometric morphisms and $n$-connected geometric morphisms. The latter is a concept that already exists in the literature, and for $n$-connectedness the idea is that the cohomology coefficients can be arbitrary objects in the codomain, they do not have to be constant over the base topos. After the definition, we discuss some equivalent formulations for $n$-pure geometric morphisms and some first properties, and we look at the relation to Galois theory and fundamental groups.

In Section 7, we show the existence of the smallest $n$-pure subtopos and then introduce the concept of dimension of a Grothendieck topos in terms of these smallest $n$-pure subtoposes, for varying $n$. Because the definition of $n$-pure geometric morphism depends on the base topos, the definition of dimension does as well. The base topos is again assumed to be a Grothendieck topos, because the theory of simplicial objects that we rely on is worked out to a higher degree for Grothendieck toposes than it is for elementary toposes. Some basic properties of dimensions are discussed, most notably the fact that dimension can be computed \'etale-locally.

In Section 8, we discuss methods for calculating the dimension of a topos, and we apply it in various cases. We show that Boolean toposes are zero-dimensional, and calculate the dimension of toposes associated to locally Euclidean topological spaces, manifolds with boundary, the rational line, boolean toposes, irreducible topological spaces, right Ore monoids and free monoids. In all these examples, we work over $\Set$ as base topos. We also show that totally connected toposes over an arbitrary base topos are zero-dimensional, and we calculate the relative dimension associated to a projection $\R^2 \to \R$.

In Section 9, we continue calculating dimensions of toposes, but now the focus shifts to algebraic geometry, more specifically to petit \'etale toposes associated to schemes. The main result here is that for an excellent, regular scheme of characteristic $0$ and Krull dimension $d$, the dimension of the associated petit \'etale topos is $2d$. In other words, the topological dimension is twice the algebraic dimension, something we are used to from working with complex varieties. The calculations in positive characteristic and mixed characteristic are more difficult and outside the scope of this article. However, as one example in mixed characteristic, we do show that the petit \'etale topos associated to $\Spec(\Z)$ is two-dimensional.

Some of the most technical parts are moved to a Appendix A, B and C. Similarly to Sections 3, 4 and 5, these discuss ideas from the existing literature, translated to the language and notations of this article where needed.

Throughout the article, the word ``topos'' will often mean ``Grothendieck topos'', even if it is not made explicit. We rely on results in the literature that only apply to Grothendieck toposes, and it seems difficult to generalize further towards elementary toposes. For example, it is unclear to what extent the category of simplicial objects in an elementary topos can be equipped with a simplicial model structure, and to what extent the results regarding Eilenberg--Mac Lane objects, localization and hypercompletion still hold.

\subsection{Acknowledgements}

I first would like to thank Jonathon Funk for his important contributions. The current article grew out of joint discussions and notes he shared on dimension theory (starting in December 2021). He explained to me the topological link between pure geometric morphisms and one-dimensionality, and suggested to use orthogonality to extend this approach to higher dimensions. It is this original idea that was later worked out using the necessary homotopical methods.

I would also like to thank Morgan Rogers, Thomas Streicher, Mat\'ias Menni, Lieven Le Bruyn and Mark Sioen for interesting discussions regarding this topic. It is through the joint work with Morgan Rogers that I came in contact with the rich theory surrounding pure geometric morphisms, as developed by Jonathon Funk and Marta Bunge.
\section{Low dimension}

\subsection{Topologies defined by an object in the topos}
\label{ssec:topologies-defined-by-an-object}

Let $\E$ be a topos and let $$j: \Omega_\E \to \Omega_\E$$ be a Lawvere--Tierney topology on $\E$. Following Par\'e \cite{P1}, we can equivalently specify the Lawvere--Tierney topology via a subobject $J \subseteq \Omega_\E$, using the universal property of $\Omega_\E$. For each object $U$ in $\E$, we know that $\Omega_\E(U)$ is the set of subobjects of $U$, and by definition we say that $$J(U) \subseteq \Omega_\E(U)$$ is the subset of \emph{$J$-dense monomorphisms} $V \hookrightarrow U$ with target $U$. We use here the shorthand $F(U)$ when we mean $\Hom_\E(U,F)$, to get back the kind of notations that we are used to when working with sheaves on a site. The family of $J$-dense monomorphisms behaves similarly to a Grothendieck topology, with the distinction that we are now working in a site-independent way. In the following, by \emph{topology on $\E$} we mean a Lawvere--Tierney topology on $\E$, with the above point of view in mind. We write $\E_J \subseteq \E$ for the subtopos corresponding to a topology $J$ on $\E$. An object $X$ of $\E$ is called a \emph{$J$-sheaf} if it is the pushforward of an object in $\E_J$, or equivalently, if $X(U) \to X(V)$ is a bijection for every $J$-dense monomorphism $V \hookrightarrow U$. We say that $X$ is a \emph{separated presheaf} for $J$ if $X(U) \to X(V)$ is injective for every $J$-dense monomorphism $V \hookrightarrow U$.

\begin{proposition} \label{prop:characterization-topologies}
Let $\E$ be a Grothendieck topos. Let $\Xi$ be a class of monomorphisms in $\E$. Then $\Xi$ is the class of $J$-dense monomorphisms for a topology $J$ on $\E$ if and only if the following three conditions are satisfied:
\begin{enumerate}
\item $\Xi$ is local, i.e.\ for a jointly epimorphic family $\{U_i \to U\}_{i \in I}$ we have that $V \hookrightarrow U$ is in $\Xi$ if and only if each
\begin{equation*}
U_i \times_U V \hookrightarrow U_i
\end{equation*}
is in $\Xi$;
\item $\Xi$ defines a subcategory, i.e. it contains the identity maps and is closed under composition;
\item $\Xi$ is upwards closed, i.e.\ for each $W \hookrightarrow V \hookrightarrow U$, if the composition $W \hookrightarrow U$ is in $\Xi$ then $V \hookrightarrow U$ is in $\Xi$ as well.
\end{enumerate}
\end{proposition}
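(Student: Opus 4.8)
The plan is to pass through Paré's correspondence and identify a Lawvere--Tierney topology on $\E$ with a universal closure operator $V \mapsto c_U(V)$ on subobjects (extensive, monotone, idempotent, and stable under pullback), under which $V \hookrightarrow U$ is $J$-dense exactly when $c_U(V) = U$. For the necessity direction I would assume $\Xi$ is the class of $J$-dense monomorphisms and read off (1)--(3) from the closure-operator axioms: condition (2) follows from $c_U(U) = U$ together with idempotency (transitivity of density), condition (3) from monotonicity (if $c_U(W) = U$ and $W \le V \le U$ then $U = c_U(W) \le c_U(V) \le U$), and condition (1) from pullback-stability for the ``only if'' half, while the ``if'' half uses that a subobject through which a jointly epimorphic family factors must be the whole object.

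For sufficiency I would assume (1)--(3) and build the closure operator directly by setting, for $V \le U$,
\[
c_U(V) = \bigvee \{\, W : V \le W \le U,\ (V \hookrightarrow W) \in \Xi \,\}.
\]
The first task is to show the join is attained, i.e.\ $(V \hookrightarrow c_U(V)) \in \Xi$. Since $c_U(V)$ is the join, the inclusions $\{W \hookrightarrow c_U(V)\}$ form a jointly epimorphic family, and pulling $V \hookrightarrow c_U(V)$ back along each recovers the corresponding $V \hookrightarrow W \in \Xi$; the ``if'' half of (1) then yields $(V \hookrightarrow c_U(V)) \in \Xi$. As an immediate consequence $c_U(V) = U$ if and only if $(V \hookrightarrow U) \in \Xi$, so it only remains to check that $c$ is a universal closure operator.

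Extensivity uses that identities lie in $\Xi$ (condition (2)); idempotency uses closure under composition (condition (2)), since $V \hookrightarrow c_U(V) \hookrightarrow c_U(c_U(V))$ is then in $\Xi$, forcing $c_U(c_U(V)) \le c_U(V)$. For monotonicity, given $V \le V' \le U$ I would pull $V' \hookrightarrow c_U(V) \vee V'$ back along the jointly epimorphic pair $\{c_U(V) \hookrightarrow -,\ V' \hookrightarrow -\}$: one leg is an identity, and the other is $V' \wedge c_U(V) \hookrightarrow c_U(V)$, which lies in $\Xi$ by upward closure (3) because it is the upper part of the factorization of $V \hookrightarrow c_U(V)$ through $V' \wedge c_U(V)$; the ``if'' half of (1) then gives $c_U(V) \vee V' \le c_U(V')$. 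The main obstacle is pullback-stability $c_{U'}(f^*V) = f^* c_U(V)$ for $f : U' \to U$. The inclusion $f^* c_U(V) \le c_{U'}(f^*V)$ is immediate from stability, but the reverse is the crux. For it I would factor the restriction $g : c_{U'}(f^*V) \to U$ through its image and note that $g^* V = f^* V \hookrightarrow c_{U'}(f^*V)$ lies in $\Xi$; applying the ``if'' half of (1) to the jointly epimorphic pair $\{V \hookrightarrow V \vee \im(g),\ g\}$ shows $(V \hookrightarrow V \vee \im(g)) \in \Xi$, whence $\im(g) \le c_U(V)$ and therefore $c_{U'}(f^*V) \le f^* \im(g) \le f^* c_U(V)$. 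With all four axioms established, $c$ defines a topology $J$ whose $J$-dense monomorphisms are precisely $\Xi$.
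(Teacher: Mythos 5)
Your proof is correct, but it takes a genuinely different route from the paper's. The paper packages $\Xi$ directly into a subobject $J \subseteq \Omega_\E$ (condition (1) makes the presheaf of $\Xi$-monomorphisms a sheaf for the canonical topology) and then invokes Johnstone's criterion [TT, Proposition 3.18]: such a $J$ is a topology iff $\Xi$ contains the isomorphisms and $\psi\phi \in \Xi$ exactly when both $\psi$ and $\phi$ are. The only nontrivial verification there is that $\psi\phi \in \Xi$ forces $\phi \in \Xi$, obtained from pullback-stability of $\Xi$ applied to the square exhibiting $\phi$ as the pullback of $\psi\phi$ along $\psi$; condition (3) enters only to give $\psi\phi \in \Xi \Rightarrow \psi \in \Xi$. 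You instead construct the universal closure operator explicitly as the join of all intermediate subobjects into which $V$ includes $\Xi$-densely and verify the axioms by hand; the two pivotal points are that the join is attained (locality applied to the cover of $c_U(V)$ by the $W$'s) and the hard inclusion in pullback-stability (your image-factorization argument), both of which check out. One expository remark: both proofs quietly use the fact that (1) already implies $\Xi$ is stable under pullback along an \emph{arbitrary} morphism $f$ (adjoin $\mathrm{id}_U$ to $\{f\}$ to get a jointly epimorphic family); you should make this explicit where you assert that the easy inclusion $f^*c_U(V) \leq c_{U'}(f^*V)$ is ``immediate from stability.'' The paper's route is shorter because it delegates to a cited criterion, while yours is more self-contained and yields an explicit formula for the closure operator --- in particular it shows every monomorphism in $\Xi$ is dense in its closure --- and you also sketch the converse direction, which the paper omits as routine.
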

\begin{proof}
We will only show the ``if'' direction because the ``only if'' direction is more straightforward.

Starting from $\Xi$, we can construct a presheaf $J$ on $\E$, with $J(U)$ consisting of isomorphism equivalence classes of monomorphisms in $\Xi$ with fixed codomain $U$. Presheaf restriction corresponds to pullback (the stated properties ensure that the monomorphisms in $\Xi$ are stable under pullback). We can in fact view $J$ as an object of $\E$, because $(1)$ states that it is a sheaf for the canonical topology on $\E$. More precisely, it is a subobject $J \subseteq \Omega_\E$.

To show that $J$ defines a topology, we now use \cite[Proposition 3.18]{TT}, which states that such a $J \subseteq \Omega_\E$ defines a topology if and only if $\Xi$ contains all isomorphisms and moreover $\psi\phi$ is in $\Xi$ if and only if both $\psi$ and $\phi$ are in $\Xi$.

The most difficult step is showing that if $\psi\phi$ is in $\Xi$ then $\phi$ is in $\Xi$. Here we use that $\Xi$ is stable under pullback, for the particular pullback square of the form
\begin{equation} \label{eq:particular-pullback}
\begin{tikzcd}
W \ar[r,"{\phi}"] \ar[d,equals] & V \ar[d,"{\psi}"] \\
W \ar[r,"{\psi\phi}"'] & U
\end{tikzcd}\,\,.
\end{equation}
\end{proof}

\begin{remark}
For Proposition \ref{prop:characterization-topologies} the main argument is the use of the pullback square \eqref{eq:particular-pullback}, this is an argument due to Jonathon Funk.
\end{remark}

\begin{corollary} \label{cor:largest-topology-X-sheaf-separated-presheaf}
For an object $X$ in $\E$, let $\Xi$ be the class of monomorphisms $\phi: V \hookrightarrow U$ with the property that for every pullback
\begin{equation} \label{eq:pullback-2}
\begin{tikzcd}
V' \ar[r,"{}"] \ar[d] & U' \ar[d] \\
V \ar[r,"{\phi}"] & U
\end{tikzcd}
\end{equation}
in $\E$ the induced map $\Hom(V',X) \to \Hom(U',X)$ is bijective (resp. injective). Then $\Xi$ is the class of $J$-dense monomorphisms with $J$ the largest topology  for which $X$ is a sheaf (resp. separated presheaf).
\end{corollary}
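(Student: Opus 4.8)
The plan is to deduce the statement from Proposition~\ref{prop:characterization-topologies}: I would first check that $\Xi$ satisfies the three conditions listed there, so that it is the class of $J$-dense monomorphisms for a unique topology $J$; then verify that $X$ is a $J$-sheaf (resp.\ separated presheaf); and finally show that $J$ is the largest topology with this property. The single observation that makes all three verifications go through is that $\Xi$ is stable under pullback \emph{by construction}: if $\phi\in\Xi$ and $\phi'$ arises from $\phi$ by a pullback, then by the pasting law for pullbacks every pullback of $\phi'$ is again a pullback of $\phi$, so the defining bijectivity (resp.\ injectivity) condition for $\phi$ is inherited by $\phi'$, whence $\phi'\in\Xi$.

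For condition~(1), the ``only if'' direction is immediate from pullback-stability. For ``if'', I would use that $X$, being an object of $\E$, is a sheaf for the canonical topology. Pulling the jointly epimorphic family $\{U_i\to U\}$ back along a given $U'\to U$ yields a jointly epimorphic family over $U'$ and, intersecting with $V'=U'\times_U V$, one over $V'$; writing $X(U')$ and $X(V')$ as the corresponding equalizers and comparing the two diagrams, the restriction maps over the pieces $U_i'$ and over the overlaps $U_i'\times_{U'}U_j'$ are bijections (resp.\ injections), since each is a pullback of the member $U_i\times_U V\hookrightarrow U_i$ of $\Xi$; this forces $X(U')\to X(V')$ to be a bijection (resp.\ injection). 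Condition~(2) is then easy: identities clearly lie in $\Xi$, and for a composite $W\hookrightarrow V\hookrightarrow U$ of members of $\Xi$ one pulls back along any $U'\to U$ and factors the restriction $X(U')\to X(W')$ as $X(U')\to X(V')\to X(W')$, a composite of bijections (resp.\ injections) by pullback-stability.

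The main obstacle is condition~(3), and only in the bijective case. Given $W\hookrightarrow V\hookrightarrow U$ with $W\hookrightarrow U\in\Xi$, I must show $V\hookrightarrow U\in\Xi$; after pulling back along $U'\to U$, injectivity of $X(U')\to X(V')$ is automatic because the composite $X(U')\to X(V')\to X(W')$ is injective, but surjectivity is not. Here I would use the pullback trick of Proposition~\ref{prop:characterization-topologies} (the square~\eqref{eq:particular-pullback}): pull $W\hookrightarrow U$ back not along $U'\to U$ but along $V'\to U$. Since $W\subseteq V$ one has $V'\times_U W\cong W'$, so this pullback is exactly $W'\hookrightarrow V'$, and membership of $W\hookrightarrow U$ in $\Xi$ makes $X(V')\to X(W')$ a bijection. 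From the two bijections $X(U')\to X(W')$ and $X(V')\to X(W')$ together with the factorization through $X(V')$, the map $X(U')\to X(V')$ is a bijection. (In the separated case, condition~(3) needs no trick, since injectivity of a composite already gives injectivity of its first factor.)

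With $J$ in hand, the last two points are short. That $X$ is a $J$-sheaf (resp.\ separated presheaf) is exactly the defining condition of $\Xi$ applied to the identity pullback of each member $V\hookrightarrow U$. For maximality, if $J'$ is any topology for which $X$ is a sheaf (resp.\ separated presheaf) and $\phi\colon V\hookrightarrow U$ is $J'$-dense, then every pullback $V'\hookrightarrow U'$ of $\phi$ is again $J'$-dense (dense monomorphisms being pullback-stable), so $X(U')\to X(V')$ is a bijection (resp.\ injection); as this holds for all pullbacks, $\phi\in\Xi$, i.e.\ $J'\leq J$.
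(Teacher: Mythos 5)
Your proposal is correct and follows essentially the same route as the paper: reduce to Proposition~\ref{prop:characterization-topologies}, verify condition (1) via the canonical-sheaf (equalizer/limit) argument, condition (2) via functoriality, and condition (3) by pulling the composite $W\hookrightarrow U$ back along $V'\to U$ so that both $X(U')\to X(W')$ and $X(V')\to X(W')$ are bijections, then conclude sheafhood and maximality from pullback-stability of dense monomorphisms. Your writeup merely makes explicit a few steps (the identification $V'\times_U W\cong W'$ and the equalizer comparison) that the paper leaves terse.
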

\begin{proof}
To show that $\Xi$ defines a topology, we use Proposition \ref{prop:characterization-topologies}. Condition $(2)$ follows from $\Hom(-,X)$ being a presheaf. For condition $(1)$ we use that $\Hom(-,X)$ is a sheaf for the canonical topology: we can then write $\Hom(V,X) \to \Hom(U,X)$ as a limit of the maps $\Hom(V_i,X) \to \Hom(U_i,X)$, and use that taking the limit preserves isomorphisms (resp. monomorphisms). For condition $(3)$, assume that the composition $W \hookrightarrow V \hookrightarrow U$ is in $\Xi$. We claim that then $V \hookrightarrow U$ is in $\Xi$ as well. Now consider the diagram
\begin{equation*}
\begin{tikzcd}
W' \ar[r] \ar[d] & V' \ar[r,"{}"] \ar[d] & U' \ar[d] \\
W \ar[r] & V \ar[r,"{\phi}"] & U
\end{tikzcd}
\end{equation*}
with the right square an arbitrary pullback diagram as in \eqref{eq:pullback-2}, and the left square the corresponding pullback diagram. We now find that $\Hom(U',X) \to \Hom(V',X)$ is an isomorphism (resp.\ monomorphism) because both $\Hom(U',X) \to \Hom(W',X)$ and $\Hom(V',X) \to \Hom(W',X)$ are. We conclude that $\Xi$ is the class of $J$-dense monomorphisms for a topology $J$ on $\E$.

By definition, $X$ is a sheaf (resp.\ separated presheaf) for $J$. Conversely, let $J'$ be a topology such that $X$ is a sheaf (resp.\ separated presheaf). Because the $J'$-dense monomorphisms are stable under pullback, we necessarily have $J' \subseteq J$. So $J$ is the largest topology for which $X$ is a sheaf (resp.\ separated presheaf).
\end{proof}

\begin{remark}
Corollary \ref{cor:largest-topology-X-sheaf-separated-presheaf} gives a characterization of the largest topology for which a given object $X$ is a sheaf or separated presheaf. This is a classical result going back to Par\'e \cite{P1}, see e.g.\ \cite[Examples 22.(1)]{P1} for the description of the largest topology for which $X$ is a sheaf. Unfortunately, page 396 of \cite{P1} seems to be missing at this time (at least in digital versions of the article).
\end{remark}

\subsection{Alternative description}
\label{ssec:alternative-description}

We write $J_X$ (resp.\ $J_X'$) for the largest topology for which $X$ is a sheaf (resp.\ separated presheaf).

Consider $\phi: V \hookrightarrow U$ in $\E$ as above. We now take a more geometric point of view, and look at the induced geometric morphism
\begin{equation*}
\begin{tikzcd}
\E/V \ar[rd,"{\pi_V}"'] \ar[rr,"{\xi}"] & & \E/U \ar[ld,"{\pi_U}"]\\
& \E &
\end{tikzcd}
\end{equation*}
over $\E$. We can then describe the results of Corollary \ref{cor:largest-topology-X-sheaf-separated-presheaf} as follows. For an object $X$ in $\E$, the monomorphism $\phi$ is $J_X$-dense (resp.\ $J_X'$-dense) if and only if the map
\begin{equation} \label{eq:restriction}
(\pi_U^*X)(U') \to (\pi_V^*X)(V')
\end{equation}
is bijective (resp.\ injective), for any pullback diagram \eqref{eq:pullback-2}. We can rewrite the map \eqref{eq:restriction} first as
\begin{equation*}
(\pi_U^*X)(U') \to (\xi^*\pi_U^*X)(\xi^*U')
\end{equation*}
and then as
\begin{equation*}
(\pi_U^*X)(U') \to (\xi_*\xi^*\pi_U^*X)(U').
\end{equation*}

We conclude that $\phi$ is $J_X$-dense (resp.\ $J_X'$-dense) precisely if
\begin{equation} \label{eq:xi-sheafification}
\pi_U^*X \longrightarrow \xi_*\xi^*\pi_U^*X
\end{equation}
is an isomorphism (resp.\ monomorphism). So the following two statements are equivalent:
\begin{enumerate}
\item $\phi: V \hookrightarrow U$ is dense for the largest topology on $\E$ for which $X$ is a sheaf (resp.\ separated presheaf);
\item the object $\pi_U^*X$ in $\E/U$ is a sheaf (resp.\ separated presheaf) for the topology defined by the subtopos $\E/V \subseteq \E/U$.
\end{enumerate}
This captures the essence of why the smallest dense subtopos and smallest pure subtopos exist.

\begin{remark}
Let $\A = \{X_i\}_{i \in I}$ be a family of objects in $\E$. Then the largest topology for which each $X_i$ is a sheaf (resp.\ separated presheaf) is given by $J_\A = \bigcap_{i \in I} J_{X_i}$ (resp.\ $J_\A' = \bigcap_{i \in I} J_{X_i}'$), and the arguments above still go through.
\end{remark}

\subsection{\texorpdfstring{$\mathcal{A}$}{A}-dense and \texorpdfstring{$\mathcal{A}$}{A}-pure geometric morphisms}
\label{ssec:A-dense-A-pure}

Let $\rho: \F \to \E$ be a geometric morphism. For a family of objects $\A$ in $\E$, we will say that $\rho$ is \emph{$\A$-pure} if for each $X$ in $\A$ the map
\begin{equation} \label{eq:generalized-sheafification}
X \longrightarrow \rho_*\rho^*X
\end{equation}
is an isomorphism. Similarly, we say that $\rho$ is \emph{$\A$-dense} if for each $X$ in $\A$ the map \eqref{eq:generalized-sheafification} is a monomorphism.

We recover the following notions.
\begin{example} We fix a base topos $\T$ and structure maps $e: \E \to \T$, $f: \F \to \T$ such that $e\rho \cong f$.
\begin{enumerate}
\item If $\A$ is the family of all objects $e^*T$ for $T$ in $\T$, then the $\A$-pure geometric morphisms are the geometric morphisms $\rho$ such that $\rho_*$ preserves $\T$-indexed coproducts, see Bunge and Funk \cite[Definition 2.2.6]{SCT}. The $\A$-dense geometric morphisms agree with the geometric morphisms that are dense over $\T$ \cite[Definition 2.1]{BF}. In this case, we can just as well take $\A = \{e^*\Omega_\T\}$ without changing the $\A$-dense geometric morphisms \cite[Proposition 2.2]{BF}.

It is this choice of $\A$ (i.e.\ the sheaves that are constant over the base topos) that is the most relevant in this article. For this $\A$, we will recover $\A$-dense and $\A$-pure as special cases of $n$-pure geometric morphisms later on, for $n=-1$ resp.\ $n=0$.
\item If $\A$ contains the single object $e^*\Omega_\T$, then the $\A$-pure geometric morphisms are precisely the pure geometric morphisms, again we refer to Bunge and Funk \cite[Definition 2.2.6]{SCT}.
\item If $\A$ is the family of all objects in $\E$, then $\rho: \F \to \E$ is $\A$-pure if and only if it is connected, and $\A$-dense if and only if it is surjective.
\end{enumerate}
\end{example}

For a monomorphism $j: V \hookrightarrow U$ in $\E$, we say that $j$ is $\A$-pure (resp.\ $\A$-dense) if the associated geometric morphism $\E/V \hookrightarrow \E/U$ is $\A$-pure (resp.\ $\A$-dense).

Using the results of Subsection \ref{ssec:alternative-description}, we arrive at the following result.

\begin{theorem}
Let $\E$ be a Grothendieck topos, and let $\A$ be a family of objects in $\E$. Let $J$ be the topology with as $J$-dense monomorphisms precisely the $\A$-pure (resp.\ $\A$-dense) monomorphisms, and let $\E_J$ be the corresponding subtopos. Then $\E_J$ is the smallest $\A$-pure (resp.\ $\A$-dense) subtopos. More precisely, a subtopos inclusion $\E' \subseteq \E$ is $\A$-pure (resp.\ $\A$-dense) if and only if $\E_J \subseteq \E'$.
\end{theorem}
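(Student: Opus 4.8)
The plan is to reduce everything to Subsections~\ref{ssec:topologies-defined-by-an-object} and~\ref{ssec:alternative-description}, together with the order-reversing correspondence between topologies and subtoposes. First I would pin down the topology $J$. By the definition of $\A$-pure (resp.\ $\A$-dense) monomorphism, $\phi\colon V \hookrightarrow U$ is $\A$-pure (resp.\ $\A$-dense) precisely when, for every $X \in \A$, the slice morphism $\xi\colon \E/V \to \E/U$ makes $\pi_U^*X \to \xi_*\xi^*\pi_U^*X$ an isomorphism (resp.\ monomorphism). By the equivalence established in Subsection~\ref{ssec:alternative-description}, this is exactly the condition that $\phi$ be $J_X$-dense (resp.\ $J_X'$-dense). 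Intersecting over all $X \in \A$ and invoking the remark on families closing Subsection~\ref{ssec:alternative-description}, the class of $\A$-pure (resp.\ $\A$-dense) monomorphisms coincides with the $J_\A$-dense (resp.\ $J_\A'$-dense) monomorphisms, where $J_\A = \bigcap_X J_X$ (resp.\ $J_\A' = \bigcap_X J_X'$). Thus $J = J_\A$ (resp.\ $J = J_\A'$) is well-defined, which establishes the first sentence of the statement, and $\E_J$ is the corresponding subtopos.

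Next I would translate $\A$-purity of a subtopos \emph{inclusion} into a sheaf condition. For $i\colon \E' \subseteq \E$ the functor $i_*i^*$ is sheafification for the topology $J'$ defining $\E'$, and the map $X \to i_*i^*X$ of~\eqref{eq:generalized-sheafification} is exactly the sheafification unit. Hence $i$ is $\A$-pure iff this unit is an isomorphism for every $X \in \A$, which holds iff each $X \in \A$ is a $J'$-sheaf; likewise $i$ is $\A$-dense iff each $X \in \A$ is a $J'$-separated presheaf, using the standard fact that the unit is monic precisely for separated presheaves.

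Now I would invoke maximality. By Corollary~\ref{cor:largest-topology-X-sheaf-separated-presheaf} together with the remark on families, $J_\A$ (resp.\ $J_\A'$) is the largest topology for which every $X \in \A$ is a sheaf (resp.\ separated presheaf). Therefore every $X \in \A$ is a $J'$-sheaf (resp.\ $J'$-separated presheaf) if and only if $J' \subseteq J_\A$ (resp.\ $J' \subseteq J_\A'$). Finally, since the correspondence between topologies and subtoposes is order-reversing, $J' \subseteq J_\A$ is equivalent to $\E_{J_\A} \subseteq \E_{J'} = \E'$, and I would conclude that $\E' \subseteq \E$ is $\A$-pure (resp.\ $\A$-dense) iff $\E_J \subseteq \E'$. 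Taking $\E' = \E_J$ then shows $\E_J$ is itself $\A$-pure (resp.\ $\A$-dense), so it is the smallest such subtopos.

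The main obstacle to watch is the bridge between the two guises of ``$\A$-pure'': the condition defining $J$ is phrased via slice geometric morphisms $\E/V \to \E/U$ and the pulled-back family $\pi_U^*\A$, whereas the conclusion concerns honest subtopos inclusions $\E' \subseteq \E$ and the original family $\A$. Reconciling these rests on identifying $i_*i^*X$ with the $J'$-sheafification of $X$ and on the maximality of $J_\A$ from Corollary~\ref{cor:largest-topology-X-sheaf-separated-presheaf}; once these identifications are in place the remaining argument is purely formal manipulation of the poset of topologies. A secondary point worth stating explicitly is that $X \in \A$ is automatically a $J_\A$-sheaf (resp.\ $J_\A'$-separated presheaf) by construction, which is what guarantees that $\E_J$ is not merely a lower bound but an actual $\A$-pure (resp.\ $\A$-dense) subtopos.
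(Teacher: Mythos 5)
Your proof is correct and takes essentially the same route the paper intends: the theorem is stated without an explicit proof, only a pointer to Subsections \ref{ssec:topologies-defined-by-an-object}--\ref{ssec:alternative-description}, and your argument is precisely the fleshed-out version of that reduction (identifying the $\A$-pure/$\A$-dense monomorphisms with the $J_\A$- resp.\ $J_\A'$-dense ones via the alternative description, then combining the maximality of $J_\A$ from Corollary \ref{cor:largest-topology-X-sheaf-separated-presheaf} with the order-reversing correspondence between topologies and subtoposes). The one fact you use that the paper leaves implicit --- that the unit $X \to i_*i^*X$ is an isomorphism (resp.\ monomorphism) exactly when $X$ is a sheaf (resp.\ separated) for the topology of $\E'$ --- is standard and correctly invoked.
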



\section{Homotopy theory in a topos via simplicial objects}

Let $\rho: \F \to \E$ be a geometric morphism between Grothendieck toposes. The derived pushforward $R^i\rho_*(A)$ is often defined only via injective resolutions, for an abelian group object $A$ in $\F$. We will use the alternative approach via Eilenberg–-Mac Lane objects, which has the advantages that then $R^i\rho^*(A)$ is also defined for a nonabelian group object $A$ if $i = 1$, or even an arbitrary object $A$ if $i=0$.

Our approach is based on the theory of toposes of sheaves of simplicial sets as studied by Joyal \cite{joyal-letter-grothendieck}, Jardine \cite{jardine-lectures} \cite{jardine-book} and Rezk \cite{rezk}. The themes that we consider are common themes in higher topos theory, as discussed (using different language) by Lurie in \cite{lurie}.

\subsection{Simplicial objects}
\label{ssec:simplicial-objects}

The first step is to consider simplicial objects in $\E$, 
rather than chain complexes of abelian group objects.
The category of simplicial objects in $\E$ will be denoted by $\s\E$.
Note that $s\E$ is again a topos:
$\s\Set$ is the topos of simplicial sets and if
$e : \E \to \Set$ and $\gamma : \s\Set \to \Set$ are the
natural geometric morphisms,
then we have a pullback of toposes
\begin{equation*}
\begin{tikzcd}
\s\E \ar[r,"{\gamma_\E}"] \ar[d,"{\s e}"'] & \E \ar[d,"{e}"] \\
\s\Set \ar[r,"{\gamma}"] & \Set
\end{tikzcd}.
\end{equation*}
More generally, for any geometric morphism $\rho : \F \to \E$ there is an induced
geometric morphism $\s \rho : \s\F \to \s\E$, obtained by taking
the pullback of $\rho$ along $\gamma$. 
By Proposition \ref{prop:direct-image-pointwise} and Proposition \ref{prop:inverse-image-pointwise}, 
$\s \rho_*$ and $\s \rho^*$ are computed pointwise,
i.e.\ $(\s \rho_*X)_m \simeq \rho_*(X_m)$ and $(\s \rho^*X)_m \simeq \rho^*(X_m)$.

If $\rho$ has a certain property that is local with respect to base change (inclusion, surjection, localic, hyperconnected, locally connected, tidy, \dots) then $\s \rho$ has that same property. Moreover, the geometric morphism $\gamma : \s\Set \to \Set$ is hyperconnected ($\s\Set$ is two-valued), so in particular it is an open surjection. So if $\s \rho$ has a certain property that can be checked after base change by an open surjection (surjection, hyperconnected, locally connected, tidy, \dots), then $\rho$ has that same property, see \cite[Corollary C5.1.7]{EL}.

The category $\s\E$ is enriched over simplicial sets:
for two objects $X$ and $Y$ in $\s\E$, we define the simplicial set
\begin{equation*}
\HOM(X,Y) = (\s e)_*(Y^X).
\end{equation*}
We can recover the usual Hom-set in $\s\E$ as
\begin{equation*}
\Hom(X,Y) = \gamma_*\HOM(X,Y),
\end{equation*}
i.e.\ $\Hom(X,Y)$ is the set of points of $\HOM(X,Y)$.

If we write $\E$ as a topos of sheaves $\E \simeq \Sh(\C,J)$, then the category of simplicial objects in $\E$ is equivalent to the category of sheaves of simplicial sets. For a simplicial object $F$ in $\E$, the corresponding sheaf of simplicial sets $\tilde{F}$ is given by
\begin{equation*}
\tilde{F}(U) = \HOM(\gamma_\E^*(\mathbf{y} U), F).
\end{equation*}

It will often be clear from the context whether we interpret $F$ as simplicial object in $\E$ or as sheaf of simplicial sets; in this case, we can simply write $F$ when we mean $\tilde{F}$.

\subsection{Homotopy groups}
\label{ssec:homotopy-groups}

Let $\E \simeq \Sh(\C,J)$ be a Grothendieck topos. Following Joyal \cite{joyal-letter-grothendieck}, for each sheaf $F$ in $\s\E$ we define $\pi_0(F)$ to be the sheafification of the presheaf
\begin{equation*}
U \mapsto \pi_0(F(U))
\end{equation*}
for $U$ in $\C$.

\begin{remark} \label{rem:independence-of-site}
The above definition is independent of the choice of Grothendieck site.
This is due to a phenomenon unrelated to $\pi_0$ that works for any presheaf on $\E$.
Indeed, let $\C \subseteq \E$ be a small full subcategory and let
$J_\C$ be the restriction to $\C$ of the canonical topology on $\E$.
If $G: \E^\op \to \Set$ is a presheaf, then we can restrict
$G$ to $\C$ and compute the sheafification of $G$ 
with respect to $J_\C$.
Because $\Sh(\C,J_\C)\simeq \E$, this defines an object $X_G$ in $\E$.
We claim that $X_G$ is independent of the choice of $\C$.
For different choices $\C,\C' \subseteq \E$, take a small full 
subcategory $\C'' \subseteq \E$ containing both $\C$ and $\C'$,
and then consider the commutative diagram of inclusions
\begin{equation*}
\begin{tikzcd}
& \PSh(\C) \ar[d,hook] \\
\E \ar[ru,hook] \ar[r,hook] \ar[rd,hook] & \PSh(\C'') \\
& \PSh(\C') \ar[u,hook]
\end{tikzcd}
\end{equation*}
The restrictions of the canonical topology on $\E$ to $\C$, $\C'$ and $\C''$
we will call $J_\C$, $J_{\C'}$ and $J_{\C''}$, respectively.
Commutativity of the diagram expresses precisely that taking the sheafification
with respect to $J_{\C''}$ agrees with first restricting to $\C$ or $\C'$
and then sheafifiying with respect to $J_\C$ resp.\ $J_{\C'}$. This proves
the claim above. Because $X_F$ is independent of $\C$, we can call it the
sheafification of $F$ with respect to the canonical topology on $\E$.
\end{remark}

\begin{proposition} \label{prop:pi0-as-connected-components}
Let $F$ be an object in $\s\E$, and let $\gamma : \s\E \to \E$ be the 
pullback of the global sections geometric morphism $\s\Set \to \Set$.
Then $\pi_0(F) \simeq \gamma_!(F)$.
\end{proposition}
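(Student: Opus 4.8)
The plan is to identify both $\pi_0(F)$ and $\gamma_!(F)$ as the value of a single functor: the left adjoint to the constant-object functor $c : \E \to \s\E$, which sends $Y$ to the simplicial object equal to $Y$ in every degree with identity face and degeneracy maps.

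First I would check that $\gamma$ is locally connected, so that $\gamma_!$ exists at all. The morphism $\gamma : \s\Set \to \Set$ is locally connected, being the terminal geometric morphism of a presheaf topos with $\gamma_! = \colim_{\Delta^\op}$, and the $\gamma$ of the statement is its base change along $e : \E \to \Set$, as recorded in the pullback square defining $\s\E$. Since local connectedness is stable under base change of toposes, $\gamma : \s\E \to \E$ is locally connected and $\gamma_! \dashv \gamma^*$. Next I would identify $\gamma^*$ with the constant-object functor $c$: the inverse image of $\gamma : \s\Set \to \Set$ is the constant simplicial set functor, and this levelwise description is preserved by the base change, so $\gamma^* Y$ is the constant simplicial object on $Y$. (Equivalently, $c$ preserves finite limits and has both adjoints, hence is the inverse image of an essential geometric morphism, which the universal property of the pullback identifies with $\gamma$.) Thus $\gamma_!$ is, up to isomorphism, the unique left adjoint of $c$.

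It then remains to show that $\pi_0(-)$ as defined is also left adjoint to $c$. Writing $F(U)$ for the simplicial set with $m$-simplices $F_m(U) = \Hom_\E(U, F_m)$, the set $\pi_0(F(U))$ is the coequalizer of $d_0, d_1 : F_1(U) \rightrightarrows F_0(U)$. Coequalizers of presheaves are computed pointwise, so $U \mapsto \pi_0(F(U))$ is the coequalizer of $F_1 \rightrightarrows F_0$ in presheaves, and since sheafification is a left adjoint it preserves this coequalizer while fixing the sheaves $F_0, F_1$; hence
\[
\pi_0(F) \;\cong\; \mathrm{coeq}\bigl(d_0, d_1 : F_1 \rightrightarrows F_0\bigr)
\]
computed in $\E$. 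A map out of this coequalizer into $Y$ is exactly a map $F_0 \to Y$ coequalizing $d_0$ and $d_1$, which is precisely the datum of a simplicial map $F \to cY$: naturality against the two vertices $[0] \rightrightarrows [1]$ forces any such map to be determined in degree $0$ by a morphism coequalizing $d_0, d_1$, and conversely every such morphism extends uniquely (consecutive vertices of $[m]$ are the two faces of an edge, so the coequalizing condition makes the extension well defined). Therefore $\pi_0 \dashv c$, and by uniqueness of adjoints $\pi_0(F) \cong \gamma_!(F)$.

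The step I expect to demand the most care is the identification $\gamma^* \cong c$: one must ensure that the inverse image of the base-changed morphism is genuinely the levelwise-constant functor and not merely argue this formally. The cleanest route is the internal characterization of $c$ as a finite-limit-preserving functor with both adjoints, combined with the universal property of the pullback square. Alternatively, one can sidestep $\gamma_!$ altogether by verifying the adjunction $\Hom_\E(\pi_0(F), Y) \cong \Hom_{\s\E}(F, cY)$ directly from the sheaf description, at the cost of re-deriving the coequalizer formula inside that computation.
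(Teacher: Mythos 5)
Your proof is correct, but it takes a genuinely different route from the paper's. The paper argues by a three-step reduction: the case $\E = \Set$ holds by definition; for a presheaf topos it invokes the appendix result that $\gamma_!$ is computed pointwise (applied to tiny representables); and for a general sheaf topos it uses the Beck--Chevalley condition $\gamma^* j_* \simeq (\s j)_* \gamma^*$ for the locally connected $\gamma$, whose left-adjoint transpose says that $\gamma_!$ commutes with sheafification, matching the definition of $\pi_0$. You instead identify both sides as the left adjoint of a single functor: you recognize $\gamma^*$ as the constant simplicial object functor, compute $\pi_0(F)$ as the coequalizer of $d_0, d_1 : F_1 \rightrightarrows F_0$ in $\E$ (sheafification being a left adjoint fixing $F_0$ and $F_1$), and verify that this coequalizer corepresents maps $F \to \gamma^* Y$ --- in effect the finality of the $1$-truncation in $\Delta^{\op}$ --- so that $\pi_0 \dashv \gamma^*$ and uniqueness of adjoints concludes. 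Your argument is more self-contained and elementary, needing neither the pointwise-computation machinery of Appendix A nor the pullback-stability of local connectedness (indeed, for your purposes the mere existence of $\gamma_! = \colim_{\Delta^{\op}}$ as an ordinary left adjoint of the constant functor suffices, so you could drop the local-connectedness discussion entirely); it also makes the site-independence of $\pi_0$ (the paper's Remark preceding the proposition) transparent, since the reflexive coequalizer is intrinsic to $\E$. What the paper's route buys is reuse: the pointwise-computation propositions and the Beck--Chevalley argument are needed elsewhere (e.g.\ for the analogous statements about $\s\rho_*$, $\s\rho^*$ and for higher homotopy groups, where no such simple colimit formula is available). The one step you rightly flag, the identification $\gamma^* \cong c$, is standard for diagram toposes and your fallback via the universal property of the pullback is adequate.
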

\begin{proof}
For $\E \simeq \Set$ the stated isomorphism holds by definition.
If $\E$ is a presheaf topos, then $\gamma_!$ is computed pointwise 
by Proposition \ref{prop:connected-components-pointwise}
applied to $\T = \PSh(\C)$, $\rho : \s\Set \to \Set$
the global sections geometric morphism
and $T$ a representable presheaf (which is tiny by \cite[Example 2.2]{nlab-tiny}).
If $\E$ is an arbitrary sheaf topos, the statement follows
because sheafification commutes with $\gamma_!$. More precisely, consider the diagram
\begin{equation*}
\begin{tikzcd}
\s\E \ar[r,"{\s j}",hookrightarrow] \ar[d,"{\gamma}"'] & \s\PSh(\C) \ar[d,"{\gamma}"] \\
\E \ar[r,"{j}"',hookrightarrow] & \PSh(\C)
\end{tikzcd}
\end{equation*}
with $j$ an inclusion in a presheaf topos $\PSh(\C)$. Because $\gamma$ is locally connected, the Beck--Chevalley condition $\gamma^*j_* \simeq (\s j)_*\gamma^*$ holds \cite[Theorem C3.3.15]{EL}. Taking left adjoints, we find $j^*\gamma_! \simeq \gamma_!(\s j)^*$, so we can calculate $\gamma_!$ by first calculating it at the level of presheaves and then sheafifying (and this agrees with how we defined $\pi_0$).
\end{proof}

For a simplicial set $X$ and a basepoint $x \in X_0$, we can also define the higher homotopy group $\pi_n(X,x)$ for $n \geq 1$. In order to remove the basepoint from the notation, we define
\begin{equation*}
\pi_n(X) = \bigsqcup_{x \in X_0} \pi_n(X,x),
\end{equation*}
again following Joyal \cite{joyal-letter-grothendieck}. There is a natural projection map $\pi_n(X) \to X_0$. For an object $F$ in $\s\E$, we then define $\pi_n(F)$ as the sheafification of
\begin{equation*}
U ~\mapsto~ \pi_n(F(U)).
\end{equation*}
Again we obtain a natural map 
\begin{equation} \label{eq:nth-homotopy-map}
\pi_n(F) \to F_0.
\end{equation}
We will say that a morphism $F \to G$ is a \textbf{weak equivalence} if
\begin{enumerate}
\item[(WE1)] the induced map $\pi_0(F) \to \pi_0(G)$ is an isomorphism, and
\item[(WE2)] for each $n \geq 1$,
\begin{equation*}
\begin{tikzcd}
\pi_n(F) \ar[r]\ar[d] & \pi_n(G) \ar[d] \\
F_0 \ar[r] & G_0
\end{tikzcd}
\end{equation*}
is a pullback diagram.
\end{enumerate}

\subsection{The injective model structure}
\label{ssec:injective-model-structure}

A model structure on a category is a choice of three families of morphisms, called weak equivalences, cofibrations and fibrations, satisfying certain axioms, see \cite[Definition 7.1.3]{hirschhorn}. If a morphism is both a cofibration and a weak equivalence, then it is called a trivial cofibration (or acyclic cofibration). Similarly, if a morphism is both a fibration and a weak equivalence, then it is called a trivial fibration (or acyclic fibration).

\begin{theorem}[Injective model structure {\cite[Theorem 1]{joyal-letter-grothendieck}}]
\label{thm:injective-model-structure}
Let $\E$ be a topos. There is a unique model structure on $\s\E$ with weak equivalences as above and as cofibrations the monomorphisms.
\end{theorem}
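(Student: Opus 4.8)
The plan is to verify the model-category axioms (in the sense of \cite[Definition 7.1.3]{hirschhorn}) for three classes of maps in $\s\E$: the weak equivalences as defined in Subsection \ref{ssec:homotopy-groups}, the cofibrations taken to be the monomorphisms, and the fibrations \emph{defined} to be the maps having the right lifting property against all trivial cofibrations. With this setup uniqueness is automatic, and I would dispose of it first: in any model structure the weak equivalences together with the cofibrations determine the trivial cofibrations as their intersection, and the fibrations are then forced to be exactly the maps with the right lifting property against the trivial cofibrations. Hence once the weak equivalences and the monomorphisms are fixed, no other choice of fibrations is possible. The content of the theorem is therefore existence, and among the existence axioms the two factorization axioms carry essentially all the weight.

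Next I would dispatch the formal axioms. That the weak equivalences satisfy two-out-of-three and are closed under retracts follows because conditions (WE1) and (WE2) are phrased purely in terms of the homotopy sheaves $\pi_n(F)$ and the projections \eqref{eq:nth-homotopy-map}; these are stable under isomorphism, and the two-out-of-three property is obtained from the long exact sequence relating the fibres of $\pi_n(F) \to F_0$, which is available after sheafification since sheafification is exact. Monomorphisms in the topos $\s\E$ are closed under retracts, pushout and transfinite composition, and the fibrations, being defined by a lifting property, are automatically closed under retracts. This settles all the retract and closure bookkeeping.

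I would then establish the factorization of an arbitrary map as a cofibration followed by a trivial fibration. First I would characterize the trivial fibrations, namely the maps that are simultaneously fibrations and weak equivalences, as exactly the maps with the right lifting property against every monomorphism: one inclusion is formal, and the reverse inclusion follows by observing that such a map becomes a sectionwise trivial Kan fibration on evaluation at the generators, hence induces isomorphisms on all homotopy sheaves and is a weak equivalence. The factorization itself is produced by the small object argument, and the key enabling fact is that, since $\s\E$ is a Grothendieck topos and so is well-powered with a generating set, the class of all monomorphisms is the saturation of a \emph{set} $I$ of monomorphisms bounded by a fixed infinite cardinal $\kappa$; running the small object argument on $I$ yields a factorization whose right factor has the right lifting property against $I$, hence against every monomorphism, and is therefore a trivial fibration.

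The hard part will be the factorization of an arbitrary map as a trivial cofibration followed by a fibration, and this is where the main obstacle lies. The trivial cofibrations are defined by the transcendental condition of being a weak equivalence, so it is not obvious that they are generated by a set or that the small object argument applies. The crux is a \emph{bounded cofibration lemma}: there is an infinite cardinal $\kappa$, depending only on $\E$, such that whenever $A \hookrightarrow B$ is a trivial cofibration and $C \subseteq B$ is a bounded subobject, there is an intermediate subobject $C \subseteq D \subseteq B$, still of size at most $\kappa$, with $A \cap D \hookrightarrow D$ again a trivial cofibration. Granting this, the trivial cofibrations of size at most $\kappa$ form a set $J$ whose saturation is all trivial cofibrations, the small object argument on $J$ gives the factorization, and the fibrations turn out to be exactly the maps with the right lifting property against $J$. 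Proving the bounded cofibration lemma is the genuinely difficult step: it requires a careful cardinality analysis of how (WE1) and (WE2) can be witnessed inside a bounded subobject, controlling sheafification and the formation of the $\pi_n$ simultaneously. This is the technical heart of Joyal's argument, handled in the literature by the boundedness methods of Jardine \cite{jardine-book} \cite{jardine-lectures} (equivalently, by Smith's recognition theorem, using that the weak equivalences form an accessible class of maps). Once both factorizations are in place, the lifting axioms follow immediately: trivial cofibrations lift against fibrations by the definition of fibration, and cofibrations lift against trivial fibrations by the characterization of the latter as the maps with the right lifting property against all monomorphisms, via the standard retract argument.
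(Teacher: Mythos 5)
Your proposal is a correct outline of the standard existence proof, but note that the paper itself does not prove this theorem: it imports existence wholesale from Joyal's letter \cite{joyal-letter-grothendieck} and only justifies uniqueness, via exactly the observation you open with (fibrations are determined by the right lifting property against trivial cofibrations, \cite[Proposition 7.2.3]{hirschhorn}). So your uniqueness argument coincides with the paper's, and the rest of your text reconstructs the argument that the citation points to: generating the cofibrations by a set of bounded monomorphisms (cf.\ \cite{beke-model}), the small object argument for the (cofibration, trivial fibration) factorization, and the bounded cofibration lemma as the technical heart of the (trivial cofibration, fibration) factorization, which is indeed how Jardine's written accounts \cite{jardine-book} \cite{jardine-lectures} proceed. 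Two points to tighten. First, you leave the bounded cofibration lemma entirely to the literature; since that lemma is essentially the whole content of the theorem beyond formalities, your proposal is an architecture rather than a proof, though this is defensible given that the paper defers the entire statement. Second, in your characterization of trivial fibrations you have the labels reversed: the inclusion you call ``formal'' (every fibration that is a weak equivalence has the right lifting property against all monomorphisms) is the direction that genuinely requires the factorization plus the retract argument, whereas the direction you prove by the sectionwise evaluation at generators (maps with the right lifting property against all monomorphisms are trivial fibrations) is the more self-contained one. You do invoke the retract argument at the end, so the needed machinery is present, but the logical order should be: prove the self-contained direction first, run the small object argument, and only then deduce the other direction by the retract argument.
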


The uniqueness follows since in in any model category, the fibrations are precisely the maps that have the right lifting property with respect to trivial cofibrations \cite[Proposition 7.2.3]{hirschhorn}. The model structure from Theorem \ref{thm:injective-model-structure} is called the \textbf{injective model structure}.\footnote{The injective model structure is due to Joyal, but is not to be confused with the Joyal model structure on $\s\Set$. In the special case $\E=\Set$, the injective model structure agrees with the classical (Quillen) model structure on $\s\Set$, not with the Joyal model structure on $\s\Set$.}

\begin{remark} \label{rem:all-objects-cofibrant}
Because the cofibrations are precisely the monomorphisms, we find that all objects are cofibrant in this model structure. As a result, if two fibrant objects are weakly equivalent, i.e.\ connected by a zig-zag of weak equivalences, then they define isomorphic objects in the homotopy category, which in turn implies that the two (cofibrant--)fibrant objects are homotopy equivalent (see \cite[Theorem 1.2.10]{hovey}).
\end{remark}

\subsection{Quillen adjunctions}
\label{ssec:quillen-adjunctions}

Recall that a \textbf{Quillen adjunction} between model categories is an adjunction $L \dashv R$ that satisfies one of the following equivalent properties \cite[Proposition 8.5.3]{hirschhorn}:
\begin{itemize}
\item $L$ preserves cofibrations and trivial cofibrations;
\item $R$ preserves fibrations and trivial fibrations;
\item $L$ preserves cofibrations and $R$ preserves fibrations;
\item $L$ preserves trivial cofibrations and $R$ preserves trivial fibrations.
\end{itemize}
Now let $L \dashv R$ be a Quillen adjunction. If for every cofibrant object $X$ the derived adjunction unit $X \to R(L(X)^f)$ is a weak equivalence, then we say that $L \dashv R$ is a \textbf{Quillen co-reflection}. If for every fibrant object $Y$ the derived adjunction counit $L(R(Y)^c) \to Y$ is an equivalence, then we say that $L \dashv R$ is a \textbf{Quillen reflection}. We say that $L \dashv R$ is a \textbf{Quillen equivalence} if it is both a Quillen reflection and Quillen co-reflection \cite{nlab-quillen-reflection} \cite{nlab-quillen-equivalence}.

\begin{example} \label{eg:projective-model-structure}
On a category of simplicial presheaves $\s\PSh(\C)$, we can define a \textbf{projective model structure} (or \textbf{Bousfield--Kan model structure}) with the same weak equivalences as in the injective model structure, and with as fibrations the maps $X \to Y$ such that $X(C) \to Y(C)$ is a fibration for each $C$ in $\C$ \cite[Example 4.1]{rezk}. There is an adjunction
\begin{equation*}
\begin{tikzcd}
\s\PSh(\C)^\mathrm{inj} \ar[r,bend right] & \s\PSh(\C)^\mathrm{proj} \ar[l,bend right]
\end{tikzcd}
\end{equation*}
consisting of two identity functors; with the superscript we mean that we take the injective model structure on the left and the projective model structure on the right. This is a Quillen adjunction, because, as we will recall in Corollary \ref{cor:fibration-is-sectionwise-fibration}, any fibration for the injective model structure is also a fibration for the projective model structure. Since both functors in the adjunction are identity functors, we can conclude that it is even a Quillen equivalence (as observed in e.g.\ \cite{nlab-model-structure-on-simplicial-presheaves}).
\end{example}

\subsection{Geometric morphisms} In calculations, it will often be important that the adjunctions we work with are Quillen adjunctions. Most notably, if $\rho : \F \to \E$ is a geometric morphism, we would like that $\s \rho^* \dashv \s \rho_*$ is a Quillen adjunction. It was already stated by Joyal in \cite{joyal-letter-grothendieck} that this is the case. In fact, a stronger property holds: $\s \rho^*$ not only preserves cofibrations (since the cofibrations are the monomorphisms) but it also preserves weak equivalences. To prove that $\s \rho^*$ preserves weak equivalences, Joyal argues that homotopy groups have a construction via colimits and finite limits, both of which are preserved by $\s \rho^*$. This was later written out in more detail by Illusie in \cite[p.~22]{illusie}.

We state the conclusion as follows.
\begin{proposition}[Joyal] \label{prop:pullback-preserves-homotopy-groups}
Let $\rho: \F \to \E$ be a geometric morphism. Then 
\begin{equation*}
\pi_k(\s\rho^*(X)) \simeq \rho^*\pi_k(X)
\end{equation*}
for any $X$ in $\s\E$ and natural number $k$, and $\s\rho^* \dashv \s\rho_*$ is a Quillen adjunction.
\end{proposition}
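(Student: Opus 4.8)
The plan is to prove the homotopy-group identity first and then read off the Quillen adjunction from it. For the second claim, recall from Subsection~\ref{ssec:quillen-adjunctions} that it suffices to check that $\s\rho^*$ preserves cofibrations and trivial cofibrations. Since the cofibrations are exactly the monomorphisms, and since $\s\rho^*$ is computed pointwise with $(\s\rho^*X)_m \simeq \rho^*(X_m)$ while $\rho^*$ is left exact, $\s\rho^*$ preserves monomorphisms. Thus everything reduces to showing that $\s\rho^*$ preserves weak equivalences, and for this the natural isomorphism $\pi_k(\s\rho^*X) \simeq \rho^*\pi_k(X)$ is the key input.

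First I would establish the isomorphism. For $k = 0$ this is the Beck--Chevalley identity $\rho^*\gamma_! \simeq \gamma_!\,\s\rho^*$ for the pullback square relating the projections $\gamma\colon \s\E \to \E$ and $\gamma\colon \s\F \to \F$, combined with Proposition~\ref{prop:pi0-as-connected-components}; since $\gamma$ is locally connected and the square is a pullback of toposes, the condition holds. For general $k$ I would use Joyal's observation, written out by Illusie, that the homotopy sheaf $\pi_k(F)$ is obtained by a construction involving only finite limits and colimits followed by sheafification: pointwise one replaces each simplicial set $F(U)$ by the Kan complex $\Ex^\infty F(U) = \colim_n \Ex^n F(U)$ and then computes the combinatorial homotopy groups of this Kan complex, which are described by equalizers and coequalizers. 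Because $(\s\rho^*X)_m \simeq \rho^*(X_m)$ and $\rho^*$ preserves finite limits, arbitrary colimits, and sheafification, it commutes with this entire construction level by level; this yields the isomorphism $\pi_k(\s\rho^*X)\simeq \rho^*\pi_k(X)$, naturally and compatibly with the projection maps to $F_0$.

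With the isomorphism in hand, I would deduce preservation of weak equivalences directly from the definition. If $f\colon X \to Y$ satisfies (WE1) and (WE2), then applying $\rho^*$ to the isomorphism $\pi_0(X)\simeq\pi_0(Y)$ gives (WE1) for $\s\rho^* f$, while applying $\rho^*$ to the pullback square in (WE2) again produces a pullback square, since $\rho^*$ is left exact; here one uses the naturality of $\pi_k(\s\rho^*(-))\simeq \rho^*\pi_k(-)$ to identify the transported squares with those computed for $\s\rho^* f$. Hence $\s\rho^*$ preserves weak equivalences, and together with preservation of monomorphisms it preserves trivial cofibrations. By the criterion recalled in Subsection~\ref{ssec:quillen-adjunctions}, $\s\rho^* \dashv \s\rho_*$ is a Quillen adjunction.

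The hard part will be the case $k \geq 1$ of the homotopy-group identity, specifically the verification that the combinatorial construction of $\pi_k$ is genuinely expressible through finite limits, filtered colimits, and sheafification in a manner that $\rho^*$ respects. The two delicate points are that the fibrant replacement $\Ex^\infty$ is a filtered colimit (so preserved by $\rho^*$, but requiring care since $\Ex$ itself is a right adjoint), and that $\rho^*$ commutes with the sheafification defining $\pi_k$ as an object of the topos; the latter holds because the inverse image of a geometric morphism is exact and intertwines the associated-sheaf functors. I would lean on Illusie~\cite{illusie} for the detailed bookkeeping of these steps rather than reproduce them.
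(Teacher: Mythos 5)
Your proposal is correct and follows essentially the same route as the paper, which likewise reduces everything to Joyal's observation that the homotopy sheaves $\pi_k$ are built from finite limits, colimits and sheafification — all preserved by $\rho^*$ — and defers the combinatorial bookkeeping to Illusie. The extra detail you supply (the Beck--Chevalley argument for $k=0$, the pointwise computation of $\s\rho^*$ for preservation of monomorphisms, and the deduction of (WE1)--(WE2) from naturality and left exactness) is accurate and consistent with the paper's sketch.
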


If $\rho$ is a local geometric morphism, then $\s \rho$ is local as well. In this case, $\s \rho_*$ has a further right adjoint $\s \rho^!$. Now $\s \rho_*$ preserves both colimits and finite limits (even all limits), so we can use the argument by Joyal above to conclude that $\s \rho_*$ preserves weak equivalences. In particular, $\s \rho_* \dashv \s \rho^!$ is a Quillen adjunction.

If $\rho$ is a locally connected geometric morphism, then $\s \rho$ is again locally connected, and in this case $\s \rho^*$ has a further left adjoint $\s \rho_!$. Here $\s \rho_!$ does not necessarily preserve monomorphisms, and in that case $\s \rho_! \dashv \s \rho^*$ fails to be a Quillen adjunction. However, we claim that $\s \rho_! \dashv \s \rho^*$ is a Quillen adjunction whenever $\rho$ is \'etale. To prove this, we need the following proposition:

\begin{proposition} \label{prop:connected-components-etale-preserves-homotopy}
Let $\E$ be a topos and $X$ an object of $\E$. Consider the \'etale geometric morphism $\xi : \E/X \to \E$.
Then for each $n \geq 0$, $\pi_n(\s\xi_!(F)) \simeq \xi_!(\pi_n(F))$. In particular, $\s\xi_!$ preserves weak equivalences.
\end{proposition}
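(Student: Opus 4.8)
The plan is to reduce the statement to a concrete computation on a site, exploiting that for \'etale $\xi$ the functor $\xi_!$ is simply the functor $\E/X \to \E$ forgetting the structure map to $X$. Write $\E \simeq \Sh(\C,J)$ with $\C \subseteq \E$ a small full subcategory; then $\E/X$ is presented by the slice site $\C/X$, whose objects are the pairs $(U \xrightarrow{g} X)$ with $U \in \C$ and $g \in X(U)$, and $\xi_!$ is induced by $(U \xrightarrow{g} X) \mapsto U$. By Remark~\ref{rem:independence-of-site} the homotopy sheaves are insensitive to this choice of site. Since $\s\xi_!$ is computed degreewise, $(\s\xi_! F)_m = \xi_!(F_m)$, and decomposing a section $U \to \xi_!(F_m)$ according to its composite to $X$ yields $\xi_!(F_m)(U) \cong \bigsqcup_{g \in X(U)} F_m(U \xrightarrow{g} X)$. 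As the simplicial operators of $F$ all live over $X$, these decompositions are compatible in the simplicial direction, so $(\s\xi_! F)(U) \cong \bigsqcup_{g \in X(U)} F(U \xrightarrow{g} X)$ as simplicial sets.

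Next I would apply $\pi_n$. Working with the basepoint-free total homotopy $\pi_n(K) = \bigsqcup_{x \in K_0} \pi_n(K,x)$, which commutes with disjoint unions of simplicial sets, the pointwise homotopy presheaf of $\s\xi_! F$ on $\C$ is $P \colon U \mapsto \bigsqcup_{g} \pi_n\big(F(U \xrightarrow{g} X)\big)$. This is exactly the reindexing by $\xi_!$ of the pointwise homotopy presheaf $P'$ of $F$ on $\C/X$. It then remains to compare sheafifications, i.e.\ to identify $\pi_n(\s\xi_! F) = a(P)$, the sheafification of $P$, with $\xi_!(\pi_n F)$. I would establish this by Yoneda together with the adjunction $\xi_! \dashv \xi^*$ and the identity $(\xi^* \mathcal S)(U \xrightarrow{g} X) = \mathcal S(U)$: for any sheaf $\mathcal S$ on $\C$, a presheaf morphism $P \to \mathcal S$ is the same datum as a presheaf morphism $P' \to \xi^* \mathcal S$, so $\Hom(\pi_n(\s\xi_! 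F), \mathcal S) \cong \Hom(\pi_n F, \xi^*\mathcal S) \cong \Hom(\xi_! \pi_n F, \mathcal S)$, naturally in $\mathcal S$. This gives the desired isomorphism $\pi_n(\s\xi_! F) \cong \xi_!(\pi_n F)$, and by naturality it is compatible with the projection to $(\s\xi_! F)_0 = \xi_! F_0$.

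For the final assertion, let $F \to G$ be a weak equivalence. Condition (WE1) is preserved because $\pi_0(\s\xi_!{-}) \cong \xi_! \pi_0({-})$ and every functor preserves isomorphisms. For (WE2) I would use that, $\xi$ being \'etale, $\xi_!$ is the forgetful functor $\E/X \to \E$, which creates connected limits and in particular preserves pullbacks. Applying $\xi_!$ to the pullback square defining (WE2) for $F \to G$ and inserting the identifications $\pi_n(\s\xi_!{-}) \cong \xi_! \pi_n({-})$ and $(\s\xi_!{-})_0 = \xi_!({-})_0$ produces precisely the (WE2) square for $\s\xi_! F \to \s\xi_! G$. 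Hence $\s\xi_!$ preserves weak equivalences.

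The main obstacle I anticipate is the sheafification step: the decomposition formula is manifest only at the presheaf level, and one must check that it survives passage to associated sheaves. The adjunction-and-Yoneda argument is what makes this clean, essentially because $\xi^*$ preserves sheaves and acts on presheaves by the simple reindexing $(U \xrightarrow{g} X) \mapsto U$. Conceptually, the whole argument reflects the fact that for \'etale $\xi$ the functor $\xi_!$ preserves colimits and connected limits, which are exactly the operations entering Joyal's construction of the homotopy sheaves; the preservation of pullbacks, the only other ingredient, is standard for \'etale geometric morphisms.
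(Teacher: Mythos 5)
Your proof is correct and follows essentially the same route as the paper's: the key step in both is the decomposition $(\s\xi_!F)(U)\cong\bigsqcup_{g\in X(U)}F(U\xrightarrow{g}X)$ over the category of elements, the fact that $\pi_n$ commutes with disjoint unions, and the observation that $\xi_!$ preserves pullbacks to handle (WE2). The only cosmetic difference is that the paper disposes of the sheafification issue upfront by reducing to presheaf toposes (since $\pi_n$ and $\xi_!$ both commute with sheafification), whereas you handle it at the end via the adjunction $\xi_!\dashv\xi^*$ and Yoneda; these amount to the same underlying fact.
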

\begin{proof}
It is enough to construct an isomorphism $\pi_n(\s\xi_!(F)) \simeq \xi_!(\pi_n(F))$ in the case where $\E\simeq\PSh(\C)$ is a presheaf topos, because both $\pi_n$ and $\xi_!$ commute with sheafification.
In this case, $\PSh(\C)/X \simeq \PSh(\int_\C X)$ is again a presheaf topos, with $\int_\C X$ the category of elements of $X$ \cite{nlab:category_of_elements}.
Taking sections over an object $U$ of $\C$ also commutes with $\pi_n$. We find:
\begin{align*}
\pi_n(\s\xi_!(F))(U) &\simeq \pi_n(\s\xi_!(F)(U)) \\
&\simeq \pi_n(\bigsqcup_{x \in X(U)} F(x)) \\
&\simeq \bigsqcup_{x \in X(U)} \pi_n(F(x)) \\
&\simeq \bigsqcup_{x \in X(U)} \pi_n(F)(x) \\
&\simeq (\xi_! (\pi_n(F)))(U).
\end{align*}
We conclude that $\pi_n(\s\xi_!(F)) \simeq \xi_!(\pi_n(F))$. We also have $(\s\xi_!F)_0 \simeq \xi_!(F_0)$ by Proposition \ref{prop:connected-components-pointwise}. The statement that $\s\xi_!$ preserves weak equivalences then follows because $\xi_!$ preserves pullback diagrams.
\end{proof}

\begin{corollary}
If $\rho: \F \to \E$ is \'etale, then $\s\rho_! \dashv \s\rho^*$ is a Quillen adjunction.
\end{corollary}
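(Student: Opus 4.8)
The plan is to verify the Quillen adjunction directly from the left-adjoint criterion recalled in Subsection \ref{ssec:quillen-adjunctions}: since the cofibrations of the injective model structure are the monomorphisms, it suffices to show that the left adjoint $\s\rho_!$ preserves cofibrations and trivial cofibrations, i.e.\ that it preserves monomorphisms and monomorphisms-that-are-weak-equivalences. Because $\rho$ is \'etale, I would first fix an equivalence $\F \simeq \E/X$ under which $\rho$ becomes the canonical projection $\xi : \E/X \to \E$, so that $\xi_!$ is the forgetful functor sending $(A \to X)$ to $A$.

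For the preservation of cofibrations, I would argue as follows. A monomorphism in $\s\E$ is exactly a degreewise monomorphism, since limits in a category of simplicial objects are computed degreewise and being a monomorphism is a finite-limit condition. By the pointwise computation of $\s\xi_!$ (Proposition \ref{prop:connected-components-pointwise}), we have $(\s\xi_! F)_m \simeq \xi_!(F_m)$, so it is enough to check that $\xi_!$ preserves monomorphisms in $\E$. But a monomorphism $(A\to X) \to (B \to X)$ in $\E/X$ is precisely a map $A \to B$ over $X$ that is monic in $\E$, and $\xi_!$ returns this same underlying map; hence $\xi_!$, and therefore $\s\xi_!$, preserves monomorphisms. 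This is exactly the feature that distinguishes the \'etale case from a general locally connected $\rho$, for which $\s\rho_!$ need not preserve monomorphisms.

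For the preservation of trivial cofibrations, the essential input is already available: Proposition \ref{prop:connected-components-etale-preserves-homotopy} shows that $\s\xi_!$ preserves weak equivalences. A trivial cofibration is a map that is simultaneously a monomorphism and a weak equivalence, so I would conclude that $\s\xi_!$ preserves trivial cofibrations simply by combining the two preservation statements just established. Together with the preservation of cofibrations, this gives that $\s\rho_! \dashv \s\rho^*$ is a Quillen adjunction.

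I expect the only genuine subtlety to lie in the preservation of monomorphisms, and more precisely in being careful that the pointwise formula for $\s\xi_!$ and the degreewise characterization of monomorphisms in $\s\E$ are correctly invoked; the preservation of weak equivalences, which is the real homotopical content, has already been carried out in the preceding proposition. In effect this corollary is a short bookkeeping argument that assembles those two preservation properties into the Quillen-adjunction criterion.
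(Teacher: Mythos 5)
Your proof is correct and follows essentially the same route as the paper: the paper's proof likewise just notes that $\s\rho_!$ preserves monomorphisms and invokes Proposition \ref{prop:connected-components-etale-preserves-homotopy} for weak equivalences, concluding that both cofibrations and trivial cofibrations are preserved. You merely spell out the monomorphism-preservation step (via the slice description of \'etale morphisms and the pointwise formula for $\s\xi_!$), which the paper leaves implicit.
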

\begin{proof}
Note that $\s \rho_!$ preserves monomorphisms, and by Proposition \ref{prop:connected-components-etale-preserves-homotopy}
it also preserves weak equivalences. So it preserves both cofibrations and trivial cofibrations.
\end{proof}

\begin{corollary} \label{cor:fibration-is-sectionwise-fibration}
Let $f: X \to Y$ be a fibration in $\s\E$. Then for every $U$ in $\E$, the induced map of simplicial sets $X(U) \to Y(U)$ is again a fibration.
\end{corollary}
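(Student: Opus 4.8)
The plan is to characterise Kan fibrations of simplicial sets by the right lifting property against horn inclusions, transport this lifting problem into $\s\E$ using the simplicial enrichment, and then invoke that $f$ is an injective fibration. First I would recall that the sections functor is $X(U) = \HOM(\gamma_\E^* U, X) = (\s e)_*\bigl(X^{\gamma_\E^* U}\bigr)$, so that for a simplicial set $K$ the enrichment and cartesian-closed adjunctions give a natural bijection
\[
\Hom_{\s\Set}\bigl(K, X(U)\bigr) \;\cong\; \Hom_{\s\E}\bigl((\s e)^* K \times \gamma_\E^* U,\, X\bigr),
\]
obtained by composing $(\s e)^* \dashv (\s e)_*$ with the exponential adjunction in $\s\E$. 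Under this bijection a lifting problem for a horn inclusion $\Lambda^n_k \hookrightarrow \Delta^n$ against $X(U) \to Y(U)$ corresponds exactly to a lifting problem against $f$ for
\[
j_{n,k,U}\colon (\s e)^* \Lambda^n_k \times \gamma_\E^* U \;\hookrightarrow\; (\s e)^* \Delta^n \times \gamma_\E^* U .
\]
Hence $X(U) \to Y(U)$ is a Kan fibration if and only if $f$ has the right lifting property against every $j_{n,k,U}$. Since $f$ is a fibration in the injective model structure, it has the right lifting property against all trivial cofibrations, so it suffices to prove that each $j_{n,k,U}$ is a trivial cofibration, i.e.\ a monomorphism and a weak equivalence.

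That $j_{n,k,U}$ is a monomorphism is immediate: $(\s e)^*$ is left exact, so it preserves the monomorphism $\Lambda^n_k \hookrightarrow \Delta^n$, and taking the product with the fixed object $\gamma_\E^* U$ again preserves monomorphisms. The substantive point is that $j_{n,k,U}$ is a weak equivalence. The key observation is that multiplication by $\gamma_\E^* U$ is the composite $\s\xi_! \s\xi^*$ for the \'etale geometric morphism $\xi\colon \E/U \to \E$. Indeed $\s\xi^* F \cong F \times \gamma_\E^* U$ viewed as an object over $\gamma_\E^* U$, and $\s\xi_!$ forgets the structure map, so $\s\xi_! \s\xi^* F \cong F \times \gamma_\E^* U$ naturally in $F$. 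Now $\s\xi^*$ preserves weak equivalences by Proposition \ref{prop:pullback-preserves-homotopy-groups}, and $\s\xi_!$ preserves weak equivalences by Proposition \ref{prop:connected-components-etale-preserves-homotopy} because $\xi$ is \'etale; therefore the functor $\gamma_\E^* U \times (-)$ preserves weak equivalences.

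It then remains only to note that $(\s e)^* \Lambda^n_k \to (\s e)^* \Delta^n$ is already a weak equivalence: the horn inclusion is a weak equivalence of simplicial sets, and $(\s e)^*$ preserves weak equivalences by Proposition \ref{prop:pullback-preserves-homotopy-groups} applied to $e\colon \E \to \Set$. Applying $\gamma_\E^* U \times (-)$ yields the weak equivalence $j_{n,k,U}$, completing the verification that it is a trivial cofibration. The main obstacle is precisely the identification of $\gamma_\E^* U \times (-)$ with $\s\xi_! \s\xi^*$: this is what channels the weak-equivalence preservation of the two \'etale Quillen-theoretic propositions into the argument, whereas the reduction via the enrichment adjunction and the monomorphism check are formal.
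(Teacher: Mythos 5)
Your proof is correct. It is essentially the adjoint transpose of the paper's argument rather than a new one: the paper observes that the sections map $X(U)\to Y(U)$ is $\s p_*(\s\xi^*(f))$ for $\xi:\E/U\to\E$ \'etale and $p:\E/U\to\Set$ global sections, and concludes because both $\s\xi^*$ and $\s p_*$ are right Quillen; your functor $K\mapsto (\s e)^*K\times\gamma_\E^*U$ is exactly the left adjoint $\s\xi_!\,\s p^*$ of that composite, and your verification that it sends horn inclusions to trivial cofibrations is precisely the lifting-property unpacking of the statement that this left adjoint is left Quillen. Both routes rest on the same two inputs, namely that $\s\xi^*$ preserves weak equivalences (Proposition \ref{prop:pullback-preserves-homotopy-groups}) and that $\s\xi_!$ preserves monomorphisms and weak equivalences for \'etale $\xi$ (Proposition \ref{prop:connected-components-etale-preserves-homotopy}). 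What your version buys is self-containedness --- it makes explicit why the Quillen-adjunction facts suffice, and it only needs the identification of $-\times\gamma_\E^*U$ with $\s\xi_!\s\xi^*$ rather than of $X(U)$ with $\s p_*\s\xi^*X$; what the paper's version buys is brevity, since once the identification $X(U)=\s p_*(\s\xi^*X)$ is made the conclusion is immediate from the already-established Quillen adjunctions. One could shorten your argument slightly by noting that it is enough to check the lifting property against the \emph{generating} trivial cofibrations of $\s\Set$, but since you verify it for all horn inclusions directly this is not a gap.
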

\begin{proof}
Let $\xi: \E/U \to \E$ be the \'etale geometric morphism associated to $U$, and let $p: \E/U \to \Set$ be the global sections geometric morphism. Then the induced map $X(U) \to Y(U)$ is precisely $\s p_*(\s\xi^*(f))$, and both $\s p_*$ and $\s \xi^*$ preserve fibrations.
\end{proof}


\section{Localization, descent and orthogonality}

\subsection{Localization}

Following Rezk \cite{rezk}, we note that for a Grothendieck topos $\E$ over $\Set$, the topos $\s\E$ with the injective model structure is a simplicial model category, via the enrichment in simplicial sets as in Subsection \ref{ssec:simplicial-objects}.

\begin{definition} \label{def:derived-mapping-space}
Let $X$ and $Y$ be two objects in a simplicial model category. We define the \textbf{derived mapping space} $R\HOM(X,Y)$ as
\begin{equation*}
R\HOM(X,Y) = \HOM(X^c,Y^f)
\end{equation*}
with $X^c$ a cofibrant replacement of $X$ and $Y^f$ a fibrant replacement of $Y$.
\end{definition}

The derived mapping space is a simplicial set defined up to weak equivalence. 

\begin{definition}[{\cite[Definition 3.1.4]{hirschhorn}}]
Let $\mathcal{C}$ be a simplicial model category and $S$ a class of morphisms in $\mathcal{C}$. Then we say that an object $X$ in $\mathcal{C}$ is \textbf{$S$-local} if it is fibrant and if for every morphism $A \to B$ in $S$ the induced map $R\HOM(B,X) \to R\HOM(A,X)$ is a weak equivalence. A map $f: A \to B$ is said to be an \textbf{$S$-local equivalence} if the induced map $R\HOM(B,X) \to R\HOM(A,X)$ is a weak equivalence for every $S$-local object $X$.
\end{definition}

\begin{remark}
Let $\E$ be a Grothendieck topos and $X$ a fibrant object in $\s\E$. As is standard in this article, we consider $\s\E$ with the injective model structure. Because every object in $\s\E$ is cofibrant, the condition for $X$ to be $S$-local simplifies to the condition that the induced map
\begin{equation*}
\HOM(B,X) \longrightarrow \HOM(A,X)
\end{equation*}
is a weak equivalence, for every morphism $A \to B$ in $S$.
\end{remark}

\begin{definition}
Let $\mathbf{M}$ be a simplicial model category and let $S$ be a class of morphisms in $\mathbf{M}$. A \textbf{left Bousfield localization} $\mathbf{M}_S$ of $\mathbf{M}$ with respect to $S$ is a model category with the following properties:
\begin{itemize}
\item the underlying category of $\mathbf{M}_S$ is the same as that of $\mathbf{M}$;
\item the cofibrations in $\mathbf{M}_S$ are precisely the cofibrations in $\mathbf{M}$;
\item the weak equivalences in $\mathbf{M}_S$ are precisely the $S$-local equivalences in $\mathbf{M}$.
\end{itemize}
\end{definition}

If $\mathbf{M}$ is left proper \cite[Definition 13.1.1(1)]{hirschhorn}, then the fibrant objects in $\mathbf{M}_S$ are precisely the $S$-local objects in $\mathbf{M}$ \cite[Proposition 3.4.1(1)]{hirschhorn}.

A model category is said to be combinatorial if it is locally presentable as a category and cofibrantly generated, see \cite[\S 2]{dugger}. 

\begin{theorem}[{See \cite[Theorem 7.1]{nlab:left-bousfield}}]
If $S$ is a set and $\mathbf{M}$ is a left proper combinatorial simplicial model category, then the left Bousfield localization $\mathbf{M}_S$ exists and is again a left proper combinatorial simplicial model category.
\end{theorem}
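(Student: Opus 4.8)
The plan is to invoke Jeff Smith's recognition theorem for combinatorial model categories, which manufactures a cofibrantly generated model structure from a prescribed class of weak equivalences and a set of generating cofibrations, provided certain closure and accessibility conditions hold. I would keep the generating cofibrations fixed: let $I$ be a set of generating cofibrations for $\mathbf{M}$, so that $\mathrm{cof}(I)$ is the class of all cofibrations of $\mathbf{M}$, and take as prospective weak equivalences the class $W$ of $S$-local equivalences. Smith's theorem then asks me to verify four things: (i) $W$ is closed under retracts and satisfies the two-out-of-three property; (ii) every map with the right lifting property against $I$ lies in $W$; (iii) the intersection $W \cap \mathrm{cof}(I)$ is closed under pushout and transfinite composition; and (iv) $W$ is an accessible subcategory of the arrow category of $\mathbf{M}$. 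Granting these, the theorem produces the model structure $\mathbf{M}_S$ with cofibrations those of $\mathbf{M}$ and weak equivalences $W$, and it is automatically combinatorial.

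Conditions (i) and (ii) are the routine ones. For (i), a map $f$ lies in $W$ exactly when $R\HOM(f,X)$ is a weak equivalence of simplicial sets for every $S$-local $X$; since weak equivalences of simplicial sets satisfy two-out-of-three and are closed under retracts, the same follows for $W$. For (ii), the maps with the right lifting property against $I$ are precisely the trivial fibrations of $\mathbf{M}$, hence weak equivalences of $\mathbf{M}$, and every weak equivalence of $\mathbf{M}$ is an $S$-local equivalence because $R\HOM(-,X)$ is homotopy invariant. In particular $\mathbf{M}$ and $\mathbf{M}_S$ share the same cofibrations while the class of weak equivalences only grows, as one expects of a left Bousfield localization.

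The substance lies in (iii) and (iv). For (iv), I would show that the $S$-local equivalences form an accessible class, and this is the step that genuinely consumes the hypotheses that $\mathbf{M}$ is combinatorial and that $S$ is a \emph{set}: the $S$-local objects constitute an accessible, accessibly embedded full subcategory of $\mathbf{M}$ (they are the objects orthogonal, in the homotopical sense, to the set $S$), and from this one deduces, via the machinery of locally presentable categories, that $W$ is accessible and stable under filtered colimits in the arrow category. For (iii), I would use left properness directly: the trivial cofibrations of the new structure are the cofibrations that are $S$-local equivalences, and left properness of $\mathbf{M}$ is exactly what guarantees that $S$-local equivalences are preserved under pushout along cofibrations, after which closure under transfinite composition follows from the compatibility of $R\HOM$ with the relevant colimits. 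I expect (iv) to be the main obstacle, since it rests on the full strength of the theory of accessible categories and is where the set-smallness of $S$ is indispensable; (iii) is the place where left properness is used up.

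Finally, I would record that $\mathbf{M}_S$ inherits the remaining structure. It is combinatorial by construction. It remains left proper because a pushout of an $S$-local equivalence along a cofibration is again an $S$-local equivalence, which reduces to left properness of $\mathbf{M}$ together with the characterization of $W$. And it remains a simplicial model category: the pushout-product axiom for a cofibration and a trivial cofibration of $\mathbf{M}_S$ is verified by testing against $S$-local fibrant objects and using that such objects are in particular fibrant in $\mathbf{M}$, so that the simplicial enrichment of $\mathbf{M}$ descends correctly to the localized structure.
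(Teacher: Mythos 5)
The paper does not prove this statement at all: it is imported as background, with the proof deferred to the cited reference, whose argument is precisely the one you outline (Smith's recognition theorem applied with the cofibrations of $\mathbf{M}$ and the $S$-local equivalences, accessibility of the class of $S$-local equivalences coming from $S$ being a set and $\mathbf{M}$ combinatorial, and left properness supplying closure of $\mathrm{cof}(I)\cap W$ under cobase change). Your sketch is a correct outline of that standard proof, so it agrees with the paper's (cited) approach.
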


\begin{example} 
Fix a Grothendieck topos $\E$. We consider the model category $\s\E$ (with the injective model structure). This is the model category of simplicial sheaves as originally introduced by Joyal in \cite{joyal-letter-grothendieck}. This model category is left proper because all objects are cofibrant, see \cite[Corollary 13.1.3(1)]{hirschhorn}. It is also cofibrantly generated, see \cite[Theorem 2.8, Example 3.1]{beke-model}. Finally, because $\s\E$ is a topos, it is also locally presentable as a category. So we see that $\s\E$ is a left proper combinatorial simplicial model category.

By the above theorem, it follows that the left Bousfield localization of $\s\E$ with respect to a set of morphisms $S$ exists and that it is again a left proper combinatorial simplicial model category.
\end{example}

\subsection{Localizations versus Quillen reflections}
\begin{lemma}
Let $\mathbf{M}$ be a simplicial model category and let $\mathbf{M}_S$ be a left Bousfield localization with respect to some class of morphisms $S$. Then the adjunction
\begin{equation*}
\begin{tikzcd}
\mathbf{M}_S \ar[r,bend right] & \mathbf{M} \ar[l,bend right]
\end{tikzcd}
\end{equation*}
consisting of identity functors, is a Quillen reflection.
\end{lemma}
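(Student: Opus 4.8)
The plan is to first identify the two identity functors with a left adjoint $L \colon \mathbf{M} \to \mathbf{M}_S$ and a right adjoint $R \colon \mathbf{M}_S \to \mathbf{M}$, and then to check that $L \dashv R$ is indeed a Quillen adjunction before verifying the reflection condition. Since $\mathbf{M}$ and $\mathbf{M}_S$ have the same underlying category and the same cofibrations, $L$ preserves cofibrations on the nose. For trivial cofibrations I would isolate the one load-bearing input, namely the containment of classes of weak equivalences: every weak equivalence in $\mathbf{M}$ is an $S$-local equivalence. This follows because the derived mapping space $R\HOM(-, X)$ is homotopy invariant, so it sends an $\mathbf{M}$-weak equivalence to a weak equivalence of simplicial sets for every fibrant $X$, in particular for every $S$-local $X$. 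Consequently a trivial cofibration of $\mathbf{M}$ is again a cofibration which is a weak equivalence in $\mathbf{M}_S$, so $L$ is left Quillen and $L \dashv R$ is a Quillen adjunction.

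It then remains to verify the defining property of a Quillen reflection: for every fibrant object $Y$ of $\mathbf{M}_S$, the derived counit $L(R(Y)^c) \to Y$ is a weak equivalence in $\mathbf{M}_S$. Here the computation collapses because both functors are the identity on underlying objects. Concretely, $R(Y) = Y$ as an object of $\mathbf{M}$; a cofibrant replacement $q \colon R(Y)^c \to R(Y)$ in $\mathbf{M}$ produces a cofibrant object (the cofibrant objects of $\mathbf{M}$ and $\mathbf{M}_S$ coincide); and the underlying counit of $L \dashv R$ at $Y$ is the identity, since $LR = \mathrm{id}$. Hence the derived counit is simply $L(q) = q \colon Y^c \to Y$, regarded as a morphism of $\mathbf{M}_S$. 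Note that no fibrant replacement of $Y$ intervenes, precisely because $Y$ is assumed fibrant in $\mathbf{M}_S$.

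To finish I would invoke the weak-equivalence containment once more: the map $q$ is a weak equivalence in $\mathbf{M}$ by construction, hence an $S$-local equivalence, i.e.\ a weak equivalence in $\mathbf{M}_S$. This shows that the derived counit is a weak equivalence and completes the proof. The argument is short, and the main obstacle is not any hard estimate but purely the bookkeeping: keeping straight which identity functor is the left and which is the right adjoint, and making sure the derived counit genuinely reduces to the cofibrant replacement map of $\mathbf{M}$ rather than something computed against the localized weak equivalences.
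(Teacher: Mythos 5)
Your proposal is correct and follows essentially the same route as the paper: both reduce the reflection condition to the observation that the derived counit at a fibrant $Y$ is just the cofibrant replacement $Y^c \to Y$ taken in $\mathbf{M}$, and both conclude via the fact that every weak equivalence of $\mathbf{M}$ is an $S$-local equivalence. The paper leaves the Quillen-adjunction check implicit, whereas you spell it out, but the load-bearing step is identical.
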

\begin{proof}
We need to show that the cofibrant replacement $Y^c \to Y$ of $Y$ in $\mathbf{M}$ is an $S$-local equivalence. This is the case because any weak equivalence is an $S$-local equivalence \cite[Proposition 3.1.5]{hirschhorn}.
\end{proof}

\begin{lemma} \label{lmm:when-is-quillen-reflection-a-localization}
Let $\mathbf{M}$ be a model category and let $\mathbf{M}_S$ be a left Bousfield localization with respect to some class of morphisms $S$. Then a Quillen reflection
\begin{equation*}
\begin{tikzcd}
\mathbf{M}' \ar[r,bend right,"{R}"'] & \ar[l,bend right,"{L}"'] \mathbf{M}
\end{tikzcd}
\end{equation*}
establishes a Quillen equivalence between $\mathbf{M}'$ and $\mathbf{M}_S$ if and only if
\begin{enumerate}
\item for each map $f$ in $\mathbf{M}$ that is a cofibration and $S$-local equivalence, $Lf$ is a weak equivalence; and
\item for each cofibrant object $X$ in $\mathbf{M}$, the map $X \to R(L(X)^f)$ is an $S$-local equivalence.
\end{enumerate}
\end{lemma}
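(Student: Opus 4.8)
The plan is to reinterpret the given adjunction $L \dashv R$, with left adjoint $L\colon \mathbf{M}\to\mathbf{M}'$, as an adjunction between $\mathbf{M}_S$ and $\mathbf{M}'$, and then to match the defining conditions of a Quillen equivalence against the two hypotheses. The facts I would use throughout are the standard properties of a left Bousfield localization: $\mathbf{M}_S$ has the same underlying category and the same cofibrations as $\mathbf{M}$, hence also the same trivial fibrations (both classes being the maps with the right lifting property against the common cofibrations), while the weak equivalences of $\mathbf{M}_S$ are exactly the $S$-local equivalences. The weak equivalences, fibrations and fibrant replacements of $\mathbf{M}'$ are of course untouched. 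In particular, a cofibrant replacement computed in $\mathbf{M}$ is simultaneously a cofibrant replacement in $\mathbf{M}_S$.

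First I would check that $L \dashv R$ is a Quillen adjunction between $\mathbf{M}_S$ and $\mathbf{M}'$ precisely when condition (1) holds. Since $\mathbf{M}$ and $\mathbf{M}_S$ share cofibrations and $L$ is left Quillen on $\mathbf{M}$, the functor $L$ already preserves cofibrations out of $\mathbf{M}_S$. The trivial cofibrations of $\mathbf{M}_S$ are exactly the maps that are both cofibrations and $S$-local equivalences, so $L$ preserves them if and only if $Lf$ is a weak equivalence in $\mathbf{M}'$ for every such $f$ --- which is condition (1). Thus (1) is equivalent to $L \dashv R$ being a Quillen adjunction in the localized setting.

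Assuming this, I would match the two remaining conditions defining a Quillen equivalence, namely being both a Quillen co-reflection and a Quillen reflection. The co-reflection condition asks that for each cofibrant $X$ the derived unit $X \to R(L(X)^f)$ be a weak equivalence of $\mathbf{M}_S$, i.e.\ an $S$-local equivalence; because cofibrant objects and $\mathbf{M}'$-fibrant replacements are unchanged, this is verbatim condition (2). The reflection condition, by contrast, is automatic from the hypothesis that $L \dashv R$ is already a Quillen reflection between $\mathbf{M}$ and $\mathbf{M}'$: for fibrant $Y$ in $\mathbf{M}'$, a cofibrant replacement $R(Y)^c$ taken in $\mathbf{M}$ also serves in $\mathbf{M}_S$, so the derived counit $L(R(Y)^c)\to Y$ is literally the same map and is a weak equivalence of $\mathbf{M}'$ by assumption. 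Combining the three equivalences yields both directions: $L \dashv R$ is a Quillen equivalence between $\mathbf{M}_S$ and $\mathbf{M}'$ if and only if it is a Quillen adjunction there (equivalently (1)) and a Quillen co-reflection there (equivalently (2)), the Quillen reflection coming for free.

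The argument is essentially formal, so the one point I would treat as the main obstacle is the bookkeeping: keeping straight which model structure every instance of ``cofibration'', ``(trivial) fibration'', ``weak equivalence'' and ``cofibrant/fibrant replacement'' belongs to, and in particular justifying that cofibrant replacements transport unchanged between $\mathbf{M}$ and $\mathbf{M}_S$. That transport rests on the coincidence of trivial fibrations in $\mathbf{M}$ and $\mathbf{M}_S$, which I would record explicitly at the outset, since it is exactly what lets the reflection condition pass over without extra hypotheses.
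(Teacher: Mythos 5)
Your proposal is correct and follows essentially the same route as the paper: factor the adjunction through $\mathbf{M}_S$, observe that condition (1) is exactly preservation of the trivial cofibrations of $\mathbf{M}_S$ (hence the Quillen adjunction condition), that condition (2) is verbatim the co-reflection condition, and that the reflection condition is inherited for free because cofibrant replacements in $\mathbf{M}$ transport unchanged to $\mathbf{M}_S$. The paper justifies that last transport via the identity adjunction $\mathbf{M}_S \rightleftarrows \mathbf{M}$ being a Quillen reflection rather than via the coincidence of trivial fibrations, but the content is the same.
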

\begin{proof}
The ``only if'' direction follows from the fact that in a Quillen equivalence the left adjoint preserves trivial cofibrations and the counit is a weak equivalence.

For the ``if'' direction, suppose that the two assumptions hold. Consider the following adjunctions.
\begin{equation} \label{eq:adjunction-factorization}
\begin{tikzcd}
\mathbf{M}' \ar[r,bend right,"{R}"'] & \ar[l,bend right,"{L}"'] \mathbf{M}_S \ar[r,bend right,"{\mathrm{id}}"'] & \mathbf{M} \ar[l,bend right,"{\mathrm{id}}"']
\end{tikzcd}
\end{equation}
We know that $L$ preserves cofibrations here and assumption (1) states that it also preserves trivial cofibrations. So $L \dashv R$ is a Quillen adjunction between $\mathbf{M}'$ and $\mathbf{M}_S$. Assumption (2) states that it is a Quillen co-reflection. It remains to show that it is also a Quillen reflection. Because the composed adjunction in \eqref{eq:adjunction-factorization} is a Quillen reflection, we have that $L(R(X)^c) \to X$ is an $S$-local equivalence, for $X$ a fibrant object in $\mathbf{M}'$ and $R(X)^c \to R(X)$ a cofibrant replacement in $\mathbf{M}$. And because the right adjunction in \eqref{eq:adjunction-factorization} is a Quillen reflection, this cofibrant replacement $R(X)^c \to R(X)$ is also a cofibrant replacement in $\mathbf{M}_S$. So the left adjunction in \eqref{eq:adjunction-factorization} is a Quillen reflection as well.
\end{proof}

\subsection{Composition of localizations}

As an application of Lemma \ref{lmm:when-is-quillen-reflection-a-localization}, we can take a look at what happens for a composition of two left Bousfield localizations.

\begin{proposition} \label{prop:composition-of-left-Bousfield-localizations}
For $\mathbf{M}$ a left proper model category and $S$ and $S'$ two families of morphisms in $\mathbf{M}$, we find that $(\mathbf{M}_S)_{S'}$ and $\mathbf{M}_{S \cup S'}$ are Quillen equivalent, if both exist.
\end{proposition}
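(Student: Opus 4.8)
The plan is to apply Lemma \ref{lmm:when-is-quillen-reflection-a-localization} with base model category $\mathbf{M}$, letting the role of ``$\mathbf{M}_S$'' there be played by $\mathbf{M}_{S\cup S'}$ and the role of ``$\mathbf{M}'$'' by $(\mathbf{M}_S)_{S'}$. All four model categories $\mathbf{M}$, $\mathbf{M}_S$, $(\mathbf{M}_S)_{S'}$ and $\mathbf{M}_{S\cup S'}$ share the same underlying category and the same cofibrations, so every comparison functor in sight is an identity functor. Writing $L : \mathbf{M} \to (\mathbf{M}_S)_{S'}$ and $R : (\mathbf{M}_S)_{S'} \to \mathbf{M}$ for the two identity functors, I would first check that $L \dashv R$ is a Quillen reflection. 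Concretely, for a fibrant object $Y$ of $(\mathbf{M}_S)_{S'}$ the derived counit is the cofibrant replacement $Y^c \to Y$ taken in $\mathbf{M}$, which is a weak equivalence in $\mathbf{M}$ and hence, since every weak equivalence is a local equivalence, a weak equivalence in $(\mathbf{M}_S)_{S'}$. (Equivalently, $L \dashv R$ is the composite of the two identity Quillen reflections $\mathbf{M}\to\mathbf{M}_S$ and $\mathbf{M}_S\to(\mathbf{M}_S)_{S'}$.)

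The heart of the argument is the following identification of local objects, which I would prove first: an object $Z$ is $S'$-local in $\mathbf{M}_S$ if and only if it is $(S\cup S')$-local in $\mathbf{M}$. Since $\mathbf{M}$ is left proper, so is $\mathbf{M}_S$ (left Bousfield localization preserves left properness, \cite{hirschhorn}), and on both sides the fibrant objects coincide with the local objects. The key observation is that whenever $Z$ is $S$-local in $\mathbf{M}$, the derived mapping spaces agree, $R\HOM_{\mathbf{M}_S}(A,Z) \simeq R\HOM_{\mathbf{M}}(A,Z)$ for every $A$: cofibrant replacements coincide in $\mathbf{M}$ and $\mathbf{M}_S$ (the trivial fibrations agree, as the cofibrations do), and $Z$, being $S$-local, is fibrant in both $\mathbf{M}$ and $\mathbf{M}_S$, so both sides equal $\HOM(A^c,Z)$. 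Granting this, ``$Z$ is $S'$-local in $\mathbf{M}_S$'' unwinds to ``$Z$ is $S$-local in $\mathbf{M}$ and $R\HOM_{\mathbf{M}}(B,Z)\to R\HOM_{\mathbf{M}}(A,Z)$ is a weak equivalence for every $A\to B$ in $S'$'', which is exactly the condition for $Z$ to be $(S\cup S')$-local in $\mathbf{M}$. The same comparison shows that the $S'$-local equivalences in $\mathbf{M}_S$ coincide with the $(S\cup S')$-local equivalences in $\mathbf{M}$.

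With this in hand I would verify the two conditions of Lemma \ref{lmm:when-is-quillen-reflection-a-localization}, taking ``$S$'' there to be $S\cup S'$. For condition (1), a map $f$ that is a cofibration and an $(S\cup S')$-local equivalence in $\mathbf{M}$ is, by the coincidence of local equivalences just established, an $S'$-local equivalence in $\mathbf{M}_S$, i.e.\ $Lf$ is a weak equivalence in $(\mathbf{M}_S)_{S'}$. For condition (2), given a cofibrant $X$ in $\mathbf{M}$, the map $X \to R(L(X)^f)$ is the fibrant replacement $X \to X^f$ computed in $(\mathbf{M}_S)_{S'}$, hence a weak equivalence there, that is, an $S'$-local equivalence in $\mathbf{M}_S$, and therefore again an $(S\cup S')$-local equivalence in $\mathbf{M}$. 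Both conditions hold, so Lemma \ref{lmm:when-is-quillen-reflection-a-localization} yields the desired Quillen equivalence between $(\mathbf{M}_S)_{S'}$ and $\mathbf{M}_{S\cup S'}$.

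I expect the main obstacle to be the local-object identification, and specifically the verification that $R\HOM(A,Z)$ is computed identically in $\mathbf{M}$ and in $\mathbf{M}_S$ when the target $Z$ is $S$-local; this is where one must be careful about which replacements each model structure uses, the point being that cofibrations (hence cofibrant replacements and trivial fibrations) are unchanged under localization and that $S$-local objects are fibrant in both. Once this comparison is secured, everything else is formal. I would also note in passing that the argument actually proves more: since the weak equivalences of $(\mathbf{M}_S)_{S'}$ and of $\mathbf{M}_{S\cup S'}$ both coincide with the $(S\cup S')$-local equivalences in $\mathbf{M}$, and the cofibrations already agree, the two model structures are in fact identical, of which the stated Quillen equivalence is a special case.
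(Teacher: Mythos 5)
Your proof is correct and follows essentially the same route as the paper's: both apply Lemma \ref{lmm:when-is-quillen-reflection-a-localization} after identifying the $S'$-local objects (and hence local equivalences) of $\mathbf{M}_S$ with the $(S\cup S')$-local ones of $\mathbf{M}$, using left properness to equate fibrant and local objects. Your explicit check that $R\HOM(A,Z)$ is computed the same way in $\mathbf{M}$ and $\mathbf{M}_S$ for $S$-local $Z$ fills in a step the paper leaves implicit, and your closing remark that the two model structures actually coincide is the same strengthening the paper establishes in item (1) of its proof.
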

\begin{proof}
Consider the Quillen reflections
\begin{equation*}
\begin{tikzcd}
(\mathbf{M}_S)_{S'} \ar[r, bend right, "{R'}"']
& \mathbf{M}_S \ar[r, bend right,"{R}"'] \ar[l, bend right, "{L'}"']
& \mathbf{M} \ar[l, bend right,"{L}"']
\end{tikzcd}.
\end{equation*}

Taking into account Lemma \ref{lmm:when-is-quillen-reflection-a-localization}, it is enough to prove the following two statements:
\begin{enumerate}
\item for a map $f$ in $\mathbf{M}$ that is a cofibration and $(S\cup S')$-local equivalence, $L'(L(f))$ is a weak equivalence; and
\item for each cofibrant object $X$ in $\mathbf{M}$, the map $X \to R(R'(L'(L(X))^f))$ is an $(S\cup S')$-local equivalence.
\end{enumerate}

For the first item, we prove the stronger statement that the weak equivalences in $(\mathbf{M}_S)_{S'}$ are precisely the $(S \cup S')$-local equivalences in $\mathbf{M}$. The weak equivalences in $(\mathbf{M}_S)_{S'}$ are the morphisms $f$ such that $R\HOM(f,X)$ is a weak equivalence for every object $X$ that is $S'$-fibrant in $\mathbf{M}_S$. Because $\mathbf{M}$ is left proper, we find that $X$ is $S'$-fibrant in $\mathbf{M}_S$ if and only if it is both $S$-fibrant and $S'$-fibrant in $\mathbf{M}$. This is the case if and only if $X$ is $(S \cup S')$-fibrant in $\mathbf{M}$, which is what we wanted to prove.

Now that we know that the $(S\cup S')$-local equivalences in $\mathbf{M}$ are precisely the weak equivalences in $(\mathbf{M}_S)_{S'}$, the second statement also becomes easier. The given map, as a map in $(\mathbf{M}_S)_{S'}$, is precisely a fibrant resolution of $X$ in $(\mathbf{M}_S)_{S'}$ (keeping in mind that all functors are the identity on objects and morphisms). Such a fibrant resolution is a weak equivalence by definition.
\end{proof}

\subsection{Localizations in practice}

Through some examples we show how left Bousfield localizations are used in practice.

\begin{example} \label{eg:localization} \ 
\begin{enumerate}
\item Let $\mathbf{M}$ be a simplicial model category and let $S$ be the class of weak equivalences in $\mathbf{M}$. From \cite[Corollary 9.3.3(2)]{hirschhorn} we know that $R\HOM(-,X)$ preserves weak equivalences, for $X$ fibrant. So the $S$-local objects are precisely the fibrant objects. From \cite[Proposition 9.7.1(1)]{hirschhorn} it then follows that the $S$-local equivalences are the weak equivalences.
\item \label{eg:reflection-as-localization} Consider a Quillen reflection between simplicial model categories
\begin{equation*}
\begin{tikzcd}
\mathbf{M}' \ar[r, bend right, "{R}"'] & \mathbf{M} \ar[l, bend right, "{L}"']
\end{tikzcd}
\end{equation*}
and let $S$ be the class of morphisms $\phi$ in $\mathbf{M}$ such that $L(\phi^c)$ is a weak equivalence.
For a fibrant object $Y$ in $\mathbf{M}'$ it then follows that $R(Y)$ is $S$-local, using the adjunction and \cite[Corollary 9.3.3(2)]{hirschhorn}. We then conclude that each $S$-local equivalence $\phi$ has the property that $L(\phi^c)$ is a weak equivalence using the adjunction and \cite[Proposition 9.7.1(1)]{hirschhorn}. Conversely, all morphisms in $S$ are $S$-local equivalences, so we see that the $S$-local equivalences are exactly the morphisms $\phi$ with $L(\phi^c)$ a weak equivalence. This can be used to show the two properties in Lemma \ref{lmm:when-is-quillen-reflection-a-localization}, and in this way conclude that $\mathbf{M}'$ is Quillen equivalent to $\mathbf{M}_S$ (assuming $\mathbf{M}_S$ exists).
\item As a special case of the above, we can consider the Quillen reflection
\begin{equation*}
\begin{tikzcd}
\s\Sh(\C,J) \ar[r, bend right, "{R}"'] & \s\PSh(\C) \ar[l, bend right, "{L}"']
\end{tikzcd}
\end{equation*}
with $R$ the inclusion and $L$ the sheafification, and with both categories equipped with the injective model structure. Note that in these model categories all objects are cofibrant. If we now take $S$ to be the class of morphisms $\phi$ in $\s\PSh(\C)$ such that $L(\phi^c)=L(\phi)$ is a weak equivalence, as in the example above, then we conclude that $\s\Sh(\C,J)$ is Quillen equivalent to the left Bousfield localization of $\s\PSh(\C)$ with respect to $S$. This is a well-known result that we can in this way recover. Originally, Joyal in \cite{joyal-letter-grothendieck} worked with the model category $\s\Sh(\C,J)$. As an equivalent approach, Jardine later worked with the left Bousfield localization $\s\PSh(\C)_S$. By definition of the left Bousfield localization, the fibrant objects in $\s\PSh(\C)_S$ are the objects that are fibrant in $\s\PSh(\C)$ and in addition satisfy descent with respect to the morphisms in $S$.
\item Now let $i: \E' \hookrightarrow \E$ be an inclusion, with $\E$ and $\E'$ Grothendieck toposes. We then similarly find that $\s\E'$ is Quillen equivalent to a left Bousfield localization of $\s\E$. More precisely, it is the left Bousfield localization with respect to the morphisms $\phi$ in $\s\E$ such that $(\s i)^*(\phi)$ is a weak equivalence.
\end{enumerate}
\end{example}

\subsection{Descent}

The following definition is inspired by \cite[\S 7]{jardine-lectures}.

\begin{definition} \label{def:descent}
Consider a Quillen reflection
\begin{equation*}
\begin{tikzcd}
\mathbf{M}' \ar[r, bend right,"{R}"'] & \mathbf{M} \ar[l, bend right,"{L}"'] \\
\end{tikzcd}.
\end{equation*}
For a cofibrant object $X$ of $\mathbf{M}$, we say that $X$ satisfies \textbf{descent} with respect to $L \dashv R$ if and only if the derived adjunction unit $X \to R((LX)^f)$ is a weak equivalence.
\end{definition}

Note that in $\s\E$, all objects are cofibrant. But for the moment, we want a definition that makes sense for more general model categories as well.

\begin{lemma} \label{lmm:satisfies-descent-if-in-image}
For a Quillen reflection $L \dashv R$ as above, a cofibrant object $X$ of $\mathbf{M}$ satisfies descent if and only if it is weakly equivalent to $R(Y)$ for some fibrant object $Y$ of $\mathbf{M}'$.
\end{lemma}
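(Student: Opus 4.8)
The statement to prove is Lemma~\ref{lmm:satisfies-descent-if-in-image}: for a Quillen reflection $L \dashv R$, a cofibrant object $X$ satisfies descent if and only if it is weakly equivalent to $R(Y)$ for some fibrant object $Y$ of $\mathbf{M}'$. One direction is essentially the definition unwound, so the strategy is to handle each implication separately, with the ``if'' direction being the one that requires an actual argument.

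\begin{proof}
Suppose first that $X$ satisfies descent. By Definition~\ref{def:descent}, this means the derived adjunction unit $X \to R((LX)^f)$ is a weak equivalence. Taking $Y = (LX)^f$, which is a fibrant object of $\mathbf{M}'$ by construction, we see immediately that $X$ is weakly equivalent to $R(Y)$, as desired.

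Conversely, suppose that $X$ is weakly equivalent to $R(Y)$ for some fibrant object $Y$ of $\mathbf{M}'$. We must show that the derived unit $X \to R((LX)^f)$ is a weak equivalence. The idea is to compare the derived unit of $X$ with that of $R(Y)$ using naturality, and then to observe that $R(Y)$ satisfies descent because $Y$ is fibrant. For the latter point, the derived counit $L((RY)^c) \to Y$ is a weak equivalence since $L \dashv R$ is a Quillen reflection and $Y$ is fibrant; applying $R$ (which preserves weak equivalences between fibrant objects) to a fibrant replacement and using the triangle identities shows that the derived unit $R(Y) \to R((L(R(Y)))^f)$ is a weak equivalence, i.e.\ $R(Y)$ satisfies descent. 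Now let $w: X \to R(Y)$ be the given weak equivalence (after replacing a zig-zag by a single map, using that fibrant replacement and the homotopy category allow us to represent the weak equivalence appropriately). Since $L$ preserves weak equivalences between cofibrant objects (all objects here are cofibrant, or we pass to cofibrant replacements), $Lw$ is a weak equivalence, hence so is $(Lw)^f$ after fibrant replacement, and $R$ applied to this is again a weak equivalence as it is a map between fibrant objects. We then obtain a commuting square
\begin{equation*}
\begin{tikzcd}
X \ar[r] \ar[d,"{w}"'] & R((LX)^f) \ar[d] \\
R(Y) \ar[r] & R((L(R(Y)))^f)
\end{tikzcd}
\end{equation*}
in which the bottom map and both vertical maps are weak equivalences; by the two-out-of-three property, the top map $X \to R((LX)^f)$ is a weak equivalence, so $X$ satisfies descent.
\end{proof}

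I expect the main obstacle to be the bookkeeping in the converse direction: the phrase ``weakly equivalent'' a priori means connected by a zig-zag of weak equivalences, so one must be careful that the naturality square can be set up with genuine maps rather than spans. The cleanest way around this is to work in the homotopy category, where $w$ becomes an isomorphism, or to invoke Remark~\ref{rem:all-objects-cofibrant} so that weak equivalences between fibrant objects upgrade to homotopy equivalences admitting honest inverses. The remaining ingredients---that $L$ preserves weak equivalences between cofibrant objects, that $R$ preserves weak equivalences between fibrant objects, and that $R(Y)$ satisfies descent whenever $Y$ is fibrant---all follow formally from the Quillen reflection hypothesis and the two-out-of-three property, so no delicate estimates are needed.
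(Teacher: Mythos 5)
Your proof is correct, and it reorganizes the argument compared to the paper's. The paper's ``if'' direction never forms a naturality square: it takes the honest weak equivalence $g\circ p : X \to R(Y)^c$ (obtained from the homotopy equivalence between the cofibrant--fibrant objects $X^f$ and $R(Y)^c$) and observes that $\epsilon_Y \circ L(g\circ p)$ exhibits $Y$ itself as a fibrant replacement of $L(X)$, so that the derived unit of $X$ is literally the composite $g\circ p$ followed by $R(Y)^c \to R(Y)$, visibly a weak equivalence. You instead factor the argument into two reusable pieces --- (a) any $R(Y)$ with $Y$ fibrant satisfies descent, via the derived counit and the triangle identity, and (b) descent is invariant under weak equivalence, via Ken Brown's lemma, a lifted naturality square, and two-out-of-three --- which is more modular and makes the role of the Quillen-reflection hypothesis more visible, at the cost of more bookkeeping. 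The one point to tighten is cofibrancy of $R(Y)$: Definition \ref{def:descent} only speaks of cofibrant objects, and both $L w$ being a weak equivalence and the descent statement for $R(Y)$ require a cofibrant replacement $R(Y)^c$ in a general model category (which stays fibrant, since the trivial fibration $R(Y)^c \to R(Y)$ has fibrant target). You flag this parenthetically but do not carry it through the square; the paper sidesteps it by working with $R(Y)^c$ from the outset. In the intended application to $\s\E$ all objects are cofibrant (Remark \ref{rem:all-objects-cofibrant}), so nothing is lost there, but for the lemma as stated the replacement should be inserted explicitly.
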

\begin{proof}
The ``only if'' direction follows immediately from the definition.

Now suppose that $X$ is weakly equivalent to $R(Y)$. Take the fibrant replacement $p: X \to X^f$. Then $X^f$ and $R(Y)^c$ are weakly equivalent, and since they are both cofibrant--fibrant, this means that they are homotopy equivalent \cite[Theorem 1.2.10]{hovey}. Take a homotopy equivalence $g: X^f \to R(Y)^c$. Now the map $\epsilon_Y \circ L(g\circ p)$ establishes $Y$ as fibrant replacement of $L(X)$, with $\epsilon_Y : L(R(Y)^c)\to Y$ the derived counit, which is a weak equivalence. We have to show that its companion map $h: X \to R(Y)$ is a weak equivalence. Note however that $h = gp$, and both $g$ and $p$ are weak equivalences by definition.
\end{proof}

\subsection{Descent in practice}

The following special cases can be considered:

\begin{example} \label{eg:descent} \ 
\begin{enumerate}
\item If $F \dashv U$ is a Quillen equivalence, then any cofibrant object $X$ in $\mathbf{M}$ satisfies descent with respect to $F \dashv U$.
\item For a subtopos $i: \E' \hookrightarrow \E$, the adjunction
\begin{equation*}
\begin{tikzcd}
\s\E' \ar[r, bend right,"{\s i_*}"'] & \s\E \ar[l, bend right,"{\s i^*}"'] \\
\end{tikzcd}
\end{equation*}
is a Quillen reflection. In this case, we say that an object $X$ in $\s\E$ satisfies descent with respect to $\E' \subseteq \E$ if it satisfies descent with respect to $\s i^* \dashv \s i_*$. By Lemma \ref{lmm:satisfies-descent-if-in-image} this is the case if and only if $X$ is weakly equivalent to $\s i_*(Y)$ for some fibrant object $Y$ of $\s\E'$.
\item In the special case where $\E = \PSh(\C)$ is a presheaf topos, the above example reduces to the concept of descent from \cite[\S 7]{jardine-lectures}. Indeed, in this case there is an alternative model structure on $\s\PSh(\C)$ in which the cofibrations are again the monomorphisms, but the weak equivalences are those maps that become weak equivalences after applying $\s i^*$. If we write $\s\PSh(\C)$ with this alternative model structure as $\s\PSh(\C)^\mathrm{loc}$, then $\s i^* \dashv \s i_*$ defines a Quillen equivalence between $\s\E'$ and $\s\PSh(\C)^\mathrm{loc}$. Composing this Quillen equivalence with $\s i^* \dashv \s i_*$ yields a Quillen reflection
\begin{equation} \label{eq:spsh-spshloc}
\begin{tikzcd}[column sep=large, row sep=large]
\s\PSh(\C)^\mathrm{loc} \ar[r,bend right] & \ar[l,bend right] \s\PSh(\C)
\end{tikzcd}
\end{equation}
consisting of identity functors. Jardine in \cite[\S 7]{jardine-lectures} then says that $X$ satisfies descent if the fibrant replacement $X \to X^f$ in $\s\PSh(\C)^\mathrm{loc}$ is a sectionwise weak equivalence, or in other words a weak equivalence in $\s\PSh(\C)$. This is the case precisely if $X$ satisfies descent with respect to the Quillen reflection \eqref{eq:spsh-spshloc} according to Definition \ref{def:descent}. In this way Definition \ref{def:descent} generalizes Jardine's notion of descent.
\item Let $\mathbf{M}$ be a left proper model category and let $\mathbf{M}_S$ be a left Bousfield localization with respect to some class of morphisms $S$. Then by Lemma \ref{lmm:satisfies-descent-if-in-image}, a cofibrant object $X$ in $\mathbf{M}$ satisfies descent with respect to
\begin{equation*}
\begin{tikzcd}
\mathbf{M}_S \ar[r, bend right] & \mathbf{M} \ar[l, bend right] \\
\end{tikzcd}
\end{equation*}
if and only if $X$ is weakly equivalent to an $S$-local object (or equivalently, if and only if $X^f$ is $S$-local).
\end{enumerate}
\end{example}

\subsection{Composition and descent}

\begin{lemma} \label{lmm:descent-transitive}
Consider two Quillen reflections
\begin{equation*}
\begin{tikzcd}
\mathbf{M}'' \ar[r, bend right, "{R'}"']
& \mathbf{M}' \ar[r, bend right,"{R}"'] \ar[l, bend right, "{L'}"']
& \mathbf{M} \ar[l, bend right,"{L}"']
\end{tikzcd}.
\end{equation*}
Then for a cofibrant object $X$ in $\mathbf{M}$, the following are equivalent:
\begin{enumerate}
\item $X$ satisfies descent with respect to $L'L \dashv RR'$;
\item $X$ satisfies descent with respect to $L \dashv R$, and $L(X)$ satisfies descent with respect to
$L' \dashv R'$.
\end{enumerate}
\end{lemma}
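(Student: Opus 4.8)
The plan is to work primarily through the characterization in Lemma \ref{lmm:satisfies-descent-if-in-image} rather than unwinding Definition \ref{def:descent} by hand: a cofibrant object of the source satisfies descent with respect to a given Quillen reflection exactly when it is weakly equivalent to the right adjoint applied to some fibrant object of the target. I will combine this with two standard facts about a Quillen reflection $L \dashv R$. First, being a right Quillen functor, $R$ preserves weak equivalences between fibrant objects (Ken Brown's lemma). Second, because the derived counit of a Quillen reflection is a weak equivalence, the total right-derived functor of $R$ is fully faithful and hence reflects weak equivalences between fibrant objects; this second fact is the engine of the hard direction. I would also record at the outset that $L(X)$ is cofibrant (as $L$ is left Quillen and $X$ is cofibrant), so that ``$L(X)$ satisfies descent'' is meaningful, and that the composite $L'L \dashv RR'$ is again a Quillen reflection (a composite of fully faithful right-derived adjoints is fully faithful), so that descent with respect to it makes sense.

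For $(2) \Rightarrow (1)$ the argument is short. Descent of $X$ with respect to $L \dashv R$ gives a weak equivalence $X \to R((LX)^f)$, while descent of $L(X)$ with respect to $L' \dashv R'$ gives a fibrant object $Z$ of $\mathbf{M}''$ with $L(X)$ weakly equivalent to $R'(Z)$. Then $(LX)^f$ and $R'(Z)$ are weakly equivalent fibrant objects of $\mathbf{M}'$ (both being weakly equivalent to $L(X)$), so applying $R$ and using Ken Brown yields a weak equivalence $R((LX)^f) \to RR'(Z)$; composing shows $X$ is weakly equivalent to $RR'(Z)$ with $Z$ fibrant, and Lemma \ref{lmm:satisfies-descent-if-in-image} then gives descent with respect to the composite. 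The first half of $(1) \Rightarrow (2)$ is equally quick: if $X$ is weakly equivalent to $RR'(Z)$ with $Z$ fibrant, then $R'(Z)$ is fibrant in $\mathbf{M}'$ and $X$ is weakly equivalent to $R(R'(Z))$, so Lemma \ref{lmm:satisfies-descent-if-in-image} immediately gives descent of $X$ with respect to $L \dashv R$.

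The remaining step, that $(1)$ also forces $L(X)$ to satisfy descent with respect to $L' \dashv R'$, is where the real content lies. Having just established descent of $X$ with respect to $L \dashv R$, I obtain a weak equivalence $X \to R((LX)^f)$; combined with the weak equivalence $X \simeq R(R'(Z))$ coming from $(1)$, this exhibits $R((LX)^f)$ and $R(R'(Z))$ as weakly equivalent objects, each of the form $R$ applied to a fibrant object of $\mathbf{M}'$. At this point I would invoke the reflection property: since the right-derived functor of $R$ is fully faithful, it reflects weak equivalences between fibrant objects, so $(LX)^f$ and $R'(Z)$ are weakly equivalent in $\mathbf{M}'$. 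Hence $L(X)$, being weakly equivalent to $(LX)^f$ and thus to $R'(Z)$ with $Z$ fibrant, satisfies descent with respect to $L' \dashv R'$ by Lemma \ref{lmm:satisfies-descent-if-in-image}. I expect this final reflection argument to be the main obstacle, since it is precisely the place where one must split ``$X$ descends all the way to $\mathbf{M}''$'' into ``$X$ descends to $\mathbf{M}'$'' together with ``$L(X)$ descends to $\mathbf{M}''$'', and it genuinely requires the reflection hypothesis rather than merely the underlying Quillen adjunction.
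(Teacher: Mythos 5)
Your proof is correct and follows essentially the same route as the paper: both directions are run through the image characterization of descent (Lemma \ref{lmm:satisfies-descent-if-in-image}), and your key step --- that full faithfulness of the derived right adjoint lets you conclude $(LX)^f \simeq R'(Z)$ from $R((LX)^f) \simeq R(R'(Z))$ --- is just a repackaging of the paper's use of the derived counit being a weak equivalence to deduce $L(X) \simeq Y$. The only (harmless) imprecision is that a zig-zag of weak equivalences between two fibrant objects is not literally a single weak equivalence to which Ken Brown applies, but passing through the derived functor on homotopy categories, as you implicitly do, handles this.
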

\begin{proof}
We use the characterization of Lemma \ref{lmm:satisfies-descent-if-in-image}. If $X$ satisfies descent with respect to $L'L \dashv RR'$, then $X$ is weakly equivalent to $R(R'(Z))$ for some fibrant object $Z$ in $\mathbf{M}''$. In particular, $X$ is weakly equivalent to $R(Y)$ with $Y = R'(Z)$ fibrant, so $X$ satisfies descent with respect to $L\dashv R$. Because $L\dashv R$ is a Quillen reflection, we know that $L(X)$ is weakly equivalent to $Y$, and because $Y$ satisfies descent with respect to $L'\dashv R'$ the same can be said about $L(X)$.

Conversely, if $X$ satisfies descent with respect to $L \dashv R$, then $X \to R(L(X)^f)$ is a weak equivalence, and if additionally $L(X)$ satisfies descent with respect to $L' \dashv R'$ we get that $L(X) \to R'(L'(L(X))^f)$ is a weak equivalence as well. Combining the two gives a weak equivalence $X \to R(R'(L'(L(X))^f))$, i.e.\ $X$ satisfies descent with respect to $L'L \dashv RR'$.
\end{proof}

\begin{corollary} \label{cor:descent-quillen-equivalence}
In the setting of the previous lemma, if $L'\dashv R'$ is a Quillen equivalence, then $X$ satisfies descent with respect to $L \dashv R$ if and only if it satisfies descent with respect to $L'L \dashv RR'$.
\end{corollary}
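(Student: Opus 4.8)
The plan is to deduce this directly from Lemma \ref{lmm:descent-transitive} together with Example \ref{eg:descent}(1), so there is very little new to do. By Lemma \ref{lmm:descent-transitive}, for the cofibrant object $X$ in $\mathbf{M}$ we know that $X$ satisfies descent with respect to $L'L \dashv RR'$ if and only if both (i) $X$ satisfies descent with respect to $L \dashv R$, and (ii) $L(X)$ satisfies descent with respect to $L' \dashv R'$. The whole point is therefore to show that, under the Quillen-equivalence hypothesis on $L' \dashv R'$, condition (ii) is automatic, so that the biconditional collapses to precisely the statement we want.

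First I would record that $L(X)$ is cofibrant in $\mathbf{M}'$. This holds because $L$ is the left adjoint of a Quillen adjunction: as a left adjoint it preserves the initial object, and by definition of a Quillen adjunction it preserves cofibrations, so it sends the cofibration $\emptyset \to X$ to a cofibration $\emptyset \to L(X)$. Hence $L(X)$ is a cofibrant object of $\mathbf{M}'$, which is the source category of the Quillen reflection $L' \dashv R'$.

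Next, since $L' \dashv R'$ is assumed to be a Quillen equivalence, Example \ref{eg:descent}(1) applies: every cofibrant object of $\mathbf{M}'$ satisfies descent with respect to $L' \dashv R'$. In particular $L(X)$ does, so condition (ii) of Lemma \ref{lmm:descent-transitive} is satisfied unconditionally. Feeding this back into Lemma \ref{lmm:descent-transitive}, the equivalence between its items (1) and (2) reduces to the equivalence between ``$X$ satisfies descent with respect to $L'L \dashv RR'$'' and ``$X$ satisfies descent with respect to $L \dashv R$,'' which is exactly the corollary.

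There is essentially no genuine obstacle here: the substantive transitivity argument has already been carried out in Lemma \ref{lmm:descent-transitive}, and the only thing to verify is the routine fact that a left Quillen functor preserves cofibrancy, so that Example \ref{eg:descent}(1) can be invoked for $L(X)$. If one wanted to avoid even that remark, one could instead observe that cofibrant replacement is unnecessary in the relevant model categories of interest (where all objects are cofibrant), but stating the preservation of cofibrancy keeps the argument valid at the level of generality of the lemma.
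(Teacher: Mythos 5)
Your proof is correct and is precisely the argument the paper intends: the corollary is stated without proof as an immediate consequence of Lemma \ref{lmm:descent-transitive}, with condition (ii) of that lemma discharged by Example \ref{eg:descent}(1) (which is just the Quillen co-reflection half of the Quillen equivalence applied to the cofibrant object $L(X)$). Your remark that $L(X)$ is cofibrant because $L$ is a left Quillen functor is a worthwhile detail that the paper leaves implicit.
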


\subsection{Relation between localization and descent}

The notions of localization and descent are closely related to each other.

\begin{proposition} \label{prop:descent-for-sheaves}
Consider a Quillen reflection
\begin{equation*}
\begin{tikzcd}
\mathbf{M}' \ar[r, bend right, "{R}"'] & \mathbf{M} \ar[l, bend right, "{L}"']
\end{tikzcd}
\end{equation*}
with $\mathbf{M}$ and $\mathbf{M}'$ simplicial model categories. Let $S$ be the class of morphisms $\phi$ in $\mathbf{M}$ such that $L(\phi^c)$ is a weak equivalence. We assume that $\mathbf{M}_S$ exists, so $\mathbf{M}'$ is Quillen equivalent to $\mathbf{M}_S$ as in Example \ref{eg:localization}(\ref{eg:reflection-as-localization}). Then the following are equivalent, for a cofibrant object $X$ in $\mathbf{M}$:
\begin{enumerate}
\item the object $X$ satisfies descent with respect to $L \dashv R$;
\item for each $\phi: A \to B$ in $S$, the induced map
\begin{equation*}
R\HOM(B,X) \longrightarrow R\HOM(A,X)
\end{equation*}
is a weak equivalence.
\end{enumerate}
\end{proposition}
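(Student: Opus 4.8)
The plan is to reduce the statement to the already-established dictionary between descent and $S$-locality for a left Bousfield localization, by factoring the given Quillen reflection through $\mathbf{M}_S$. By hypothesis (and Example \ref{eg:localization}(\ref{eg:reflection-as-localization})), the functors $L,R$ exhibit a Quillen equivalence $\bar L \dashv \bar R$ between $\mathbf{M}'$ and $\mathbf{M}_S$, which composes with the localization reflection to give the factorization
\begin{equation*}
\begin{tikzcd}
\mathbf{M}' \ar[r, bend right, "{\bar R}"'] & \mathbf{M}_S \ar[r, bend right, "{\mathrm{id}}"'] \ar[l, bend right, "{\bar L}"'] & \mathbf{M} \ar[l, bend right, "{\mathrm{id}}"']
\end{tikzcd}.
\end{equation*}
The essential bookkeeping point is that composing the identity functors with $\bar R$ and $\bar L$ returns exactly the original right adjoint $R\colon \mathbf{M}'\to\mathbf{M}$ and left adjoint $L\colon\mathbf{M}\to\mathbf{M}'$, so that the composite of these two reflections is precisely $L\dashv R$.

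With this factorization in place, I would apply Corollary \ref{cor:descent-quillen-equivalence} with the Quillen equivalence $\bar L\dashv\bar R$ in the role of the upper reflection. The corollary then asserts that a cofibrant object $X$ of $\mathbf{M}$ satisfies descent with respect to the composite (that is, with respect to the original $L\dashv R$) if and only if it satisfies descent with respect to the lower reflection $\mathrm{id}\dashv\mathrm{id}\colon\mathbf{M}_S\rightleftarrows\mathbf{M}$, i.e.\ with respect to the localization. This converts condition (1) of the proposition into the statement that $X$ satisfies descent for the left Bousfield localization $\mathbf{M}_S$ of $\mathbf{M}$.

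It then remains to recognize the latter as condition (2). By Example \ref{eg:descent}(4), $X$ satisfies descent for $\mathbf{M}_S\rightleftarrows\mathbf{M}$ if and only if the fibrant replacement $X^f$ is $S$-local, which by definition means that $R\HOM(B,X^f)\to R\HOM(A,X^f)$ is a weak equivalence for every $\phi\colon A\to B$ in $S$; since $X$ is cofibrant and $X\to X^f$ is a weak equivalence, the derived mapping spaces satisfy $R\HOM(-,X^f)\simeq R\HOM(-,X)$, so this is exactly condition (2). The chain of equivalences (1)$\iff$descent for $\mathbf{M}_S$ $\iff$ $X^f$ is $S$-local $\iff$ (2) then completes the argument. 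I expect the only delicate points to be verifying that the factorization genuinely recovers $L\dashv R$ (so that Corollary \ref{cor:descent-quillen-equivalence} applies with the upper reflection being the Quillen equivalence) and the invariance of $R\HOM$ under fibrant replacement in the second variable; both are routine, but both must be stated explicitly to make the chain of equivalences airtight. An alternative, more hands-on route would bypass the localization and argue directly from Lemma \ref{lmm:satisfies-descent-if-in-image}, using Example \ref{eg:localization}(\ref{eg:reflection-as-localization}) to see that $R(Y)$ is $S$-local for fibrant $Y$ in $\mathbf{M}'$, but the composition argument above is cleaner since it reuses the machinery already assembled.
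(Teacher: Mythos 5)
Your proposal is correct and follows essentially the same route as the paper's proof: reduce via Corollary \ref{cor:descent-quillen-equivalence} to descent with respect to the localization $\mathbf{M}_S \rightleftarrows \mathbf{M}$, identify that with $X^f$ being $S$-local via Example \ref{eg:descent}(4), and unwind the definition of $S$-locality to obtain condition (2). The extra bookkeeping you flag (that the factorization recovers $L\dashv R$ on the nose, and that $R\HOM(-,X^f)\simeq R\HOM(-,X)$ for cofibrant $X$) is sound and is left implicit in the paper's shorter write-up.
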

\begin{proof}
We can assume using Corollary \ref{cor:descent-quillen-equivalence} that $\mathbf{M}' = \mathbf{M}_S$ and $L$ and $R$ are identity functors. Then using Example \ref{eg:descent}(4), we see that $X$ satisfies descent with respect to $L \dashv R$ if and only if $X^f$ is $S$-local. The latter condition agrees with condition (2) above.
\end{proof}

\subsection{Left exact localizations}
\label{ssec:left-exact-localization}

We say that a left Bousfield localization $\mathbf{M}_S$ of $\mathbf{M}$ is \emph{left exact} if the derived left adjoint $\mathbf{M} \to \mathbf{M}_S$ of the identity map preserves derived finite limits. In this case, we also say that $\mathbf{M}_S$ is a left exact localization of $\mathbf{M}$ (the fact that we consider left Bousfield localizations is left implicit).

For a set of morphisms $\Sigma$ in $\s\E$, there exists a smallest family $S$ of morphisms containing $\Sigma$ such that $\s\E \longrightarrow (\s\E)_S$ is left exact, see \cite{lurie} \cite{ABFJ-I}. Following \cite{ABFJ-I}, we can even take $S$ to be a set, more precisely
\begin{equation} \label{eq:sigma-to-S}
S ~=~ (\Sigma^\Delta)^\mathrm{bc}
\end{equation}
where $(-)^\Delta$ is the closure under taking the diagonal of a morphism, and $(-)^\mathrm{bc}$ is the closure under derived pullback, along morphisms with domain in a predetermined set of `generators' for $\s\E$.

In the above we borrow the terminology `generators' from \cite{ABFJ-I}. In the context of this article, it is better to use the more verbose terminology `homotopy generators' to stress that we mean it in a homotopical sense rather than in a $1$-categorical sense. More precisely, we say that a family of objects $\A = \{X_i\}_{i \in I}$ in $\s\E$ is a family of \emph{homotopy generators} for $\s\E$ if every object in $\s\E$ can be written as a homotopy colimit of objects in $\A$ (up to weak equivalence).

Note that the objects of $\E$, interpreted as discrete objects in $\s\E$, form a family of homotopy generators for $\s\E$ \cite[Remark 2.1]{dugger-hollander-isaksen}. Taking this idea a bit further, we see that $\s\Sh(\C,J)$ has a set of homotopy generators consisting of the representable sheaves (which are all discrete); a fact that is used in \cite{lurie} \cite{ABFJ-I}. As a basic example, the category of simplicial sets has as set of homotopy generators the singleton containing the terminal object.

\begin{definition}
A (left Bousfield) localization $\mathbf{M}_S$ of $\mathbf{M}$ is called \emph{accessible} if, up to equivalence, we can take $S$ to be a set \cite{lurie} \cite{ABFJ-I}.
\end{definition}

In the notations as above, $(\s\E)_S$ is an accessible left exact localization of $\s\E$, for every set of morphisms $\Sigma$.

\subsection{The \texorpdfstring{\v{C}ech}{Cech} localization}
\label{ssec:cech-localization}

For a Grothendieck topos $\E$ and a subtopos $\E'\subseteq\E$, we saw in Example \ref{eg:localization}(4) that, up to Quillen equivalence, we can view $\s\E'$ as a left Bousfield localization of $\s\E$. More precisely, we take the left Bousfield localization with respect to the maps in $\s\E$ that become weak equivalences after derived pullback to $\s\E'$.

There is a different left Bousfield localization of $\E$ that is also interesting in this context. Consider the set $\Sigma$ of monomorphisms $\Sigma$ of the form 
\begin{equation*} \label{eq:representable-dense}
\gamma^*V \hookrightarrow \gamma^*U
\end{equation*}
for $V \hookrightarrow U$ a monomorphism in $\E$ that is dense with respect to $\E'\subseteq \E$, with $U$ in some set of generators for $\E$. In this sense, $\Sigma$ contains the covering sieves corresponding to the subtopos $\E'\subseteq\E$. Now take the accessible left exact localization associated to $\Sigma$ as in Subsection \ref{ssec:left-exact-localization}. This is called the \emph{\v{C}ech localization} of $\s\E$ corresponding to the subtopos $\E'\subseteq\E$. We will use the notation $\check{\s}\E'$ for this \v{C}ech localization; note that the original topos $\E$ is left implicit in this notation.

\subsection{Model toposes}

In \cite{rezk}, the concept of hypercompleteness is introduced in the setting of \emph{model toposes}. In this subsection, we give a short introduction to model toposes. We refer to \cite{rezk} for more details.

Until now we have only worked with simplicial presheaves on a small (1-)category. We generalize this by moving from categories to simplicially enriched categories (i.e. categories enriched in simplicial sets). Let $\C$ be a small simplicially enriched category. A \emph{simplicial presheaf} on $\C$ is then a simplicial functor $\C \to \s\Set$. The category of simplicial presheaves and simplicially enriched natural transformations between them \cite{nlab:enriched_natural_transformation} will be denoted by $\s\PSh(\C)$. If the simplicial Hom-sets in $\C$ are discrete, then we can think of $\C$ as a non-enriched small category, and then the current definition of $\s\PSh(\C)$ agrees with the one earlier in this article.

Note that small simplicially enriched categories can be interpreted as internal categories in $\s\Set$, with the special property that the simplicial set of objects is discrete. The category $\s\PSh(\C)$ above then has an alternative interpretation as the category of internal presheaves on $\C$. In particular, $\s\PSh(\C)$ is a topos \cite[Corollary B2.3.18]{EL}.

For $\C$ a simplicially enriched category, the topos $\s\PSh(\C)$ can be equipped with an \emph{injective model structure}: the weak equivalences and cofibrations are the objectwise weak equivalences and the objectwise cofibrations, respectively \cite[Proof of Proposition 3.6.1]{toen-vezzosi-1}. 

It can also be equipped with a \emph{projective model structure}: the weak equivalences and fibrations are the objectwise weak equivalences and objectwise fibrations, respectively \cite[\S 2.3.1]{toen-vezzosi-1}. There is a Quillen equivalence between the injective and projective model structures, in which the left and right adjoint are both the identity functor \cite[Proof of Proposition 3.6.1]{toen-vezzosi-1}.

\begin{definition}[Model topos ({\cite[\S 6]{rezk}})] \label{def:model-topos}
A model category is called a \emph{model topos} if it is Quillen equivalent to an accessible left exact localization (see \ref{ssec:left-exact-localization}) of $\s\PSh(\C)$, for $\C$ a simplicially enriched category. Here $\s\PSh(\C)$ is considered as a model category with the projective model structure.
\end{definition}

For further details, we refer to \cite{rezk}. For the purpose of this article, it is enough to point out the following facts:
\begin{itemize}
\item the family of model toposes is closed under Quillen equivalence (by definition) and closed under slices \cite[Corollary 6.10]{rezk};
\item $\s\E$ is a model topos for each Grothendieck topos $\E$ (using \cite[Example 6.3]{rezk});
\item an accessible left exact localization of a model topos is again a model topos.
\end{itemize}
To prove the last item, first use that for each model topos $\mathbf{M}$ you can construct a Quillen equivalence
\begin{equation*}
\begin{tikzcd}
\mathbf{M} \ar[r, bend right, "{R}"'] & \s\PSh(\C)_S \ar[l, bend right, "{L}"']
\end{tikzcd},
\end{equation*}
see \cite[Proof of Corollary 6.10]{rezk}. Now let $\mathbf{M}_{S'}$ be an accessible left exact localization of $\mathbf{M}$. The sets $S'$, $(S')^f$ and $L(R((S')^f)^c)$ have the same \textit{saturation} (see \cite[\S5.3]{rezk}), so using transport of left Bousfield localizations \cite[Theorem 3.3.20(1)(b)]{hirschhorn} we find that $\mathbf{M}_{S'}$ is Quillen equivalent to $(\s\PSh(\C)_S)_{R((S')^f)}$. By Proposition \ref{prop:composition-of-left-Bousfield-localizations} we can then show that $\mathbf{M}_{S'}$ is Quillen equivalent to an accessible left exact localization of $\s\PSh(\C)$, which is what we wanted to prove.

\begin{remark}
Using the properties above, note that we could equivalently use the injective model structure on $\s\PSh(\C)$ in Definition \ref{def:model-topos}.
\end{remark}

\begin{remark}
A \v{C}ech localization (see the previous subsection) is an accessible left exact localization, so the resulting model category $\check{\s}\E$ is a model topos.
\end{remark}

\subsection{Factorization into \texorpdfstring{$n$-connected and $n$-truncated}{n-connected and n-truncated} part}

We follow \cite{rezk}.

Take $n \geq -1$. We say that a simplicial set $X$ is $n$-truncated if for each $k > n$, the homotopy group $\pi_k(X,x_0)$ is trivial for each base point $x_0$; for $n=-1$ this means that $X$ is either empty or contractible. More generally, for $X$ an object in a simplicial model category $\mathbf{M}$, we say that $X$ is $n$-truncated if $R\HOM(Y,X)$ is an $n$-truncated simplicial set for each $Y$ in $\mathbf{M}$ \cite[p.~31]{rezk}. 

For the particular case of the model topos $\s\E$, the characterization of the $n$-truncated objects ($n \geq 0$) is very similar to that for simplicial sets: an object $F$ of $\s\E$ is $n$-truncated if and only if the maps $\pi_k(F) \to F_0$ (as in \eqref{eq:nth-homotopy-map}) are isomorphisms for $k > n$, see Remark \ref{rem:rezk-truncation}.

Following \cite[Proposition 8.3]{rezk}, we will define $n$-connected maps in a simplicial model category as those maps that are orthogonal in some sense to the $n$-truncated objects.

\begin{definition}[{\cite[Proposition 8.3]{rezk}}] \label{def:n-connected}
A map $Y \to X$ in a simplicial model category $\mathbf{M}$ is called \emph{$n$-connected} if for every $n$-truncated object $Z$ in $\mathbf{M}/X$, the induced map
\begin{equation*}
R\HOM_{\mathbf{M}/X}(X,Z) \longrightarrow R\HOM_{\mathbf{M}/X}(Y,Z)
\end{equation*}
is a weak equivalence.
\end{definition}

We say an object is $n$-connected if the map from this object to the terminal object is $n$-connected. The other way around, we say that a map $f: Z \to X$ is $n$-truncated if $Z$ is $n$-truncated in the slice topos over $X$, using $f$ as the structure map.

In a model topos, every map has a factorization as an $n$-connected map followed by an $n$-truncated map, for each $n \geq -1$ \cite[Proposition 8.5]{rezk}.

We say that a morphism or object is $\infty$-connected if it is $n$-connected for each natural number $n$.

\subsection{Topological and cotopological localization}

Let $\mathbf{M} \to \mathbf{M}_S$ be a left Bousfield localization. We say that $S$ is \emph{strongly saturated} if $S$ agrees with the family of $S$-local equivalences \cite{lurie} \cite{ABFJ-I} (in \cite{rezk}, these families are called `saturated'). For an arbitrary family $S$, the class $\overline{S}$ of $S$-local equivalences is the smallest strongly saturated class containing $S$, so we say that $\overline{S}$ is the \emph{strong saturation} of $S$. The left Bousfield localization with respect to the strong saturation $\overline{S}$ agrees with the left Bousfield localization with respect to $S$ itself. In this way, we get a one-to-one correspondence between left Bousfield localizations and strongly saturated families.

Following \cite{ABFJ-I}, a strongly saturated family $S$ is called a \emph{congruence} if the associated left Bousfield localization is left exact. For each set of maps $\Sigma$ there is a smallest congruence $S$ containing $\Sigma$ \cite{lurie} \cite{ABFJ-I}, see also Subsection \ref{ssec:left-exact-localization}. In this situation, we say that $S$ is the congruence \emph{generated by} $\Sigma$.

\begin{definition}[{Topological localization \cite[Definition 4.3.5]{ABFJ-I}}] A congruence is called \emph{topological} if it is generated by a set of $(-1)$-truncated maps.
\end{definition}

\begin{remark} \label{rem:topological-congruence}
For an arbitrary family $S$ of $(-1)$-truncated maps (that may be a proper class), there exists a smallest congruence containing it, and this congruence is topological \cite[Proposition 4.3.7]{ABFJ-I}. So in the above definition, we can replace the word `set' by `family'.
\end{remark}

In \cite{lurie} \cite{ABFJ-I}, $(-1)$-truncated maps are called monomorphisms. In the case of model toposes, this definition of monomorphism does not necessarily agree with the 1-categorical definition of monomorphism. Therefore, in \cite{rezk} the terminology `homotopy monomorphism' is used to refer to $(-1)$-truncated maps.

Because monomorphisms are $(-1)$-truncated, the \v{C}ech localizations that we discussed in Subsection \ref{ssec:cech-localization} are topological localizations. In fact, the converse is true as well: every topological localization of $\s\E$ is a \v{C}ech localization with respect to some topology on $\E$ \cite[Lurie 6.2.2.9]{lurie} \cite[Theorem 4.1.9]{ABFJ-II}. 

Next, we will define cotopological localizations. Recall that a map or object is said to be $\infty$-connected if it is $n$-connected for each $n$.

\begin{definition}[{Cotopological localization \cite[Proposition 4.2.2]{ABFJ-I}}]
A congruence is called \emph{cotopological} if each map in the congruence is $\infty$-connected.
\end{definition}

\subsection{Hypercompletion}

A special role is played by the hypercompletion, corresponding to the maximal possible cotopological congruence.
\begin{definition}[Hypercompletion]
For a model topos $\mathbf{M}$, the left Bousfield localization of $\mathbf{M}$ with respect to the $\infty$-connected maps is called the \emph{hypercompletion} of $\mathbf{M}$ and is denoted by $\mathsf{t}\mathbf{M}$.
\end{definition}

The hypercompletion was first defined in \cite{toen-vezzosi-1}, and there the name \emph{t-completion} was used instead. Later the terminology `hypercompletion' was introduced in \cite{lurie}.

In \cite[Proposition 10.3]{rezk} it is shown that the hypercompletion of a model topos is the left Bousfield localization with respect to a set (in the sense that we can find a set that has as strong saturation the class of $\infty$-connected maps). Further, it is shown that this left Bousfield localization is left exact. So the hypercompletion is an accessible left exact localization of the original model topos.

It is shown by Lurie that we can factor every accessible left exact left Bousfield localization $\mathbf{M} \to \mathbf{M}_S$ as a topological localization followed by a cotopological localization \cite[Proposition 6.5.2.19]{lurie} \cite[Remark 4.3.12]{ABFJ-II}.

More precisely, take a left exact left Bousfield localization $\mathbf{M} \to \mathbf{M}_S$. Now consider the congruence $S'$ generated by the family of $(-1)$-truncated maps in $S$. Using Remark \ref{rem:topological-congruence}, we see that $\mathbf{M} \to \mathbf{M}_{S'}$ is a topological localization. Further, all maps in $S$ become $\infty$-connected in $\mathbf{M}_{S'}$, see \cite{ABFJ-II}. Because they become $\infty$-connected, we can now consider the composition
\begin{equation*}
\mathbf{M} \to \mathbf{M}_{S'} \to (\mathbf{M}_{S'})_S
\end{equation*}
which is a factorization of $\mathbf{M}_S$, see Proposition \ref{prop:composition-of-left-Bousfield-localizations}. By the above this is a factorization in a topological part followed by a cotopological part.

The factorization in an $n$-connected followed by an $n$-truncated map is preserved by localization \cite[Proposition 7.6]{rezk}. So to show that all maps in $S$ become $\infty$-connected, it is enough to show that the $n$-truncated maps become weak equivalences, for each $n$. We refer to \cite{ABFJ-I} \cite{ABFJ-II} where these things are discussed in depth.

We end with the following definition.

\begin{definition}[Hypercompleteness] \label{def:hypercompleteness}
Let $\mathbf{M}$ be a model topos. Then we say that $\mathbf{M}$ is \emph{hypercomplete} if every $\infty$-connected map is a weak equivalence.
\end{definition}

For a hypercomplete model topos $\mathbf{M}$, the hypercompletion of $\mathbf{M}$ is $\mathbf{M}$ itself. Conversely, if $\mathbf{M}'$ is the hypercompletion of a model topos $\mathbf{M}$, we can use the definition of $\infty$-connected maps and the adjunction between $\mathbf{M}$ and $\mathbf{M}'$ to see that an $\infty$-connected map in $\mathbf{M}'$ is also $\infty$-connected in $\mathbf{M}$. From this it follows that $\mathbf{M}'$ is hypercomplete in the sense of Definition \ref{def:hypercompleteness}.

In other words, hypercompletion is an idempotent operation.

\subsection{Simplicial sheaves and hypercompletion}

The main theorem about hypercompletion, for the purposes of this article, is the following:

\begin{theorem}[{\cite{dugger-hollander-isaksen}, \cite[p.~49]{rezk}}] \label{thm:hypercompletion-of-cech}
For a topos $\E$ and a subtopos $\E' \subseteq \E$, there is a Quillen equivalence
\begin{equation*}
\mathsf{t}(\check{\s}\E') ~\simeq~ \s\E',
\end{equation*}
where $\check{\s}\E'$ denotes the \v{C}ech localization of $\s\E$ corresponding to the subtopos $\E'\subseteq\E$.
\end{theorem}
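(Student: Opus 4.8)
The plan is to exhibit $\s\E'$ first as an intermediate and then as a terminal localization of $\check{\s}\E'$, and to recognize the latter localization as precisely the hypercompletion. Write $i : \E' \hookrightarrow \E$ for the inclusion and $\gamma' : \s\E' \to \E'$ for the analogue of $\gamma$. By Example \ref{eg:localization}(4), $\s\E'$ is Quillen equivalent to the left Bousfield localization $(\s\E)_{S_{\E'}}$, where $S_{\E'}$ is the accessible left exact congruence of maps $\phi$ with $(\s i)^*\phi$ a weak equivalence. First I would check that the \v{C}ech localization sits between $\s\E$ and $\s\E'$: the generating monomorphisms $\gamma^* V \hookrightarrow \gamma^* U$ of $\Sigma$ satisfy $(\s i)^*\gamma^*(V\hookrightarrow U) \simeq \gamma'^*\,i^*(V \hookrightarrow U)$ by commutativity of the defining square, and since $V \hookrightarrow U$ is dense for $\E' \subseteq \E$ the map $i^*(V \hookrightarrow U)$ is an isomorphism. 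Hence $\Sigma \subseteq S_{\E'}$ and therefore $S_\Sigma \subseteq S_{\E'}$ for the congruence $S_\Sigma$ generated by $\Sigma$. By Proposition \ref{prop:composition-of-left-Bousfield-localizations} this produces a Quillen equivalence $\s\E' \simeq (\check{\s}\E')_{S'}$, where $S'$ is the congruence on $\check{\s}\E'$ induced by $S_{\E'}$; denote the corresponding localization by $L' : \check{\s}\E' \to \s\E'$.

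Next I would invoke the topological/cotopological factorization of the accessible left exact localization $\s\E \to \s\E'$ \cite{lurie} \cite{ABFJ-II}. Its topological part is generated by the $(-1)$-truncated maps in $S_{\E'}$, and by the correspondence between topological localizations of $\s\E$ and Grothendieck topologies on $\E$ \cite{lurie} \cite{ABFJ-II}, this topological part is exactly the \v{C}ech localization attached to the topology of $\E' \subseteq \E$, namely $\check{\s}\E'$. Since $\s\E \to \check{\s}\E' \to \s\E'$ reproduces this factorization, uniqueness forces $L'$ to be the cotopological part: every map in $S'$ is $\infty$-connected in $\check{\s}\E'$, and $L'$ is itself an accessible left exact localization. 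In particular $S' \subseteq S_{\mathrm{hyp}}$, where $S_{\mathrm{hyp}}$ is the congruence of $\infty$-connected maps defining $\mathsf{t}(\check{\s}\E')$.

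It remains to prove the reverse inclusion $S_{\mathrm{hyp}} \subseteq S'$, for which the key fact is that $\s\E'$ is hypercomplete. Here I would unwind the definition of weak equivalence in the injective model structure: a map in $\s\E'$ that is $\infty$-connected induces isomorphisms on all homotopy sheaves $\pi_k$, using Rezk's homotopy-sheaf characterization of $n$-connected maps \cite{rezk}, and a map inducing such isomorphisms satisfies (WE1) and (WE2), hence is a weak equivalence. Thus every $\infty$-connected map of $\s\E'$ is already a weak equivalence, i.e.\ $\s\E'$ is hypercomplete in the sense of Definition \ref{def:hypercompleteness}. Now take any $\infty$-connected map $f$ in $\check{\s}\E'$; since $L'$ is a left exact localization it preserves the $n$-connected/$n$-truncated factorizations \cite[Proposition 7.6]{rezk} and so sends $f$ to an $\infty$-connected map of $\s\E'$, which is then a weak equivalence by hypercompleteness. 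Therefore $f \in S'$, giving $S_{\mathrm{hyp}} \subseteq S'$, so $S' = S_{\mathrm{hyp}}$ and consequently $\mathsf{t}(\check{\s}\E') \simeq \s\E'$.

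The step I expect to be the main obstacle is the identification of the topological part of $\s\E \to \s\E'$ with the \v{C}ech localization $\check{\s}\E'$: this is exactly where the heavier higher-topos input is unavoidable, namely the bijection between topological localizations and topologies together with the uniqueness of the topological/cotopological factorization. The verification that $\s\E'$ is hypercomplete is conceptually the heart of the matter but is comparatively concrete, resting only on the description of the injective weak equivalences via homotopy sheaves; the bookkeeping with congruences in the first step and in the final inclusion is routine given the propositions already established.
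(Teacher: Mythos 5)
Your overall architecture is the same as the paper's: factor the accessible left exact localization $\s\E \to \s\E'$ into a topological part followed by a cotopological part, identify the topological part with the \v{C}ech localization $\check{\s}\E'$ via the correspondence between topological localizations and topologies, and then use hypercompleteness of $\s\E'$ to recognize the cotopological part as the hypercompletion. Your extra bookkeeping (checking $\Sigma \subseteq S_{\E'}$ via $(\s i)^*\gamma_\E^* \simeq \gamma_{\E'}^* i^*$, and the two congruence inclusions) is fine and just makes explicit what the paper compresses into ``it follows immediately.''

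The one place where you genuinely diverge is the proof that $\s\E'$ is hypercomplete, and that is also where your argument has a gap. You assert that an $\infty$-connected map in $\s\E'$ induces isomorphisms on the homotopy sheaves $\pi_k$ of Subsection \ref{ssec:homotopy-groups} and hence satisfies (WE1)--(WE2). But $\infty$-connectedness here means $n$-connectedness for all $n$ in the sense of Definition \ref{def:n-connected}, i.e.\ orthogonality to the $n$-truncated objects of the model topos; the weak equivalences of the injective model structure are defined via the sheafified presheaf homotopy groups. The statement that these two invariants agree --- that a map orthogonal to all truncated objects must invert all the sheaves $\pi_k$ --- is precisely the nontrivial comparison, and it is not established anywhere in the paper (Remark \ref{rem:rezk-truncation} and Proposition \ref{prop:hom-to-n-type} give the analogous comparison only for \emph{truncatedness} of objects, not for \emph{connectedness} of maps). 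Citing ``Rezk's homotopy-sheaf characterization of $n$-connected maps'' begs the question: that characterization, for the model topos $\s\E'$, is essentially equivalent to the hypercompleteness you are trying to prove. The paper sidesteps this by deducing hypercompleteness of $\s\E'$ from the presheaf case of the theorem itself, namely Rezk's Theorem 11.10 applied to a presentation $\E' \simeq \Sh(\C,J) \subseteq \PSh(\C)$ together with idempotency of hypercompletion; that external theorem is where the comparison of homotopy sheaves is actually carried out. To repair your argument, either supply that comparison or replace your direct hypercompleteness claim by the same appeal to the presheaf case.
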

\begin{proof}
The case where $\E = \PSh(\C)$ for $\C$ a small category is \cite[Theorem 11.10]{rezk}. From this special case, it follows that $\s\E'$ is hypercomplete for each Grothendieck topos $\E'$.
Now we consider the general case. We factor the localization of $\s\E$ corresponding to $\s\E' \subseteq \s\E$ into a topological and cotopological part. Because $\s\E'$ is hypercomplete, the cotopological part must be a hypercompletion. The topological part is a \v{C}ech localization by \cite[6.2.2.9]{lurie} \cite[Theorem 4.1.9]{ABFJ-I}. Once we know that it is a \v{C}ech localization, it follows immediately that it must be precisely the \v{C}ech localization corresponding to $\E'\subseteq \E$.

The factorization into topological and cotopological part gives the Quillen equivalence $\mathsf{t}(\check{\s}\E') ~\simeq~ \s\E'$ from the theorem statement.
\end{proof}


\section{Homotopy theory with simplicial objects: cohomology}

\subsection{Eilenberg--Mac Lane objects and cohomology}
\label{ssec:em-objects-cohomology}

Fix a Grothendieck topos $\E$ and a natural number $n$. Let $A$ be an object in $\E$, equipped with a group structure if $n\geq 1$, with this group structure abelian for $n\geq 2$. We say that an object $X$ in $\s\E$ is an \emph{Eilenberg--Mac Lane object} associated to $(A,n)$ if $X$ is fibrant and
\begin{equation*}
\pi_k(X) \cong \begin{cases}
A \quad& \text{for }k=n; \\
1 \quad& \text{for }k\neq n,
\end{cases}
\end{equation*}
(the isomorphism should be an isomorphism of groups at $k=n$, for $n\geq 1$). In this case, we also say that $X$ is a $K(A,n)$. We will often simply write $K(A,n)$ to denote any Eilenberg--Mac Lane object associated to $(A,n)$. Under the assumptions above, the object $K(A,n)$ always exists and it is unique up to homotopy equivalence, see Appendix \ref{app:em-objects-and-loop-spaces}.

Now let $\rho : \F \to \E$ be a geometric morphism and let $K(A,n)$ be an Eilenberg--Mac Lane object in $\F$. Then we can define the higher direct image $R^n \rho_*(A)$ as
\begin{equation} \label{eq:derived-functor}
R^n \rho_*(A) = \pi_0(\s \rho_*K(A,n)),
\end{equation}
which is an object of $\E$.

As a special case, if $e : \E \to \T$ is the geometric morphism to the base topos $\T$, then we can use the notation
\begin{equation} \label{eq:cohomology-over-base-topos}
\HH_\T^n(\E,A) = R^n e_* A
\end{equation}
or just $\HH^n(\E,A)$ if we leave the base topos implicit.

Eilenberg--Mac Lane objects interact well with localization:

\begin{proposition} \label{prop:eilenberg-maclane-locally}
Let $\xi : \E' \to \E$ be an \'etale geometric morphism. Then for any Eilenberg--Mac Lane object $K(A,n)$ in $\E$ we have $\s\xi^*K(A,n) \simeq K(\xi^*A,n)$.
\end{proposition}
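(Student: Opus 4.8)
The plan is to show that $\s\xi^*K(A,n)$ is itself an Eilenberg--Mac Lane object associated to $(\xi^*A,n)$, and then to invoke the uniqueness of Eilenberg--Mac Lane objects up to homotopy equivalence (Appendix \ref{app:em-objects-and-loop-spaces}) to conclude $\s\xi^*K(A,n) \simeq K(\xi^*A,n)$. By the definition in Subsection \ref{ssec:em-objects-cohomology}, this requires verifying two things: that the homotopy groups of $\s\xi^*K(A,n)$ are concentrated in degree $n$ with value $\xi^*A$ there, and that $\s\xi^*K(A,n)$ is fibrant.

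The homotopy groups are immediate from Proposition \ref{prop:pullback-preserves-homotopy-groups}, which gives $\pi_k(\s\xi^*K(A,n)) \simeq \xi^*\pi_k(K(A,n))$ for every $k$. Since $K(A,n)$ has $\pi_n \cong A$ and $\pi_k \cong 1$ for $k \neq n$, and since $\xi^*$ is left exact and hence preserves the terminal object, we obtain $\pi_n(\s\xi^*K(A,n)) \simeq \xi^*A$ and $\pi_k(\s\xi^*K(A,n)) \simeq 1$ for $k \neq n$. For $n \geq 1$ the group structure on $A$ is carried to $\xi^*A$ because $\xi^*$ preserves finite products, and the isomorphism of Proposition \ref{prop:pullback-preserves-homotopy-groups} is compatible with this structure as the homotopy groups are built functorially from finite limits and colimits; one should record this compatibility, but it presents no real difficulty.

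The step I expect to be the main obstacle is fibrancy. In general $\s\xi^*$ is only a \emph{left} Quillen functor (it preserves cofibrations and weak equivalences by Proposition \ref{prop:pullback-preserves-homotopy-groups}), and a left Quillen functor need not preserve fibrant objects, so one cannot na\"ively conclude that $\s\xi^*K(A,n)$ is fibrant. The key observation that resolves this is that when $\xi$ is \'etale, $\s\xi^*$ is \emph{also} a right Quillen functor: by the corollary following Proposition \ref{prop:connected-components-etale-preserves-homotopy}, the adjunction $\s\xi_! \dashv \s\xi^*$ is a Quillen adjunction, so its right adjoint $\s\xi^*$ preserves fibrations and trivial fibrations. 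In particular $\s\xi^*$ sends the fibration $K(A,n) \to 1$ to a fibration $\s\xi^*K(A,n) \to \s\xi^*(1) \simeq 1$, so $\s\xi^*K(A,n)$ is fibrant.

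Combining these steps, $\s\xi^*K(A,n)$ is fibrant with the homotopy groups of a $K(\xi^*A,n)$, hence is an Eilenberg--Mac Lane object associated to $(\xi^*A,n)$. By uniqueness up to homotopy equivalence it follows that $\s\xi^*K(A,n) \simeq K(\xi^*A,n)$, as claimed.
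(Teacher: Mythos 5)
Your proof is correct and follows essentially the same route as the paper, which simply cites Proposition \ref{prop:pullback-preserves-homotopy-groups} for the homotopy groups and Proposition \ref{prop:connected-components-etale-preserves-homotopy} (via the ensuing corollary that $\s\xi_! \dashv \s\xi^*$ is a Quillen adjunction for \'etale $\xi$) for fibrancy. You have merely made explicit the details the paper leaves implicit, including the correct identification of fibrancy as the point where \'etaleness is actually needed.
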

\begin{proof}
This follows from Proposition \ref{prop:pullback-preserves-homotopy-groups} and Proposition \ref{prop:connected-components-etale-preserves-homotopy}.
\end{proof}

\subsection{Comparison with definition via chain complexes} In $\eqref{eq:derived-functor}$, we defined the right derived functor via Eilenberg--Mac Lane objects. For $A$ an abelian group object in $\s\E$, the right derived functor is originally defined in a different way using chain complexes and injective resolutions. The two definitions are equivalent by the Dold-Kan correspondence \cite{nlab:dold-kan-correspondence}.

The approach with Eilenberg--Mac Lane objects is more elegant from a theoretical point of view, but in concrete calculations it is often a good idea to use the definition with chain complexes instead.

\subsection{Contravariance of cohomology}
\label{ssec:contravariance-of-cohomology}

For a geometric morphism $\rho: \F\to\E$ over $\T$
\begin{equation*}
\begin{tikzcd}
\F \ar[rr,"{\rho}"] \ar[dr,swap,"{f=e\rho}"] && \E \ar[dl,"{e}"]\\
& \T
\end{tikzcd}
\end{equation*}
and an Eilenberg--Mac Lane object $K(i,A)$ in $\s\E$ there is an induced map
\begin{equation}\label{eq:HHrho}
\HH^i(\rho,A)~:~ \HH^i(\E,A) \to \HH^i(\F,\rho^*A).
\end{equation}

This map can be constructed as follows.
For an Eilenberg--Mac Lane object $K(A,n)$ in $\E$,
take a fibrant replacement
$i : \s\rho^*K(A,n) \to K$
and now take the composition
\begin{equation*}
K(A,n) \to \s\rho_*\s\rho^*K(A,n) \to \s\rho_*K
\end{equation*}
with the adjunction unit. The functors $\pi_n$ commute with 
$\s\rho^*$ (Proposition \ref{prop:pullback-preserves-homotopy-groups}),
so $K \simeq K(\rho^*A,n)$.
Applying $\s e_*$ and then $\pi_0$ gives a map
\begin{equation*}
\HH^n(\E,A) \longrightarrow \HH^n(\F,\rho^*A).
\end{equation*}

If $A$ is an abelian group, then we can work with chain complexes and injective resolutions,
and then there is an analogous construction of the map above,
see e.g.\ Iversen \cite[Section II.5]{iversen}.

\subsection{The higher direct image as a sheaf}
\label{ssec:derived-pushforward-as-sheaf}

Let $\rho : \F \to \E$ be a geometric morphism, 
and let $K(A,n)$ be an Eilenberg--Mac Lane object in $\s\F$.
The derived pushforward
\begin{equation*}
R^n\rho_*(A) \simeq \pi_0(\s\rho_*K(A,n))
\end{equation*}
has the following description as a sheaf.
By definition of $\pi_0$, it is the sheafification of
\begin{equation*}
U ~\mapsto~ \pi_0\left(\vphantom{\int}(\s\rho_*K(A,n))(U)\right)
\end{equation*}
on $\E$. Now consider the commutative diagram
\begin{equation*}
\begin{tikzcd}
\F/\rho^*U \ar[r,"{\rho_U}"] \ar[d,"{\xi}"'] & \E/U \ar[d,"{\xi}"] \ar[r,"{\delta}"] & \Set \\
\F \ar[r,"{\rho}"] & \E
\end{tikzcd}.
\end{equation*}
Then we find:
\begin{align*}
(\s \rho_* K(A,n))(U) &\simeq \s\delta_*\s\xi^*\s\rho_*K(A,n) \\
&\simeq \s\delta_*\s\rho_{U,*}\s\xi^*K(A,n) \\
&\simeq \s\delta_*\s\rho_{U,*}K(\xi^*A,n).
\end{align*}
Here in the second isomorphism we use the Beck--Chevalley condition resulting from $\xi$ being locally connected \cite[Theorem C3.3.15]{EL}.
Note that $\delta\rho_U$ is the global sections geometric morphism
for $\F/\rho_*U$. So after taking $\pi_0$ we find
\begin{equation*}
\pi_0((\s \rho_* K(A,n))(U)) \simeq \HH^n(\F/\rho^*U,A).
\end{equation*}

We conclude:
\begin{proposition} \label{prop:derived-pushforward-as-sheaf}
Let $\rho : \F \to \E$ be a geometric morphism and $K(A,n)$ an Eilenberg--Mac Lane object in $\s\F$.
Then $R^n\rho_*(A)$ is the sheafification in $\E$ of
\begin{equation*}
U ~\mapsto~ \HH^n(\F/\rho^*U,A).
\end{equation*}
\end{proposition}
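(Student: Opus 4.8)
The plan is to unwind the defining formula $R^n\rho_*(A) \simeq \pi_0(\s\rho_*K(A,n))$ from \eqref{eq:derived-functor} and reduce the statement to a sectionwise computation. By the definition of $\pi_0$ in $\s\E$, the object $R^n\rho_*(A)$ is already the sheafification of the presheaf $U \mapsto \pi_0\big((\s\rho_*K(A,n))(U)\big)$; since the target sheaf in the statement is itself obtained by sheafification, it suffices to produce, for each object $U$ in $\E$, a natural weak equivalence between the simplicial set $(\s\rho_*K(A,n))(U)$ and the global sections over $\F/\rho^*U$ of an Eilenberg--Mac Lane object, and then to apply $\pi_0$. Because $K(A,n)$ is fibrant and $\s\rho_*$ is right Quillen (Proposition \ref{prop:pullback-preserves-homotopy-groups}), these sections are the correct derived sections, so that $\pi_0$ of them is a genuine homotopy invariant and the reduction is legitimate.

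For the sectionwise identification I would pass to \'etale slices via the commutative diagram
\begin{equation*}
\begin{tikzcd}
\F/\rho^*U \ar[r,"{\rho_U}"] \ar[d,"{\xi}"'] & \E/U \ar[d,"{\xi}"] \ar[r,"{\delta}"] & \Set \\
\F \ar[r,"{\rho}"] & \E
\end{tikzcd},
\end{equation*}
whose left-hand square is the pullback of $\rho$ along the \'etale morphism $\xi\colon \E/U \to \E$. The computation then runs in three moves. First, sections over $U$ are computed as $(\s\rho_*K(A,n))(U) \simeq \s\delta_*\s\xi^*\s\rho_*K(A,n)$, expressing restriction to the slice followed by global sections. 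Second, the Beck--Chevalley isomorphism $\s\xi^*\s\rho_* \simeq \s\rho_{U,*}\s\xi^*$ applies because $\xi$ is locally connected \cite[Theorem C3.3.15]{EL}. Third, Proposition \ref{prop:eilenberg-maclane-locally} lets me replace $\s\xi^*K(A,n)$ by $K(\xi^*A,n)$. Recognising $\delta\rho_U$ as the global sections geometric morphism of $\F/\rho^*U$, applying $\pi_0$ to the resulting $\s\delta_*\s\rho_{U,*}K(\xi^*A,n)$ yields exactly $\HH^n(\F/\rho^*U,A)$ by \eqref{eq:derived-functor} and \eqref{eq:cohomology-over-base-topos}. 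Sheafifying in $U$ gives the claim.

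The step demanding the most care is the combination of Beck--Chevalley with the homotopical bookkeeping: one must check that each isomorphism above is a genuine weak equivalence of fibrant objects rather than a mere point-set identity, so that passing to $\pi_0$ is meaningful at every stage. This is controlled by the fact that $\s\xi^*$ preserves weak equivalences for any geometric morphism (Proposition \ref{prop:pullback-preserves-homotopy-groups}) and, since $\xi$ is \'etale, preserves Eilenberg--Mac Lane objects (Proposition \ref{prop:eilenberg-maclane-locally}); that the Beck--Chevalley isomorphism is available because the \'etale $\xi$ is in particular locally connected; and that the right adjoints $\s\delta_*$ and $\s\rho_{U,*}$, being right Quillen, preserve the fibrant object $K(\xi^*A,n)$. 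The sheafification appearing in the final statement requires no extra argument, since it is precisely the sheafification already built into the definition of $\pi_0$.
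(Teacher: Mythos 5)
Your proposal is correct and follows essentially the same route as the paper: both unwind $R^n\rho_*(A)=\pi_0(\s\rho_*K(A,n))$ via the definition of $\pi_0$ as a sheafification, pass to the slice diagram over $U$, apply the Beck--Chevalley isomorphism coming from $\xi$ being locally connected, and use Proposition \ref{prop:eilenberg-maclane-locally} to identify $\s\xi^*K(A,n)$ with $K(\xi^*A,n)$ before taking $\pi_0$. The extra remarks you make about fibrancy and preservation of weak equivalences are left implicit in the paper but are a sound addition.
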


For $A$ an abelian group, this appears in \cite[Lemma 8.18]{TT}. For $n=1$ and $A$ any group, the above sheaf description could be taken as definition of $R^1\rho_*(A)$.


\section{\texorpdfstring{$n$-pure}{n-pure} geometric morphisms}

\subsection{\texorpdfstring{$n$-pure}{n-pure} geometric morphisms}

We say that a family of Eilenberg--\mbox{Mac Lane} objects  $\A$ in $\s\E$ is \textbf{downwards closed} if
\begin{equation*}
K(A,n) \in \A ~\text{and}~ m \leq n ~\Rightarrow~ K(A,m) \in \A.
\end{equation*}
In other words, $\A$ is closed under application of the loop functor $\Omega$ (see Appendix \ref{app:em-objects-and-loop-spaces}).

\begin{definition} \label{def:n-pure}
Let $\rho : \F \to \E$ be a geometric morphism over the base topos $\T$.
Let $\A$ be a downwards closed family of Eilenberg--Mac Lane objects in $\s\E$.
We shall say that $\rho$ is \textbf{pure with respect to $\A$} if
for every object $X$ in $\E$ and $K(A,m) \in \A$ the induced map 
\begin{equation*}
\HH^m(\rho/X): \HH^m(\E/X,A) \to \HH^m(\F/\rho^*X,\rho^*A)
\end{equation*}
is an isomorphism. For $n \geq 0$, we say that $\rho$ is 
\textbf{$n$-pure} over $\T$ if it is 
pure with respect to the family of all Eilenberg--Mac Lane objects
$K(e^*T,m)$ with $T$ in $\T$ and $m \leq n$.
Finally, we say that $\rho$ is \textbf{$n$-connected} if it is pure with
respect to the family of all Eilenberg--Mac Lane 
objects $K(A,m)$ in $\s\E$ with $m \leq n$.
\end{definition}

\begin{remark}
In the above definition, cohomology is taken over a 
base topos $\T$, as in \eqref{eq:cohomology-over-base-topos}.
However, we will see in Theorem \ref{thm:characterization-derived-pushforwards}
that pureness with respect to a downwards closed family is independent of the base topos.
In particular, $n$-connectedness is independent of the base topos.
The notion of being $n$-pure is not, since in this case 
the downwards closed family $\A$
is itself defined in terms of the base topos.
\end{remark}

By convention, $\rho$ is \textbf{$(-1)$-pure} (resp. $(-1)$-connected) 
if for all $X$ in $\E$ the map
\begin{equation*}
\HH^0(\rho/X) : \HH^0(\E/X,A) \to \HH^0(\F/\rho^*X,A)
\end{equation*}
is a monomorphism, for all $A = e^*T$ with $T$ in $\T$ 
(resp. for all $A$ in $\E$). We also follow the convention that every geometric morphism is $(-2)$-pure (and $(-2)$-connected).

A geometric morphism is by definition $n$-connected if and only if it is $n$-pure over the codomain topos. 
As special cases, we find that $(-1)$-connectedness and $0$-connectedness agree with surjectivity resp. connectedness.
Similarly, a geometric morphism is $(-1)$-pure over $\T$ if and only if it is $\T$-dense.
If $\A$ contains only Eilenberg--Mac Lane objects of the form $K(A,0)$, then the notion of $\A$-pure coincides with that of Section \ref{ssec:A-dense-A-pure}. In fact, $-1$-pure and $0$-pure geometric morphisms were already introduced there, see also Remark \ref{rem:comparison-to-0-pure-introduction}.

\begin{remark}
Definition \ref{def:n-pure} is closely related to the concepts of $n$-acyclic 
and $1$-aspherical morphisms of schemes as studied in SGA4 \cite[Expos\'e XV]{sga4-3}.
Before making a comparison, we first recall some definitions from SGA4.
For a set of primes $L$, an ind-$L$-group $G$ is a group such that
each finitely generated subgroup $H \subseteq G$ is finite, with the prime divisors
of the order of $H$ contained in $L$ \cite[Expos\'e IX, D\'efinition 1.3]{sga4-3}.
A sheaf of ind-$L$-groups on the petit \'etale site of $X$ is 
a sheaf of groups $F$ such that $F(U)$ is an ind-$L$-group for all $U \to X$ \'etale
with $U$ quasi-compact \cite[Expos\'e IX, D\'efinition 1.5]{sga4-3}.
An $L$-torsion sheaf is a sheaf of abelian groups $A$ such that
\begin{equation*}
A \simeq \varinjlim A_n
\end{equation*}
in the category of sheaves, with $A_n$ the kernel of the multiplication by $n$ map $A \to A$
\cite[Expos\'e IX, D\'efinition 1.1]{sga4-3}, and with the limit going over natural numbers $n$ that have prime divisors in $L$.

Let $g : X \to Y$ be a morphism of schemes, and let $\gamma : X_\etale \to Y_\etale$ be the induced
geometric morphism on petit \'etale toposes. Then:
\begin{itemize}
\item $g$ is $(-1)$-acyclic if and only if $\gamma$ is $(-1)$-connected (i.e.\ surjective) \cite[Expos\'e XV, D\'efinition 1.3]{sga4-3};
\item $g$ is $0$-acyclic if and only if $\gamma$ is $0$-connected (i.e.\ connected) \cite[Expos\'e XV, D\'efinition 1.3]{sga4-3};
\item $g$ is $n$-acyclic for a set of primes $L$, $n \geq 1$, if and only if $\gamma$ is pure with respect to the family of all $(m,A)$
with $m \leq n$ and $A$ an $L$-torsion sheaf \cite[Expos\'e XV, D\'efinition 1.7]{sga4-3};
\item $g$ is $1$-aspherical for a set of primes $L$ if and only if $\gamma$ is pure with respect to the family of all $(m,A)$ with $m\in \{0,1\}$ and
$A$ a sheaf of ind-$L$-groups \cite[Expos\'e XV, D\'efinition 1.7]{sga4-3}.
\end{itemize}
The above correspondences follow from the alternative characterizations of $n$-pureness that we discuss later on in this article.
\end{remark}

\begin{remark} \label{rem:n-connected-hoyois}
A related (but possibly different) notion of $n$-connected geometric morphism already appears in the higher topos theory literature \cite{hoyois}, extending the notion of $n$-connected topological space in topology, see Remark \ref{rem:relation-to-earlier}.
\end{remark}

We mention the following basic properties of $\A$-pure geometric morphisms.

\begin{proposition} \label{prop:basic-properties-A-pure}
Consider geometric morphisms
\begin{equation*}
\begin{tikzcd}
\mathcal{G} \ar[r,"{\psi}"] & \F \ar[r,"{\rho}"] & \E
\end{tikzcd}
\end{equation*}
and let $\A$ be a downwards closed family of Eilenberg--Mac Lane objects in $\s\E$. We write $\rho^*\A$ for the family consisting of the objects $\rho^*E$ for $E$ in $\A$.
\begin{enumerate}
\item If $\rho$ is $\A$-pure and $\psi$ is $\rho^*\A$-pure, then $\rho\psi$ is $\A$-pure.
\item If $\rho\psi$ is $\A$-pure and $\psi$ is $\rho^*\A$-pure, then $\rho$ is $\A$-pure.
\end{enumerate}
\end{proposition}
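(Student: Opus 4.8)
The plan is to reduce both statements to a single functoriality property of the cohomology base-change maps, combined with the two-out-of-three property for isomorphisms. First I would record that slicing is compatible with composition: for an object $X$ of $\E$ we have $(\rho\psi)^*X = \psi^*\rho^*X$, and the induced geometric morphism $(\rho\psi)/X : \mathcal{G}/(\rho\psi)^*X \to \E/X$ factors as $(\rho/X)\circ(\psi/\rho^*X)$, where $\psi/\rho^*X : \mathcal{G}/\psi^*(\rho^*X) \to \F/\rho^*X$ and $\rho/X : \F/\rho^*X \to \E/X$. Since $\rho^*K(A,m)\simeq K(\rho^*A,m)$ by Proposition \ref{prop:pullback-preserves-homotopy-groups}, for each $K(A,m)\in\A$ the pullback $\rho^*K(A,m)$ is, up to weak equivalence, the member $K(\rho^*A,m)$ of $\rho^*\A$, so the cohomology groups below are indexed consistently.

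The core step is the contravariant functoriality of the base-change map from Subsection \ref{ssec:contravariance-of-cohomology}: applied to the factorization above it gives
\begin{equation*}
\HH^m\big((\rho\psi)/X\big) ~=~ \HH^m\big(\psi/\rho^*X\big)\circ\HH^m\big(\rho/X\big),
\end{equation*}
a composite $\HH^m(\E/X,A)\to\HH^m(\F/\rho^*X,\rho^*A)\to\HH^m(\mathcal{G}/(\rho\psi)^*X,(\rho\psi)^*A)$. To verify this I would unwind the construction of \eqref{eq:HHrho}: for a geometric morphism $\sigma$ the map $\HH^m(\sigma,-)$ is obtained by applying $\pi_0$ of the relevant pushforward to the composite of the adjunction unit with $\s\sigma_*$ of a fibrant replacement of $\s\sigma^*K(A,m)$. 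The unit of the composite adjunction $\s(\rho\psi)^*\dashv\s(\rho\psi)_*$ factors as $\s\rho_*(\eta^{\psi})\s\rho^*\circ\eta^{\rho}$, so choosing the fibrant replacement for $\rho\psi$ compatibly with those for $\rho$ and $\psi$ yields the claimed equality; this is the usual compatibility of derived pushforwards under composition.

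With this factorization in hand both statements are immediate. For (1), the map $\HH^m(\rho/X)$ is an isomorphism because $\rho$ is $\A$-pure, and $\HH^m(\psi/\rho^*X)$ is an isomorphism because $\psi$ is $\rho^*\A$-pure (applied to $K(\rho^*A,m)\in\rho^*\A$ and the object $\rho^*X$ of $\F$); hence the composite $\HH^m((\rho\psi)/X)$ is an isomorphism, and since $X$ and $K(A,m)\in\A$ were arbitrary, $\rho\psi$ is $\A$-pure. For (2), the composite $\HH^m((\rho\psi)/X)$ is an isomorphism because $\rho\psi$ is $\A$-pure, and the second factor $\HH^m(\psi/\rho^*X)$ is again an isomorphism because $\psi$ is $\rho^*\A$-pure; by two-out-of-three the first factor $\HH^m(\rho/X)$ is then an isomorphism, so $\rho$ is $\A$-pure.

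The bookkeeping of slices and coefficients is routine; the only point demanding care is the functoriality equation, that is, checking that the base-change map built from adjunction units and fibrant replacements respects composition. I expect this to be the main obstacle, since the explicit construction via a chosen fibrant replacement is only functorial up to homotopy, so one must either fix compatible replacements or argue directly in the homotopy category, where $\s\rho^*$ preserves weak equivalences by Proposition \ref{prop:pullback-preserves-homotopy-groups} and the composite of derived units is well defined.
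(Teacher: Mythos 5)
Your proof is correct, and it is exactly the ``writing out the definitions in detail'' that the paper's one-line proof alludes to: the factorization $(\rho\psi)/X=(\rho/X)\circ(\psi/\rho^*X)$, the compatibility of the base-change maps of Subsection \ref{ssec:contravariance-of-cohomology} with composition, and the two-out-of-three argument for isomorphisms. You also correctly flag the only real point of care, namely that the derived units compose up to the choice of compatible fibrant replacements, which is handled as you describe since $\s\rho^*$ preserves weak equivalences.
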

\begin{proof}
Straightforward once the definitions are written out in detail.
\end{proof}

\subsection{Independence of the base topos}

We claim that the notion of pure geometric morphism with respect to a 
downwards closed family $\A$ is independent of the base topos $\T$.

Let $\rho : \F\to\E$ be a geometric morphism over a Grothendieck topos $\T$ and
let $t : \T\to \Set$ be the unique geometric morphism to $\Set$.
Let $K(A,n)$ be an Eilenberg--Mac Lane object in $\s\E$.
We will show that the maps
\begin{equation} \label{eq:over-T}
\HH^m_\T(\rho/U,A) : \HH^m_\T(\E/U,A) \to \HH^m_\T(\F/\rho^*U,\rho^*A)
\end{equation}
are isomorphisms for all $U$ in $\E$ and $m\leq n$ if and only if the maps
\begin{equation} \label{eq:over-Set}
\HH^m(\rho/U,A) : \HH^m_\Set(\E/U,A) \to \HH^m_\Set(\F/\rho^*U,\rho^*A)
\end{equation}
are isomorphisms for all $U$ in $\E$ and $m \leq n$. Here $\HH^m_\T$ and $\HH^m_\Set$
denote cohomology with respect to the base topos $\T$ and $\Set$,
see \eqref{eq:cohomology-over-base-topos}.

\begin{proposition} \label{prop:from-Set-to-T}
Take a fixed Eilenberg--Mac Lane object $K(A,m)$ in $\E$.
If $\HH^m_\Set(\rho/U,A)$ is an isomorphism (resp.\ monomorphism) for all $U$ in $\E$,
then also $\HH^m_\T(\rho/U,A)$ is an isomorphism (resp.\ monomorphism) for all $U$ in $\E$.
\end{proposition}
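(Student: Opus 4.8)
\emph{Proof plan.} The plan is to reduce the statement over $\T$ to the statement over $\Set$, by expressing each cohomology object over $\T$ as the sheafification of a presheaf whose values are cohomology sets over $\Set$. The crucial input is Proposition \ref{prop:derived-pushforward-as-sheaf}, which describes a higher direct image as a sheaf through its sections.

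First I would fix notation: let $e : \E \to \T$ and $f = e\rho : \F \to \T$ be the structure maps, so that $f^* = \rho^* e^*$. The object $\HH^m_\T(\E/U,A)$ is $R^m(e_U)_*$ of the pullback of $A$, where $e_U = e\pi_U : \E/U \to \T$. Applying Proposition \ref{prop:derived-pushforward-as-sheaf} to the geometric morphism $e_U$ (in the role of $\rho$, with $\T$ as codomain), and using the slice identification $(\E/U)/e_U^* T' \simeq \E/(U \times e^* T')$ together with the fact that the $\HH^m$ appearing there is taken over $\Set$, I obtain that $\HH^m_\T(\E/U,A)$ is the sheafification in $\T$ of the presheaf $T' \mapsto \HH^m_\Set(\E/(U \times e^* T'),A)$. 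In the same way, $\HH^m_\T(\F/\rho^*U,\rho^*A)$ is the sheafification in $\T$ of $T' \mapsto \HH^m_\Set(\F/\rho^*(U \times e^* T'),\rho^*A)$, where I use $f^* T' = \rho^* e^* T'$ to identify the object along which I slice $\F$.

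Next I would identify the comparison map itself. By the naturality of the base-change construction in Subsection \ref{ssec:contravariance-of-cohomology}, the morphism $\HH^m_\T(\rho/U,A)$ is the sheafification of the morphism of presheaves on $\T$ whose value at $T'$ is exactly the $\Set$-comparison map $\HH^m_\Set(\rho/(U \times e^* T'),A)$. By hypothesis, each such component (taking $V = U \times e^* T'$) is an isomorphism (resp.\ monomorphism), so the morphism of presheaves is an isomorphism (resp.\ monomorphism), these being detected pointwise in a presheaf category. Since sheafification is left exact, it preserves isomorphisms and monomorphisms, and hence $\HH^m_\T(\rho/U,A)$ is an isomorphism (resp.\ monomorphism) of objects of $\T$, for every $U$ in $\E$.

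The step I expect to be the main obstacle is the bookkeeping in the middle paragraph: checking that $\HH^m_\T(\rho/U,A)$ genuinely is the sheafification of the presheaf map assembled from the $\Set$-comparison maps. Concretely, this means verifying that the base-change maps of Subsection \ref{ssec:contravariance-of-cohomology} are compatible with the sheaf descriptions furnished by Proposition \ref{prop:derived-pushforward-as-sheaf} and with the slice identifications $(\E/U)/e_U^* T' \simeq \E/(U \times e^* T')$ and $\rho^* e^* T' = f^* T'$. Once this naturality is pinned down, the passage from the $\Set$-statement to the $\T$-statement is immediate from the left exactness of sheafification.
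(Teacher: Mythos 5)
Your proposal is correct and follows essentially the same route as the paper: both express $\HH^m_\T(\E/U,A)$ and $\HH^m_\T(\F/\rho^*U,\rho^*A)$ as sheafifications of the presheaves $T' \mapsto \HH^m_\Set(\E/(U\times e^*T'),A)$ and $T' \mapsto \HH^m_\Set(\F/\rho^*(U\times e^*T'),\rho^*A)$ via Proposition \ref{prop:derived-pushforward-as-sheaf}, and then conclude from the hypothesis applied to the objects $U\times e^*T'$ together with the fact that sheafification preserves isomorphisms and monomorphisms. Your additional attention to the naturality of the comparison map is a reasonable elaboration of a point the paper leaves implicit, not a departure from its argument.
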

\begin{proof}
We can write $\HH^m_\T(\E/U,A)$ as the sheafification of the presheaf
\begin{equation*}
T \mapsto \HH^m_\Set(\E/(U\times e^*T),A)
\end{equation*}
as in Subsection \ref{ssec:derived-pushforward-as-sheaf}, and similarly
for $\HH^m_\T(\F/\rho^*U,A)$. The statement then follows from the fact
that sheafification preserves isomorphisms and monomorphism.
\end{proof}

Conversely, we find:

\begin{proposition} \label{prop:from-T-to-Set}
Take an Eilenberg--Mac Lane object $K(A,n)$ in $\s\E$ and a fixed object $U$ in $\E$.
If $\HH^m_\T(\rho/U,A)$ is an isomorphism for all $0 \leq m \leq n$
then also $\HH^m_\Set(\rho/U,A)$ is an isomorphism for all $0 \leq m \leq n$.
\end{proposition}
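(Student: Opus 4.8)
The plan is to package the whole range of cohomology objects $\HH^0_\T,\dots,\HH^n_\T$ into the homotopy sheaves of a single pushed-forward Eilenberg--Mac Lane object, so that the hypothesis becomes the statement that one map is a weak equivalence, and then to transport that weak equivalence along $\s t_*$. The point is that this is dual to Proposition \ref{prop:from-Set-to-T}: there one passes from $\Set$ to $\T$ by sheafification, whereas here one passes from $\T$ to $\Set$ using that $\s t_*$ is a right Quillen functor.

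First I would fix the structure maps $e_U : \E/U \to \T$ and $f_{\rho^*U} : \F/\rho^*U \to \T$ and form the objects $P_\E = \s(e_U)_* K(A,n)$ and $P_\F = \s(f_{\rho^*U})_* K(\rho^*A,n)$ in $\s\T$. Both are fibrant, since $\s(e_U)_*$ and $\s(f_{\rho^*U})_*$ are right Quillen by Proposition \ref{prop:pullback-preserves-homotopy-groups}. The crucial computation is that their homotopy sheaves recover cohomology over $\T$: because $\s(e_U)_*$ preserves finite limits, and hence the loop functor, and because $\Omega K(A,j)\simeq K(A,j-1)$ (Appendix \ref{app:em-objects-and-loop-spaces}), one obtains $\Omega^k P_\E \simeq \s(e_U)_* K(A,n-k)$, whence $\pi_k(P_\E)\simeq \pi_0(\s(e_U)_* K(A,n-k)) = R^{n-k}(e_U)_* A = \HH^{n-k}_\T(\E/U,A)$ for $0\le k\le n$ and $\pi_k(P_\E)$ trivial for $k>n$; the same holds for $P_\F$.

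Next, the base-change morphism constructed in Subsection \ref{ssec:contravariance-of-cohomology} gives a map $P_\E \to P_\F$ in $\s\T$ that induces, on $\pi_k$, exactly the comparison map $\HH^{n-k}_\T(\rho/U,A)$. The hypothesis that $\HH^m_\T(\rho/U,A)$ is an isomorphism for all $0\le m\le n$ therefore says precisely that $P_\E \to P_\F$ induces an isomorphism on every homotopy sheaf (both sides vanishing above degree $n$), i.e.\ that $P_\E \to P_\F$ is a weak equivalence between fibrant objects. Applying the right Quillen functor $\s t_*$ together with Ken Brown's lemma, the map $\s t_* P_\E \to \s t_* P_\F$ is again a weak equivalence. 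Since $\s t_*\s(e_U)_* = \s(t e_U)_*$, the same loop-functor computation gives $\pi_k(\s t_* P_\E)\simeq \HH^{n-k}_\Set(\E/U,A)$, and reading off homotopy sheaves shows that $\HH^m_\Set(\rho/U,A)$ is an isomorphism for all $0\le m\le n$.

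The main obstacle I anticipate is the two homotopy-theoretic identifications, and in particular justifying that an isomorphism on all homotopy sheaves genuinely amounts to a weak equivalence in the sense of (WE1)--(WE2): one must verify that the basepoint-fibred pullback condition (WE2) is controlled for these pushed-forward Eilenberg--Mac Lane objects, which is exactly where the explicit description of $\Omega$ and of Eilenberg--Mac Lane objects from Appendix \ref{app:em-objects-and-loop-spaces} is needed. The low-degree cases (where $A$ may be nonabelian) are covered by the same argument, since the identifications $\Omega K(A,1)\simeq K(A,0)$ and $K(A,0)\simeq A$ still hold and $\s t_*$ preserves the resulting isomorphisms directly.
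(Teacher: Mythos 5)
Your proposal is correct and follows essentially the same route as the paper: identify $\HH^{m}_\T$ with $\pi_{n-m}$ of the pushed-forward Eilenberg--Mac Lane object via commutation of $\Omega$ with $\s e_*$, observe that the hypothesis plus $n$-truncatedness of the pushforward makes the comparison map a weak equivalence of fibrant objects, and then transport along $\s t_*$. The only cosmetic differences are that the paper first reduces to $U=1$ by replacing $\rho$ with $\rho/U$, and that it preserves the weak equivalence under $t_*$ by noting it is a homotopy equivalence between fibrant objects rather than by invoking Ken Brown's lemma.
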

\begin{proof}
We can assume without loss of generality that $U=1$, 
by replacing $\rho$ with $\rho/U$ if necessary.
We then write:
\begin{equation} \label{eq:looping}
\begin{split}
\HH^m_\T(\E,A) &\simeq \pi_0(\s e_* K(A,m)) \\
&\simeq \pi_{0}(\s e_* \Omega^{n-m}K(A,n)) \\ 
&\simeq \pi_0(\Omega^{n-m} \s e_* K(A,n)) \\
&\simeq \pi_{n-m}(\s e_* K(A,n))
\end{split}.
\end{equation}
We refer to Appendix \ref{app:em-objects-and-loop-spaces} for the definition of $\Omega$ and its relevant properties. Similarly we get $\HH^m_\T(\F,\rho^*A) \simeq \pi_{n-m}(\s f_* K(\rho^*A,n) )$.

So by assumption the induced map $\phi : \s e_* K(A,n) \to \s f_* K(\rho^* A,n)$ is an isomorphism on homotopy groups $\pi_k$ for $0 \leq k \leq n$. The Eilenberg--Mac Lane spaces $K(A,n)$ and $K(\rho^*A,n)$ are $n$-truncated, and pushforward preserves $n$-truncated objects (see the comment in the proof of Proposition 7.6 of \cite{rezk}). So the homotopy groups $\pi_k$ for $k \geq n+1$ vanish for $\s e_* K(A,n)$ and $\s f_* K(\rho^*A,n)$, and as a result $\phi$ is a weak equivalence. A weak equivalence between fibrant objects is a homotopy equivalence (see Remark \ref{rem:all-objects-cofibrant}), so it is preserved by $t_*$, with $t : \T \to \Set$ the global sections geometric morphism. The statement follows.
\end{proof}

\subsection{Independence of the base topos: applications}

We can use independence of the base topos to deduce various properties or equivalent
characterizations of pure geometric morphisms with respect to a downwards closed family of Eilenberg--Mac Lane objects $\A$.

\begin{proposition} \label{prop:coproduct-pure}
Let $\rho : \F \to \E$ be a geometric morphism and let $\A$ be a downwards closed family of Eilenberg--Mac Lane objects in $\s\E$.
Suppose that $\E$ can be written as a disjoint union of toposes $\E \simeq \bigsqcup_{i \in I} \E_i$.
For each $i \in I$, let $\rho_i : \F_i \to \E_i$ be the base change of $\rho$ along $\E_i \hookrightarrow \E$.
Then $\rho$ is pure with respect to $\A$ if and only if 
for each $i \in I$, $\rho_i$ is pure with respect to $\A$.
\end{proposition}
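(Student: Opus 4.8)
The plan is to reduce the statement to a single computation — that cohomology turns a disjoint union of toposes into a product — and then read off both implications from naturality, so that the biconditional becomes ``a product of morphisms is an isomorphism if and only if each factor is''. First I would fix notation for the decomposition. Writing $\E \simeq \bigsqcup_{i\in I}\E_i$ corresponds to a decomposition of the terminal object $1 \simeq \bigsqcup_{i\in I} U_i$ into complemented subterminal objects, with $\E_i \simeq \E/U_i$ and the inclusion $u_i : \E_i \hookrightarrow \E$ étale (indeed open and closed). Since $\rho^*$ preserves coproducts and the terminal object we obtain $1 \simeq \bigsqcup_i \rho^*U_i$ in $\F$, whence $\F \simeq \bigsqcup_i \F_i$ with $\F_i \simeq \F/\rho^*U_i$ and $\rho_i \simeq \rho/U_i$. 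For $K(A,m)\in\A$ I write $A_i = u_i^*A$; by Proposition \ref{prop:eilenberg-maclane-locally} the restriction $\s u_i^* K(A,m)$ is a $K(A_i,m)$, and the restricted family $u_i^*\A$ is again downwards closed, so ``$\rho_i$ is pure with respect to $\A$'' is read as pureness with respect to $u_i^*\A$.

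Next I would establish the key lemma: for a disjoint union of toposes $\Dcal \simeq \bigsqcup_i \Dcal_i$ over $\Set$, with structure maps $e : \Dcal \to \Set$, $e_i : \Dcal_i \to \Set$, inclusions $u_i : \Dcal_i \hookrightarrow \Dcal$ and an Eilenberg--Mac Lane object $K(A,m)$ in $\s\Dcal$ with $A_i := u_i^*A$, there is a natural isomorphism
\begin{equation*}
\HH^m_\Set(\Dcal,A) \;\cong\; \prod_{i\in I}\HH^m_\Set(\Dcal_i,A_i).
\end{equation*}
The input facts are: (i) a direct-image/adjunction computation giving $e_* \simeq \prod_i e_{i,*}$ (each applied to the corresponding component); (ii) that $\s e_*$ is computed pointwise and that $\s u_i^*$ is right Quillen for the étale map $u_i$ (by the corollary to Proposition \ref{prop:connected-components-etale-preserves-homotopy}), so that $\s u_i^* K(A,m)$ is a \emph{fibrant} $K(A_i,m)$ and $\s e_* K(A,m) \simeq \prod_i \s e_{i,*}K(A_i,m)$ as Kan complexes; and (iii) that $\pi_0$ commutes with products of Kan complexes (a $1$-simplex of a product is a tuple of $1$-simplices, so $\pi_0(\prod_i Z_i)\cong\prod_i\pi_0(Z_i)$). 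Applying $\pi_0$ yields the displayed isomorphism. The reduction to the base $\Set$ is legitimate by Proposition \ref{prop:from-Set-to-T} and Proposition \ref{prop:from-T-to-Set}: pureness with respect to a downwards closed family is independent of the base topos, so $\rho$ (resp.\ each $\rho_i$) is pure if and only if the corresponding cohomology maps over $\Set$ are isomorphisms. This reduction is what makes step (iii) clean, since over $\Set$ the functor $\pi_0$ is literally connected components and no sheafification intervenes; over a general base $\pi_0$ need not commute with infinite products.

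Finally I would assemble the two implications. Given $X$ in $\E$, write $X \simeq \bigsqcup_i X_i$ with $X_i = X\times U_i$, so that $\E/X \simeq \bigsqcup_i \E_i/X_i$ and $\F/\rho^*X \simeq \bigsqcup_i \F_i/\rho_i^*X_i$ (products distribute over coproducts in a topos). Applying the lemma to both sides, naturality identifies $\HH^m(\rho/X)$ with $\prod_i \HH^m(\rho_i/X_i)$. Hence if every $\rho_i$ is pure then each factor, and so the product, is an isomorphism for all $X$ and all $K(A,m)\in\A$, so $\rho$ is pure; conversely, if $\rho$ is pure, then fixing $j$ and an arbitrary $Y$ in $\E_j$ and taking $X$ with $X_j = Y$ exhibits $\HH^m(\rho_j/Y)$ as a factor of an isomorphism, hence an isomorphism, so each $\rho_j$ is pure. (The forward direction may also be seen directly: for $Y$ lying over $U_j$ the map $\HH^m(\rho_j/Y)$ coincides with $\HH^m(\rho/Y)$.)

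The main obstacle is the key lemma, and within it the interchange of $\pi_0$ with an arbitrary product; this is exactly where reducing to the base $\Set$ via independence of the base topos does the essential work, together with checking that restriction along the clopen inclusions $\Dcal_i \hookrightarrow \Dcal$ preserves both Eilenberg--Mac Lane objects and fibrancy.
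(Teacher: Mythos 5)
Your proof is correct, and all the ingredients you invoke (Proposition \ref{prop:eilenberg-maclane-locally} for restricting Eilenberg--Mac Lane objects along the clopen inclusions, right-Quillen-ness of $\s u_i^*$ for fibrancy, $\pi_0$ commuting with products of Kan complexes, and base-independence of pureness) are available and used appropriately. The paper, however, disposes of the statement in one line: it invokes independence of the base topos and simply \emph{changes the base} to $\T \simeq \bigsqcup_{i\in I}\Set$. Over that base, $\HH^m_\T(\E/X,A)$ is by construction an $I$-indexed family of sets whose $i$-th component is $\HH^m_\Set(\E_i/X_i,A_i)$, and a morphism in $\bigsqcup_{i\in I}\Set$ is an isomorphism exactly when each component is; no product-decomposition lemma and no argument about factors of isomorphisms is needed. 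Your route through the base $\Set$ proves the same componentwise decomposition but packaged as an honest product of sets, which is why you then have to worry (correctly) about whether a factor of a bijection is a bijection in the converse direction; note that with arbitrary $X$ this can fail when some factor is empty (possible for $\HH^0$ with empty coefficient object), so you should either take $X$ supported on the single component $j$ (making the remaining factors singletons) or rely on your parenthetical observation that $\HH^m(\rho_j/Y)$ literally coincides with $\HH^m(\rho/Y)$ for $Y$ over $U_j$ --- either fix is immediate. In short: same underlying idea (base-independence plus componentwise cohomology), but the paper's choice of base topos absorbs your key lemma and the bookkeeping into the definition of a morphism in a coproduct of toposes.
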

\begin{proof}
This follows by taking as base topos $\T ~\simeq~ \bigsqcup_{i \in I} \Set$.
\end{proof}

The following is an alternative characterization of $\A$-pure geometric morphisms 
in terms of derived pushforwards. It can be seen as a generalization
of the characterization of $n$-acyclic morphisms of schemes in terms of the derived pushforward
\cite[Expos\'e XV, \S1]{sga4-3}.

\begin{theorem} \label{thm:characterization-derived-pushforwards}
Let $\rho : \F \to \E$ be a geometric morphism over the base topos $\T$ 
and take $\A= \{ K(A,0),K(A,1),\dots, K(A,n)\}$.
Then the following are equivalent, for $n \geq 0$:
\begin{enumerate}
\item $\rho$ is $n$-pure with respect to $\A$;
\item the unit map $A \to \rho_*\rho^*A$ is an isomorphism and 
$R^i\rho_*(A) \simeq 0$ for all $1 \leq i \leq n$.
\end{enumerate}
\end{theorem}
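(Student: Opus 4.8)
The plan is to route everything through the sheaf-theoretic description of derived pushforwards in Proposition~\ref{prop:derived-pushforward-as-sheaf} together with a homotopy-sheaf computation of $\s\rho_*$ applied to an Eilenberg--Mac Lane object. First I would reduce to $\T = \Set$: by Propositions~\ref{prop:from-Set-to-T} and \ref{prop:from-T-to-Set}, pureness with respect to $\A$ over $\T$ is equivalent to pureness over $\Set$ (apply the equivalence of \eqref{eq:over-T} and \eqref{eq:over-Set} to each slice $\rho/X$ across the full range $m \le n$), while condition (2) is intrinsic to $\rho$. The key computation is this: since $\s\rho_*$ is a right adjoint it preserves finite limits, hence commutes with the loop functor $\Omega$; combined with $\Omega^{m-i}K(\rho^*A,m) \simeq K(\rho^*A,i)$ and $\pi_0(\Omega^k Z) \simeq \pi_k(Z)$, this gives for $Y_m := \s\rho_*K(\rho^*A,m)$ that
\begin{equation*}
\pi_j(Y_m) \simeq R^{m-j}\rho_*(\rho^*A) \quad (0 \le j \le m), \qquad \pi_j(Y_m) = 1 \ (j > m),
\end{equation*}
using $R^i\rho_*(\rho^*A) = \pi_0(\s\rho_*K(\rho^*A,i))$. (I read $R^i\rho_*(A)$ in the statement as $R^i\rho_*(\rho^*A)$.)

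For the implication (2)$\Rightarrow$(1) I would first observe that condition (2) is stable under \'etale base change: by Proposition~\ref{prop:derived-pushforward-as-sheaf} the restriction of $R^i\rho_*(\rho^*A)$ to $\E/X$ is the corresponding derived pushforward for $\rho_X : \F/\rho^*X \to \E/X$, and restriction preserves isomorphisms, trivial objects, and the unit map. Hence it suffices to treat $X = 1$. There the computation above, together with $R^0\rho_*(\rho^*A) \simeq \rho_*\rho^*A \cong A$ and $R^i\rho_*(\rho^*A) = 0$ for $1 \le i \le m \le n$, shows that $Y_m$ is fibrant with homotopy concentrated in degree $m$, so $Y_m \simeq K(A,m)$, and that the adjunction unit $K(A,m) \to \s\rho_*\s\rho^*K(A,m) \simeq Y_m$ is a weak equivalence. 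Applying the right Quillen functor $\s e_*$ (which preserves weak equivalences between fibrant objects) and then $\pi_0$ turns this unit into the base change map $\HH^m(\rho)$ of Subsection~\ref{ssec:contravariance-of-cohomology}, so $\HH^m(\rho)$ is an isomorphism for all $m \le n$.

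For (1)$\Rightarrow$(2) I would compare two instances of Proposition~\ref{prop:derived-pushforward-as-sheaf}. Applied to the identity $\mathrm{id}_\E$, it shows that the presheaf $U \mapsto \HH^i(\E/U,A)$ sheafifies to $R^i(\mathrm{id})_*(A) = \pi_0(K(A,i))$, which equals $A$ for $i = 0$ and is trivial for $1 \le i \le n$; applied to $\rho$, it shows $U \mapsto \HH^i(\F/\rho^*U,\rho^*A)$ sheafifies to $R^i\rho_*(\rho^*A)$. Pureness with respect to $\A$ says precisely that the base change maps $\HH^i(\rho/U)$ assemble into a natural isomorphism between these two presheaves for all $U$ and all $i \le n$; sheafifying, and identifying the $i = 0$ comparison map with the unit, yields $A \cong \rho_*\rho^*A$ and $R^i\rho_*(\rho^*A) = 0$ for $1 \le i \le n$.

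I expect the main obstacle to be the bookkeeping around basepoints and the precise identification of $Y_m$ with $K(A,m)$ via the uniqueness of Eilenberg--Mac Lane objects (Appendix~\ref{app:em-objects-and-loop-spaces}), together with checking that the unit $K(A,m) \to Y_m$ really induces on $\pi_m$ the unit $A \to \rho_*\rho^*A$; this is what glues the degree-$m$ homotopy sheaf to the $i = 0$ condition and closes the loop between the two directions. A pleasant feature of this homotopical formulation is that the nonabelian cases $m = 0, 1$ require no separate treatment, which is the main advantage over a Leray-spectral-sequence argument.
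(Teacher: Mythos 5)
Your argument is correct, but it takes a genuinely different route from the paper's. The paper's proof is a single short reduction: by independence of the base topos it takes $\T\simeq\E$, so that $\HH^m_\E(\rho/U,A)$ becomes $\xi_U^*$ applied to the one global map $\pi_0(K(A,m))\to\pi_0(\s\rho_*K(\rho^*A,m))$ (via Proposition \ref{prop:eilenberg-maclane-locally}, Beck--Chevalley for the \'etale $\xi_U$, and Proposition \ref{prop:pullback-preserves-homotopy-groups}); since a map of $\E$ is an isomorphism iff all its \'etale restrictions are, ``iso for all $U$ and all $m\leq n$'' collapses immediately to condition (2), with both directions handled at once and only $\pi_0$ ever appearing. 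You instead normalize to $\T=\Set$ and split the equivalence: for (1)$\Rightarrow$(2) you sheafify the cohomology presheaves via Proposition \ref{prop:derived-pushforward-as-sheaf} (essentially re-deriving the paper's argument in a pointwise form), and for (2)$\Rightarrow$(1) you show the derived unit $K(A,m)\to\s\rho_*K(\rho^*A,m)$ is a weak equivalence by computing all homotopy sheaves of the target through $\Omega$ --- which in effect proves Corollary \ref{cor:preserves-em-objects} along the way. The price of your route is exactly the point you flag yourself: you must identify the map induced on $\pi_m$ by the derived unit with the adjunction unit $A\to\rho_*\rho^*A$ (an abstract isomorphism $\pi_m(Y_m)\cong\rho_*\rho^*A$ is not enough), which requires checking that the unit is compatible with the identifications $\Omega\s\rho_*\simeq\s\rho_*\Omega$ and $\Omega\s\rho^*\simeq\s\rho^*\Omega$; this is routine but is precisely the bookkeeping the paper's choice $\T\simeq\E$ avoids, since there only $\pi_0$ of the comparison map is ever needed. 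Two small points: the step ``$\s\rho_*$ is a right adjoint, hence commutes with $\Omega$'' should be phrased as right Quillen functors preserving homotopy limits of fibrant diagrams (this is exactly Proposition \ref{prop:looping-commutes-with-pushforward}, which you can cite instead), and your reduction to $\T=\Set$ is legitimate only because Propositions \ref{prop:from-Set-to-T} and \ref{prop:from-T-to-Set} are proved independently of the theorem --- which they are, so there is no circularity.
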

\begin{proof}
We use independence of the base topos, taking as base topos $\T \simeq \E$. We can write $\HH_\E^m(\rho/U, A)$ as the map
\begin{equation*}
\pi_0(K(\xi_U^*A,m))
\to 
\pi_0 (\s \rho_{U,*} K((\xi_U')^*\rho_U^*A,m)) 
\end{equation*}
with $\xi_U : \E/U \to \E$ and $\xi_U' : \F/U \to \F$ the \'etale geometric morphisms associated to $U$. We can further rewrite this as
\begin{equation*}
\xi^*_U\pi_0(K(A,m))
\longrightarrow
\xi^*_U\pi_0(\s\rho_* K(\rho^*A,m)),
\end{equation*}
using Proposition \ref{prop:eilenberg-maclane-locally}, the Beck--Chevalley condition $\s\rho_{U,*}(\s\xi_U')^* \simeq \s\xi^*_U\s\rho_*$ \cite[Theorem C3.3.15]{EL} and Proposition \ref{prop:pullback-preserves-homotopy-groups}. This is an isomorphism for all $U$ if and only if the map
\begin{equation*}
\pi_0(K(A,m))
\to
\pi_0(\s\rho_* K(A,m))
\end{equation*}
is an isomorphism. The statement now follows.
\end{proof}

Let $\rho: \F \to \E$ be a geometric morphism over $\T$.
\begin{equation}\label{eq:rho-e}
\begin{tikzcd}
\F \ar[rr,"{\rho}"] \ar[dr,swap,"{f}"] && \E \ar[dl,"{e}"]\\
& \T
\end{tikzcd}
\end{equation}
We say that $\rho$ \emph{preserves $\T$-indexed coproducts} if the counit map
\begin{equation} \label{eq:T-indexed-coproducts}
e^*A \to \rho_*\rho_*e^*A
\end{equation}
is an isomorphism, for each object $A$ in $\T$ \cite[Definition 2.2.6]{SCT}.

As a special case of Theorem \ref{thm:characterization-derived-pushforwards} we find:

\begin{corollary}\label{cor:0pure}
A geometric morphism over $\T$
is $0$-pure if and only if it preserves $\T$-coproducts.
\end{corollary}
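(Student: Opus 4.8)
The plan is to read off the corollary directly from Theorem \ref{thm:characterization-derived-pushforwards}, the only subtlety being to match the family-based Definition \ref{def:n-pure} of $0$-pureness with the single-object formulation of that theorem.

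First I would unpack the definition. By Definition \ref{def:n-pure}, the geometric morphism $\rho : \F \to \E$ is $0$-pure over $\T$ precisely if it is pure with respect to the family of all Eilenberg--Mac Lane objects $K(e^*T,0)$ with $T$ in $\T$. Since being pure with respect to a family means, by definition, being pure with respect to each of its members, this is equivalent to requiring, for every $T$ in $\T$ separately, that $\rho$ is pure with respect to the singleton family $\{K(e^*T,0)\}$.

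Next I would apply Theorem \ref{thm:characterization-derived-pushforwards} with $n = 0$ and $A = e^*T$. In this case the family $\{K(A,0),\dots,K(A,n)\}$ reduces to the singleton $\{K(A,0)\}$, and the vanishing condition ``$R^i\rho_*(A) \simeq 0$ for all $1 \leq i \leq n$'' is vacuous. Hence the theorem tells us that $\rho$ is pure with respect to $\{K(e^*T,0)\}$ if and only if the unit map $e^*T \to \rho_*\rho^*e^*T$ is an isomorphism.

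Combining the two steps, $\rho$ is $0$-pure over $\T$ if and only if the unit map $e^*T \to \rho_*\rho^*e^*T$ is an isomorphism for every object $T$ in $\T$. This is exactly the condition \eqref{eq:T-indexed-coproducts} defining the property that $\rho$ preserves $\T$-indexed coproducts, so the two notions coincide. Since all the real content sits inside Theorem \ref{thm:characterization-derived-pushforwards}, there is no genuine obstacle here; the one point requiring care is the passage from the family of all $K(e^*T,0)$ to a single object, for which one uses that pureness with respect to a family is the conjunction of pureness with respect to its members, together with the observation that the derived-pushforward vanishing condition is empty in degree $0$.
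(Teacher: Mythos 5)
Your proposal is correct and matches the paper's intent exactly: the paper offers no separate argument, stating only that the corollary is a special case of Theorem \ref{thm:characterization-derived-pushforwards}, and your application of that theorem with $n=0$ and $A=e^*T$ (noting the vanishing condition is vacuous in degree $0$) is precisely the intended reading. The one point you flag -- reducing pureness with respect to the family of all $K(e^*T,0)$ to pureness with respect to each singleton -- is handled correctly.
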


\begin{remark} \label{rem:comparison-to-0-pure-introduction}
Using Theorem \ref{thm:characterization-derived-pushforwards}, we recover the notion of $\A$-pure geometric morphisms from Subsection \ref{ssec:A-dense-A-pure} by restricting to Eilenberg--Mac Lane objects of the form $K(A,0)$.
\end{remark}

\begin{corollary}\label{cor:pure-reflection}
Let $\rho : \F\to\E$ be a geometric morphism over $\T$.
For any $n\,$, if the base change of $\rho$ along an \'etale surjection 
is $n$-pure, then $\rho$ is itself $n$-pure.
\end{corollary}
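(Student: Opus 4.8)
The plan is to realise the \'etale surjection as $\xi\colon \E/X \to \E$ for some object $X$ of $\E$ with $X \to 1$ an epimorphism, so that the base change of $\rho$ is the geometric morphism $\rho_X \colon \F/\rho^*X \to \E/X$ sitting in the pullback square
\[
\begin{tikzcd}
\F/\rho^*X \ar[r,"{\rho_X}"] \ar[d,"{\tilde\xi}"'] & \E/X \ar[d,"{\xi}"] \\
\F \ar[r,"{\rho}"] & \E
\end{tikzcd}
\]
with $\tilde\xi$ and $\xi$ the evident \'etale geometric morphisms. Fix $T$ in $\T$ and write $A = e^*T$; the structure map of $\E/X$ over $\T$ is $e\xi$, so the corresponding constant coefficient on $\E/X$ is $\xi^*A$. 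By Theorem \ref{thm:characterization-derived-pushforwards}, $\rho$ is $n$-pure over $\T$ if and only if, for every such $T$, the unit $A \to \rho_*\rho^*A$ is an isomorphism and $R^i\rho_*(\rho^*A) \simeq 0$ for $1 \leq i \leq n$; and likewise $\rho_X$ is $n$-pure over $\T$ if and only if, for every $T$, the unit $\xi^*A \to \rho_{X,*}\rho_X^*\xi^*A$ is an isomorphism and $R^i\rho_{X,*}(\rho_X^*\xi^*A) \simeq 0$ for $1 \leq i \leq n$. (Condition $(2)$ of that theorem is visibly independent of the base topos, so no further appeal to Propositions \ref{prop:from-Set-to-T} and \ref{prop:from-T-to-Set} is needed beyond what is already built into the theorem.)

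The next step is a base-change formula. Since $\xi$ is \'etale, hence locally connected, the Beck--Chevalley isomorphism $\s\xi^*\s\rho_* \simeq \s\rho_{X,*}\s\tilde\xi^*$ holds \cite[Theorem C3.3.15]{EL}. Combining this with the facts that $\s\xi^*$ commutes with $\pi_k$ (Proposition \ref{prop:pullback-preserves-homotopy-groups}), that $\s\tilde\xi^* K(\rho^*A,i) \simeq K(\tilde\xi^*\rho^*A,i)$ (Proposition \ref{prop:eilenberg-maclane-locally}), and that $\tilde\xi^*\rho^*A \simeq \rho_X^*\xi^*A$ by commutativity of the square, I obtain for every $i \geq 0$
\[
\xi^* R^i\rho_*(\rho^*A) \;\simeq\; \pi_0\bigl(\s\rho_{X,*}\,K(\rho_X^*\xi^*A,i)\bigr) \;\simeq\; R^i\rho_{X,*}(\rho_X^*\xi^*A).
\]
For $i = 0$ this reads $\xi^*(\rho_*\rho^*A) \simeq \rho_{X,*}\rho_X^*\xi^*A$, and the standard coherence between adjunction units and Beck--Chevalley mates identifies $\xi^*$ of the unit $A \to \rho_*\rho^*A$ with the unit $\xi^*A \to \rho_{X,*}\rho_X^*\xi^*A$.

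Finally, because $\xi$ is a surjection its inverse image $\xi^*$ is conservative: the map $X \to 1$ is a cover, hence of effective descent, so $\xi^* = X \times (-)$ reflects isomorphisms (and, being exact, also monomorphisms). Assuming $\rho_X$ is $n$-pure, the displayed formula gives $\xi^* R^i\rho_*(\rho^*A) \simeq 0$ for $1 \leq i \leq n$ and exhibits $\xi^*$ of the unit $A \to \rho_*\rho^*A$ as an isomorphism; conservativity of $\xi^*$ then forces $R^i\rho_*(\rho^*A) \simeq 0$ and the unit itself to be an isomorphism. As $T$ was arbitrary, Theorem \ref{thm:characterization-derived-pushforwards} yields that $\rho$ is $n$-pure. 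The case $n = -2$ is trivial, and $n = -1$ is handled by the same reflection argument applied directly to the monomorphism condition on the unit $A \to \rho_*\rho^*A$, for which it suffices that $\xi^*$ reflects monomorphisms, without invoking Theorem \ref{thm:characterization-derived-pushforwards}.

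I expect the main obstacle to be the coherence used in the $i=0$ case: one must check carefully that, under the Beck--Chevalley isomorphism, $\xi^*$ applied to the unit of $\rho^* \dashv \rho_*$ really coincides with the unit of $\rho_X^* \dashv \rho_{X,*}$, and more generally that the base-change isomorphism is natural enough to transport the comparison maps $\HH^m(\rho/{-})$ rather than merely the objects $R^i\rho_*$. Granting this mate coherence, the rest is a formal consequence of the cited Beck--Chevalley condition together with the conservativity of $\xi^*$.
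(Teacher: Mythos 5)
Your proof is correct and follows essentially the same route as the paper, whose own proof is a one-line appeal to Theorem \ref{thm:characterization-derived-pushforwards} together with the fact that derived pushforwards are computed locally via Beck--Chevalley along \'etale morphisms. You have simply written out in full the details the paper leaves implicit (the pullback square, the transport of the unit and of $R^i\rho_*$ under the Beck--Chevalley isomorphism, and the conservativity of $\xi^*$ for a surjection), and the mate coherence you flag at the end is indeed the only point being glossed over, and it is standard.
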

\begin{proof}
This follows from Theorem \ref{thm:characterization-derived-pushforwards} and the fact that derived pushforwards can be computed locally (using for example Beck--Chevalley conditions or Proposition \ref{prop:derived-pushforward-as-sheaf}).
\end{proof}

Finally, we can reformulate Theorem \ref{thm:characterization-derived-pushforwards} in the following way, which is maybe the most conceptual characterization of $\A$-pure geometric morphisms.
\begin{corollary} \label{cor:preserves-em-objects}
Let $\rho: \F \to \E$ be a geometric morphism and let $\A$ be a downwards closed family of Eilenberg--Mac Lane objects in $\s\E$. Then $\rho$ is $\A$-pure if and only if the derived unit
\begin{equation*}
K(A,n) \to \s\rho_*((\s\rho^*K(A,n))^f)
\end{equation*}
is a homotopy equivalence, for each $K(A,n)$ in $\A$.
\end{corollary}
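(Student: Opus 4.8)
The plan is to reduce the statement to Theorem \ref{thm:characterization-derived-pushforwards}, which already characterizes $\A$-pureness (for a single tower $\{K(A,0),\dots,K(A,n)\}$) in terms of the unit map and the vanishing of the derived pushforwards. The real work is then to show that the derived unit being a homotopy equivalence encodes exactly these same conditions, and this is a homotopy-group computation essentially identical to the one in the proof of Proposition \ref{prop:from-T-to-Set}.

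First I would identify the target of the derived unit. By Proposition \ref{prop:pullback-preserves-homotopy-groups} we have $\pi_k(\s\rho^*K(A,n)) \simeq \rho^*\pi_k(K(A,n))$, so $\s\rho^*K(A,n)$ is weakly equivalent to $K(\rho^*A,n)$ and its fibrant replacement is an Eilenberg--Mac Lane object $K(\rho^*A,n)$ in $\s\F$. Since $\s\rho_*$ is right Quillen, the object $\s\rho_*((\s\rho^*K(A,n))^f)$ is fibrant; as $K(A,n)$ is fibrant as well, Remark \ref{rem:all-objects-cofibrant} lets me replace ``homotopy equivalence'' by ``weak equivalence'', i.e.\ by the condition that the map is an isomorphism on every $\pi_k$.

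Next I would compute $\pi_k(\s\rho_*K(\rho^*A,n))$. Reusing the looping computation \eqref{eq:looping} with $\s\rho_*$ in place of $\s e_*$, using that $\s\rho_*$ commutes with $\Omega$ on fibrant objects and that $\Omega^{n-i}K(\rho^*A,n)\simeq K(\rho^*A,i)$ (Appendix \ref{app:em-objects-and-loop-spaces}), I obtain $\pi_{n-i}(\s\rho_*K(\rho^*A,n)) \simeq \pi_0(\s\rho_*K(\rho^*A,i)) = R^i\rho_*(\rho^*A)$ for $0\le i\le n$, while the groups vanish above degree $n$ because $K(\rho^*A,n)$ is $n$-truncated and pushforward preserves $n$-truncatedness (as in the proof of Proposition \ref{prop:from-T-to-Set}). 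Writing $k=n-i$, the effect of the derived unit on $\pi_n$ is the unit $A \to \rho_*\rho^*A$, on $\pi_k$ for $k<n$ it is $0 \to R^{n-k}\rho_*(\rho^*A)$, and $\pi_k$ vanishes on both sides for $k>n$. Hence the derived unit is a weak equivalence precisely when $A\to\rho_*\rho^*A$ is an isomorphism and $R^i\rho_*(\rho^*A)=0$ for $1\le i\le n$, which is exactly condition (2) of Theorem \ref{thm:characterization-derived-pushforwards}.

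Finally I would handle a general downwards closed $\A$ member by member. For a fixed $K(A,n)\in\A$, downward closure puts $K(A,m)\in\A$ for all $m\le n$, so if $\rho$ is $\A$-pure then it is pure with respect to $\{K(A,0),\dots,K(A,n)\}$, and Theorem \ref{thm:characterization-derived-pushforwards} together with the previous paragraph make the derived unit of $K(A,n)$ a homotopy equivalence. Conversely, if the derived unit of every member is a homotopy equivalence, then for each $K(A,m)\in\A$ the conditions of Theorem \ref{thm:characterization-derived-pushforwards} hold up to degree $m$, giving pureness with respect to $K(A,m)$ and hence $\A$-pureness. The step I expect to be the main obstacle is the interchange $\s\rho_*\Omega\simeq\Omega\s\rho_*$ on fibrant objects, together with the attendant claim $R^i\rho_*(\rho^*A)\simeq\pi_{n-i}(\s\rho_*K(\rho^*A,n))$: I must be sure the loop-space construction is a homotopy limit preserved by the right-derived $\s\rho_*$, and that $\s\rho_*$ preserves $n$-truncatedness. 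Both facts are already invoked in the proof of Proposition \ref{prop:from-T-to-Set}, so I would cite that computation rather than redo it.
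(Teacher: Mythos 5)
Your proposal is correct and follows the same route the paper intends: the corollary is stated there without proof as a direct reformulation of Theorem \ref{thm:characterization-derived-pushforwards}, and your homotopy-group bookkeeping (identifying $\pi_{n-i}$ of the target with $R^i\rho_*(\rho^*A)$ via the looping computation of Proposition \ref{prop:from-T-to-Set}, and using truncatedness to kill the higher groups) is exactly the justification that reformulation rests on. The member-by-member reduction using downward closure is also the right way to pass from the single tower of the theorem to a general family $\A$.
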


\begin{remark} \label{rem:relation-to-earlier}
In general, the unit of an adjunction is an isomorphism if and only if the left adjoint functor is fully faithful. So in this sense, a geometric morphism is $n$-pure if and only if the left adjoint is fully faithful, in a homotopical sense, when restricted to the simplicial objects in $\A$.

This is the point of view that is taken in the definition of $n$-connected geometric morphisms in \cite{hoyois}. The difference is that in this article, we only consider Eilenberg--Mac Lane objects $K(A,m)$ with $m \leq n$, while in \cite{hoyois} all $n$-truncated objects are considered. We don't know to what extent the two definitions are equivalent.
\end{remark}

\subsection{Relation to Galois theory}

For a locally connected Grothendieck topos $\E$, let $\SLC(\E)$ be the full subcategory of $\E$ consisting of the objects that can be written as a coproduct of locally constant objects. By \cite[Theorem 2.4]{leroy}, $\SLC(\E)$ is a Grothendieck topos and the inclusion functor $\SLC(\E) \subseteq \E$ is the inverse image functor for a geometric morphism $f : \E \to \SLC(\E)$. Since $f^*$ is fully faithful, $f$ is connected (in particular $0$-pure).

A Grothendieck topos $\E$ is said to be \textbf{locally Galois} if $\E$ is locally connected and the natural geometric morphism $f : \E \to \SLC(\E)$ is an equivalence \cite[D\'efinition 3.1]{leroy}. We can then show that $\SLC$ is an idempotent endofunctor of the category of locally connected Grothendieck toposes, with as essential image the locally Galois toposes.

Recall that a locally Galois topos is atomic (in particular Boolean), see \cite[Corollaire 2.6(a)]{leroy}. So when working with locally Galois toposes, the following proposition will be relevant.

\begin{proposition} \label{prop:0-pure-implies-hyperconnected-for-boolean}
Let $\rho: \F \to \E$ be a geometric morphism with $\F$ and $\E$ Boolean. If $\rho$ is $0$-pure, then it is hyperconnected.
\end{proposition}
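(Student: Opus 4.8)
The plan is to reduce the statement to the single claim that $\rho^*$ induces a bijection on subobject lattices, and then to read off hyperconnectedness from this by elementary topos theory.

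First I would unwind what $0$-pure (over $\Set$) means in degree $0$. By definition $\rho$ is pure with respect to the family $\{K(e^*T,0) : T \in \Set\}$, so for every $X$ in $\E$ and every set $T$ the base-change map $\HH^0(\E/X, e^*T) \to \HH^0(\F/\rho^*X, \rho^*e^*T)$ is a bijection. Computing $\HH^0$ over $\Set$ as global sections, this map is nothing but $\rho^* \colon \Hom_\E(X, e^*T) \to \Hom_\F(\rho^*X, \rho^*e^*T)$, where $\rho^* e^* T \cong f^* T$. Now I specialize to $T = 2 = \{0,1\}$. Since $\E$ is Boolean, $\Omega_\E \cong 1 + 1 = e^*2$, and likewise $\Omega_\F \cong f^*2 \cong \rho^* e^* 2$; under the standard identification $\Hom(X,\Omega) = \sub(X)$ the displayed map becomes $\rho^* \colon \sub_\E(X) \to \sub_\F(\rho^*X)$. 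Thus $0$-pureness yields, for every $X$ in $\E$, a bijection of subobject lattices $\sub_\E(X) \xrightarrow{\sim} \sub_\F(\rho^*X)$; since $\rho^*$ preserves monomorphisms and pullbacks this bijection is an isomorphism of posets, natural in $X$.

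From this subobject bijection I would extract the two defining ingredients of hyperconnectedness: that $\rho^*$ is fully faithful, and that its essential image is closed under subobjects. For faithfulness, if $g,h\colon X \to Y$ satisfy $\rho^*g = \rho^*h$, their equalizer $E \hookrightarrow X$ has $\rho^*E = \rho^*X$, so $E = X$ by injectivity on $\sub_\E(X)$, whence $g = h$; thus $\rho$ is surjective and $\rho^*$ is conservative. For fullness, given $\phi\colon \rho^*X \to \rho^*Y$, its graph $\Gamma_\phi \hookrightarrow \rho^*X \times \rho^*Y \cong \rho^*(X\times Y)$ is, by surjectivity of $\rho^*$ on $\sub_\E(X\times Y)$, of the form $\rho^*G$ for a unique $G \hookrightarrow X \times Y$; the first projection $G \to X$ becomes an isomorphism after applying $\rho^*$, hence is an isomorphism since $\rho^*$ is conservative, so $G$ is the graph of a morphism $\psi\colon X \to Y$ with $\rho^*\psi = \phi$. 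Finally, every subobject of $\rho^*X$ lies in the image of $\rho^*$ by surjectivity of $\sub_\E(X) \to \sub_\F(\rho^*X)$, so the essential image of $\rho^*$ is closed under subobjects. Having shown $\rho^*$ fully faithful with image closed under subobjects, $\rho$ is hyperconnected.

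The main obstacle is the first paragraph: correctly identifying the degree-$0$ base-change map with $\rho^*$ on global sections, and — crucially — recognizing that the single instance $T = 2$ together with Booleanness ($\Omega \cong 1+1 = e^*2$) upgrades the $a$ priori weak $0$-pure condition into a full isomorphism of subobject lattices. Once that identification is in place the remaining steps are routine, the only slightly delicate point being the graph argument for fullness, which relies on $\rho^*$ being conservative, itself a consequence of the surjectivity established just before.
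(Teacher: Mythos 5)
Your proof is correct and follows essentially the same route as the paper: the crux in both is that Booleanness makes $\Omega_\E \cong 1+1 = e^*2$ a constant object, so the degree-$0$ part of $0$-pureness (bijectivity of $\Hom_\E(X,e^*T)\to\Hom_\F(\rho^*X,f^*T)$, equivalently $2_\E \xrightarrow{\sim} \rho_*\rho^*2_\E$) yields an isomorphism $\sub_\E(X)\to\sub_\F(\rho^*X)$ for every $X$. The only difference is that the paper then simply cites the standard characterization of hyperconnected morphisms via subobject lattices (Johnstone, Proposition A4.6.6(vi)), whereas you reprove that characterization by hand (faithfulness via equalizers, fullness via graphs, image closed under subobjects) — a correct but unnecessary detour.
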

\begin{proof}
It is enough to show that $f^*$ induces an isomorphism
\begin{equation} \label{eq:hyperconnected}
\mathbf{Sub}_\E(A) \longrightarrow \mathbf{Sub}_\F(f^*A)
\end{equation}
for each $A$ in $\E$, see \cite[Proposition A4.6.6(vi)]{EL}. Because $\F$ and $\E$ are Boolean, we can rewrite \eqref{eq:hyperconnected} as
\begin{equation*}
\Hom_\E(A, 2_\E) \longrightarrow \Hom_\F(f^*A, 2_\F)
\end{equation*}
with $2_\E$ and $2_\F$ the coproduct of two times the terminal object in $\E$ resp.\ $\F$. Using $f^*(2_\F) \simeq 2_\E$ and the adjunction $f^* \dashv f_*$ we can further rewrite as
\begin{equation*}
\HOM_\E(A, 2_\E) \longrightarrow \HOM_\E( A, f_*f^*2_\E).
\end{equation*}
This is an isomorphism because, as a result of $f$ being $0$-pure, we have that
\begin{equation*}
2_\E \longrightarrow f_*f^*2_\E
\end{equation*}
is an isomorphism.
\end{proof}

We can use this to deduce the following more general statement for locally connected Grothendieck toposes.

\begin{proposition} \label{prop:mono-in-the-next-galois}
Let $\rho : \F \to \E$ be a $0$-pure geometric morphism with $\F$ and $\E$ locally connected. Then the induced geometric morphism $\rho_1 : \SLC(\F) \to \SLC(\E)$ is hyperconnected. As a result, the induced functor
\begin{equation*}
\TOP(\E,\PSh(G)) \longrightarrow \TOP(\F,\PSh(G))
\end{equation*}
is fully faithful, for every group $G$.
\end{proposition}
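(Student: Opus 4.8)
The plan is to deduce both assertions from the Boolean case already handled in Proposition~\ref{prop:0-pure-implies-hyperconnected-for-boolean}. The morphism $\rho_1$ sits in a commutative square
\begin{equation*}
\begin{tikzcd}
\F \ar[r,"{\rho}"] \ar[d,"{f_\F}"'] & \E \ar[d,"{f_\E}"] \\
\SLC(\F) \ar[r,"{\rho_1}"] & \SLC(\E)
\end{tikzcd}
\end{equation*}
where $f_\E$ and $f_\F$ are the canonical morphisms, which are connected and hence $0$-pure. The morphism $\rho_1$ exists and the square commutes because $f_\F$ exhibits $\SLC(\F)$ as the reflection of $\F$ onto locally Galois toposes and $\SLC(\E)$ is locally Galois. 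Since a locally Galois topos is atomic, and hence Boolean, both $\SLC(\F)$ and $\SLC(\E)$ are Boolean; so once I know $\rho_1$ is $0$-pure, Proposition~\ref{prop:0-pure-implies-hyperconnected-for-boolean} will immediately give that $\rho_1$ is hyperconnected.

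The core of the first assertion is therefore to show that $\rho_1$ is $0$-pure, which I would obtain by a two-step application of Proposition~\ref{prop:basic-properties-A-pure}, taking $\A$ to be the family of Eilenberg--Mac Lane objects $K(A,0)$ on constant objects (so that $\A$-pure means $0$-pure, by Remark~\ref{rem:comparison-to-0-pure-introduction} and Corollary~\ref{cor:0pure}). Since inverse image functors preserve the terminal object and coproducts, $f_\E^*$ and $\rho_1^*$ send constant objects to constant objects. Applying part~(1) to $\F \to \E \to \SLC(\E)$, where $f_\E$ is $\A$-pure and $\rho$ is $f_\E^*\A$-pure, shows that $f_\E\rho$ is $0$-pure, hence so is $\rho_1 f_\F$. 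Then applying part~(2) to $\F \to \SLC(\F) \to \SLC(\E)$, using that $\rho_1 f_\F$ is $0$-pure and $f_\F$ is $\rho_1^*\A$-pure, yields that $\rho_1$ is $0$-pure.

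For the final statement, I would use that $\PSh(G)$ is the classifying topos of $G$, so geometric morphisms into it correspond to $G$-torsors: $\TOP(\E,\PSh(G)) \simeq \mathrm{Tors}(\E,G)$, naturally in $\E$. A $G$-torsor is locally isomorphic to the constant object $G$, hence locally constant, and therefore lies in $\SLC(\E)$; since $f_\E^*$ is the fully faithful inclusion $\SLC(\E) \hookrightarrow \E$, precomposition with $f_\E$ gives an equivalence $\TOP(\SLC(\E),\PSh(G)) \simeq \TOP(\E,\PSh(G))$, and likewise for $\F$. Under these equivalences and the commutative square, the functor $-\circ\rho$ of the statement corresponds to $-\circ\rho_1$, which on torsors is $T \mapsto \rho_1^*T$. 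As $\rho_1^*$ is fully faithful (hyperconnectedness) and preserves the constant group object $G$ together with its action, it is fully faithful on equivariant maps, giving the desired full faithfulness.

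The main obstacle I expect is the careful bookkeeping in the two applications of Proposition~\ref{prop:basic-properties-A-pure}: one must check at each stage that the pulled-back family ($f_\E^*\A$ in the first step, $\rho_1^*\A$ in the second) is exactly the family of constant objects on the relevant topos, so that ``$0$-pure'' is matched consistently. A secondary point to verify is the translation to torsors, namely that the equivalence $\TOP(-,\PSh(G)) \simeq \mathrm{Tors}(-,G)$ is natural enough that the functor induced by $\rho$ genuinely corresponds to $\rho_1^*$ acting on torsors, and that $\rho_1^*$ preserving the action map lets one conclude full faithfulness on equivariant morphisms rather than merely on underlying objects.
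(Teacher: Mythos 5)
Your proposal is correct and follows essentially the same route as the paper: the same commutative square, the same two-step application of Proposition \ref{prop:basic-properties-A-pure} (the paper cites part (ii) more tersely but relies on exactly the cancellation you spell out), the same reduction to the Boolean case via Proposition \ref{prop:0-pure-implies-hyperconnected-for-boolean}, and the same torsor argument for the final claim. The only difference is that you are somewhat more explicit about the bookkeeping of the pulled-back families of constant objects and the naturality of the torsor correspondence, which the paper leaves implicit.
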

\begin{proof}
Consider the commutative square
\begin{equation*}
\begin{tikzcd}
\F \ar[r,"{\rho}"] \ar[d,"{\gamma'}"'] & \E \ar[d,"{\gamma}"] \\
\SLC(\F) \ar[r,"{\rho_1}"] & \SLC(\E)
\end{tikzcd}
\end{equation*}
with $\gamma$ and $\gamma'$ the natural geometric morphisms from a locally connected topos to its associated locally Galois topos. Since $\gamma$ and $\gamma'$ are connected, they are in particular $0$-pure. Further, $\rho$ is $0$-pure by assumption. It follows that $\rho_1$ is $0$-pure as well, see Proposition \ref{prop:basic-properties-A-pure}(ii). Using Proposition \ref{prop:0-pure-implies-hyperconnected-for-boolean} we conclude that $\rho_1$ is hyperconnected. Note that $\PSh(G)$ classifies $G$-torsors. Since $G$-torsors are locally constant, every $G$-torsor in $\F$ or $\E$ is contained in $\SLC(\F)$ resp.\ $\SLC(\E)$. It follows that the category of $G$-torsors in $\E$ is a full subcategory of the category of $G$-torsors in $\F$, via the functor $\rho^*$.
\end{proof}

We now want to show that the natural map $\E\to \SLC(\E)$ is $1$-pure. First, we need to show that the $\SLC$ construction behaves well with respect to slicing.

\begin{proposition} \label{prop:SLC-and-slices}
Let $\E$ be a locally connected Grothendieck topos and let $\gamma : \E\to\SLC(\E)$ be the natural geometric morphism. Take a well-supported object $X$ in $\SLC(\E)$. Then there is an equivalence
\begin{equation*}
\SLC(\E/\gamma^*X) \simeq \SLC(\E)/X
\end{equation*}
such that the natural geometric morphism $\E/\gamma^*X \to \SLC(\E/\gamma^*X)$ agrees with $\gamma/X$.
\end{proposition}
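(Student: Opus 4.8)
The plan is to pin down the inverse image functor of $\gamma/X$ and compare it with that of the natural reflection $\gamma' : \E/\gamma^*X \to \SLC(\E/\gamma^*X)$, using that a geometric morphism is determined by its inverse image. First I would record that slicing the fully faithful $\gamma^*$ yields a fully faithful functor: since $\gamma^*X = X$, the inverse image $(\gamma/X)^*$ sends an object $(Z \to X)$ of $\SLC(\E)/X$ to the same arrow regarded in $\E/\gamma^*X$, and as $\gamma^*$ is fully faithful and lying over $X$ is preserved, $(\gamma/X)^*$ is fully faithful with essential image
\[
\mathcal{I} ~=~ \{\, (Z \to X) ~:~ Z \in \SLC(\E) \,\} ~\subseteq~ \E/\gamma^*X .
\]
On the other hand, by the description of the reflection recalled above, $\gamma'$ is the geometric morphism whose inverse image is the inclusion of $\SLC(\E/\gamma^*X)$ (the coproducts of locally constant objects of $\E/\gamma^*X$) into $\E/\gamma^*X$. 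Thus the whole statement reduces to the single equality $\mathcal{I} = \SLC(\E/\gamma^*X)$: once this is established, $(\gamma/X)^*$ and $\gamma'^*$ are the same fully faithful inclusion, so the induced equivalence $\SLC(\E)/X \simeq \SLC(\E/\gamma^*X)$ intertwines $\gamma/X$ and $\gamma'$, which is exactly the compatibility asserted.

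For the equality I would prove two inclusions, writing $\xi : \E/\gamma^*X \to \E$ for the \'etale projection, so that $\xi_!(Z \to X) = Z$ and, since $\E/\gamma^*X \to \Set$ factors through $\xi$, the constant objects of $\E/\gamma^*X$ are precisely the $\xi^*$ of constant objects of $\E$. Because $\SLC(\E)$ is coproduct-closed and $\xi_!$ preserves coproducts, in both directions it suffices to treat a single locally constant piece. For $\mathcal{I} \subseteq \SLC(\E/\gamma^*X)$ I take $(L \to X)$ with $L$ locally constant in $\E$; for the reverse inclusion I must show that if $(Y \to X)$ is locally constant in $\E/\gamma^*X$ then $Y = \xi_!(Y \to X) \in \SLC(\E)$. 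Both are handled by base change along a well-supported cover $U \twoheadrightarrow 1$ in $\E$ that trivializes $X$ (available since $X \in \SLC(\E)$, so that $X|_U$ becomes a coproduct of constants, hence a constant object $\Delta(\Xi)$ of $\E/U$): along such a cover $\E/\gamma^*X$ base-changes to $(\E/U)/\Delta(\Xi) \simeq \prod_{\Xi}\E/U$, and in this product both membership in $\mathcal{I}$ and membership in $\SLC$ translate into the same condition, namely that each component be a coproduct of constant objects of $\E/U$.

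The hard part will be this last matching, i.e. the inclusion $\SLC(\E/\gamma^*X) \subseteq \mathcal{I}$: local constancy of $(Y \to X)$ only furnishes a well-supported cover $W \twoheadrightarrow X$ in $\E$ over which $Y \times_X W \cong \Delta(S)\times W$, which is information about the fibre product $Y \times_X W$, whereas $Y \in \SLC(\E)$ demands a trivialization of $Y$ itself over a well-supported cover of $1$. Bridging these two notions of local constancy across $\xi$ is where the hypothesis that $X$ is a \emph{well-supported object of} $\SLC(\E)$ is genuinely used: combining $W \twoheadrightarrow X$ with the $X$-trivializing cover $U \twoheadrightarrow 1$ (noting that $W \twoheadrightarrow X \twoheadrightarrow 1$ is again well-supported) produces a single well-supported cover of $1$ over which $Y$ becomes a coproduct of constants, and descent along it certifies $Y \in \SLC(\E)$. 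The delicate points I expect here are verifying that membership in $\SLC$ is detected by base change along well-supported \'etale covers, and handling the coherence of the covers when $X$ is an infinite coproduct of locally constant objects.

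With $\mathcal{I} = \SLC(\E/\gamma^*X)$ in hand, the remaining assertions are formal: the equivalence $\SLC(\E)/X \simeq \SLC(\E/\gamma^*X)$ is an equivalence of Grothendieck toposes under which $(\gamma/X)^*$ becomes the defining inclusion of $\SLC(\E/\gamma^*X)$, so $\gamma/X$ is identified with the natural reflection $\gamma'$. As a by-product $\SLC(\E)/X$ is locally Galois, being in the essential image of $\SLC$.
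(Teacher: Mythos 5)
Your reduction is exactly the one the paper makes: since $\gamma^*$ is fully faithful, $(\gamma/X)^*$ is fully faithful with essential image $\mathcal{I}=\{(W\to\gamma^*X): W\ \text{slc in}\ \E\}$, so the whole proposition comes down to the single claim that an object $Y\to\gamma^*X$ of $\E/\gamma^*X$ is slc if and only if $Y$ is slc in $\E$. Your strategy for that claim --- trivialize $\gamma^*X$ over a well-supported cover $U$, pass to $\E/(\gamma^*X\times U)\simeq\prod_{\Xi}\E/U$, and compare local constancy componentwise --- is also the paper's.

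The gap sits exactly where you flag it, in the inclusion $\SLC(\E/\gamma^*X)\subseteq\mathcal{I}$, and I think the step as you describe it would fail rather than merely need polishing. First, local constancy of $Y\to\gamma^*X$ is in general witnessed by a jointly epimorphic \emph{family} $\{W_k\to\gamma^*X\}$ with varying fibres $S_k$, not by a single $W$ with a single fibre; replacing the family by $\bigsqcup_k W_k$ does not help, since a coproduct of constant objects with different fibres over the different pieces is not constant over the coproduct. Second, even with a single $W$: writing $Y\times U=\bigsqcup_{a\in\Xi}Z_a$ and $W\times U=\bigsqcup_{a\in\Xi}W_a$ over $\gamma^*X\times U\cong\bigsqcup_{a\in\Xi}U$, the hypothesis only controls the diagonal pieces $Z_a\times_U W_a\cong S\times W_a$; to get one well-supported cover of $1$ over which $Y$ becomes (a coproduct of) constants you would need to trivialize all the $Z_a$ simultaneously, and neither $\bigsqcup_a W_a$ (which says nothing about $Z_{a'}\times_U W_a$ for $a'\neq a$) nor the fibre product of the $W_a$ over $U$ (which need not be well-supported when $\Xi$ is infinite) does this. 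The paper never manufactures a single cover: it first reduces to $Y$ connected, so that slc coincides with locally constant, then argues entirely with jointly well-supported families, for which local constancy in $\prod_a\E/U_i$ really is detected componentwise (pad each component's trivializing family with initial objects in the other components), and finally handles a general slc $X$ by decomposing $X=\bigsqcup_j X_j$ and $Y=\bigsqcup_j Y_j$, which removes any need for coherence between the covers attached to different summands. Substituting that family-wise, componentwise argument for your ``single combined cover'' step makes the rest of your outline go through.
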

\begin{proof}
It is enough to show that an object $Y \to \gamma^*X$ of $\E/\gamma^*X$ is slc (i.e.\ a coproduct of locally constant objects) if and only if $Y$ is slc. Without loss of generality, we can assume that $Y$ is connected. It remains to show that $Y \to \gamma^*X$ is locally constant if and only if $Y$ is locally constant.

First, we consider the case where $X$ is locally constant. Take a jointly well-supported family $\{U_i\}_{i \in I}$ trivializing $\gamma^*X$. Then $Y \to \gamma^*X$ is locally constant if and only if the base change $Y \times U_i \to \gamma^*X \times U_i$ is locally constant for each $i \in I$. There is an isomorphism $\gamma^*X \times U_i \cong \bigsqcup_{a \in A} U_i$ respecting the projection maps to $U_i$, for some set $A$. Let $Z_a$ be the part of $Y \times U_i$ lying above the copy of $U_i$ indexed by $a$. Then $Y \times U_i \to \gamma^*X \times U_i$ is locally constant if and only if $Z_a \to U_i$ is locally constant in $\E/U_i$ for all $a \in A$, so if and only if $Y \times U_i \to U_i$ is locally constant in $\E/U_i$. This is the case for all $i \in I$ if and only if $Y$ is locally constant.

If $X$ is not locally constant, then write $X = \bigsqcup_{j} X_j$ with $X_j$ locally constant and let $Y_j$ be the part of $Y$ lying above $\gamma^*X_j$. Then $Y \to \gamma^*X$ is slc if and only if $Y_j \to \gamma^*X_j$ is slc for each $j$, which by the above is the case if and only if $Y_j$ is slc for each $j$, or equivalently if and only if $Y$ is slc.
\end{proof}

\begin{corollary} \label{cor:1-pure-gm-to-galois-topos}
Let $\E$ be a locally connected topos and let $\gamma : \E \to \SLC(\E)$ be the natural geometric morphism. Then $\gamma$ is $1$-pure.
\end{corollary}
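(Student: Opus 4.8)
The plan is to verify the two cohomological conditions defining $1$-pureness separately, working over the base topos $\Set$ (so that the relevant coefficients are the constant objects $e^*T$). Recall that $\gamma^*$ is by construction the inclusion $\SLC(\E) \subseteq \E$, so $\gamma^*$ is fully faithful and $\gamma$ is connected, hence $0$-pure. This immediately settles the degree-$0$ part of the definition: for every $X$ and every set $T$ the unit $e^*T \to \gamma_*\gamma^*e^*T$ is an isomorphism, and therefore so is $\HH^0(\gamma/X)$. It remains to treat degree $1$, i.e.\ to show that for every group $G$ the comparison map $\HH^1(\gamma/X)\colon \HH^1(\SLC(\E)/X,e^*G) \to \HH^1(\E/\gamma^*X, \gamma^*e^*G)$ is an isomorphism for all $X$ in $\SLC(\E)$.

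By Theorem \ref{thm:characterization-derived-pushforwards} (together with independence of the base topos, applied to the fixed group object $e^*G$ of $\SLC(\E)$), and given that the unit condition is already guaranteed by connectedness, the degree-$1$ statement is equivalent to the vanishing $R^1\gamma_*(\gamma^*e^*G) \simeq 0$ for every group $G$. I would compute this derived pushforward via Proposition \ref{prop:derived-pushforward-as-sheaf}: it is the sheafification over $\SLC(\E)$ of the presheaf $U \mapsto \HH^1(\E/\gamma^*U, \gamma^*e^*G)$. The key point is that $\HH^1$ with constant coefficients classifies $G$-torsors, and $G$-torsors are locally constant; hence every such torsor on $\E/\gamma^*U$ lies in the subtopos $\SLC(\E/\gamma^*U)$. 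Using Proposition \ref{prop:SLC-and-slices}, which identifies $\SLC(\E/\gamma^*U)$ with $\SLC(\E)/U$ for $U$ well-supported, these torsors are exactly the $G$-torsors on $\SLC(\E)/U$, so the presheaf above is naturally isomorphic to $U \mapsto \HH^1(\SLC(\E)/U, e^*G)$.

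To finish, I would observe that the sheafification over $\SLC(\E)$ of $U \mapsto \HH^1(\SLC(\E)/U, e^*G)$ is, again by Proposition \ref{prop:derived-pushforward-as-sheaf}, nothing but $R^1(\mathrm{id})_*(e^*G)$ for the identity geometric morphism of $\SLC(\E)$, which vanishes since the identity is exact (concretely, this is the statement that torsors are locally trivial). This yields $R^1\gamma_*(\gamma^*e^*G)\simeq 0$, hence $1$-pureness. The reduction to well-supported $U$ needed for Proposition \ref{prop:SLC-and-slices} is harmless: since $\SLC(\E)$ is Boolean and atomic it decomposes into connected components, in each of which the atoms are well-supported (their image in the terminal object is a nonzero complemented subobject, hence all of it) and form a generating family, so it suffices to check the vanishing of the sheaf on such atoms.

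The main obstacle I anticipate is the bookkeeping in the identification of $U \mapsto \HH^1(\E/\gamma^*U, \gamma^*e^*G)$ with $U \mapsto \HH^1(\SLC(\E)/U, e^*G)$: one must check that the equivalence of Proposition \ref{prop:SLC-and-slices} is natural in $U$ (compatible with restriction along maps $V \to U$) and that it intertwines $\gamma^*$ on torsors with the inclusion $\SLC(\E)/U \hookrightarrow \E/\gamma^*U$, so that the resulting isomorphism of sheaves genuinely implements the map $\HH^1(\gamma/X)$. Everything else is a routine assembly of results already established, most crucially Proposition \ref{prop:SLC-and-slices} together with the fact that $G$-torsors are locally constant.
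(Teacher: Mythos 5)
Your proof is correct and rests on exactly the same two ingredients as the paper's: the identification $\SLC(\E/\gamma^*X)\simeq\SLC(\E)/X$ from Proposition \ref{prop:SLC-and-slices}, and the fact that $\HH^1$ with constant coefficients classifies $G$-torsors, which are locally constant and hence already live in the $\SLC$ subtopos. The only difference is packaging: the paper reduces directly to the case $X=1$ and quotes the equivalence of torsor categories, whereas you route the same facts through Theorem \ref{thm:characterization-derived-pushforwards} and the vanishing of $R^1\gamma_*(\gamma^*e^*G)$ computed as a sheafification — a harmless reorganization, and your cautions about naturality in $U$ and well-supportedness are apt but equally implicit in the paper's version.
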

\begin{proof}
We already saw that $\gamma$ is $0$-pure. It remains to show that for each $X$ in $\SLC(\E)$ and group $G$, the induced map $\HH^1(\gamma/X,G)$ is an isomorphism. By Proposition \ref{prop:SLC-and-slices}, we can interpret $\gamma/X$ as the natural geometric morphism $\E/\gamma^*X \to \SLC(\E/\gamma^*X)$, so it is enough to consider the case $X=1$. In other words, we need to prove that
\begin{equation*}
\HH^1(\rho, G): \HH^1(\E, G) \to \HH^1(\SLC(\E), G)
\end{equation*}
is an isomorphism. The sets $\HH^1(\E,G)$ and $\HH^1(\SLC(\E),G)$ have an interpretation as the sets of isomorphism classes of $G$-torsors in $\E$ resp.\ $\SLC(\E)$, see \cite[Theorem 9.8]{jardine-book}. Because $G$-torsors are locally connected, we get an equivalence of categories between the $G$-torsors in $\E$ and the $G$-torsors in $\SLC(\E)$, in particular the map $\HH^1(\rho, G)$ is an isomorphism, see also \cite[\S 3.4]{moerdijk-prodiscrete-galois}.
\end{proof}

\subsection{Relation to fundamental groups}

We will now study how the results above translate to fundamental groups. Take again $\E$ locally connected, with $\gamma : \E \to \SLC(\E)$ the natural geometric morphism. Because $\gamma$ is connected, $\E$ is connected if and only if $\SLC(\E)$ is. In this case, each point $p$ of $\E$ naturally determines a prodiscrete localic group $G$ and an equivalence
\begin{equation*}
\SLC(\E) \simeq \Cont(G), 
\end{equation*}
with $p^*\gamma^*$ corresponding to the forgetful functor $\Cont(G) \to \Set$, see \cite[Theorem 3.2]{moerdijk-prodiscrete-galois}. This localic group $G$ is called the \textbf{fundamental group} of $\E$ in $p$ and denoted by $\pi_1(\E,p)$, or $\pi_1(\E)$ if the point is left implicit.

\begin{remark}
If $\E$ is connected locally connected, and $p$ and $q$ are two different points of $\E$, then $\pi_1(\E,p)$ and $\pi_1(\E,q)$ are isomorphic (this follows from e.g.\ \cite[Theorem 2.6]{moerdijk-prodiscrete-galois}). If $\E$ does not have any points, then it is possible that $\SLC(\E)$ does not have any points either, and in that case $\SLC(\E)$ cannot be equivalent to $\Cont(G)$ for $G$ a localic group, because the latter always has a point corresponding to the forgetful functor.
\end{remark}

\begin{proposition} \label{prop:h1-fundamental-group}
Let $\E$ be a pointed, connected, locally connected topos, and let $\pi_1(\E)$ be its (localic) fundamental group. Then for a (discrete) group $G$, we have
\begin{equation*}
\HH^1(\E,G) ~\simeq~ \HH^1(\Cont(\pi_1(E)),G) ~\simeq~ \Hom(\pi_1(\E),G)/G
\end{equation*}
where the right $G$-action on $\Hom(\pi_1(G),G)$ is given by conjugation, in other words $(\phi \cdot g)(x) = g^{-1}\phi(x)g$.
\end{proposition}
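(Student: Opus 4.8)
The plan is to prove the two displayed isomorphisms separately, the first being essentially formal given the earlier results and the second being the genuine computation.

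For the first isomorphism $\HH^1(\E,G) \simeq \HH^1(\Cont(\pi_1(\E)),G)$, I would invoke Corollary \ref{cor:1-pure-gm-to-galois-topos}, which shows that the natural geometric morphism $\gamma : \E \to \SLC(\E)$ is $1$-pure. As recorded in the proof of that corollary, this gives in particular that $\HH^1(\gamma,G) : \HH^1(\E,G) \to \HH^1(\SLC(\E),G)$ is a bijection. Composing with the equivalence $\SLC(\E) \simeq \Cont(\pi_1(\E))$ supplied by Moerdijk's theorem \cite[Theorem 3.2]{moerdijk-prodiscrete-galois} (which is exactly how $\pi_1(\E)$ was defined in this subsection) then yields the first isomorphism.

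For the second isomorphism, I would use that $\HH^1(\mathcal{B},G)$ is the set of isomorphism classes of $G$-torsors in $\mathcal{B}$, as in \cite[Theorem 9.8]{jardine-book}, and specialize to $\mathcal{B} = \Cont(H)$ with $H = \pi_1(\E)$. The heart of the argument is to identify $G$-torsors in the classifying topos $\Cont(H)$ with conjugacy classes of continuous homomorphisms $H \to G$. The forgetful functor $p^* : \Cont(H) \to \Set$ is a surjective point, so for a $G$-torsor $P$ the fiber $p^*P$ is a nonempty, hence trivial, $G$-torsor of sets. Choosing a trivialization is the same as choosing a base point $x_0 \in p^*P$, and by freeness and transitivity of the $G$-action there is for each $h \in H$ a unique $\phi(h) \in G$ with $h \cdot x_0 = x_0 \cdot \phi(h)$. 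The torsor axioms force $\phi : H \to G$ to be a homomorphism, and continuity of the $H$-action on $P$ (openness of the stabilizer of $x_0$) makes $\phi$ continuous into the discrete group $G$; conversely a continuous $\phi$ produces the torsor $G$ with right translation by $G$ and left $H$-action through $\phi$. Finally I would track the dependence on choices: replacing $x_0$ by $x_0 g$ replaces $\phi$ by $h \mapsto g^{-1}\phi(h)g$, and an isomorphism of $G$-torsors is precisely an $H$- and $G$-equivariant bijection, so isomorphism classes correspond exactly to continuous homomorphisms modulo the conjugation action, which matches the stated action $(\phi\cdot g)(x) = g^{-1}\phi(x)g$.

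I expect the main obstacle to lie in the careful handling of the localic, prodiscrete nature of $\pi_1(\E)$ in this torsor computation: one must check that the forgetful point is genuinely a surjection that trivializes every $G$-torsor, that ``continuous homomorphism $H \to G$'' is the correct notion, and that continuity corresponds to the relevant stabilizer being open. Because $G$ is discrete, every such homomorphism factors through a discrete quotient of $H$, which keeps the argument elementary; the delicate point is only to reconcile this elementary picture cleanly with the localic-group formalism of \cite{moerdijk-prodiscrete-galois}, and I would lean on the $G$-torsor description already used in the proof of Corollary \ref{cor:1-pure-gm-to-galois-topos} to do so.
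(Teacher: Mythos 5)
Your proposal is correct and takes essentially the same route as the paper, whose proof is a one-line citation of the torsor interpretation of $\HH^1(-,G)$ \cite[Theorem 9.8]{jardine-book} together with the conclusion of \cite[\S 3.4]{moerdijk-prodiscrete-galois}: your two steps (reduction to $\SLC(\E)\simeq\Cont(\pi_1(\E))$ via Corollary \ref{cor:1-pure-gm-to-galois-topos}, then identifying $G$-torsors in $\Cont(\pi_1(\E))$ with conjugacy classes of continuous homomorphisms by trivializing along the forgetful point) is exactly the content that citation unpacks to, and your conjugation bookkeeping is right. The only point requiring care is the one you already flag, namely that $\pi_1(\E)$ is a prodiscrete localic group possibly without enough points, so the element-wise computation must be read through the discrete quotients $\pi_1(\E)/N_i$; this is precisely what Moerdijk's \S 3.4 supplies.
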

\begin{proof}
This is the conclusion of \cite[\S 3.4]{moerdijk-prodiscrete-galois}, taking into account the interpretation of $\HH^1(-,G)$ in terms of $G$-torsors \cite[Theorem 9.8]{jardine-book}.
\end{proof}

Let $\rho : \F\to \E$ be a $0$-pure geometric morphism of pointed, connected and locally connected toposes. A natural question is whether the induced map on fundamental groups
\begin{equation*}
\pi_1(\F) \to \pi_1(\E)
\end{equation*}
is surjective. This turns out not to be the case. For example, consider the connected geometric morphism $\rho : \Cont(\Z) \to \Cont(\widehat{\Z})$ induced by the inclusion $\Z \to \widehat{\Z}$, with $\Z$ the discrete group of integers and $\widehat{\Z}$ its profinite completion. The induced morphism on fundamental groups is again the inclusion $\Z \to \widehat{\Z}$, which is not surjective. However, we can still prove something weaker:

\begin{corollary}
Let $\rho: \F \to \E$ be a $0$-pure geometric morphism of pointed toposes, with $\F$ and $\E$ connected and locally connected. Then the induced map
\begin{equation*}
\pi_1(\rho) : \pi_1(\F) \to \pi_1(\E)
\end{equation*}
is an epimorphism of prodiscrete localic groups.
\end{corollary}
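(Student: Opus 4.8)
The plan is to reduce the statement to the fully faithfulness assertion in Proposition~\ref{prop:mono-in-the-next-galois} and then extract from it an injectivity of $\Hom$-sets that exactly witnesses an epimorphism. Write $\psi = \pi_1(\rho) : \pi_1(\F) \to \pi_1(\E)$ for the induced continuous homomorphism of prodiscrete localic groups; because both toposes are pointed and $\rho$ respects the basepoints, this is a genuine homomorphism and not merely one defined up to conjugacy, via the equivalences $\SLC(\E) \simeq \Cont(\pi_1(\E))$ and $\SLC(\F) \simeq \Cont(\pi_1(\F))$ \cite[Theorem 3.2]{moerdijk-prodiscrete-galois}. The first reduction is to observe that $\psi$ is an epimorphism of prodiscrete localic groups if and only if, for every discrete group $K$, the map $\Hom(\pi_1(\E),K) \to \Hom(\pi_1(\F),K)$, $\theta \mapsto \theta\psi$, is injective. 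Indeed, testing an epimorphism against an arbitrary prodiscrete target $\varprojlim_i K_i$ reduces to testing against the discrete pieces $K_i$, since $\Hom(-,\varprojlim_i K_i) \simeq \varprojlim_i \Hom(-,K_i)$ and an inverse limit of injective maps is injective.

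Next I would identify both $\Hom$-sets with torsor data. Since $\rho$ is $0$-pure, Proposition~\ref{prop:mono-in-the-next-galois} tells us that for every group $K$ the functor $\TOP(\E,\PSh(K)) \to \TOP(\F,\PSh(K))$ is fully faithful. For $K$ discrete, $\PSh(K)$ classifies $K$-torsors, so these are the groupoids of $K$-torsors in $\E$ and $\F$; as $K$-torsors are locally constant they already live in $\SLC(\E)$ and $\SLC(\F)$, and over a pointed connected base with fundamental group $\pi$ the groupoid of $K$-torsors is the action groupoid $\Hom(\pi,K) /\!/ K$, whose set of components is $\Hom(\pi,K)/K$ (as in Proposition~\ref{prop:h1-fundamental-group}) and whose automorphism group at $\theta$ is the centralizer $Z_K(\theta(\pi))$. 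Under this identification the functor above is restriction along $\psi$: on components it is the induced map $\Hom(\pi_1(\E),K)/K \to \Hom(\pi_1(\F),K)/K$ on conjugacy classes, and on automorphisms it is the inclusion $Z_K(\theta(\pi_1(\E))) \hookrightarrow Z_K(\theta\psi(\pi_1(\F)))$ coming from $\theta\psi(\pi_1(\F)) \subseteq \theta(\pi_1(\E))$.

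Full faithfulness then yields two facts: the map on conjugacy classes is injective, and each of the above centralizer inclusions is an equality $Z_K(\theta(\pi_1(\E))) = Z_K(\theta\psi(\pi_1(\F)))$. To conclude, suppose $\theta_1\psi = \theta_2\psi$ for $\theta_1,\theta_2 \in \Hom(\pi_1(\E),K)$. Injectivity on conjugacy classes gives $\theta_2 = k\theta_1 k^{-1}$ for some $k \in K$, and then $\theta_1\psi = \theta_2\psi = k(\theta_1\psi)k^{-1}$ shows $k \in Z_K(\theta_1\psi(\pi_1(\F))) = Z_K(\theta_1(\pi_1(\E)))$, whence $\theta_2 = k\theta_1 k^{-1} = \theta_1$. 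Thus $\Hom(\pi_1(\E),K) \to \Hom(\pi_1(\F),K)$ is injective for every discrete $K$, and by the first reduction $\pi_1(\rho)$ is an epimorphism. The step I expect to be the main obstacle is the bookkeeping of the torsor-groupoid identification, in particular verifying that full faithfulness translates precisely into the two displayed facts about conjugacy classes and centralizers, together with propagating the prodiscrete (rather than merely discrete) nature of $\pi_1$ through the limit argument of the first step. An alternative, more external route would be to invoke directly that hyperconnected geometric morphisms between $\Cont$-toposes of prodiscrete localic groups correspond to epimorphisms, applying this to the hyperconnected morphism $\rho_1$ furnished by Proposition~\ref{prop:mono-in-the-next-galois}.
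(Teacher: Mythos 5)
Your proof is correct, but it takes a genuinely different route from the paper's. The paper argues structurally: it writes $\pi_1(\F) \simeq \varprojlim_{i\in I}\pi_1(\F)/N_i$ over the open normal subgroups, identifies these quotients with automorphism groups of Galois objects, uses $0$-purity to realize $\pi_1(\E)$ as the limit of the same diagram restricted to a full subcategory $I'\subseteq I$, and concludes by combining surjectivity of the locale maps $\pi_1(\F)\to\pi_1(\F)/N_i$ with a diagram chase. You instead test the epimorphism property representably: you reduce to injectivity of $\Hom(\pi_1(\E),K)\to\Hom(\pi_1(\F),K)$ for discrete $K$ (legitimate, since prodiscrete targets are cofiltered limits of discrete groups and a limit of injections is injective), and you extract that injectivity from the full faithfulness of $\TOP(\E,\PSh(K))\to\TOP(\F,\PSh(K))$ given by Proposition \ref{prop:mono-in-the-next-galois}, via the action-groupoid description $\Hom(\pi_1,K)/\!\!/K$ of $K$-torsors; the centralizer equality supplied by fullness is exactly what upgrades ``conjugate'' to ``equal'', and that final algebraic step is carried out correctly. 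Both proofs rest on the same essential input ($\rho^*$ fully faithful on locally constant objects), but yours makes transparent why faithfulness alone would not suffice, while the paper's yields the sharper structural fact that $\pi_1(\E)$ is presented by a subdiagram of the presentation of $\pi_1(\F)$ and gives an epimorphism in the category of all localic groups; your discrete-test reduction only certifies epimorphy within the category of prodiscrete localic groups, which is, however, all that the statement claims. The groupoid bookkeeping you flag is the automorphism-level refinement of Proposition \ref{prop:h1-fundamental-group} and goes through exactly as in \cite[\S 3.4]{moerdijk-prodiscrete-galois}.
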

\begin{proof}
Let $\{N_i\}_{i \in I}$ be the filtered diagram of open normal subgroups of $\pi_1(\F)$. Then we can write $\pi_1(\F)$ as a cofiltered limit of discrete groups
\begin{equation*}
\pi_1(\F) \simeq \varprojlim_{i \in I} \pi_1(\F)/N_i,
\end{equation*}
where we take the limit as localic groups \cite{moerdijk-prodiscrete-galois}. Here the quotients $\pi_1(\F)/N_i$ can be interpreted as the automorphism groups of the Galois objects in $\F$, see the proof of \cite[Theorem 3.2]{moerdijk-prodiscrete-galois}. Because $\rho$ is $0$-pure, the Galois objects of $\E$ form a full subcategory of the Galois objects of $\F$. As a result, we find
\begin{equation*}
\pi_1(\E) \simeq \varprojlim_{i \in I'} \pi_1(\F)/N_i,
\end{equation*}
for a cofiltered full subcategory $I'$ of $I$.

Because the transition maps $N_j \to N_i$ are inclusions, the dual maps $$\pi_1(\F)/N_i \to \pi_1(\F)/N_j$$ are surjections. Moreover, the map $\pi_1(\F) \to \pi_1(\F)/N_i$ is an epimorphism of locales, see \cite[Theorem 5.1]{Mo}, so in particular it is an epimorphism of localic groups.

Now via a standard diagram chasing argument it follows that $\pi_1(\F) \to \pi_1(\E)$ is an epimorphism of localic groups.
\end{proof}

\subsection{Monomorphism in the next degree}

Proposition \ref{prop:mono-in-the-next-galois} encourages us to prove the following proposition.

The cohomology groups $\HH^{n+1}$ in the statement below are the cohomology groups $\HH^{n+1}_\Set$ with respect to $\Set$. But using Proposition \ref{prop:from-Set-to-T} we then see that the statement also holds if we calculate the cohomology groups with respect to another base topos, for example with respect to $\T$.

\begin{proposition} \label{prop:mono-in-the-next}
If a geometric morphism $\rho: \F \to \E$ is $n$-pure over the base topos $\T$, then the map
\begin{equation*}
\HH^{n+1}(\rho) : \HH^{n+1}(\E, e^*A) \longrightarrow \HH^{n+1}(\F, f^*A)
\end{equation*}
is a monomorphism, for each Eilenberg--Mac Lane object $K(A, n+1)$ in $\T$.
\end{proposition}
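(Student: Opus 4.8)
The plan is to identify $\HH^{n+1}(\rho)$ with the map induced on $\pi_0$ by a single morphism of Eilenberg--Mac Lane objects, and then to recognize that morphism, up to weak equivalence, as the inclusion of a homotopy fibre over a discrete base. By Proposition \ref{prop:from-Set-to-T} it suffices to prove the statement with cohomology taken over $\Set$, so I fix the global sections geometric morphisms $e : \E \to \Set$ and $f = e\rho : \F \to \Set$ and write $B = e^*A$, so that $\rho^*B = f^*A$. Following Subsection \ref{ssec:contravariance-of-cohomology}, the map $\HH^{n+1}(\rho)$ is obtained by applying $\s e_*$ and then $\pi_0$ to the derived unit
\[
\eta : K(B, n+1) \longrightarrow Y := \s\rho_*\bigl(K(\rho^*B, n+1)\bigr),
\]
where $K(\rho^*B,n+1)$ is taken fibrant; here I use $\s e_*\s\rho_* = \s f_*$ to see that the target recovers $\HH^{n+1}(\F, f^*A)$.

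First I would compute the homotopy sheaves of $Y$. Exactly as in the looping argument in the proof of Proposition \ref{prop:from-T-to-Set} (using that $\s\rho_*$ commutes with $\Omega$ and preserves $(n+1)$-truncated objects), one gets $\pi_k(Y) \simeq R^{n+1-k}\rho_*(\rho^*B)$ for $0 \le k \le n+1$, and $\pi_k(Y) = 0$ for $k > n+1$. Now I invoke $n$-pureness: since $\rho$ is $n$-pure over $\T$ it is in particular pure with respect to every $K(e^*A,m) = K(B,m)$ with $m \le n$, so Theorem \ref{thm:characterization-derived-pushforwards} gives $\rho_*\rho^*B \cong B$ and $R^i\rho_*(\rho^*B) = 0$ for $1 \le i \le n$. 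Hence $\pi_{n+1}(Y) \cong B$, the sheaves $\pi_k(Y)$ vanish for $1 \le k \le n$ and for $k > n+1$, and $\pi_0(Y) \simeq R^{n+1}\rho_*(\rho^*B) =: C$. On $\pi_{n+1}$ the map $\eta$ is the unit $B \to \rho_*\rho^*B$, hence an isomorphism; thus $\eta$ is an isomorphism on all $\pi_k$ with $k \ge 1$, while $\pi_0(\eta)$ is the basepoint inclusion $\ast \to C$.

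Next I would realize $\eta$ as a cover inclusion. Consider the connected--truncated factorization $Y \to \tau_{\le 0}Y = K(C,0)$, with homotopy fibre the $0$-connected cover $Y\langle 1\rangle$; by the homotopy-sheaf computation $Y\langle 1\rangle \simeq K(B,n+1)$. Since $K(B,n+1)$ is connected and $K(C,0)$ is discrete, the composite $K(B,n+1)\xrightarrow{\eta}Y\to K(C,0)$ is constant at the basepoint, so $\eta$ lifts through the homotopy fibre to a map $K(B,n+1)\to Y\langle1\rangle$ which, being an isomorphism on all homotopy sheaves, is a weak equivalence. Thus, up to homotopy, $\eta$ factors as a weak equivalence followed by the inclusion of the homotopy fibre into the homotopy fibre sequence
\[
Y\langle 1\rangle \longrightarrow Y \longrightarrow K(C,0).
\]

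Finally I would apply $\s e_*$, which is a right Quillen functor by Proposition \ref{prop:pullback-preserves-homotopy-groups} and therefore preserves weak equivalences between fibrant objects as well as homotopy fibre sequences of fibrant objects. Since cohomology is computed over $\Set$, this yields a homotopy fibre sequence of Kan complexes
\[
\s e_*Y\langle1\rangle \longrightarrow \s e_*Y \longrightarrow \s e_*K(C,0) = K(e_*C, 0)
\]
whose base is discrete. Taking $\pi_0$, the map $\HH^{n+1}(\rho)$ is exactly the map from $\pi_0$ of the fibre to $\pi_0$ of the total space; in the long exact sequence of the fibration the adjacent term is $\pi_1 K(e_*C,0) = 1$, so the $\pi_1$-action on the fibre is trivial and the map of pointed sets is injective. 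Transferring back to the base topos $\T$ via Proposition \ref{prop:from-Set-to-T} completes the argument. The main obstacle is this last step: in degree $n+1 = 1$ the coefficient group $A$ may be nonabelian and $\HH^1$ is only a pointed set, so one cannot argue by ``trivial kernel implies monomorphism'' as in the abelian case. Reducing to simplicial sets over $\Set$ and using that the base $K(e_*C,0)$ of the fibre sequence is $0$-truncated (whence its fundamental group is trivial) is precisely what makes the monomorphism claim hold uniformly in all degrees.
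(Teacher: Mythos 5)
Your argument is essentially correct, but it is a genuinely different proof from the one in the paper. The paper splits into three cases: $n=-1$ holds by definition; $n=0$ is handled by interpreting $\HH^1$ as isomorphism classes of $A$-torsors and showing that $0$-pureness makes $\rho^*$ fully faithful on locally constant objects; and $n\geq 1$ uses the Grothendieck spectral sequence $\HH^p(\E,R^q\rho_*(f^*A))\Rightarrow \HH^{p+q}(\F,e^*A)$, where $n$-pureness kills the groups $E_2^{n-k+1,k-1}$ for $2\leq k\leq n+1$, so that the edge map $\HH^{n+1}(\E,A)\to E^{n+1,0}_\infty$ is an isomorphism and the factorization through $E^{n+1,0}_\infty\hookrightarrow\HH^{n+1}(\F,A)$ gives injectivity. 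Your fibre-sequence argument replaces both the torsor argument and the spectral sequence by a single mechanism --- the long exact sequence of the fibration $\s e_*Y\langle1\rangle\to\s e_*Y\to \s e_*K(C,0)$ over a $0$-truncated base --- and in particular treats the nonabelian degree $n+1=1$ uniformly with the abelian higher degrees. That is arguably the more conceptual route: it makes visible that injectivity in degree $n+1$ is exactly the statement that $K(B,n+1)$ sits inside $\s\rho_*K(\rho^*B,n+1)$ as the basepoint component. What the paper's route buys in exchange is that each case rests on completely standard classical inputs (torsors, edge maps of a first-quadrant spectral sequence) with no need to manipulate Postnikov data of $\s\rho_*K(\rho^*B,n+1)$.

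Two points in your write-up need attention, though neither is fatal. First, your argument assumes $K(B,n+1)$ is connected, so it only covers $n\geq 0$; the case $n=-1$, where $K(A,0)$ is discrete and the ``connected cover'' step collapses, must be added separately --- it holds immediately from the definition of $(-1)$-pure, as the paper notes. Second, the computation ``$\pi_{n+1}(Y)\cong B$ and $\eta$ is an isomorphism on all $\pi_k$ with $k\geq1$'' is looser than the framework allows: since $\pi_0(Y)=R^{n+1}\rho_*(\rho^*B)$ is in general nontrivial, the homotopy sheaves $\pi_k(Y)$ live over $Y_0$ and are identified with $R^{n+1-k}\rho_*(\rho^*B)$ only over the basepoint component, and the loop functor $\Omega$ of Appendix B is defined in the paper only for objects with trivial $\pi_0$. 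You therefore need to run the looping argument for the based loop space at the canonical point $1\to K(B,n+1)\to Y$, or equivalently first form the homotopy fibre $Y\langle1\rangle$ of $Y\to K(C,0)$ (which exists by Rezk's $0$-connected/$0$-truncated factorization, already quoted in the paper) and compute its homotopy sheaves there. This is bookkeeping rather than a gap in the idea, and the remaining steps --- preservation of the fibre sequence by the right Quillen functor $\s e_*$, and injectivity of $\pi_0(\mathrm{fibre})\to\pi_0(\mathrm{total})$ for a Kan fibration whose base has trivial $\pi_1$ --- are correct as you state them.
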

\begin{proof}
For $n=-1$, this holds by definition.

For $n=0$, we use that $A$ is a group and $\HH^1_\Set(\E, e^*A)$ classifies $A$-torsors \cite[Theorem 9.8]{jardine-book}. Because $A$-torsors are locally isomorphic to $A$, they are in particular locally constant over $\T$ (here `constant' means that it is in the essential image of $e^*$). Because $\rho$ is $0$-pure over $\T$, we find that $\rho^*$ is fully faithful on constant objects over $\T$. Indeed, the map
\begin{equation*}
\Hom_\E(e^*X, e^*Y) \to \Hom_\F(f^*X, f^*Y)
\end{equation*}
can be rewritten as
\begin{equation*}
\Hom_\E(e^*X, e^*Y) \to \Hom_\F(e^*X, \rho_*\rho^*e^*Y)
\end{equation*}
which is an isomorphism because $e^*Y \to \rho_*\rho^*e^*Y$ is an isomorphism. But then $\rho^*$ is also fully faithful on locally constant objects, in particular on $A$-torsors. We conclude that $\HH^1(\rho)$ is injective.

For $n \geq 1$, $A$ is abelian. In this case, we consider the Grothendieck spectral sequence
\begin{equation*}
\HH^p(\E,R^q\rho_*(f^*A)) \Rightarrow \HH^{p+q}(\F,e^*A)\;.
\end{equation*}
The map $\HH^{n+1}(\rho,A)$ factors as
\begin{equation*}
\HH^{n+1}(\E,A) \twoheadrightarrow E^{n+1,0}_\infty \hookrightarrow \HH^{n+1}(\F,A)
\end{equation*}
with $\HH^{n+1}(\E,A) = E^{n+1,0}_2 \twoheadrightarrow E^{n+1,0}_\infty$ 
the quotient by the images of the boundary maps. 
So we have to show that the relevant boundary maps are zero here. 
They have the form
\begin{equation*}
d_k : E^{n-k+1,k-1}_k \to E^{n+1,0}_k\;.
\end{equation*}
For $2 \leq k \leq n+1$, by Theorem \ref{thm:characterization-derived-pushforwards} we have $R^{k-1}\rho_*(f^*A) = 0\,$, so $E_2^{n-k+1,k-1} = 0\,$. 
Because $E_k^{n-k+1,k-1}$ is a subquotient of $E_2^{n-k+1,k-1}$
we find that $E_k^{n-k+1,k-1} = 0$ as well. 
For $k > n+1\,$, we also have $E_k^{n-k+1,k-1} = 0$ because the spectral sequence is concentrated in the first quadrant. 
So all relevant boundary maps are zero, and as a result the natural map $\HH^{n+1}(\rho)$ is a monomorphism.
\end{proof}

\section{The smallest \texorpdfstring{$n$-pure}{n-pure} subtopos and dimension theory}

Throughout this section, we fix a Grothendieck base topos $\T$.

\subsection{The \texorpdfstring{$n$-pure}{n-pure} topology} 

Let $\E$ be a Grothendieck topos over $\T$. For a morphism $j: V \to U$ in $\E$, we say that $j$ is $n$-pure if the induced geometric morphism $\E/V \to \E/U$ is $n$-pure. We claim that the $n$-pure monomorphisms form a topology on $\E$. To see this, we apply Proposition \ref{prop:characterization-topologies}. Condition $(i)$ follows by combining Proposition \ref{prop:coproduct-pure} and Corollary \ref{cor:pure-reflection}. Condition $(ii)$ and $(iii)$ follow from Proposition \ref{prop:basic-properties-A-pure}.

\begin{definition}
Let $\E$ be a Grothendieck topos over $\T$.
The \emph{$n$-pure topology} on $\E$ is the topology given by the monomorphisms that are $n$-pure over $\T$. 
We write $\E_{\leq n}$ for the subtopos of $\E$ determined by the $(n-1)$-pure topology.
\end{definition}

If $m \leq n$, then each $n$-pure geometric morphism is $m$-pure, 
so we get a sequence of subtoposes
\begin{equation*}
\varnothing = \E_{\leq -1} \subseteq \E_{\leq 0} \subseteq \E_{\leq 1} \subseteq \dots \subseteq \E_{\leq n} \subseteq \dots
\end{equation*}

The subtoposes of $\E$ form a complete lattice, so we can define the union
\begin{equation*}
\E_{<\infty} = \bigcup_{n=0}^\infty \E_{\leq n}.
\end{equation*}
The subtopos $\E_{<\infty} \subseteq \E$ will be called the \textbf{content} of $\E$. The corresponding topology has as dense monomorphisms the $\infty$-pure monomorphisms, where we define a morphism to be $\infty$-pure if it is $n$-pure for each $n$.

\subsection{The smallest \texorpdfstring{$n$-pure}{n-pure} subtopos}

\begin{theorem} \label{thm:n-pure-topology-vs-n-pure-inclusion}
Let $e: \E \to \T$ be a Grothendieck topos over $\T$. Let $J$ be a (Lawvere--Tierney) topology on $\E$ and let $\E' \subseteq \E$ be the corresponding subtopos. For a fixed $n \geq 0$, the following are equivalent:
\begin{enumerate}
\item every monomorphism in $J$ is $n$-pure over $\T$;
\item the Eilenberg--Mac Lane objects of the form $K(e^*A,m)$ with $m \leq n$ satisfy descent with respect to the \v{C}ech localization associated to $\E' \subseteq \E$;
\item the Eilenberg--Mac Lane objects of the form $K(e^*A,m)$ with $m \leq n$ satisfy descent with respect to the localization $\s\E' \subseteq \s\E$;
\item the inclusion $\s\E' \subseteq \s\E$ is $n$-pure over $\T$.
\end{enumerate}
\end{theorem}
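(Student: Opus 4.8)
The plan is to run the equivalences in the order $(3)\Leftrightarrow(4)$, then $(2)\Leftrightarrow(3)$, and finally $(1)\Leftrightarrow(2)$, so that each step compares two adjacent reformulations. Throughout, write $\s i : \s\E' \hookrightarrow \s\E$ for the inclusion and $\A = \{K(e^*A,m) : A \text{ in } \T,\ m\leq n\}$ for the relevant downwards closed family. For $(3)\Leftrightarrow(4)$ I would simply unwind the definitions. By Example \ref{eg:descent}(2), an object $X$ of $\s\E$ satisfies descent with respect to $\s\E'\subseteq\s\E$ exactly when the derived unit $X \to \s i_*((\s i^* X)^f)$ is a weak equivalence, while Corollary \ref{cor:preserves-em-objects}, applied to the inclusion $i : \E' \to \E$ (so that $\s\rho = \s i$), states that this inclusion is $\A$-pure precisely when this same derived unit is a homotopy equivalence for every member of $\A$. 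Both source and target of the unit are fibrant (an Eilenberg--Mac Lane object is fibrant and $\s i_*$ is right Quillen), so by Remark \ref{rem:all-objects-cofibrant} weak equivalence and homotopy equivalence coincide here. Hence $(3)$ and $(4)$ agree term by term.

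For $(2)\Leftrightarrow(3)$ I would use Theorem \ref{thm:hypercompletion-of-cech}, which identifies $\s\E'$ with the hypercompletion $\mathsf{t}(\check{\s}\E')$. Thus the localization $\s\E \to \s\E'$ factors as the \v{C}ech localization $\s\E \to \check{\s}\E'$ followed by a hypercompletion, and Lemma \ref{lmm:descent-transitive} tells us that a fixed $K(e^*A,m)$ satisfies descent with respect to $\s\E'\subseteq\s\E$ if and only if it satisfies descent with respect to $\check{\s}\E'$ \emph{and} its \v{C}ech-localized image satisfies descent with respect to the hypercompletion. The second clause is automatic: $K(e^*A,m)$ is $m$-truncated, and any $m$-truncated object is local for the $\infty$-connected maps, since an $\infty$-connected map is in particular $m$-connected and hence, by Definition \ref{def:n-connected}, induces weak equivalences on derived mapping spaces into $m$-truncated objects. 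This leaves only descent with respect to $\check{\s}\E'$, giving $(2)\Leftrightarrow(3)$.

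The substantive step is $(1)\Leftrightarrow(2)$. Because the \v{C}ech localization is topological, its local objects are exactly the homotopy sheaves for $J$; since the covers are the $J$-dense monomorphisms $V\hookrightarrow U$ and the \v{C}ech nerve of a monomorphism collapses to the constant object $V$, the object $K(e^*A,m)$ satisfies \v{C}ech descent if and only if $R\HOM(\gamma^*U, K(e^*A,m)) \to R\HOM(\gamma^*V, K(e^*A,m))$ is a weak equivalence for every such $V\hookrightarrow U$. I would then identify these derived mapping spaces with derived sections: writing $\xi_U : \E/U \to \E$ for the associated étale morphism and combining Proposition \ref{prop:eilenberg-maclane-locally} with the looping identity \eqref{eq:looping}, one finds $\pi_j(R\HOM(\gamma^*U, K(e^*A,m))) \simeq \HH^{m-j}(\E/U, e^*A)$ for $0\leq j\leq m$ and $0$ otherwise. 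Hence \v{C}ech descent of $K(e^*A,m)$ amounts to $\HH^k(\E/U,e^*A)\to\HH^k(\E/V,e^*A)$ being an isomorphism for all $0\leq k\leq m$; ranging over $m\leq n$ (the case $m=n$ being strongest) this says exactly that $\HH^k(\E/U,e^*A)\to\HH^k(\E/V,e^*A)$ is an isomorphism for $k\leq n$, which by Theorem \ref{thm:characterization-derived-pushforwards} means precisely that $\E/V \to \E/U$, i.e.\ the monomorphism $V\hookrightarrow U$, is $n$-pure over $\T$.

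The main obstacle I expect is in $(1)\Leftrightarrow(2)$, on two fronts. First, one must make precise that descent with respect to the (left exact) \v{C}ech localization reduces to the naive homotopy-sheaf condition $R\HOM(\gamma^*U,-)\simeq R\HOM(\gamma^*V,-)$ for single $J$-dense monomorphisms, verifying that the left-exact closure imposes no extra condition on these truncated coefficients and that testing over a generating set of objects $U$ suffices by the locality of the $n$-pure topology (condition (1) of Proposition \ref{prop:characterization-topologies}). Second, the homotopy-group computation of the derived sections must be justified carefully through the looping argument together with the Beck--Chevalley isomorphism for the étale morphisms $\xi_U$. The remaining two equivalences are essentially formal once the cited structural results are in hand.
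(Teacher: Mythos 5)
Your proposal is correct and follows essentially the same route as the paper: the same three pairwise equivalences, established with the same ingredients (Corollary \ref{cor:preserves-em-objects} and fibrancy for $(3)\Leftrightarrow(4)$; Theorem \ref{thm:hypercompletion-of-cech}, Lemma \ref{lmm:descent-transitive}, and the automatic hypercompletion-descent of truncated objects for $(2)\Leftrightarrow(3)$; the looping computation of $\pi_j(R\HOM(\gamma^*U,K(e^*A,m)))\simeq\HH^{m-j}(\E/U,e^*A)$ together with Theorem \ref{thm:characterization-derived-pushforwards} for $(1)\Leftrightarrow(2)$), merely presented in the reverse order. The two obstacles you flag are exactly the points the paper attends to, the first via the closure of $J$ under pullback and the second via the cited Beck--Chevalley and loop-space propositions, and your appeal to \cite[Proposition 7.6]{rezk}-style preservation of truncation into the \v{C}ech localization matches the paper's treatment.
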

\begin{proof}
$\underline{(1) \Leftrightarrow (2)}$~ Using Proposition \ref{prop:descent-for-sheaves} and the discussion in Subsection \ref{ssec:left-exact-localization} and Subsection \ref{ssec:cech-localization},  we see that $K(e^*A,m)$ satisfies descent with respect to this \v{C}ech localization if and only if for each monomorphism $V \hookrightarrow U$ in $J$, the map
\begin{equation} \label{eq:cech-descent}
\HOM(U, K(e^*A,m)) \longrightarrow \HOM(V, K(e^*A,m))
\end{equation}
is a weak equivalence. By Proposition \ref{prop:hom-to-n-type}, the simplicial sets $\HOM(U,K(e^*A,m))$ and $\HOM(V,K(e^*A,m))$ are $m$-truncated, i.e.\ their homotopy groups vanish in degrees $>m$. In lower degrees, we can calculate
\begin{equation*}
\begin{split}
\pi_k(\HOM(U, K(e^*A,m)))
&\simeq \pi_k(\s e_{U,*}K(e_U^*A,m)) \\
&\simeq \pi_0(\s e_{U,*}\Omega^k K(e_U^*A,m)) \\
&\simeq \pi_0(\s e_{U,*}K(e_U^*A,m-k))  \\
&\simeq \HH^{m-k}(\E/U, e_U^*A)
\end{split}
\end{equation*}
with $e_U : \E/U \to \T$ the global sections geometric morphism. To construct the above isomorphisms, we use Proposition \ref{prop:looping-commutes-with-pushforward}, Proposition \ref{prop:homotopy-groups-of-loop-space} and Corollary \ref{cor:loop-space-of-em}. Using a similar notation and calculation for $V$ rather than $U$, we see that \eqref{eq:cech-descent} is a weak equivalence if and only if the maps
\begin{equation*}
\HH^{m-k}(\E/U, e_U^*A) \longrightarrow \HH^{m-k}(\E/V, e_V^*A)
\end{equation*}
are isomorphisms, for $0 \leq k \leq m$. This is the case for all $K(e^*A,m)$ if and only if each $V \hookrightarrow U$ in $J$ is $n$-pure (we take into account here that pullbacks of monomorphisms in $J$ are again in $J$).

$\underline{(2) \Leftrightarrow (3)}$~ We use the notation $\check{\s}\E'$ to denote the \v{C}ech localization of $\s\E$ along $\E' \subseteq \E$. Using Theorem \ref{thm:hypercompletion-of-cech}, we can write $\s\E'$ as the hypercompletion of $\check{\s}\E'$. Lemma \ref{lmm:descent-transitive} then says that an object satisfies descent with respect to $\s\E' \subseteq \s\E$ if and only if the following two conditions hold: it should satisfy descent with respect to the \v{C}ech localization $\check{\s}\E' \subseteq \s\E$, and, seen as an object of $\check{\s}\E'$, it should satisfy descent along the hypercompletion. It is enough to show that this second condition automatically holds for Eilenberg--Mac Lane objects. Note that an Eilenberg--Mac Lane object $X$ in $\s\E$ remains $k$-truncated (for the appropriate $k$) when it is interpreted as an object of the \v{C}ech localization $\check{\s}\E'$, by \cite[Proposition 7.6]{rezk}. We can again apply \cite[Proposition 7.6]{rezk} to see that $X\times A \to A$ is $k$-truncated in $\check{\s}\E'/A$. For an $\infty$-connected map $B\to A$ in $\check{\s}\E'$, we then find that the map
\begin{equation*}
R\HOM_{\check{\s}\E'/A}(A,X\times A) \longrightarrow R\HOM_{\check{\s}\E'/A}(B,X\times A)
\end{equation*}
is a weak equivalence (Definition \ref{def:n-connected}). We can rewrite this map as
\begin{equation*}
R\HOM_{\check{\s}\E'}(A,X) \longrightarrow R\HOM_{\check{\s}\E'}(B,X).
\end{equation*}
We conclude that $X$ satisfies descent with respect to the $\infty$-connected maps, see Proposition \ref{prop:descent-for-sheaves}, in other words it satisfies descent with respect to the hypercompletion.

$\underline{(3) \Leftrightarrow (4)}$~ This follows from Definition \ref{def:descent} and Corollary \ref{cor:preserves-em-objects}. Note that a map of the form
\begin{equation*}
K(e^*A,m) \longrightarrow \s i_*\left((\s i^*K(e^*A,m))^f\right),
\end{equation*}
with $\s i: \s\E' \to \s\E$ the inclusion geometric morphism, is a weak equivalence if and only if it is a homotopy equivalence, because both objects are fibrant (and cofibrant).
\end{proof}

\begin{corollary}
Let $e: \E \to \T$ be a Grothendieck topos over $\T$.
Then $\E_{\leq n} \subseteq \E$ is the smallest $(n-1)$-pure subtopos.
\end{corollary}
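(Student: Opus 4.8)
The plan is to read off the corollary from the biconditional packaged in Theorem \ref{thm:n-pure-topology-vs-n-pure-inclusion} together with the order-reversing correspondence between Lawvere--Tierney topologies and subtoposes. Concretely, I want to establish that for any subtopos inclusion $\E' \subseteq \E$, the inclusion is $(n-1)$-pure over $\T$ if and only if $\E_{\leq n} \subseteq \E'$. This biconditional is exactly the assertion that $\E_{\leq n}$ is the smallest $(n-1)$-pure subtopos: the ``$\Leftarrow$'' direction applied to $\E' = \E_{\leq n}$ shows that $\E_{\leq n}$ is itself $(n-1)$-pure, while the ``$\Rightarrow$'' direction shows every $(n-1)$-pure subtopos contains it.

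First I would unwind the definition: $\E_{\leq n} = \E_J$, where $J$ is the $(n-1)$-pure topology, i.e.\ the topology whose dense monomorphisms are exactly the monomorphisms of $\E$ that are $(n-1)$-pure over $\T$. That these monomorphisms do form a topology is the content of the discussion preceding the definition, via Proposition \ref{prop:characterization-topologies}, Proposition \ref{prop:coproduct-pure}, Corollary \ref{cor:pure-reflection} and Proposition \ref{prop:basic-properties-A-pure}. By construction $J$ is the largest topology all of whose dense monomorphisms are $(n-1)$-pure: if $\E'$ has topology $J'$ and every monomorphism in $J'$ is $(n-1)$-pure, then $J' \subseteq J$.

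Next, for $n \geq 1$ I would apply Theorem \ref{thm:n-pure-topology-vs-n-pure-inclusion} with its parameter set equal to $n-1$. The equivalence of its conditions (1) and (4) says that, for the topology $J'$ of a subtopos $\E'$, every monomorphism in $J'$ is $(n-1)$-pure over $\T$ if and only if the inclusion $\E' \subseteq \E$ is $(n-1)$-pure over $\T$ (condition (4) is phrased via $\s\E' \subseteq \s\E$, but this is the same statement, since the cohomology defining $(n-1)$-pureness is computed simplicially throughout). Combined with the previous paragraph, this rewrites ``$\E' \subseteq \E$ is $(n-1)$-pure'' as the purely order-theoretic condition $J' \subseteq J$. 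I would then invoke the standard fact that a larger topology corresponds to a smaller subtopos, so $J' \subseteq J$ is equivalent to $\E_J \subseteq \E_{J'}$, i.e.\ $\E_{\leq n} \subseteq \E'$. Chaining the equivalences yields the desired biconditional. The boundary case $n = 0$ concerns the $(-1)$-pure topology, which is the $\T$-dense topology; there the statement is already the smallest-$\A$-dense-subtopos result of Subsection \ref{ssec:A-dense-A-pure}, taking $\A$ to be the objects constant over $\T$.

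I expect no new homotopical input to be needed: all of the substantive work, namely translating the $(n-1)$-pureness of an inclusion into a descent condition on the Eilenberg--Mac Lane objects $K(e^*A,m)$ and back, is already carried out inside Theorem \ref{thm:n-pure-topology-vs-n-pure-inclusion}. The only delicate points are bookkeeping ones, and these are where I would concentrate: keeping the index shift straight (the corollary speaks of $(n-1)$-pureness while $\E_{\leq n}$ is defined through the $(n-1)$-pure topology and the theorem through its own parameter), and keeping the direction of the topology/subtopos correspondence straight (larger topology $\leftrightarrow$ smaller subtopos), since a reversed inclusion here would silently invert the extremality claim.
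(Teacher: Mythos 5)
Your proposal is correct and follows essentially the same route as the paper: both read the corollary off the equivalence $(1)\Leftrightarrow(4)$ of Theorem \ref{thm:n-pure-topology-vs-n-pure-inclusion}, combined with the fact that the $(n-1)$-pure topology is by construction the largest topology whose dense monomorphisms are all $(n-1)$-pure and the order-reversing topology/subtopos correspondence. The only (immaterial) difference is the edge case $n=0$: the paper notes that the equivalence $(1)\Leftrightarrow(4)$ still holds for the parameter value $-1$, whereas you fall back on the smallest-$\A$-dense-subtopos result of Section 2, which is equally valid since $(-1)$-pure over $\T$ means $\T$-dense.
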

\begin{proof}
Using the equivalence $(1) \Leftrightarrow (4)$ in Theorem \ref{thm:n-pure-topology-vs-n-pure-inclusion}, a subtopos is $n$-pure if and only if the corresponding topology is contained in the $n$-pure topology. So the subtopos corresponding to the $n$-pure topology is the smallest $n$-pure topos.

Note that in Theorem \ref{thm:n-pure-topology-vs-n-pure-inclusion} we only consider the case $n \geq 0$, but the equivalence $(1) \Leftrightarrow (4)$ holds for $n=-1$ as well.
\end{proof}

For $n=-1$ and $\T=\Set$, we recover the classical result that the subtopos corresponding to the dense topology is the smallest dense subtopos.

\subsection{Definition of dimension}

We are now ready to introduce our concept of dimension for Grothendieck toposes.

\begin{definition}
Let $\E$ be a Grothendieck topos over $\T$. The \emph{dimension of $\E$ over $\T$} is the smallest integer $n \geq -1$ 
such that $\E_{\leq n} = \E_{<\infty}$.
In other words, the dimension of $\E$ is the smallest number $n$ such that every $(n-1)$-pure monomorphism over $\T$ in $\E$ is $k$-pure over $\T$ for all $k$.
If such an integer $n$ does not exist, then we say that $\E$ has dimension $\infty\,$.
We write $\dim(\E)$ for the dimension of $\E$.
\end{definition}

If $\E_{<\infty} = \E$, then we say that $\E$ is \emph{without boundary}. On the other hand, if $\E_{<\infty} \neq \E$ then we say that $\E$ has a boundary. Note that unless the subtopos $\E_{<\infty}$ has a complement, it does not make sense to view the boundary of $\E$ as a subtopos in its own right.

\subsection{Basic properties of dimensions}

We shall say that a geometric morphism $\rho : \F \to \E$ is \emph{$n$-skeletal} 
if it restricts to a geometric morphism $\F_{\leq n} \to \E_{\leq n}$. 
Equivalently, $\rho$ is $n$-skeletal if and only if $\rho^*(\phi)$ is $(n-1)$-pure 
for every $(n-1)$-pure morphism  $\phi$ of $\E$.

Let $j : \E_{\leq n} \to \E$ be the inclusion of the smallest $(n-1)$-pure subtopos. 
For an object $X$ of $\E$ consider the following pullback diagram.
\begin{equation} \label{eq:n-minimal-subtopos-of-slice}
\begin{tikzcd}
\E_{\leq n}/j^*(X) \ar[r,"{\tilde{j}}"] \ar[d] & \E/X \ar[d] \\
\E_{\leq n} \ar[r,"{j}"] & \E
\end{tikzcd}
\end{equation}
The pullback $\tilde{j}$ is then an inclusion, 
corresponding to the topology on $\E/X$ generated by the $n$-pure monomorphisms. 
It follows that 
\begin{equation*}
(\E/X)_{\leq n} ~\simeq~ \E_{\leq n}/j^*(X)\;.
\end{equation*}
In particular:
\begin{proposition}
\'Etale geometric morphisms are $n$-skeletal, for every $n \geq 0\,$.
\end{proposition}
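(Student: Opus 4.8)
The plan is to use the identification $(\E/X)_{\leq n} \simeq \E_{\leq n}/j^*(X)$ established immediately above, together with the pullback square \eqref{eq:n-minimal-subtopos-of-slice}. Every étale geometric morphism is, up to equivalence, a slice projection $\xi : \E/X \to \E$ for some object $X$ of $\E$, so it suffices to show that each such $\xi$ is $n$-skeletal, i.e.\ that it restricts to a geometric morphism $(\E/X)_{\leq n} \to \E_{\leq n}$.

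First I would read off the square \eqref{eq:n-minimal-subtopos-of-slice}. Its right-hand vertical map is exactly $\xi$, its bottom map is the inclusion $j$ of the smallest $(n-1)$-pure subtopos of $\E$, and its top map $\tilde{j}$ is the inclusion of the smallest $(n-1)$-pure subtopos of $\E/X$ (this is precisely the content of the identification $(\E/X)_{\leq n} \simeq \E_{\leq n}/j^*(X)$, where $\tilde{j}$ corresponds to the topology on $\E/X$ generated by the $n$-pure monomorphisms). Commutativity of the square then says $\xi \circ \tilde{j} \simeq j \circ p$, with $p : \E_{\leq n}/j^*(X) \to \E_{\leq n}$ the slice projection. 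This exhibits $\xi$ as restricting, along the subtopos inclusions $\tilde{j}$ and $j$, to the geometric morphism $p$, which is exactly the statement that $\xi$ is $n$-skeletal.

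Alternatively, I would verify the equivalent criterion directly: $\xi$ is $n$-skeletal if and only if $\xi^*(\phi)$ is $(n-1)$-pure for every $(n-1)$-pure $\phi$ in $\E$. For a monomorphism $\phi : V \hookrightarrow U$ the morphism $\xi^*(\phi)$ has underlying monomorphism $V \times X \hookrightarrow U \times X$, which is the pullback of $\phi$ along $U \times X \to U$; moreover pureness of a monomorphism in $\E/X$ depends only on the underlying monomorphism in $\E$, since $(\E/X)/(W \to X) \simeq \E/W$. Because the $(n-1)$-pure monomorphisms form a topology (condition $(1)$ of Proposition \ref{prop:characterization-topologies}, which holds by Proposition \ref{prop:coproduct-pure} and Corollary \ref{cor:pure-reflection}), they are stable under pullback, so $\xi^*(\phi)$ is again $(n-1)$-pure.

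There is no real obstacle remaining: the substantive work is the identification $(\E/X)_{\leq n} \simeq \E_{\leq n}/j^*(X)$, which has already been carried out and which in turn rests on the fact that $n$-pureness can be tested étale-locally (Corollary \ref{cor:pure-reflection}) together with the composition and cancellation properties of Proposition \ref{prop:basic-properties-A-pure}. Given that, the proposition is just a matter of reading off the pullback square, so the only point requiring care is to confirm that the left-hand vertical map in \eqref{eq:n-minimal-subtopos-of-slice} is indeed the étale projection and that $\tilde{j}$ really is the canonical inclusion of $(\E/X)_{\leq n}$.
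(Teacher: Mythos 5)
Your first argument is exactly the paper's: the proposition is read off from the pullback square \eqref{eq:n-minimal-subtopos-of-slice} and the identification $(\E/X)_{\leq n} \simeq \E_{\leq n}/j^*(X)$ established immediately before it. The alternative direct verification (pureness of a monomorphism in $\E/X$ depends only on its underlying monomorphism in $\E$, and the $(n-1)$-pure monomorphisms, forming a topology, are stable under pullback) is also correct and in fact makes explicit the observation on which the paper's identification of $\tilde{j}$ rests.
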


Further, if the right vertical map in \eqref{eq:n-minimal-subtopos-of-slice} is a surjection, 
then the same holds for the left vertical map because open surjections are stable under base change. 
From this we can conclude that $\E_{\leq n} = \bigcup_{i \in I} (\E_i)_{\leq n}$ whenever $\E$ is covered by a collection of open subtoposes $(\E_i)_{i \in I}\,$. 

A geometric morphism $\rho : \F \to \E$ is $n$-skeletal if and only if 
$\rho(\F_{\leq n}) \subseteq \E_{\leq n}\,$, 
where $\rho(\F_{\leq n})$ denotes the image of the subtopos $\F_{\leq n}$ along $\rho$.
The $(n-1)$-pure geometric morphisms satisfy the reverse inclusion.

\begin{proposition} \label{prop:n-pure-converse-skeletal}
If $\rho: \F \to \E$ is $(n-1)$-pure, then $\E_{\leq n} \subseteq \rho(\F_{\leq n})$. In other words, the image of an $(n-1)$-pure geometric morphism is again $(n-1)$-pure.
\end{proposition}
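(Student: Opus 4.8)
The plan is to prove the sharper statement hidden in the ``in other words'' clause: the image subtopos of \emph{any} $(n-1)$-pure geometric morphism is itself $(n-1)$-pure, and then to specialize. First I would record that the restriction $g := \rho\circ\iota : \F_{\leq n} \to \E$ of $\rho$ along the inclusion $\iota : \F_{\leq n}\hookrightarrow \F$ is again $(n-1)$-pure. Indeed, with $\A = \{K(e^*A,m) : A\in\T,\ m\leq n-1\}$, the morphism $\rho$ is $\A$-pure by hypothesis, while $\iota$ is $\rho^*\A$-pure because $\rho^*\A$ is precisely the family $\{K(f^*A,m)\}$ cutting out $(n-1)$-pureness over $\T$ in $\F$, for which $\F_{\leq n}\hookrightarrow\F$ is the smallest $(n-1)$-pure subtopos; so Proposition \ref{prop:basic-properties-A-pure}(1) gives that $g$ is $\A$-pure. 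Factoring $g$ through its image as $\F_{\leq n}\xrightarrow{\ s\ }\E'\xrightarrow{\ i\ }\E$ with $\E' = \rho(\F_{\leq n})$ a subtopos and $i$ the inclusion, it then suffices to show that $i$ is $(n-1)$-pure; since $\E_{\leq n}$ is by definition the smallest $(n-1)$-pure subtopos of $\E$, this yields $\E_{\leq n}\subseteq \E' = \rho(\F_{\leq n})$.

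The core step would use descent rather than a direct composition argument. Fix an Eilenberg--Mac Lane object $X = K(e^*A,m)$ with $m\leq n-1$. Since $g$ is $(n-1)$-pure, Corollary \ref{cor:preserves-em-objects} provides a homotopy equivalence $X \xrightarrow{\ \sim\ } \s g_*\big((\s g^*X)^f\big)$. Using $\s g_* = \s i_*\,\s s_*$ I would rewrite the target as $\s i_*(Y)$ with $Y := \s s_*\big((\s g^*X)^f\big)$; here $Y$ lies in $\s\E'$ and is fibrant, because $\s s_*$ is a right Quillen functor (Proposition \ref{prop:pullback-preserves-homotopy-groups}) and hence preserves fibrant objects. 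Thus $X$ is weakly equivalent to $\s i_*(Y)$ for a fibrant $Y$, so by Lemma \ref{lmm:satisfies-descent-if-in-image} the object $X$ satisfies descent with respect to the Quillen reflection $\s i^*\dashv \s i_*$ of Example \ref{eg:descent}(2). As this holds for every such $X$, the equivalence $(3)\Leftrightarrow(4)$ of Theorem \ref{thm:n-pure-topology-vs-n-pure-inclusion} (applied with $n$ replaced by $n-1$) shows that the inclusion $\s\E'\subseteq\s\E$, hence $\E'\subseteq\E$, is $(n-1)$-pure, which completes the argument.

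I expect the main obstacle to be resisting the naive route. The temptation is to apply Proposition \ref{prop:basic-properties-A-pure}(2) to the factorization $g = i\circ s$, which would force the \emph{surjective} part $s$ to be $i^*\A$-pure; but surjections are not $(n-1)$-pure in general (they need not even preserve constant objects in degree $0$), so that line stalls. The descent formulation circumvents exactly this issue: the surjection $s$ enters only through $\s s_*$ preserving fibrancy, which holds for an arbitrary geometric morphism, so no purity of $s$ is ever required. The remaining points needing care are purely bookkeeping, namely the index shift ($\E_{\leq n}$ corresponds to the $(n-1)$-pure topology) and the verification that $\rho^*\A$ really is the family defining $(n-1)$-pureness over $\T$ in $\F$.
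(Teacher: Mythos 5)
Your proof is correct in substance, but it takes a genuinely different route from the paper's, which is worth spelling out. The paper never forms the surjection--inclusion factorization of $g=\rho\iota$ at all: it argues one monomorphism at a time, showing that if $\phi\colon Y\hookrightarrow X$ in $\E$ has $\rho^*\phi$ $(n-1)$-pure then $\phi$ is $(n-1)$-pure, which is exactly the containment of topologies dual to $\E_{\leq n}\subseteq\rho(\F_{\leq n})$. The trick there is to read the commuting square $\phi\circ(\rho/Y)=(\rho/X)\circ\rho^*\phi$ and apply Proposition \ref{prop:basic-properties-A-pure}: the right-hand composite is $\A$-pure by part (1), and the cancellation in part (2) is legitimate because the morphism being cancelled is $\rho/Y$ --- a slice of the pure morphism $\rho$, hence itself pure --- rather than the surjective part of an image factorization. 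So the obstacle you rightly identify (that part (2) cannot be applied to $g=i\circ s$ since $s$ need not be pure) is dodged in the paper by cancelling against a different morphism, whereas you dodge it by passing to the descent characterization, where $s$ enters only through the right Quillen functor $\s s_*$ via Lemma \ref{lmm:satisfies-descent-if-in-image}. Both are valid; the paper's argument is the more elementary one (it stays entirely within the formal properties of Section 6), while yours leans on the Section 4/7 machinery and, applied to $\rho$ in place of $g$, also yields the ``in other words'' clause directly. One boundary case deserves a sentence in your write-up: for $n=0$ the hypothesis is $(-1)$-pureness, a monomorphism condition on $\HH^0$ which is not ``$\A$-pure'' for any downwards closed family of Eilenberg--Mac Lane objects, so Corollary \ref{cor:preserves-em-objects} and the equivalence $(3)\Leftrightarrow(4)$ of Theorem \ref{thm:n-pure-topology-vs-n-pure-inclusion} do not apply as stated; there you need the separate (one-line) observation that if $\HH^0(g/X)$ is a monomorphism then so is the factor $\HH^0(i/X)$ through which it passes. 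The paper's proof requires the same small addendum, since Proposition \ref{prop:basic-properties-A-pure} is likewise stated only for families of Eilenberg--Mac Lane objects.
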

\begin{proof}
Let $\phi:Y \hookrightarrow X$ be a monomorphism of $\E$.
We need to show that if $\rho^*\phi$ is $(n-1)$-pure, then $\phi$ is $(n-1)$-pure.
We have a pullback diagram
\begin{equation*}
\begin{tikzcd}
\F/\rho^*Y \ar[r,"{\rho/Y}"] \ar[d,"\rho^*\phi"'] & \E/Y \ar[d,"{\phi}"] \\
\F/\rho^*X \ar[r,"{\rho/X}"'] & \E/X
\end{tikzcd}
\end{equation*}
where $\rho/X$ and $\rho/Y$ are $(n-1)$-pure.
By Proposition \ref{prop:basic-properties-A-pure}(ii), if $\rho^*\phi$ is $(n-1)$-pure, then $\phi$ is $(n-1)$-pure as well.
\end{proof}

The following is a generalization of the standard fact that dense inclusions are skeletal.

\begin{proposition} \label{prop:n-pure-inclusions-and-skeletal}
The $(n-1)$-pure inclusions are $n$-skeletal, for every $n \geq 0$.
\end{proposition}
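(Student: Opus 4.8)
The plan is to unwind the definition of $n$-skeletal. Writing the $(n-1)$-pure inclusion as $i : \E' \hookrightarrow \E$, I must show that for every $(n-1)$-pure monomorphism $\phi : Y \hookrightarrow X$ of $\E$ the pulled-back map $i^*\phi : i^*Y \hookrightarrow i^*X$ (again a monomorphism, since $i^*$ is left exact) is $(n-1)$-pure in $\E'$. By Definition \ref{def:n-pure} this amounts to checking, for every object $Z \to i^*X$ of $\E'/i^*X$, every constant coefficient $A = e^*T$ and every $m \leq n-1$, that the comparison map $\HH^m(\E'/Z, A) \to \HH^m(\E'/Z', A)$ is an isomorphism, where $Z' = Z \times_{i^*X} i^*Y$ is the pullback of $i^*\phi$ along $Z \to i^*X$.

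The first step is a reduction. I would observe that the base change $i_X : \E'/i^*X \to \E/X$ of $i$ along the \'etale morphism $\E/X \to \E$ is again an inclusion, because the property of being an inclusion is local with respect to base change; its inverse image, the sliced functor $(W \to X) \mapsto (i^*W \to i^*X)$, is therefore essentially surjective. Hence it suffices to verify the isomorphism for objects of the form $Z = i^*W$ with $W \to X$ in $\E$. For such a $W$, setting $W' = W \times_X Y = \phi^* W$ and using that $i^*$ preserves finite limits yields $Z' = i^*W'$.

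The heart of the argument is then a single commutative square of cohomology groups, obtained by applying the contravariant functor $\HH^m(-,A)$ (functorial by Subsection \ref{ssec:contravariance-of-cohomology}) to the commutative square of base-changed geometric morphisms relating $\E'/i^*W'$, $\E'/i^*W$, $\E/W'$ and $\E/W$:
\begin{equation*}
\begin{tikzcd}
\HH^m(\E/W, A) \ar[r] \ar[d] & \HH^m(\E'/i^*W, A) \ar[d] \\
\HH^m(\E/W', A) \ar[r] & \HH^m(\E'/i^*W', A)
\end{tikzcd}
\end{equation*}
The two horizontal maps are isomorphisms because $i$ is $(n-1)$-pure: they are exactly the comparison maps of Definition \ref{def:n-pure} for $i$ on the slices $W$ and $W'$. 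The left vertical map is an isomorphism because $\phi$ is $(n-1)$-pure, being its comparison map on the slice $W \to X$ (recall $W' = \phi^*W$). A diagram chase then forces the right vertical map to be an isomorphism; but that map is precisely $\HH^m(\E'/i^*W, A) \to \HH^m(\E'/i^*W', A)$, which is what we needed. As $W \to X$ ranges over a class whose image is essentially all of $\E'/i^*X$, this shows $i^*\phi$ is $(n-1)$-pure.

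The main obstacle is not the cohomological bookkeeping but justifying the reduction, i.e.\ the essential surjectivity of the sliced inverse image $\E/X \to \E'/i^*X$. This is exactly where the hypothesis that $i$ is an \emph{inclusion}, rather than an arbitrary $(n-1)$-pure geometric morphism, is indispensable: the composition and cancellation results of Proposition \ref{prop:basic-properties-A-pure} only let one cancel an \emph{outer} factor from a composite, so they would allow one to re-derive that $i_X$ is $(n-1)$-pure but not to isolate the inner map $i^*\phi$. The inclusion property supplies the missing ingredient by making $i^*$ essentially surjective on slices, after which the commuting square does the rest.
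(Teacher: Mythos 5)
Your proof is correct and follows essentially the same route as the paper: both arguments reduce to objects of $\E'/i^*X$ coming from $\E/X$ (you via essential surjectivity of the sliced inverse image of an inclusion, the paper via the explicit preimage obtained by pulling back along the unit $X \to j_*j^*X$, which is the same fact), and then conclude by the same three-isomorphisms-imply-the-fourth square of cohomology comparison maps. Your closing remark correctly identifies that the inclusion hypothesis enters precisely through this essential surjectivity, which is also where the paper uses it.
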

\begin{proof}
Let $j : \F \to \E$ be an $(n-1)$-pure inclusion. To show that $j$ is $n$-skeletal, it is enough to show that for every $(n-1)$-pure monomorphism $b : X' \to X$ in $\E$, the pullback $j^*(b)$ is again $(n-1)$-pure. 
Take $Y \to j^*X$ an object in $\F/j^*X$, and set $Y' = Y \times_{j^*X} j^*X'$. We claim that the map $\HH^i(\F/Y,A) \to \HH^i(\F/Y',A)$ is an isomorphism for all $K(A,i)$ with $i \leq n-1$. Let $Z \to X$ be the pullback of $Y \to j^*X$ along the unit $X \to j_*j^*X$, and set $Z' = Z \times_X X'$. Then we get a natural map $Z' \to Z$ in $\E$ such that after applying $j^*$ we get the map $Y' \to Y$. As a result, we find a commutative diagram
\begin{equation*}
\begin{tikzcd}
\HH^i(\F/Y,A) \ar[r] \ar[d] & \HH^i(\E/Z,A) \ar[d] \\
\HH^i(\F/Y',A) \ar[r] & \HH^i(\E/Z',A).
\end{tikzcd}
\end{equation*}
such that the upper, lower and right maps are isomorphisms, for each $K(A,i)$ with $i \leq n-1$. But then the left map is an isomorphism as well, which shows that $j^*(b)$ is $(n-1)$-pure.
\end{proof}

\begin{corollary} \label{cor:same-smallest-n-pure-subtopos}
Let $j : \F\hookrightarrow \E$ be an $(n-1)$-pure subtopos. 
Then $\F_{\leq n} = \E_{\leq n}$.
\begin{proof}
This follows by combining Proposition \ref{prop:n-pure-converse-skeletal} 
and Proposition \ref{prop:n-pure-inclusions-and-skeletal}.
\end{proof}
\end{corollary}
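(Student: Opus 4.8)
The plan is to prove the equality of subtoposes $\F_{\leq n} = \E_{\leq n}$ by establishing two opposite inclusions, where throughout I regard $\F_{\leq n}$ as a subtopos of $\E$ via the composite inclusion $\F_{\leq n} \hookrightarrow \F \hookrightarrow \E$. Since $j$ is itself an inclusion, its restriction to $\F_{\leq n}$ is again an inclusion and hence equals its own image, so $j(\F_{\leq n})$ is precisely this composite subtopos. It therefore suffices to show $j(\F_{\leq n}) \subseteq \E_{\leq n}$ and $\E_{\leq n} \subseteq j(\F_{\leq n})$.

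For the first inclusion, I would note that $j$ is an $(n-1)$-pure inclusion by hypothesis, so Proposition \ref{prop:n-pure-inclusions-and-skeletal} applies and shows that $j$ is $n$-skeletal. Unwinding the characterization of $n$-skeletal morphisms recorded just before Proposition \ref{prop:n-pure-converse-skeletal}, this says exactly that $j(\F_{\leq n}) \subseteq \E_{\leq n}$. For the reverse inclusion, I would use that $j$, being an $(n-1)$-pure subtopos inclusion, is in particular an $(n-1)$-pure geometric morphism; Proposition \ref{prop:n-pure-converse-skeletal} then gives $\E_{\leq n} \subseteq j(\F_{\leq n})$ directly. Combining the two yields $j(\F_{\leq n}) = \E_{\leq n}$, which is the desired equality once we recall that $j(\F_{\leq n}) = \F_{\leq n}$ as subtoposes of $\E$.

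The content of the argument is thus entirely carried by the two cited propositions, and the only delicate point is bookkeeping: I must be sure that the smallest $(n-1)$-pure subtopos of $\F$ and its image in $\E$ genuinely coincide as subtoposes of $\E$. This rests on transitivity of subtopos inclusions together with the fact that an inclusion is its own image, after which the two propositions fit together with no additional calculation.
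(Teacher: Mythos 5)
Your proposal is correct and is exactly the paper's argument: the paper's proof simply cites Proposition \ref{prop:n-pure-inclusions-and-skeletal} for the inclusion $j(\F_{\leq n}) \subseteq \E_{\leq n}$ and Proposition \ref{prop:n-pure-converse-skeletal} for the reverse inclusion, which is what you do, with the additional (correct) bookkeeping that $j(\F_{\leq n})$ coincides with $\F_{\leq n}$ as a subtopos of $\E$ because $j$ is an inclusion.
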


\begin{corollary}
For $i,j$ natural numbers, we have $(\E_{\leq i})_{\leq j} = \E_{\leq \mathrm{min}(i,j)}$. In the limit, we have $(\E_{\leq i})_{< \infty} = \E_{\leq i}$, $(\E_{< \infty})_{\leq j} = \E_{\leq j}$ and $(\E_{< \infty})_{<\infty} = \E_{< \infty}$.
\end{corollary}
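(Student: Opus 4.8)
The plan is to reduce the entire statement to Corollary \ref{cor:same-smallest-n-pure-subtopos}, together with the closure of $n$-pureness over $\T$ under composition (Proposition \ref{prop:basic-properties-A-pure}) and the defining minimality of $\E_{\leq n}$ as the smallest $(n-1)$-pure subtopos. First I would prove the finite identity $(\E_{\leq i})_{\leq j} = \E_{\leq \mathrm{min}(i,j)}$ by splitting into the two cases $j \leq i$ and $i \leq j$, and then deduce the three limiting statements purely formally by passing to unions.

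In the case $j \leq i$, the inclusion $\E_{\leq i} \hookrightarrow \E$ is $(i-1)$-pure by construction, hence $(j-1)$-pure since $j-1 \leq i-1$ and higher pureness implies lower pureness. Applying Corollary \ref{cor:same-smallest-n-pure-subtopos} with $\F = \E_{\leq i}$ and $n = j$ then yields $(\E_{\leq i})_{\leq j} = \E_{\leq j} = \E_{\leq \mathrm{min}(i,j)}$ immediately. In the case $i \leq j$, where the claim reads $(\E_{\leq i})_{\leq j} = \E_{\leq i}$, I would instead prove the stronger statement that $\E_{\leq i}$ has no proper $(j-1)$-pure subtopos. Given a $(j-1)$-pure subtopos $\mathcal{G} \hookrightarrow \E_{\leq i}$, it is also $(i-1)$-pure since $i-1 \leq j-1$; composing with the $(i-1)$-pure inclusion $\E_{\leq i} \hookrightarrow \E$ and invoking Proposition \ref{prop:basic-properties-A-pure}(1) shows that $\mathcal{G} \hookrightarrow \E$ is $(i-1)$-pure over $\T$. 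By minimality of $\E_{\leq i}$ we get $\E_{\leq i} \subseteq \mathcal{G}$, whence $\mathcal{G} = \E_{\leq i}$. In particular the smallest $(j-1)$-pure subtopos $(\E_{\leq i})_{\leq j}$ is $\E_{\leq i}$.

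The limits then follow formally. For the first I would write $(\E_{\leq i})_{<\infty} = \bigcup_j (\E_{\leq i})_{\leq j} = \bigcup_j \E_{\leq \mathrm{min}(i,j)}$, and observe that the terms stabilise at $\E_{\leq i}$ once $j \geq i$, so the union is $\E_{\leq i}$. For the second, the inclusion $\E_{<\infty} \hookrightarrow \E$ is $\infty$-pure: its topology is the $\infty$-pure topology, which is contained in the $n$-pure topology for every $n$, so by the characterisation of $n$-pure subtoposes it is $n$-pure for all $n$, in particular $(j-1)$-pure. Applying Corollary \ref{cor:same-smallest-n-pure-subtopos} with $\F = \E_{<\infty}$ and $n = j$ gives $(\E_{<\infty})_{\leq j} = \E_{\leq j}$. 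Finally $(\E_{<\infty})_{<\infty} = \bigcup_j (\E_{<\infty})_{\leq j} = \bigcup_j \E_{\leq j} = \E_{<\infty}$.

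The main obstacle is the case $i \leq j$, since unlike $j \leq i$ it cannot be read off directly from Corollary \ref{cor:same-smallest-n-pure-subtopos}: higher pureness of the inclusion does not descend in the direction needed, so one must transport a $(j-1)$-pure subtopos of $\E_{\leq i}$ down to an $(i-1)$-pure subtopos of $\E$ and then appeal to minimality. The one point demanding care here is the bookkeeping of the base-topos families in Proposition \ref{prop:basic-properties-A-pure}: one must check that $\rho^*\A$, for $\rho$ the inclusion $\E_{\leq i} \hookrightarrow \E$ and $\A$ the family $\{K(e^*T,m) : m \leq i-1\}$, coincides up to weak equivalence with the family $\{K(e'^*T,m) : m \leq i-1\}$ that defines $(i-1)$-pureness over $\T$ inside $\E_{\leq i}$, where $e' = e\rho$. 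This follows from $\s\rho^*K(e^*T,m) \simeq K(\rho^*e^*T,m)$ (Proposition \ref{prop:pullback-preserves-homotopy-groups}) together with $\rho^*e^* = e'^*$, after which the composition property applies verbatim.
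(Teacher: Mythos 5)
Your proof is correct and uses the same ingredients as the paper's: Corollary \ref{cor:same-smallest-n-pure-subtopos} for the case $j\leq i$, and for $i\leq j$ the observation that composing the two pure inclusions and invoking minimality of $\E_{\leq i}$ forces $(\E_{\leq i})_{\leq j}=\E_{\leq i}$ (the paper phrases this as the sandwich $(\E_{\leq i})_{\leq i}\subseteq(\E_{\leq i})_{\leq j}\subseteq\E_{\leq i}$ after first settling $i=j$, but the content is identical). Deriving the three limiting statements by passing to unions rather than rerunning the arguments is a harmless cosmetic difference.
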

\begin{proof}
We first consider the case where $i=j$ and we write $n=i=j$. The inclusion $(\E_{\leq n})_{\leq n} \subseteq \E$ is $(n-1)$-pure as a composition of two $(n-1)$-pure inclusions. It is also contained in $\E_{\leq n}$. Because $\E_{\leq n}$ is the smallest $(n-1)$-pure subtopos of $\E$, we find $(\E_{\leq n})_{\leq n} = \E_{\leq n}$. In the same way we can show that $(\E_{<\infty})_{<\infty} = \E_{<\infty}$.

As the second case, we consider $i < j$. We have inclusions $(\E_{\leq i})_{\leq i} \subseteq (\E_{\leq i})_{\leq j} \subseteq \E_{\leq i}$, which shows $(\E_{\leq i})_{\leq j} = \E_{\leq i}$. Similarly, we find $(\E_{\leq i})_{< \infty} = \E_{\leq i}$.

Finally, take $i > j$. In this case, $\E_{\leq i} \subseteq \E$ is a $(j-1)$-pure subtopos, so by Corollary \ref{cor:same-smallest-n-pure-subtopos} we get $(\E_{\leq i})_{\leq j} = \E_{\leq j}$. In the same way we get $(\E_{< \infty})_{\leq j} = \E_{\leq j}$.
\end{proof}

\begin{proposition} \label{prop:dimension-locally}
Let $\phi_i : \E_i \to \E\,$, $i \in I\,$, be a jointly surjective family of \'etale geometric morphisms. 
Then $\dim(\E) = \sup_{i \in I} \dim(\E_i)\,$.
\end{proposition}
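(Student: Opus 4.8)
The plan is to reduce the equality to a local characterization of purity along the étale cover, and then to read off the dimension from the behaviour of pure monomorphisms. Write $\phi_i : \E_i \to \E$ with $\E_i \simeq \E/X_i$. Joint surjectivity means that the family $\{X_i\}$ is jointly well-supported, so the coproduct $\pi : \bigsqcup_i \E_i \simeq \E/(\bigsqcup_i X_i) \to \E$ is a single \'etale surjection, which is what will allow me to invoke the reflection results.

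The key lemma I would establish first is that a monomorphism $\phi$ of $\E$ is $m$-pure over $\T$ if and only if each pullback $\phi_i^*\phi$ is $m$-pure over $\T$, for every $m \geq -1$ (and hence also for $m=\infty$). The forward implication is exactly the statement that \'etale geometric morphisms are $n$-skeletal, so that $\phi_i^*$ preserves $(n-1)$-purity for all $n \geq 0$. For the converse I would base change $\phi$ along $\pi$: since slicing commutes with disjoint unions, $(\bigsqcup_i \E_i)/\pi^*U \simeq \bigsqcup_i (\E_i/\phi_i^*U)$, so Proposition \ref{prop:coproduct-pure} shows that $\pi^*\phi$ is $m$-pure precisely when every $\phi_i^*\phi$ is, and Corollary \ref{cor:pure-reflection} then descends $m$-purity from the \'etale surjection $\pi$ to $\phi$ itself. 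Alongside this I would record the elementary observation that purity is intrinsic to the slice: for a monomorphism $\psi$ living in $\E_i \simeq \E/X_i$, the associated geometric morphism computed in $\E_i$ coincides, via $(\E/X_i)/(Y \to X_i) \simeq \E/Y$, with the one computed in $\E$, so $\psi$ is $m$-pure in $\E_i$ if and only if it is $m$-pure in $\E$.

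With these two facts the dimension computation is a matter of unwinding definitions, using that $\dim \E \leq n$ holds exactly when $\E_{\leq n} = \E_{<\infty}$, i.e.\ when every $(n-1)$-pure monomorphism of $\E$ is already $\infty$-pure (and similarly for each $\E_i$). For the inequality $\sup_i \dim \E_i \leq \dim \E$, which uses only that the $\phi_i$ are \'etale, I may assume $\dim\E = n_0 < \infty$ and take a $(n_0-1)$-pure monomorphism $\psi$ in some $\E_i$; by intrinsicness it is $(n_0-1)$-pure in $\E$, hence $\infty$-pure in $\E$ since $\E_{\leq n_0} = \E_{<\infty}$, hence $\infty$-pure in $\E_i$ again by intrinsicness, giving $\dim \E_i \leq n_0$. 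For the reverse inequality $\dim \E \leq \sup_i \dim \E_i =: d$ (the case $d=\infty$ being trivial), I take a $(d-1)$-pure monomorphism $\phi$ of $\E$; the key lemma makes each $\phi_i^*\phi$ a $(d-1)$-pure monomorphism of $\E_i$, which is $\infty$-pure because $\dim \E_i \leq d$, and the converse half of the key lemma then forces $\phi$ to be $\infty$-pure. Thus every $(d-1)$-pure monomorphism of $\E$ is $\infty$-pure, so $\dim \E \leq d$, and the two inequalities give $\dim \E = \sup_i \dim \E_i$.

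The main obstacle is the converse half of the key lemma, where $m$-purity of $\phi$ must be descended along the \'etale surjection $\pi$. This is precisely the step that requires joint surjectivity, in order to make $\pi$ surjective, together with the compatibility of slicing with disjoint unions, so that Proposition \ref{prop:coproduct-pure} and Corollary \ref{cor:pure-reflection} can be chained; by contrast the forward direction and the intrinsicness observation are formal, and the $\sup_i \dim\E_i \leq \dim\E$ inequality needs only \'etaleness of the $\phi_i$.
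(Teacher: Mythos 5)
Your proof is correct and follows essentially the same route as the paper: both reduce to the single \'etale surjection $\bigsqcup_{i}\E_i \to \E$, invoke Proposition \ref{prop:coproduct-pure} to handle the coproduct, and then descend purity along that surjection (you via Corollary \ref{cor:pure-reflection} applied monomorphism by monomorphism, the paper via uniqueness of the (surjection, inclusion) factorization in a pullback square of the subtoposes $\E_{\leq n} \subseteq \E_{<\infty}$). The difference is only one of packaging: you unwind the definition of dimension at the level of individual $(n-1)$-pure monomorphisms, while the paper phrases the identical descent at the level of the subtoposes themselves.
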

\begin{proof}
The associated geometric morphism $\phi: \E' = \bigsqcup_{i \in I} \E_i \to \E$ is an \'etale surjection. In $\bigsqcup_{i \in I} \E_i$ a monomorphism is $n$-pure if and only if each of its components is $n$-pure, see Proposition \ref{prop:coproduct-pure}. It follows that $\dim(\E') = \sup_{i \in I} \dim(\E_i)$. Further, there is a pullback diagram
\begin{equation}
\begin{tikzcd}
\E'_{\leq n} \ar[r,"{j'}"] \ar[d,"{\xi}"'] & \E'_{< \infty} \ar[d,"{\psi}"] \\
\E_{\leq n} \ar[r,"{j}"] & \E_{<\infty}
\end{tikzcd}
\end{equation}
as in \eqref{eq:n-minimal-subtopos-of-slice}. Because $\psi$ is surjective, so is $\xi$. If $j$ is an equivalence, then so is the pullback $j'$. Conversely, if $j'$ is an equivalence, then so is $j$ by uniqueness of the (surjection, inclusion) factorization. We conclude that
\begin{equation*}
\dim(\E) = \dim(\E') = \sup_{i \in I} \dim(\E_i).
\end{equation*}
\end{proof}

\section{Calculation techniques}

Again we work over a fixed (Grothendieck) base topos $\T$.

\subsection{Puncture types}

Let $\F$ and $\F'$ be Grothendieck toposes over $\T$. We say that an open inclusion $j: \F' \hookrightarrow \F$ is \emph{nontrivial} if it is not an equivalence.

\begin{definition}
A \emph{puncture type} is an equivalence class of nontrivial open inclusions, where two open inclusions $j$ and $j'$ are equivalent if and only if there exist topos equivalences $\phi$ and $\psi$ such that $j'\phi \cong \psi j$.

The \emph{dimension} $\dim(j)$ of a puncture type $j$ is the smallest number $n$ such that $j$ is $(n-2)$-pure but not $(n-1)$-pure, or $\infty$ if such an $n$ does not exist.
\end{definition}

The dimension of a puncture type is well-defined: if $j$ and $j'$ are two equivalent open inclusions then $j$ is $n$-pure if and only if $j'$ is $n$-pure.

If a puncture type $j$ can be written as $\E/V \hookrightarrow \E/U$ for some monomorphism $V \hookrightarrow U$ in $\E$, up to equivalence, then we say that $j$ is a \emph{puncture type in} $\E$.

Now suppose that $\E$ contains a puncture type of dimension $n$. Then by definition the $(n-1)$-pure topology and the $(n-2)$-pure topology are different, in other words $\E_{\leq n-1} \neq \E_{\leq n}$. We conclude that $\dim(\E) \geq n$.

Similarly, there is a puncture type of dimension $\infty$ in $\E$ if and only if $\E_{<\infty} \neq \E$, i.e.\ if and only if $\E$ has a boundary.

\begin{definition}
Let $j$ and $j'$ be two puncture types. We say that $j$ is \emph{contained in} $j'$ precisely if there exist an open inclusion $\phi$ and a topos equivalence $\psi$ such that $j'\phi \cong \psi j$.

Let $\E$ be a Grothendieck topos. A family of puncture types $\A$ in $\E$ is called \emph{complete} if there is a family of generators $\U$ for $\E$ such that for every puncture type $j$ in $\E$ with codomain in $\U$ there is some puncture type $j'$ in $\A$ with $j$ contained in $j'$ and with $\dim(j)$ and $\dim(j')$ either both finite or both infinite.
\end{definition}

The idea behind this is that if $j$ is contained in $j'$ then $\dim(j) \leq \dim(j')$. This follows from the fact that a subtopos is $n$-pure whenever it contains an $n$-pure subtopos. So a complete family of generators will lead to an upper bound on the dimensions of the puncture types.

\begin{proposition} \label{prop:dimension-is-dimension-of-puncture-types}
Let $\E$ be a Grothendieck topos over $\T$ and let $\A$ be a complete family of puncture types for $\E$.
Then $\dim(\E) = \sup_{j \in \A'} \dim(j)$, with $\A' \subseteq \A$ the subfamily of puncture types of finite dimension. Moreover, $\E$ has a boundary if and only if $\A'\neq \A$.
\end{proposition}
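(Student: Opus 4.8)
The plan is to prove the equality $\dim(\E) = D$, where I abbreviate $D = \sup_{j \in \A'}\dim(j)$ (with the convention $\sup\varnothing = -1$), by establishing the two inequalities $\dim(\E) \geq D$ and $\dim(\E) \leq D$ separately, and then to obtain the boundary dichotomy as the ``level $\infty$'' variant of the same argument. I will use two facts already recorded in the text preceding the statement: first, that a puncture type of finite dimension $n$ in $\E$ forces $\dim(\E) \geq n$ (it witnesses $\E_{\leq n-1} \neq \E_{\leq n}$); and second, that if $j$ is contained in $j'$ then $\dim(j) \leq \dim(j')$. I will also use repeatedly that the $n$-pure monomorphisms form a topology (Section 7, via Proposition \ref{prop:coproduct-pure} and Corollary \ref{cor:pure-reflection}), so that in particular they satisfy the locality condition (1) of Proposition \ref{prop:characterization-topologies}: for a jointly epimorphic family $\{W_i \to U\}$, a monomorphism $V \hookrightarrow U$ is $n$-pure if and only if every pullback $V \times_U W_i \hookrightarrow W_i$ is $n$-pure.

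For the lower bound I would note that each $j \in \A'$ is, by hypothesis, a puncture type in $\E$ of finite dimension $\dim(j)$, so the first recorded fact gives $\dim(\E) \geq \dim(j)$; taking the supremum over $\A'$ yields $\dim(\E) \geq D$. If $D = \infty$ (the finite dimensions occurring in $\A'$ being unbounded) this already forces $\dim(\E) = \infty = D$, so for the reverse inequality I may assume $D$ finite.

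For the upper bound I want $\E_{\leq D} = \E_{<\infty}$. Comparing the two defining topologies (the $(D-1)$-pure topology and the $\infty$-pure topology) and using that $\infty$-pure implies $(D-1)$-pure, this amounts to showing that every $(D-1)$-pure monomorphism over $\T$ is $\infty$-pure. Suppose not: let $V \hookrightarrow U$ be $(D-1)$-pure but fail to be $m$-pure for some finite $m$. The heart of the argument, and the step I expect to be the main obstacle, is to replace $U$ by an object of the generating family $\U$ without destroying this behaviour. Choosing a jointly epimorphic family $\{W_i \to U\}$ with each $W_i \in \U$ and applying locality, every pullback $V \times_U W_i \hookrightarrow W_i$ is again $(D-1)$-pure, while some pullback must fail to be $m$-pure (otherwise $V \hookrightarrow U$ would be $m$-pure). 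Fixing such an index gives $V' \hookrightarrow W$ with $W \in \U$ that is $(D-1)$-pure but not $m$-pure; in particular it is not an isomorphism, so $j_0 : \E/V' \hookrightarrow \E/W$ is a genuine puncture type, of finite dimension $\dim(j_0) \geq D+1$. Completeness of $\A$ then supplies $j' \in \A$ with $j_0$ contained in $j'$ and the two dimensions both finite; the second recorded fact gives $\dim(j') \geq \dim(j_0) \geq D+1$, whence $j' \in \A'$ and $\dim(j') > D = \sup_{j\in\A'}\dim(j)$, a contradiction. Therefore every $(D-1)$-pure monomorphism is $\infty$-pure, so $\E_{\leq D} = \E_{<\infty}$ and $\dim(\E) \leq D$.

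Finally, for the boundary statement I would argue that $\E$ has a boundary, i.e.\ $\E_{<\infty} \neq \E$, precisely when the $\infty$-pure topology is nontrivial, i.e.\ when there is a non-isomorphism monomorphism over $\T$ that is $\infty$-pure, equivalently when $\E$ contains a puncture type of dimension $\infty$. If $\A' \neq \A$, any $j' \in \A \setminus \A'$ is such a puncture type, so $\E$ has a boundary. Conversely, if $\E$ has a boundary, I take a non-isomorphism $\infty$-pure monomorphism $V \hookrightarrow U$ and run exactly the localization step above at level $\infty$: every pullback to a generator is $\infty$-pure, and since $V \hookrightarrow U$ is not an isomorphism, some pullback $V' \hookrightarrow W$ with $W \in \U$ is not an isomorphism either (else a monomorphism all of whose restrictions to a jointly epimorphic family are isomorphisms would be an isomorphism). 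This produces a puncture type $j_0$ of dimension $\infty$ with codomain in $\U$, and completeness then yields $j' \in \A$ containing $j_0$ with $\dim(j')$ infinite, so $j' \in \A \setminus \A'$ and $\A' \neq \A$. The only delicate point, as in the upper bound, is precisely the preservation of both the purity level and the non-triviality under restriction to a generator, which is the content of locality together with the ``locally iso implies iso'' principle.
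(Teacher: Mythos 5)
Your proof is correct and follows essentially the same route as the paper's: identify $\dim(\E)$ with the supremum of the dimensions of the puncture types occurring in $\E$, reduce an arbitrary puncture type to one with codomain in the generating family $\U$ via locality of the $n$-pure topologies, and then invoke completeness together with the monotonicity of dimension under containment. You make explicit the localization-to-generators step (and the ``locally an isomorphism implies an isomorphism'' point in the boundary case) that the paper's proof leaves implicit; otherwise the arguments coincide.
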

\begin{proof}
The dimension of $\E$ is the largest $n$ for which $\E_{\leq n} \neq \E_{\leq n-1}$, or $\infty$ if this $n$ does not exist. In other words, the dimension of $\E$ is the largest $n$ such that there exists an $n$-puncture in $\E$, or $\infty$ if there is no upper bound on this number $n$. The statement follows, taking into account that for any puncture type of finite dimension in $\E$ we can find a puncture type in $\A'$ with at least this dimension.

If $\A' \neq \A$ then $\E$ contains a puncture type of dimension $\infty$, so $\E$ has a boundary. Conversely, if $\E$ has a boundary, then it contains a puncture type $j$ of dimension $\infty$. We can find a puncture type $j'$ in $\A$ with $j$ contained in $j'$. Then $j'$ is again of dimension $\infty$ so $j' \in \A - \A'$.
\end{proof}

Let $\E$ be a Grothendieck topos and let $\U$ be a family of generators for $\E$. To check that a puncture type $j: \E/V \to \E/U$ is $n$-pure, we first construct for each map $U' \to U$ with $U'$ in $\U$ a pullback diagram of the form:
\begin{equation*}
\begin{tikzcd}
\E/V' \ar[r,hook,"{j'}"] \ar[d] & \E/U' \ar[d] \\
\E/V \ar[r,hook,"{j}"'] & \E/U
\end{tikzcd}.
\end{equation*}
Now $j$ is $n$-pure if and only if for each choice of $U' \to U$ the pullback $j'$ induces isomorphisms in cohomology
\begin{equation*}
\HH^i(\E/U',A) \longrightarrow \HH^i(\E/V',A)
\end{equation*}
for all Eilenberg--Mac Lane objects $K(A,i)$ in $\T$ with $i \leq n$. We say that $j$ is \emph{stable} (with respect to $\U$) if each $j'$ is either an equivalence or equivalent to $j$. This simplifies the procedure of checking whether $j$ is $n$-pure or not:

\begin{proposition} \label{prop:stable-puncture-types}
Suppose that a puncture type $j$ in $\E$ is stable (with respect to some family of generators). Take $n\geq 0$. Then $j$ is $n$-pure if and only if it the induced maps in cohomology $\HH^i(j,A)$ are isomorphisms, for all Eilenberg--Mac Lane objects $K(A,i)$ in $\T$ with $i \leq n$.

Further, a stable puncture type $j$ is $(-1)$-pure if and only the induced maps in cohomology $\HH^0(j,A)$ are monomorphisms for each $A$ in $\T$.
\end{proposition}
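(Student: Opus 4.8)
The plan is to reduce the pullback-based criterion for $n$-pureness, which quantifies over all pullbacks of $j$ along maps $U' \to U$ out of the fixed family of generators, to a single cohomological check on $j$ itself, with stability doing all the work.

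I would first settle the ``only if'' direction, which in fact needs no stability hypothesis. If $j$ is $n$-pure then, by Definition \ref{def:n-pure}, the map $\HH^i(j/X, A)$ is an isomorphism for every object $X$ of $\E/U$ and every $K(A,i)$ in $\T$ with $i \leq n$. Specializing to $X$ the terminal object of $\E/U$, for which $j/X \cong j$, gives exactly that $\HH^i(j, A)$ is an isomorphism. The $(-1)$-pure case runs the same argument against the convention defining $(-1)$-pureness, reading ``monomorphism'' for ``isomorphism'' and $\HH^0$ for $\HH^i$.

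For the ``if'' direction I would invoke the pullback characterization of $n$-pureness recalled just above the statement: $j$ is $n$-pure precisely when, for every $U' \to U$ with $U'$ a generator, the pullback $j'$ induces isomorphisms $\HH^i(\E/U', A) \to \HH^i(\E/V', A)$ for all $K(A,i)$ in $\T$ with $i \leq n$. Stability splits this into two cases. If $j'$ is an equivalence, its induced maps on cohomology are automatically isomorphisms, cohomology being invariant under equivalence over $\T$. If $j'$ is equivalent to $j$, then the pair of topos equivalences witnessing this produces, via the contravariant functoriality of $\HH^i(-,A)$ from Subsection \ref{ssec:contravariance-of-cohomology}, a commutative square identifying $\HH^i(j', A)$ with $\HH^i(j, A)$ up to isomorphism; hence $\HH^i(j', A)$ is an isomorphism exactly when $\HH^i(j,A)$ is, which holds by hypothesis. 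Running the identical case analysis with ``monomorphism'' in place of ``isomorphism'' yields the $(-1)$-pure statement.

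The step I expect to require the most care is the second case of the ``if'' direction. There one must unwind the phrase ``equivalent to $j$'' from the definition of puncture type and check that the witnessing equivalences are compatible with the structure maps to $\T$, so that they genuinely intertwine the two cohomology maps with the \emph{same} pulled-back coefficient $e^*A$. Because every topos and every equivalence in play lives over the fixed base $\T$ and the coefficients $K(A,i)$ are pulled back from $\T$, this compatibility holds and the comparison square commutes; but verifying it is the one place where the ``up to equivalence'' in the definition of puncture type must be taken seriously rather than waved through.
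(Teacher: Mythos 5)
Your proposal is correct and follows essentially the same route as the paper, which states this proposition without a separate proof precisely because it is the combination of the pullback criterion recalled in the preceding paragraph with the case split that stability provides (pullback either an equivalence or equivalent to $j$). Your closing caveat about the equivalences in the definition of puncture type being compatible with the structure maps to $\T$ is the right thing to flag; the paper glosses over it in the same way when asserting that the dimension of a puncture type is well-defined.
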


\subsection{Sheaf cohomology for topological spaces}

We restrict to the case $\T = \Set$.

\begin{proposition} \label{prop:sheaf-cohomology-topological-spaces}
Let $X$ be a topological space. Then:
\begin{enumerate}
\item for $A$ a set, $\HH^0(\Sh(X),A)$ agrees with the set of continuous maps $X \to A$, where $A$ has the discrete topology;
\item for $A$ a group, $\HH^1(\Sh(X),A)$ agrees with $\Hom(\pi_1(X), A)/A$, assuming that $X$ is locally path-connected, path-connected and semilocally simply connected.
\item for $i \geq 0$ and $A$ an abelian group, $\HH^i(\Sh(X), A)$ agrees with the singular cohomology $\HH^i_{\mathrm{sing}}(\Sh(X),A)$, assuming that $X$ is locally contractible.
\end{enumerate}
\end{proposition}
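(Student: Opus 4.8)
The plan is to treat the three parts separately, reducing each to a classical comparison theorem once the topos-theoretic definition of cohomology has been unwound; the difficulty increases from (1) to (3), and in each case the real content lies in matching the simplicial definition \eqref{eq:derived-functor} with a more familiar description.

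For (1) I would simply unwind the definition. Writing $e : \Sh(X) \to \Set$ for the global sections geometric morphism, we have by \eqref{eq:derived-functor} that $\HH^0(\Sh(X),A) = R^0 e_*(e^*A) = \pi_0(\s e_* K(e^*A,0))$. Since $K(e^*A,0)$ is the discrete simplicial object on $e^*A$, this collapses to the ordinary pushforward $e_*(e^*A)$, that is, the global sections $\Gamma(X,\underline{A})$ of the constant sheaf $\underline{A} = e^*A$. A global section of a constant sheaf is precisely a locally constant function $X \to A$, which is the same datum as a continuous map $X \to A$ when $A$ carries the discrete topology, giving the claimed identification.

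For (2) the key input is Proposition \ref{prop:h1-fundamental-group}, which gives $\HH^1(\E,G) \simeq \Hom(\pi_1(\E),G)/G$ for any pointed, connected, locally connected topos $\E$. First I would verify the hypotheses: local path-connectedness of $X$ makes $\Sh(X)$ locally connected, path-connectedness (of the nonempty space $X$) makes it connected, and any point of $X$ provides a point of $\Sh(X)$. It then remains to identify the prodiscrete localic fundamental group $\pi_1(\Sh(X))$ with the discrete topological group $\pi_1(X)$. This is exactly where semilocal simple connectedness is used: under that hypothesis the universal cover exists and the classical Galois theory of covering spaces yields $\SLC(\Sh(X)) \simeq \PSh(\pi_1(X)) \simeq \Cont(\pi_1(X))$ with $\pi_1(X)$ \emph{discrete}, so $\pi_1(\Sh(X)) \cong \pi_1(X)$ as localic groups. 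Substituting into Proposition \ref{prop:h1-fundamental-group}, and noting that for discrete groups localic homomorphisms are ordinary homomorphisms, produces $\HH^1(\Sh(X),A) \simeq \Hom(\pi_1(X),A)/A$.

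For (3) I would first reduce to classical sheaf cohomology. Since $A$ is abelian, the definition of $R^i e_*(e^*A)$ via Eilenberg--Mac Lane objects coincides, by the Dold--Kan correspondence \cite{nlab:dold-kan-correspondence} as discussed in the comparison with chain complexes, with the classical right derived functor of global sections; hence $\HH^i(\Sh(X),A)$ is the ordinary sheaf cohomology $\HH^i(X,\underline{A})$ of the constant sheaf. The remaining and decisive step is the classical comparison theorem that, for a locally contractible space $X$, sheaf cohomology with constant coefficients agrees with singular cohomology, which I would invoke as a cited result. This last step is the main obstacle: it is genuinely nontrivial and fails without local contractibility, but it is available in the literature, so the only work left in my proof is the bookkeeping that matches the paper's simplicial cohomology to the derived-functor description to which the classical theorem applies. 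In Part (2) the analogous subtle point is the identification of the prodiscrete localic fundamental group with the discrete group $\pi_1(X)$, which is precisely what semilocal simple connectedness guarantees.
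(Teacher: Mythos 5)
Your proposal is correct and follows essentially the same route as the paper: part (1) is the definition unwound, part (2) identifies the localic fundamental group of $\Sh(X)$ with the discrete group $\pi_1(X)$ via the classical Galois correspondence for covering spaces and then applies Proposition \ref{prop:h1-fundamental-group}, and part (3) reduces to the classical comparison between sheaf and singular cohomology for locally contractible spaces (the paper cites \cite{Sella} for exactly this). The extra bookkeeping you supply — checking the hypotheses of Proposition \ref{prop:h1-fundamental-group} and invoking Dold--Kan to match the simplicial definition with derived-functor sheaf cohomology — is implicit in the paper's terser proof but is the same argument.
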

\begin{proof}
\begin{enumerate}
\item This holds by definition.
\item For $X$ a locally path-connected, path-connected and semilocally simply connected space, the category of covering spaces over $X$ is equivalent to the category of $\pi_1(X)$-sets \cite[Theorem 2.1]{nlab:fundamental_theorem_of_covering_spaces}, so the fundamental group of the topos $\Sh(X)$ agrees with the fundamental group of $X$. Now we can use Proposition \ref{prop:h1-fundamental-group}.
\item This is proved in \cite{Sella}.
\end{enumerate}
\end{proof}

\subsection{Example: locally Euclidean topological spaces}

We restrict to the case $\T = \Set$.

\begin{definition}
A topological space $X$ is called \emph{locally Euclidean} 
if there is a natural number $n$ and an open covering 
$X = \bigcup_{i \in I} U_i$ with $U_i \cong \R^n$ for every $i \in I\,$. 
The natural number $n$ is then called the \emph{dimension} of $X\,$. 
\end{definition}

\begin{proposition} \label{prop:example-locally-euclidean}
Let $X$ be a locally Euclidean topological space of dimension $n\,$. 
Then $\dim(\Sh(X))=n\,$.
\end{proposition}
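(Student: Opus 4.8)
The plan is to reduce to the case $X = \R^n$ and then analyze the punctures of $\Sh(\R^n)$. Since $X = \bigcup_{i\in I} U_i$ with each $U_i \cong \R^n$ open, the open inclusions $\Sh(U_i) \hookrightarrow \Sh(X)$ form a jointly surjective family of \'etale geometric morphisms, and each $\Sh(U_i) \simeq \Sh(\R^n)$. Hence Proposition \ref{prop:dimension-locally} gives $\dim(\Sh(X)) = \sup_{i} \dim(\Sh(U_i)) = \dim(\Sh(\R^n))$, so it suffices to prove $\dim(\Sh(\R^n)) = n$.

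For the lower bound I would use the puncture type $j_0 \colon \Sh(\R^n \setminus \{0\}) \hookrightarrow \Sh(\R^n)$. Pulling back $j_0$ along an inclusion of an open ball $U' \hookrightarrow \R^n$ yields either an equivalence (when $0 \notin U'$) or again a punctured ball inside a ball (when $0 \in U'$), so $j_0$ is \emph{stable} with respect to the basis of open balls. By Proposition \ref{prop:stable-puncture-types} its dimension is therefore read off from the global maps $\HH^i(j_0, A)$. Using Proposition \ref{prop:sheaf-cohomology-topological-spaces} together with the contractibility of $\R^n$ and the homotopy equivalence $\R^n \setminus \{0\} \simeq S^{n-1}$, these maps are isomorphisms for $i \le n-2$ and fail to be isomorphisms for $i = n-1$ (where $\HH^{n-1}(\R^n, A) = 0$ but $\HH^{n-1}(S^{n-1}, A) = A$; for $n=1$ the failure already occurs at $\HH^0$, where $A \to A^2$ is a non-surjective monomorphism, and for $n=2$ at $\HH^1$ via $\pi_1(S^1) = \Z$). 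Thus $\dim(j_0) = n$, and since $\Sh(\R^n)$ contains a puncture type of dimension $n$ we conclude $\dim(\Sh(\R^n)) \ge n$.

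For the upper bound and the absence of boundary, I would take the complete family $\A$ consisting of all puncture types $j \colon \Sh(V) \hookrightarrow \Sh(U)$ with $U$ an open ball (so $V \subsetneq U$ a proper open subset), and show every such $j$ has finite dimension at most $n$; by Proposition \ref{prop:dimension-is-dimension-of-puncture-types} this yields $\dim(\Sh(\R^n)) = \sup_{j} \dim(j) = n$ and, since no puncture type is then infinite-dimensional, that $\Sh(\R^n)$ is without boundary. By Proposition \ref{prop:derived-pushforward-as-sheaf} the pureness of $j$ is governed by the higher direct images $R^i j_*(A)$, which are the sheafifications of $U' \mapsto \HH^i(U' \cap V, A)$ and hence depend only on the local behaviour of $V$ near the closed complement $K = U \setminus V$. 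If $j$ were $(n-1)$-pure, then by Theorem \ref{thm:characterization-derived-pushforwards} the unit $A \to j_* j^* A$ would be an isomorphism and $R^i j_*(A) = 0$ for $1 \le i \le n-1$; feeding this into the local-cohomology long exact sequence of the open--closed decomposition $U = V \cup K$ forces the local cohomology sheaves $\mathcal{H}^i_K(A)$ to vanish for all $i \le n$.

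The main obstacle is exactly this last step: ruling out a nonempty $K$. I would argue that since $U \cong \R^n$ has cohomological dimension $n$, the sheaves $\mathcal{H}^i_K(A)$ also vanish for $i > n$, so the previous paragraph would give $R\Gamma_K(A) = 0$ identically; but the local cohomology of a nonzero constant sheaf along $K$ is supported on all of $K$ (this is where I would invoke Poincar\'e--Lefschetz duality on the manifold $U$, under which $R\Gamma_K(A)$ is dual to compactly supported cohomology of $K$ valued in the dualizing sheaf, concentrated in degree $n$), giving $K = \varnothing$ and hence $V = U$, contradicting that $j$ is a nontrivial puncture type. This shows $j$ is not $(n-1)$-pure, so its first cohomological failure occurs in some degree $\le n-1$ and $\dim(j) \le n$ is finite. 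Intuitively, removing a point is the thinnest possible puncture and already realizes the maximal dimension $n$, while removing anything of lower codimension breaks purity in a strictly lower degree; making the vanishing/non-vanishing dichotomy for the local cohomology of an arbitrary, possibly wild, closed set $K$ precise is the technical heart of the argument.
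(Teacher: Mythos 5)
Your reduction to $\R^n$ via Proposition \ref{prop:dimension-locally} and your treatment of the lower bound are correct and essentially identical to the paper's: the point-puncture $j_0 : \Sh(\R^n-\{0\}) \hookrightarrow \Sh(\R^n)$ is stable, so Proposition \ref{prop:stable-puncture-types} reduces its dimension to the global cohomology comparison, and the computations via $\R^n-\{0\}\simeq S^{n-1}$ (with the special low-degree cases $n=1,2$) give $\dim(j_0)=n$.

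The upper bound is where you have a genuine gap, and it is an avoidable one. You take $\A$ to be \emph{all} puncture types with codomain a ball, which forces you to bound the dimension of $\Sh(V)\hookrightarrow\Sh(U)$ for an arbitrary proper open $V\subsetneq U$, and your proposed route through local cohomology $\mathcal{H}^i_K(A)$ of the closed complement $K=U\setminus V$ and Poincar\'e--Lefschetz duality is left incomplete at exactly the step you flag: establishing the non-vanishing of $R\Gamma_K(A)$ for an arbitrary, possibly wild, nonempty closed $K$. None of this is needed. The definition of a complete family only asks that every puncture type with codomain in the generating family be \emph{contained in} some member of $\A$ (with matching finiteness of dimension), and containment already controls dimension: if $j$ is contained in $j'$ then $\dim(j)\le\dim(j')$, because a subtopos is $m$-pure whenever it contains an $m$-pure subtopos. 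Since any nontrivial open $V\subsetneq U$ with $U\cong\R^n$ misses some point $x$, the inclusion $\Sh(V)\hookrightarrow\Sh(U)$ is contained in $\Sh(U-\{x\})\hookrightarrow\Sh(U)$, which is equivalent to $j_0$; as $\dim(j_0)=n$ is finite, every such $j$ has finite dimension $\le n$. Hence the singleton $\{j_0\}$ is itself a complete family, and Proposition \ref{prop:dimension-is-dimension-of-puncture-types} immediately yields $\dim(\Sh(X))=\dim(j_0)=n$ with no boundary, with no analysis of general closed subsets of $\R^n$ required.
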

\begin{proof}
We can take a family of generators $\U$ for $\Sh(\R^n)$ corresponding to a basis of open subsets in $\R^n$ with each open subset itself homeomorphic to $\R^n$. If we remove a single point from such an open set, then what remains is a space homeomorphic to $\R^n-\{0\}$, with $0$ denoting the origin.
We conclude that the singleton consisting of the puncture type
\begin{equation}\label{eq:j}
j : \Sh(\R^n-\{0\}) \to \Sh(\R^n)
\end{equation}
is a complete family of puncture types for $\Sh(\R^n)$. It remains to show that $j$ is $(n-2)$-pure but not $(n-1)$-pure.

For $n=0$, $j$ is of the form $\Sh(\varnothing) \to \Sh(\{\ast\})$, which is $(-2)$-pure but not $(-1)$-pure; remember that every geometric morphism is $(n-2)$-pure by convention, and that a geometric morphism is $(-1)$-pure precisely if it is dense.

For $n\geq 1$, we calculate the cohomology groups explicitly. Because $j$ is a stable puncture type we can use Proposition \ref{prop:stable-puncture-types}, which simplifies the situation. We can compute the sheaf cohomology groups using Proposition \ref{prop:sheaf-cohomology-topological-spaces}.

For $n=1$, the inclusion $\R-\{0\} \hookrightarrow \R$ is dense but in zeroth cohomology we get the diagonal morphism $A \hookrightarrow A \times A$ which fails to be an isomorphism as soon as $A$ contains at least two elements.

For $n=2$, the inclusion $\R^2-\{0\} \hookrightarrow \R^2$ induces an isomorphism in zeroth cohomology but not in $\HH^1$, for example applying $\HH^1(-,\Z)$ gives the group homomorphism $0 \hookrightarrow \Z$ (using that $\pi_1(\R^2-\{0\}) = \Z$).

Finally, for $n\geq 3$, the inclusion $\R^n-\{0\} \hookrightarrow \R^n$ induces an isomorphism in degrees $i \leq n-2$, but again in $(n-1)$th cohomology this breaks down, applying $\HH^{n-1}(-,\Z)$ gives the inclusion $0 \hookrightarrow \Z$.
\end{proof}

\subsection{Example: manifolds with boundary}

Topological manifolds are in particular locally Euclidean. So for $M$ a topological manifold, we have $\dim(\Sh(M)) = \dim(M)$ using Proposition \ref{prop:example-locally-euclidean}. We will now generalize this to manifolds with boundary. Recall that for a topos $\E$, the subtopos $\E_{<\infty}$ is called the \emph{content} of $\E$.

\begin{proposition}
Let $M$ be a topological manifold with nonempty boundary $\partial M\,$. 
We write $M^\circ = M - \partial M$ for the interior. 
Then $\Sh(M)$ is an $n$-dimensional topos with content $\Sh(M^\circ)$ and boundary $\Sh(\partial M)$.
\end{proposition}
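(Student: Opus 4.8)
The plan is to reduce everything to the already-established manifold-without-boundary case by first identifying the content of $\Sh(M)$ with $\Sh(M^\circ)$. Write $d = \dim(M)$. The interior $M^\circ$ is a $d$-manifold without boundary, hence locally Euclidean of dimension $d$, so by Proposition \ref{prop:example-locally-euclidean} (and its proof, which exhibits a complete family consisting of a single finite-dimensional puncture) the topos $\Sh(M^\circ)$ is $d$-dimensional without boundary; that is, $(\Sh(M^\circ))_{\leq k} = \Sh(M^\circ)$ precisely when $k \geq d$.

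The central step is to show that the open inclusion $\rho : \Sh(M^\circ) \hookrightarrow \Sh(M)$ is $\infty$-pure. By Corollary \ref{cor:pure-reflection} it is enough to verify this after base change along an étale surjection, and I would take the disjoint union of a cover of $M$ by charts, each homeomorphic either to $\R^d$ (interior charts) or to the half-space $\mathbb{H}^d = \{x_d \geq 0\}$ (boundary charts). On an interior chart the base change of $\rho$ is an equivalence, while on a boundary chart $U \cong \mathbb{H}^d$ it is the inclusion $\Sh((\mathbb{H}^d)^\circ) \hookrightarrow \Sh(\mathbb{H}^d)$. This inclusion is a stable puncture type with respect to a basis of sub-charts: pulling back along an interior sub-chart gives an equivalence, and along a boundary sub-chart gives the same inclusion up to equivalence. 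Hence by Proposition \ref{prop:stable-puncture-types} its purity is governed by cohomology alone, and since both $\mathbb{H}^d$ and $(\mathbb{H}^d)^\circ \cong \R^d$ are contractible and locally contractible, Proposition \ref{prop:sheaf-cohomology-topological-spaces} shows that the induced maps on $\HH^i$ are isomorphisms in every degree and for all (set, group, or abelian) coefficients. Thus this inclusion is $n$-pure for every $n$. By Proposition \ref{prop:coproduct-pure} the base change along the whole chart cover is then $\infty$-pure, and Corollary \ref{cor:pure-reflection} gives that $\rho$ itself is $\infty$-pure.

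Being $\infty$-pure, $\rho$ is in particular $(k-1)$-pure for every $k$, so Corollary \ref{cor:same-smallest-n-pure-subtopos} applies and yields $(\Sh(M^\circ))_{\leq k} = (\Sh(M))_{\leq k}$ as subtoposes of $\Sh(M)$ for all $k$. Combining this with the first paragraph, $(\Sh(M))_{\leq k} = \Sh(M^\circ)$ exactly when $k \geq d$, and therefore
\begin{equation*}
\Sh(M)_{<\infty} = \bigcup_{k} (\Sh(M))_{\leq k} = \Sh(M^\circ),
\end{equation*}
which identifies the content as $\Sh(M^\circ)$. The dimension of $\Sh(M)$ is by definition the smallest $k$ with $(\Sh(M))_{\leq k} = \Sh(M)_{<\infty} = \Sh(M^\circ)$, which is $k = d$; and since $\partial M \neq \varnothing$ we have $M^\circ \neq M$, so $\Sh(M^\circ)$ is a proper subtopos and $\Sh(M)$ has a boundary. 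Finally, $\Sh(M^\circ)$ is the open subtopos attached to the open set $M^\circ$, hence has a complement, namely the closed subtopos on the complementary closed set $\partial M$; this realizes the boundary as $\Sh(\partial M)$.

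I expect the main obstacle to be the first step, and in particular the local reduction: one must be careful that $\infty$-purity of $\rho$ really can be checked chart-by-chart — which is exactly what Corollary \ref{cor:pure-reflection} together with Proposition \ref{prop:coproduct-pure} provide — and that on a boundary chart the restricted inclusion $\Sh((\mathbb{H}^d)^\circ) \hookrightarrow \Sh(\mathbb{H}^d)$ is genuinely stable, so that Proposition \ref{prop:stable-puncture-types} legitimately reduces purity to the (trivial, since both spaces are contractible) comparison of cohomology. Everything after that is a formal consequence of Corollary \ref{cor:same-smallest-n-pure-subtopos} and the definitions of dimension and content.
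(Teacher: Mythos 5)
Your proof is correct, and it reorganizes the argument in a way that differs from the paper's. The paper works entirely with local puncture types of $\Sh(M)$: it exhibits the two stable puncture types $\R^n-\{0\}\hookrightarrow\R^n$ and $\mathbb{H}^n-\{0\}\hookrightarrow\mathbb{H}^n$ as a complete family, reads off the dimension and the existence of a boundary from Proposition \ref{prop:dimension-is-dimension-of-puncture-types}, and then separately uses the stable infinite-dimensional puncture type $\{x_1>0\}\hookrightarrow\mathbb{H}^n$ to compute the $\infty$-pure topology and hence the content. You instead make the content identification primary: you prove the global inclusion $\Sh(M^\circ)\hookrightarrow\Sh(M)$ is $\infty$-pure by the chart-by-chart reduction (Corollary \ref{cor:pure-reflection} plus Proposition \ref{prop:coproduct-pure}), with the boundary-chart case handled by exactly the same stability-plus-contractibility computation the paper uses for its half-space puncture types, and then invoke Corollary \ref{cor:same-smallest-n-pure-subtopos} to transport all the subtoposes $(\Sh(M))_{\leq k}$ to $(\Sh(M^\circ))_{\leq k}$, so that the dimension falls out of the already-proven boundaryless case (Proposition \ref{prop:example-locally-euclidean}). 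The computational inputs are identical, but your route buys a clean reduction to the manifold-without-boundary case and spares you from having to exhibit and justify a complete family of puncture types for $\Sh(M)$ itself, at the cost of having to verify the global $\infty$-purity of the interior inclusion (which the local-to-global corollaries make painless). Both arguments identify the boundary in the same way, as the closed complement of the open subtopos $\Sh(M^\circ)$.
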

\begin{proof}
Note that $M$ has as complete family of puncture types the two inclusions
\begin{equation*}
\R^n-\{0\} ~\hookrightarrow~ \R^n
\end{equation*}
and
\begin{equation*}
\{ (x_1,\dots,x_n) \in \R^n ~:~ x_1 \geq 0 \} - \{0\} ~\hookrightarrow~ \{ (x_1,\dots,x_n) \in \R^n ~:~ x_1 \geq 0 \}.
\end{equation*}
Both puncture types are stable. We already saw in the proof of Proposition \ref{prop:example-locally-euclidean} that the first is a puncture type of dimension $n$. In the second puncture type, both domain and codomain are contractible, so the second puncture type is infinite-dimensional. Now using Proposition \ref{prop:dimension-is-dimension-of-puncture-types} we conclude that $\Sh(M)$ is $n$-dimensional. Further, the existence of an infinite-dimensional puncture type means that $\Sh(M)$ has a boundary.

Similarly, the puncture type
\begin{equation*}
\{ (x_1,\dots,x_n) \in \R^n ~:~ x_1 > 0 \} ~\hookrightarrow~ \{ (x_1,\dots,x_n) \in \R^n ~:~ x_1 \geq 0 \}
\end{equation*}
is also stable and infinite-dimensional. This observations allows us to calculate the $\infty$-pure topology in $\Sh(M)$. Namely, an inclusion of open sets $V \subseteq U$ is $\infty$-pure if and only $V \cap M^\circ = U \cap M^\circ$. It follows that the content of $\Sh(M)$ is
\begin{equation*}
\Sh(M)_{< \infty} = \Sh(M^\circ).
\end{equation*}
This is an open subtopos, with as complement the closed subtopos $\Sh(\partial M)$. So $\Sh(\partial M)$ is the boundary of $\Sh(M)$.
\end{proof}

\subsection{Example: the rational line} 
Sierpi\'nski has shown that a topological space is homeomorphic to the space 
of rational numbers $\Q$ (with the Euclidean topology) if and only if it is 
countable, metrizable and has no isolated points. 
In particular, $\Q^n$ with the Euclidean topology and $\Q$ are homeomorphic spaces.
As a result, the dimension of $\Q^n$ is independent of $n$
for any notion of dimension that is invariant under homeomorphism.

For separable metrizable spaces like $\Q$, the small and large inductive dimension agree with the Lebesgue covering dimension, 
and for the rational line $\Q$ the dimension is zero. 
However, it turns out that $\Sh(\Q)$ is one-dimensional.

\begin{proposition}
The topos $\Sh(\Q)$ is one-dimensional.
\end{proposition}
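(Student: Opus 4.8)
The plan is to apply Proposition~\ref{prop:dimension-is-dimension-of-puncture-types}. The key geometric input is that $\Q$ is zero-dimensional: it has a basis $\U$ consisting of the clopen intervals $(a,b)\cap\Q$ with $a,b$ irrational, each of which is homeomorphic to $\Q$ by Sierpi\'nski's theorem. I take $\U$ as a family of generators for $\Sh(\Q)$. Since every $U\in\U$ is a subterminal object, $\Sh(\Q)/U\simeq\Sh(U)$ and the subobjects of $U$ are exactly the open subsets of $U$; hence every puncture type with codomain in $\U$ has the form $\Sh(V)\hookrightarrow\Sh(U)$ for a proper open inclusion $V\subsetneq U$. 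The family $\A$ of all such puncture types is therefore complete, and it remains to bound their dimensions.

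The crucial claim is that no proper open inclusion $V\subsetneq U$ is $0$-pure; together with the existence of a dense such inclusion this pins the dimension to $1$. If $V$ is not dense in $U$, then restriction of clopen sets (i.e.\ of $\HH^0(-,\{0,1\})$) is already non-injective, so the inclusion is not $(-1)$-pure and has dimension $0$. The main work is the dense case. Given $q\in U\setminus V$, I would choose a neighborhood $(q-\delta,q+\delta)\cap\Q\subseteq U$ and irrationals $\alpha_1<\alpha_2<\cdots$ increasing to $q$ inside it, so that the clopen intervals $J_n=(\alpha_n,\alpha_{n+1})\cap\Q$ lie in $U$ and accumulate at $q$. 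Density forces $V\cap J_n\neq\varnothing$ for all $n$, and I would then argue that the clopen subset $C=V\cap\bigcup_{n\text{ even}}J_n$ of $V$ has no clopen extension to $U$: if a clopen $D\subseteq U$ satisfied $D\cap V=C$, then openness of $D$ (when $q\in D$) or of $U\setminus D$ (when $q\notin D$) would force some $J_n$ of the wrong parity near $q$ to lie in $D$ resp.\ its complement, contradicting $D\cap V=C$. Thus $\HH^0(\Sh(U),\{0,1\})\to\HH^0(\Sh(V),\{0,1\})$ is not surjective and the inclusion is not $0$-pure. A short separate check shows that a dense inclusion is $(-1)$-pure, since two locally constant functions on an open subset of $\Q$ that agree on a dense subset agree everywhere; hence every dense proper inclusion has dimension exactly $1$.

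Finally, removing a single point, $V=U\setminus\{q\}$, is a dense proper open inclusion (as $\{q\}$ is closed and nowhere dense), giving a puncture type of dimension $1$, while by the claim every puncture type in $\A$ is of dimension $0$ (not dense) or $1$ (dense), and in particular all are finite-dimensional. Proposition~\ref{prop:dimension-is-dimension-of-puncture-types} then yields $\dim(\Sh(\Q))=\sup_{j\in\A}\dim(j)=1$ and shows that $\Sh(\Q)$ has no boundary. The main obstacle is the non-extension argument in the dense case: it is precisely where the absence of isolated points and the density of $V$ are combined to defeat $0$-purity, and care is needed because these puncture types are not stable, so Proposition~\ref{prop:stable-puncture-types} does not apply and one must verify $0$-purity directly from the definition on a single well-chosen slice.
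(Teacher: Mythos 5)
Your proof is correct, and it works within the same framework as the paper's — a complete family of puncture types fed into Proposition~\ref{prop:dimension-is-dimension-of-puncture-types}, with failure of $0$-purity detected by $\HH^0(-,1\sqcup 1)$ — but it routes the key step differently. The paper takes the \emph{singleton} $\{j\colon \Sh(\Q\setminus\{0\})\hookrightarrow\Sh(\Q)\}$ as its complete family: every nontrivial open $V\subsetneq U$ with $U$ a clopen rational interval is \emph{contained}, in the sense of the definition of complete family, in a single-point puncture $U\setminus\{q\}\hookrightarrow U$, which is equivalent to $j$ by Sierpi\'nski's theorem; the monotonicity $\dim(j'')\leq\dim(j')$ under containment then bounds all dimensions by $\dim(j)=1$ for free, and the only computation needed is that the sign function on $\Q\setminus\{0\}$ has no continuous extension to $\Q$. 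You instead take \emph{all} puncture types with codomain in the basis, which obliges you to prove the stronger statement that every dense proper open $V\subsetneq U$ fails to be $0$-pure. Your alternating-clopen-intervals construction does establish this correctly: $C=V\cap\bigcup_{n\text{ even}}J_n$ is clopen in $V$ because its closure in $U$ adds at most the point $q\notin V$, and the parity argument rules out any clopen $D\subseteq U$ with $D\cap V=C$. So your route is more laborious than the paper's but also buys more — non-extendability of clopen sets across an arbitrary dense proper open inclusion of $\Q$, not merely across the complement of a point — and it avoids invoking the containment mechanism entirely. One small remark: stability is irrelevant for \emph{refuting} $0$-purity, since by Definition~\ref{def:n-pure} a single failing slice always suffices; stability (Proposition~\ref{prop:stable-puncture-types}) only matters when one wants to establish purity from the top slice alone, which you correctly do not attempt — you verify $(-1)$-purity instead via the standard fact that dense topological inclusions induce dense subtoposes.
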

\begin{proof}
There is a complete family of puncture types for $\Sh(\Q)$ consisting only of the puncture type $j: \Q - \{0\} \hookrightarrow \Q$. It is enough to show that this puncture type is one-dimensional (i.e. dense but not $0$-pure).

It is clear that $j$ is dense. To show that $j$ is not $0$-pure, consider the continuous map 
\[
f : \Q-\{0\} \to \{0,1\}
\]
defined by $f(z) = -1$ for $z<0$ and $f(z)=1$ for $z>0\,$. 
Then $f$ does not have an extension to the whole space $\Q\,$. 
In other words, $\HH^0(\Q,A) \to \HH^0(\Q-\{0\},A)$ is not surjective, for $A = 1 \sqcup 1\,$.
\end{proof}

\subsection{Boolean toposes are zero-dimensional} Again, we take as base topos $\T=\Set$. If $\E$ is a Boolean topos, then the smallest dense subtopos is the full topos, in symbols $\E_{\leq 0} = \E$. As a result, Boolean toposes are zero-dimensional without boundary.

Conversely, suppose that $\E$ is zero-dimensional without boundary, i.e.\ $\E_{\leq 0} = \E$. The smallest dense subtopos $\E_{\leq 0}$ is Boolean, so then $\E$ is Boolean as well.

We conclude:

\begin{proposition}
A Grothendieck topos is zero-dimensional without boundary (over $\Set$) if and only if it is Boolean.
\end{proposition}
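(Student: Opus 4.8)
The plan is to reduce the biconditional to the single identity $\E_{\leq 0} = \E$ and then to quote the classical theory of the double-negation subtopos. First I would unwind the two hypotheses on the left-hand side. Being \emph{without boundary} means $\E_{<\infty} = \E$, while being \emph{zero-dimensional} means that $0$ is the least integer with $\E_{\leq 0} = \E_{<\infty}$; taken together these say exactly that $\E_{\leq 0} = \E$ (the only caveat being the degenerate topos $\E \simeq \varnothing$, for which $\E_{\leq -1} = \varnothing = \E$ already makes the dimension $-1$). I would then recall, from the discussion following Theorem \ref{thm:n-pure-topology-vs-n-pure-inclusion}, that for $\T = \Set$ the subtopos $\E_{\leq 0}$ is precisely the smallest dense subtopos of $\E$, i.e.\ the subtopos $\Sh_{\neg\neg}(\E)$ of $\neg\neg$-sheaves.

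For the implication from Boolean to zero-dimensional without boundary, I would use that on a Boolean topos the operator $\neg\neg$ is the identity, so the $\neg\neg$-topology is trivial and every object is already a sheaf. Hence the smallest dense subtopos is all of $\E$, which is the reduced condition $\E_{\leq 0} = \E$. For the converse, I would invoke the classical fact that the smallest dense subtopos $\E_{\leq 0} = \Sh_{\neg\neg}(\E)$ is always Boolean (see \cite{blass-scedrov} and the standard references). If $\E$ is zero-dimensional without boundary then $\E = \E_{\leq 0}$, and since the right-hand side is Boolean, so is $\E$.

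The argument is formal once the reduction is in place; the genuine mathematical input is entirely imported, namely the two classical properties of the $\neg\neg$-subtopos (that it is Boolean, and that it exhausts $\E$ exactly when $\E$ is already Boolean). I do not expect a serious obstacle, but the step deserving the most care is the reduction itself: one must check that \emph{without boundary} is really needed, since a topos can be zero-dimensional \emph{with} boundary when $\E_{\leq 0} = \E_{<\infty} \subsetneq \E$, and in that situation only the content $\E_{<\infty}$, rather than $\E$ itself, is forced to be Boolean.
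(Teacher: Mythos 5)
Your proposal is correct and follows essentially the same route as the paper: reduce the statement to the identity $\E_{\leq 0} = \E$ and then invoke the two classical facts about the smallest dense ($\neg\neg$-) subtopos, namely that it is always Boolean and that it is all of $\E$ precisely when $\E$ is Boolean. Your extra remarks on the empty topos and on zero-dimensional toposes \emph{with} boundary are sensible refinements that the paper leaves implicit.
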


\subsection{Totally connected toposes are zero-dimensional}

Let $\rho: \F\to \E$ be a geometric morphism over $\T$.
\begin{equation}\label{eq:rho-e-2}
\begin{tikzcd}
\F \ar[rr,"{\rho}"] \ar[dr,swap,"{f}"] && \E \ar[dl,"{e}"]\\
& \T
\end{tikzcd}
\end{equation}
We say that the geometric morphism $\rho$ is \emph{totally connected} \cite[p.~707]{EL} if it is locally connected and moreover the left adjoint $\rho_!$ preserves finite limits. If $\rho$ is totally connected, then there is a geometric morphism $d : \E \to \F$ defined by $d_* = \rho^*$ and $d^* = \rho_!$. Moreover, $d$ is a dense inclusion and $\rho d \cong 1$ \cite[p.~707-708]{EL}. After base change to the simplicial setting, we find that $\s \rho$ is again totally connected with $\s d$ as corresponding dense inclusion \cite[Lemma C3.6.18(iii)]{EL}.

Totally connected morphisms are connected \cite[Theorem C3.6.16]{EL}. We claim that, more generally, they are $n$-connected for each $n$.

\begin{proposition}\label{prop:totconn-are-nconn}
Totally connected geometric morphisms are $\infty$-connected (in particular $\infty$-pure).
\end{proposition}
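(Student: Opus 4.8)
The plan is to read off $\infty$-connectedness from Corollary \ref{cor:preserves-em-objects}. Since the family of \emph{all} Eilenberg--Mac Lane objects in $\s\E$ is downwards closed and being pure with respect to it is exactly $\infty$-connectedness, it suffices to show that for every $K(A,n)$ the derived unit
\begin{equation*}
K(A,n) \longrightarrow \s\rho_*\bigl((\s\rho^*K(A,n))^f\bigr)
\end{equation*}
is a weak equivalence. I will establish the stronger fact that $\s\rho^*$ is homotopically fully faithful on all of $\s\E$, which forces every such derived unit to be a weak equivalence.

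The key input is the third adjoint. As recalled before the statement, $\s\rho$ is again totally connected, with $\s d : \s\E\to\s\F$ the associated dense inclusion satisfying $\s d^* = \s\rho_!$, $\s d_* = \s\rho^*$ and $\s\rho\,\s d \cong 1$. First I would verify that $\s\rho_! \dashv \s\rho^*$ is a Quillen reflection. Because $\rho_!$ preserves finite limits it preserves monomorphisms, so $\s\rho_!$ preserves cofibrations; and since $\s\rho_!$ preserves both finite limits (total connectedness) and colimits (it is a left adjoint), the argument of Joyal behind Proposition \ref{prop:pullback-preserves-homotopy-groups} applies verbatim to show that $\s\rho_!$ preserves homotopy groups and hence weak equivalences, so it preserves trivial cofibrations as well. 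Thus $\s\rho_! \dashv \s\rho^*$ is a Quillen adjunction with $\s\rho_!$ left Quillen. Moreover, $\s d$ being an inclusion gives $\s\rho_!\,\s\rho^* = \s d^*\s d_* \cong 1$, so the derived counit is an isomorphism (all objects are cofibrant); hence the adjunction is a Quillen reflection.

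A Quillen reflection exhibits its right adjoint as homotopically fully faithful, so the natural map $R\HOM_{\s\E}(X,Y)\to R\HOM_{\s\F}(\s\rho^*X,\s\rho^*Y)$ is a weak equivalence for all $X,Y$ in $\s\E$. Combining this with the derived adjunction $\s\rho^*\dashv\s\rho_*$ (a Quillen adjunction by Proposition \ref{prop:pullback-preserves-homotopy-groups}) yields, for every $X$ and every fibrant $K(A,n)$, natural weak equivalences
\begin{equation*}
R\HOM_{\s\E}\bigl(X,\s\rho_*((\s\rho^*K(A,n))^f)\bigr) \;\simeq\; R\HOM_{\s\F}(\s\rho^*X,\s\rho^*K(A,n)) \;\simeq\; R\HOM_{\s\E}(X,K(A,n)),
\end{equation*}
the composite being induced by precomposition with the derived unit. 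A Yoneda argument in the homotopy category then shows the derived unit is itself a weak equivalence, for every $K(A,n)$. By Corollary \ref{cor:preserves-em-objects}, $\rho$ is pure with respect to the family of all Eilenberg--Mac Lane objects, i.e.\ $\infty$-connected; restricting to constant coefficients $e^*T$ gives $\infty$-pure.

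The main obstacle I expect is the careful justification of the two ``standard'' steps: that Joyal's colimit/finite-limit construction of homotopy groups genuinely transports to the third adjoint $\s\rho_!$ (so that $\s\rho_!$ preserves weak equivalences, giving the Quillen reflection), and that ``Quillen reflection'' upgrades to a full equivalence of derived mapping spaces $R\HOM$ rather than merely a bijection on homotopy classes. The first is handled by repeating the proof of Proposition \ref{prop:pullback-preserves-homotopy-groups} for $\s\rho_!$; the second uses that $\s\rho_! \dashv \s\rho^*$ is a simplicial adjunction, so that homotopical full faithfulness is an honest weak equivalence on $R\HOM$.
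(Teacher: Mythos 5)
Your proof is correct, but it takes a noticeably longer route than the paper's. Both arguments reduce, via Corollary \ref{cor:preserves-em-objects}, to showing that the derived unit $K(A,n) \to \s\rho_*((\s\rho^*K(A,n))^f)$ is a weak equivalence, and both exploit the third adjoint $\s\rho_! = \s d^*$, $\s\rho^* = \s d_*$. The paper then makes one economical observation that your argument circumvents: since $\s\rho^* = \s d_*$ is a \emph{right} Quillen functor (for the Quillen adjunction $\s d^* \dashv \s d_*$), it preserves fibrant objects, so the fibrant replacement $(\s\rho^*K(A,n))^f$ is superfluous and the derived unit is literally the ordinary unit $K(A,n) \to \s\rho_*\s\rho^*K(A,n)$ --- which is an isomorphism because $\s\rho$ is connected. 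No derived mapping spaces, Quillen reflections, or Yoneda arguments are needed. Your route --- establishing that $\s\rho_! \dashv \s\rho^*$ is a Quillen reflection, deducing homotopical full faithfulness of $\s\rho^*$ at the level of $R\HOM$, and transporting this through the derived adjunction $\s\rho^* \dashv \s\rho_*$ --- does work, and it proves the stronger statement that the derived unit is a weak equivalence on \emph{every} object of $\s\E$, not just on Eilenberg--Mac Lane objects. The price is the extra bookkeeping you flag at the end (preservation of weak equivalences by $\s\rho_!$, and the simplicial enrichment of the adjunction); for the first of these, note that you can avoid re-running Joyal's argument by simply observing that $\s\rho_! = \s d^*$ is the inverse image functor of a geometric morphism, so Proposition \ref{prop:pullback-preserves-homotopy-groups} applies to it verbatim.
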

\begin{proof}
Let $\rho$ be a totally connected geometric morphism with corresponding dense inclusion $d$. We need to show that the map
\begin{equation} \label{eq:totconn-are-nconn}
K(A,n) \longrightarrow \s\rho_*((\s\rho^*K(A,n))^f)
\end{equation}
is a homotopy equivalence, for each Eilenberg--Mac Lane object $K(A,n)$ in $\s\E$, see Corollary \ref{cor:preserves-em-objects}.

As discussed above, we have that $\s\rho_! \dashv \s\rho^*$ agrees with $\s d^* \dashv \s d_*$, in particular it is a Quillen adjunction. As a result, $\s\rho^*$ preserves fibrant objects. So we can rewrite the map \eqref{eq:totconn-are-nconn} as the unit map
\begin{equation*}
K(A,n) \longrightarrow \s\rho_*\s\rho^*K(A,n).
\end{equation*}
Because $\s\rho$ is connected, this unit map is an isomorphism for each object, in particular for Eilenberg--Mac Lane objects.
\end{proof}

\begin{corollary} \label{cor:dense-section-is-n-pure}
Let $\rho: \F \to \E$ be totally connected with corresponding dense inclusion $d$. Then $d$ is $\infty$-pure over $\E$ (in particular, $\infty$-pure over the base topos).
\end{corollary}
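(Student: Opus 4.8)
The plan is to reduce the statement to a descent condition via the equivalence $(3)\Leftrightarrow(4)$ of Theorem \ref{thm:n-pure-topology-vs-n-pure-inclusion}, applied to the inclusion $d : \E \hookrightarrow \F$, where $\F$ is regarded as a topos over the base $\E$ through $\rho$ (the relation $\rho d \cong 1$ supplying the structure maps). In the notation of that theorem the ambient topos is $\F$, the subtopos is $\E$, the base is $\E$, and the structure map playing the role of $e$ is $\rho$, so the relevant coefficients are the Eilenberg--Mac Lane objects $K(\rho^*A,m)$ for $A$ in $\E$. Thus, to show that $d$ is $n$-pure over $\E$ for every $n$, it suffices to show that each $K(\rho^*A,m)$ satisfies descent with respect to the localization $\s\E \subseteq \s\F$, i.e.\ with respect to the Quillen reflection $\s d^* \dashv \s d_*$ (Example \ref{eg:descent}(2)).

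The crucial input is that, since $\rho$ is totally connected, so is $\s\rho$, with corresponding dense inclusion $\s d$; in particular $\s d_* = \s\rho^*$ and $\s d^* = \s\rho_!$, and $\s\rho_! \dashv \s\rho^*$ is a Quillen adjunction, as recorded in the proof of Proposition \ref{prop:totconn-are-nconn}. Consequently $\s\rho^* = \s d_*$ preserves fibrant objects. First I would use this to identify $\s\rho^*K(A,m)$ with $K(\rho^*A,m)$: by Proposition \ref{prop:pullback-preserves-homotopy-groups} it has the correct homotopy groups ($\rho^*A$ in degree $m$, trivial otherwise), and it is fibrant because $\s\rho^*$ preserves fibrant objects, so it is an Eilenberg--Mac Lane object for $(\rho^*A,m)$.

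This identification shows that $K(\rho^*A,m)$ is weakly equivalent to $\s d_*(K(A,m))$, with $K(A,m)$ a fibrant object of $\s\E$. By Lemma \ref{lmm:satisfies-descent-if-in-image}, a cofibrant object that is weakly equivalent to $\s d_*$ of a fibrant object automatically satisfies descent with respect to $\s d^* \dashv \s d_*$. Hence every $K(\rho^*A,m)$ satisfies descent, which is exactly condition $(3)$ of Theorem \ref{thm:n-pure-topology-vs-n-pure-inclusion}; the equivalence with $(4)$ then gives that $d$ is $n$-pure over $\E$ for each $n \geq 0$, and therefore $\infty$-pure over $\E$ (the case $n=-1$ following from $0$-pureness). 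The parenthetical claim is then immediate: the coefficients $K(\rho^*e^*S,m)$ used to test $\infty$-pureness over the original base $\T$ form a subfamily of the $K(\rho^*A,m)$ with $A$ in $\E$, since each $e^*S$ is an object of $\E$, and purity with respect to the larger family implies purity with respect to the smaller one.

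I expect the part requiring the most care to be the bookkeeping when invoking Theorem \ref{thm:n-pure-topology-vs-n-pure-inclusion}: one must consistently take $\F$ as the ambient topos, $\E$ as both the subtopos and the base, and keep track of the fact that the coefficients are precisely the $\rho^*$-constant Eilenberg--Mac Lane objects. The only genuinely homotopical step is the identification $\s\rho^*K(A,m) \simeq K(\rho^*A,m)$, which rests on $\s\rho^*$ preserving fibrancy --- and this is exactly the place where total connectedness, through the Quillen adjunction $\s\rho_! \dashv \s\rho^*$, enters.
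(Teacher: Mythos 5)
Your argument is correct, but it follows a genuinely different route from the paper's. The paper's proof is a short two-out-of-three argument: since totally connected morphisms and their centres are stable under pullback, it suffices to check the global maps $\HH^n(d,A)$, and these are left inverses of the maps $\HH^n(\rho,A)$ because $\rho d \cong 1$; the latter are isomorphisms by Proposition \ref{prop:totconn-are-nconn}, so the former are too. You instead run everything through the descent machinery of Section 7: the key observation is that $\s d_* = \s\rho^*$ carries $K(A,m)$ to a fibrant object whose homotopy groups are concentrated in degree $m$, where they equal $\rho^*A$ (Proposition \ref{prop:pullback-preserves-homotopy-groups} plus the fact that $\s\rho_!\dashv\s\rho^*$ is a Quillen adjunction), i.e.\ to a model of $K(\rho^*A,m)$. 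Hence every Eilenberg--Mac Lane object with $\E$-constant coefficients is, up to weak equivalence, of the form $\s d_*(Y)$ with $Y$ fibrant, so it satisfies descent by Lemma \ref{lmm:satisfies-descent-if-in-image}, and Theorem \ref{thm:n-pure-topology-vs-n-pure-inclusion}(3)$\Leftrightarrow$(4) --- or, more directly, Corollary \ref{cor:preserves-em-objects}, which would let you bypass the theorem's bookkeeping entirely --- converts this into $n$-purity of $d$ over $\E$ for every $n$. What your version buys is that it needs neither the conclusion of Proposition \ref{prop:totconn-are-nconn} nor pullback-stability of totally connected morphisms (the slice-by-slice check demanded by the definition of purity is absorbed into Corollary \ref{cor:preserves-em-objects}); what it costs is invoking the localization/descent apparatus where a one-line retraction argument suffices. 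Both proofs ultimately rest on the same feature of total connectedness, namely $d_*=\rho^*$ together with $\s\rho^*$ preserving fibrancy. Your treatment of the edge cases ($n=-1$ via $0$-purity, and deducing $\infty$-purity over $\T$ from $\infty$-purity over $\E$ by restricting the coefficient family) is also correct.
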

\begin{proof}
Totally connected geometric morphisms and their corresponding dense inclusions are stable under pullback \cite[Lemma C3.6.18(iii)]{EL}. So it is enough to show that
\begin{equation*}
\HH^n(d,A)~:~\HH^n(\F, A) \longrightarrow \HH^n(\E, A)
\end{equation*}
is an isomorphism for all Eilenberg--Mac Lane objects $K(A,n)$ in $\s\E$. This follows because $\HH^n(\rho,A)$ is an isomorphism for each $K(A,n)$ and $\HH^n(d,A)\HH^n(\rho,A) \cong 1$.
\end{proof}

\begin{corollary} \label{cor:totally-connected-toposes-infinite-dimensional}
Let $e : \E\to\T$ be a totally connected topos over $\T$, with $\T$ nonempty.
Then $\E$ is zero-dimensional over $\T$.
\end{corollary}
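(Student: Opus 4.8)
The plan is to exploit the dense section that total connectedness provides. Since $e : \E \to \T$ is totally connected, the discussion preceding Proposition \ref{prop:totconn-are-nconn} gives a dense inclusion $d : \T \hookrightarrow \E$ with $ed \cong 1_\T$, and by Corollary \ref{cor:dense-section-is-n-pure} this $d$ is $\infty$-pure over $\T$. Write $\E' = d(\T)$ for the resulting subtopos of $\E$, so that $\E' \cong \T$ over $\T$. Because $\E' \hookrightarrow \E$ is the inclusion of an $\infty$-pure subtopos we get $\E_{<\infty} \subseteq \E'$; and because it is in particular $(-1)$-pure (i.e.\ $\T$-dense), Corollary \ref{cor:same-smallest-n-pure-subtopos} (in the case $n=0$) identifies the smallest $\T$-dense subtopos of $\E$ with that of $\E'$, that is $\E_{\leq 0} = (\E')_{\leq 0}$.

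The crux is then to show that the base topos, viewed over itself, admits no proper $\T$-dense subtopos, i.e.\ $(\E')_{\leq 0} = \E'$. I would prove this directly. A subtopos $i : \T'' \hookrightarrow \T$ is $\T$-dense over the base $\T$ (whose structure map is the identity) exactly when the unit $A \to i_* i^* A$ is a monomorphism for every object $A$ of $\T$, i.e.\ when every object is a separated presheaf for the corresponding topology $j$; here this must hold for \emph{all} $A$, since over the identity structure map every object is of the form $e^*A$. Now let $m : V \hookrightarrow U$ be any $j$-dense monomorphism and form its cokernel pair $U \rightrightarrows U \sqcup_V U$. Testing separatedness of the object $A = U \sqcup_V U$ against $m$, the two coproduct inclusions $U \to U \sqcup_V U$ restrict to the same morphism along $m$, hence must coincide; so $m$ is an epimorphism, and being also a monomorphism it is an isomorphism because a Grothendieck topos is balanced. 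Thus $j$ is the trivial topology and $\T'' = \T$, giving $(\E')_{\leq 0} = \E'$.

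Putting the two parts together yields $\E_{\leq 0} = (\E')_{\leq 0} = \E' \supseteq \E_{<\infty} \supseteq \E_{\leq 0}$, so all three subtoposes coincide; in particular $\E_{\leq 0} = \E_{<\infty}$ and therefore $\dim(\E) \leq 0$. Finally, since $\T$ is nonempty, $\E_{<\infty} = \E' \cong \T$ is nonempty, so $\E_{\leq -1} = \varnothing \neq \E_{<\infty}$ and the dimension cannot be $-1$; hence $\dim(\E) = 0$, as claimed. The only genuinely non-formal step is the self-referential lemma that $\T$ has no proper $\T$-dense subtopos relative to itself, and I expect this balancedness argument to be the main obstacle; everything else is bookkeeping with the smallest $n$-pure subtoposes already established.
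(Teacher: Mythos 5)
Your proof is correct and follows essentially the same route as the paper: both use Corollary \ref{cor:dense-section-is-n-pure} to obtain the $\infty$-pure dense section $d$, transfer the computation of the smallest pure subtoposes to $\T$ via Corollary \ref{cor:same-smallest-n-pure-subtopos}, and then observe that $\T$ over itself admits no proper $\T$-dense subtopos. The only difference is in that last step: the paper just notes that a $(-1)$-pure inclusion over the codomain is $(-1)$-connected, i.e.\ surjective, hence an equivalence, whereas you reprove this by hand with the cokernel-pair/balancedness argument --- both are valid.
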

\begin{proof}
From Corollary \ref{cor:dense-section-is-n-pure} we know that the dense inclusion $d: \T \to \E$ corresponding to $e$ is $n$-pure for each $n$. By Corollary \ref{cor:same-smallest-n-pure-subtopos} we then have that $\E_{\leq n} = \T_{\leq n}$ for each $n$. Because $\T$ is the base topos, if $j: \T' \hookrightarrow \T$ is a subtopos where $j$ is $n$-pure, then $j$ is by definition $n$-connected. In particular, $j$ is $(-1)$-connected, i.e.\ surjective. This is only possible if $j$ is an isomorphism. In other words, $\T_{\leq n}=\T$ for each $n$. We conclude that $\T$ and $\E$ are both zero-dimensional (unless $\T$ is empty, because in that case $\E$ must be empty as well and empty toposes are $(-1)$-dimensional by convention).
\end{proof}

\subsection{Example: irreducible topological spaces}

If $X$ is an irreducible topological space, then $\Sh(X)$ is totally connected, see \cite[Examples C3.6.17(a)]{EL}. From the results above, we know that $\Sh(X)$ is then in particular zero-dimensional.

As a special case, $\Sh(X)$ is zero-dimensional for $X$ an irreducible scheme equipped with the Zariski topology.

\subsection{Example: right Ore monoids}

Recall that a monoid $M$ is called \emph{right Ore} if for all $x,y \in M$ there are $m,n \in M$ with $xm = yn$. The groupification of $M$ is the group you get by formally inverting all elements of $M$.

\begin{proposition} \label{prop:example-right-ore-monoids}
Let $M$ be a right Ore monoid and let $G$ be its groupification. Then $\PSh(M)$ is zero-dimensional with content $\PSh(G)$.
\end{proposition}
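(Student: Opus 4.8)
The plan is to exhibit the groupification as a totally connected geometric morphism and then feed its dense section into the machinery for dense sections of totally connected morphisms, together with the fact that $\PSh(G)$ is Boolean.

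First I would consider the geometric morphism $e : \PSh(M) \to \PSh(G)$ induced by the groupification homomorphism $f : M \to G$, with inverse image $e^* = f^*$ the restriction functor and further left adjoint $e_! = \mathrm{Lan}_f \cong (-)\otimes_M G$. The key computation, and the main obstacle, is to show that $e$ is \emph{totally connected}, i.e.\ that $e_!$ preserves finite limits and that $e$ is locally connected. This is exactly where the right Ore hypothesis enters: for a right Ore monoid the localization $X \otimes_M G$ is computed as the filtered colimit $\colim_M X$ of the right $M$-action on $X$, the indexing diagram being filtered precisely because any two elements admit common right multiples. Since filtered colimits of sets commute with finite limits, $e_!$ preserves finite limits; and local connectedness follows from Frobenius reciprocity $e_!(X \times e^*Y) \cong e_!(X) \times Y$, which holds because the $M$-action on $e^*Y$ is already invertible and so $Y$ is left unchanged by the localization. (This total connectedness is the topos-theoretic shadow of the Ore condition; compare the monoid--topos dictionary of \cite{hr1} and the facts on totally connected morphisms in \cite{EL}.)

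Next, let $d : \PSh(G) \to \PSh(M)$ be the dense inclusion corresponding to the totally connected morphism $e$, so that $d_* = e^*$ and $d^* = e_!$; this realizes $\PSh(G)$ as a dense subtopos of $\PSh(M)$. By Corollary \ref{cor:dense-section-is-n-pure}, $d$ is $\infty$-pure over the base topos $\Set$, and in particular it is $(n-1)$-pure for every $n \geq 0$. Moreover, since $G$ is a group, $\PSh(G)$ is a Boolean topos, hence zero-dimensional without boundary, so that $\PSh(G)_{\leq n} = \PSh(G)$ for all $n \geq 0$.

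Finally I would assemble these facts. Applying Corollary \ref{cor:same-smallest-n-pure-subtopos} to the $(n-1)$-pure inclusion $d$ gives $\PSh(M)_{\leq n} = \PSh(G)_{\leq n} = \PSh(G)$ for every $n \geq 0$. In particular $\PSh(M)_{\leq 0} = \PSh(G)$, and taking the union over $n$ yields $\PSh(M)_{<\infty} = \PSh(G)$, so the content of $\PSh(M)$ is $\PSh(G)$. As $\PSh(M)$ is nonempty we have $\PSh(M)_{\leq -1} = \varnothing \neq \PSh(G) = \PSh(M)_{<\infty}$, so $0$ is the smallest integer $n$ with $\PSh(M)_{\leq n} = \PSh(M)_{<\infty}$; that is, $\PSh(M)$ is zero-dimensional with content $\PSh(G)$.
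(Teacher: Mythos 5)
Your proposal is correct and follows essentially the same route as the paper: establish that the groupification morphism $\PSh(M)\to\PSh(G)$ is totally connected, then apply Corollary \ref{cor:dense-section-is-n-pure} and Corollary \ref{cor:same-smallest-n-pure-subtopos} to its dense section, using Booleanness of $\PSh(G)$ to conclude $\PSh(M)_{<\infty}=\PSh(G)_{<\infty}=\PSh(G)$. The only divergence is in how total connectedness is verified --- the paper cites flatness of $G$ as a left $M$-set and the ``essential over a Boolean \'etendue implies locally connected'' result, whereas you sketch the filtered-colimit description of the localization and Frobenius reciprocity directly (for the latter, note that full local connectedness needs the indexed version of Frobenius, which here follows because $e_!$ preserves pullbacks and $e_!e^*\cong\mathrm{id}$) --- but this is a difference of justification, not of strategy.
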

\begin{proof}
The monoid map $M \to G$ induces an essential geometric morphism
\begin{equation*}
f: \PSh(M) \longrightarrow \PSh(G)
\end{equation*}
with $f_!(X) \simeq X \otimes_M G$ \cite[Lemma 2.4]{hr1}.
If $M$ is right Ore, then $G$ is flat as a left $M$-set \cite[Example 2.4.4]{hemelaer-localization}, in other words $f_!$ preserves finite limits. Because $\PSh(G)$ is a Boolean \'etendue and $f$ is essential, we know that $f$ is locally connected \cite{hemelaer-eilc}. So $f$ is totally connected. Let $d: \PSh(G) \to \PSh(M)$ be the dense inclusion associated to $f$. Then $d$ is $\infty$-pure by Corollary \ref{cor:dense-section-is-n-pure} and then using Corollary \ref{cor:same-smallest-n-pure-subtopos} we get
\begin{equation*}
\PSh(M)_{< \infty} = \PSh(G)_{< \infty} = \PSh(G).
\end{equation*}
So $\PSh(M)$ is zero-dimensional with content $\PSh(G) \subset \PSh(M)$.
\end{proof}

For $M$ a right Ore monoid, it follows that $\PSh(M)$ has a boundary if and only if $M$ is not a group. In this case, the boundary can not be interpreted as a subtopos. Indeed, since $\PSh(M)$ is a hyperconnected topos, it has only two open subtoposes: the empty topos and $\PSh(M)$ itself. As a result, each nonempty subtopos is dense and contains the smallest dense subtopos $\PSh(G)$. So it is impossible for $\PSh(G)$ to have a complement in the lattice of subtoposes.

\begin{remark}
Using Proposition \ref{prop:example-right-ore-monoids}, we find in particular that the cohomology group $\HH^n(\PSh(M),A)$ with constant coefficients agrees with the $n$th cohomology of the group $G$ with coefficients in $A$.
\end{remark}

\subsection{Example: free monoids}

\begin{proposition} \label{prop:example-free-monoids}
Let $M$ be a free monoid on at least two generators. Then $\PSh(M)$ is a $1$-dimensional topos without boundary.
\end{proposition}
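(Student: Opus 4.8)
The plan is to exhibit a single puncture type that forms a complete family, compute its dimension to be $1$, and then invoke Proposition \ref{prop:dimension-is-dimension-of-puncture-types}. Write $M$ for the free monoid on generators $a_1,\dots,a_k$ with $k \geq 2$, and identify $\PSh(M)$ with the topos of right $M$-sets. The representable presheaf, namely $M$ acting on itself by right multiplication, is a generator, so I take $\U = \{M\}$. The open subtoposes of $\PSh(M)/M$ correspond to the subterminal objects of $\PSh(M)/M$, that is, to the sub-$M$-sets of $M$, which are exactly the right ideals; the nontrivial open inclusions thus correspond to the proper right ideals $V \subsetneq M$. Since a right ideal containing the empty word $e$ is all of $M$, every proper right ideal is contained in $V_0 := M \setminus \{e\}$. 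Consequently every puncture type $\PSh(M)/V \hookrightarrow \PSh(M)/M$ factors as an open inclusion $\PSh(M)/V \hookrightarrow \PSh(M)/V_0$ followed by $j_0 : \PSh(M)/V_0 \hookrightarrow \PSh(M)/M$, so it is contained in $j_0$. Hence, once I show $\dim(j_0) = 1$, the singleton $\{j_0\}$ is a complete family of puncture types, all of finite dimension.

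Next I would check that $j_0$ is stable with respect to $\U$, so that Proposition \ref{prop:stable-puncture-types} applies. An equivariant endomorphism of the right regular representation $M$ is left multiplication $\lambda_m$ by some $m \in M$, and pulling back $V_0 \hookrightarrow M$ along $\lambda_m$ gives $\lambda_m^{-1}(V_0) = \{\, w : mw \neq e \,\}$. Since $M$ is free, $mw = e$ forces $m = w = e$, so this pullback equals $V_0$ when $m = e$ and equals all of $M$ otherwise. Thus every pullback of $j_0$ is either equivalent to $j_0$ or an equivalence, which is exactly stability.

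The main computation is the cohomology of the two toposes. The key input is that $\PSh(M)/M \simeq \PSh(\int_M M)$, where $\int_M M$ is the category of elements of the right regular representation: its objects are the elements of $M$, with a unique morphism $x \to y$ whenever $y$ is a left divisor (prefix) of $x$. The empty word $e$ is a terminal object of this category, so the global sections functor is naturally isomorphic to evaluation at $e$ and is therefore exact; hence $\HH^0(\PSh(M)/M, A) \cong A$ and $\HH^i(\PSh(M)/M, A) = 0$ for $i \geq 1$, for every set $A$. On the other hand $V_0$ decomposes as a coproduct of right $M$-sets $V_0 \cong \bigsqcup_{i=1}^k a_i M$, each summand $a_i M$ being isomorphic to the regular representation $M$; slicing over a coproduct gives $\PSh(M)/V_0 \simeq \bigsqcup_{i=1}^k \PSh(M)/M$, whence $\HH^0(\PSh(M)/V_0, A) \cong A^k$ and all higher cohomology vanishes. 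Under these identifications the map $\HH^0(j_0, A) : A \to A^k$ is the diagonal: it is always a monomorphism, and it fails to be an isomorphism as soon as $k \geq 2$ and $A$ has at least two elements.

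Feeding this into Proposition \ref{prop:stable-puncture-types}, the puncture type $j_0$ is $(-1)$-pure (the maps on $\HH^0$ are monomorphisms) but not $0$-pure (they are not isomorphisms), so $\dim(j_0) = 1$. Since $\{j_0\}$ is a complete family consisting of a single finite-dimensional puncture type, Proposition \ref{prop:dimension-is-dimension-of-puncture-types} gives $\dim(\PSh(M)) = 1$ and, as $\A' = \A$, that $\PSh(M)$ is without boundary. I expect the main obstacle to be the cohomology computation, and in particular justifying that $\PSh(M)/M$ is cohomologically trivial; the cleanest route is the observation that the relevant category of elements has a terminal object, so that global sections reduce to an exact evaluation functor, rather than appealing to contractibility of a classifying space.
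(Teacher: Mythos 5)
Your proof is correct and follows essentially the same route as the paper: the single stable complete puncture type $\PSh(M)/(M\setminus\{e\})\hookrightarrow\PSh(M)/M$, the decomposition of $M\setminus\{e\}$ as $\bigsqcup_i a_iM$ with each $a_iM\cong M$, and the diagonal map $A\to A^k$ on $\HH^0$ being a non-surjective monomorphism. You supply details the paper leaves implicit (completeness via containment in $V_0$, stability via the left-multiplication endomorphisms, and exactness of global sections from the terminal object of $\int_M M$); note only that the vanishing of higher cohomology is not actually needed, since the failure of $0$-purity already occurs at $\HH^0$ and purity is monotone in degree.
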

\begin{proof}
We identify $\PSh(M)$ with the category of right $M$-sets. The unique representable presheaf then corresponds to the set $M$ equipped with the right $M$-action given by multiplication. There is then a stable complete family of puncture types in $\PSh(M)$ given by
\begin{equation} \label{eq:free-monoid-puncture-type}
\PSh(M)/(M-\{0\}) \longrightarrow \PSh(M)/M.
\end{equation}
Here $\PSh(M)/M$ is connected and $\PSh(M)/(M-\{0\})$ is isomorphic to a disjoint union of copies of $\PSh(M)/M$, one for each generator of the monoid $M$. So applying $\HH^0(-,A)$ to \eqref{eq:free-monoid-puncture-type} for some set $A$ gives the diagonal map
\begin{equation*}
A \longrightarrow \prod_{i \in S} A,
\end{equation*}
with $S$ the set of generators of $M$. We assumed $|S|\geq 2$ so this is a monomorphism that fails to be an isomorphism. By Proposition \ref{prop:stable-puncture-types}, we see that the puncture type \eqref{eq:free-monoid-puncture-type} is dense but not $0$-pure, in other words it has dimension $1$. Using Proposition \ref{prop:dimension-is-dimension-of-puncture-types} we conclude that $\PSh(M)$ is $1$-dimensional without boundary.
\end{proof}

The assumption that there are at least two generators is necessary. If $M$ is a free monoid on one generator (i.e.\ the natural numbers under addition), then $M$ is commutative, in particular right Ore. So then by Proposition \ref{prop:example-right-ore-monoids} we have that $\PSh(M)$ is zero-dimensional with boundary.

\subsection{Example: relative dimension}

This example features a base topos that is not \Set\/.
The following question was asked by Richard Squire\footnote{The question was communicated to me by Jonathon Funk. It was asked at a seminar given by Jonathon Funk around 1999 at the University of British Columbia.}: does a projection
$$
\gamma:\Sh(\R^2)\to\Sh(\R)
$$
turn $\Sh(\R^2)$ into a $1$-dimensional topos over $\Sh(\R)$? We will show that this is indeed the case.

The puncture type $j: \R^2 - \{0\} \hookrightarrow \R^2$ forms, on its own, a complete family of puncture types. We now need to show that, relative over $\Sh(\R)$, we have $\dim(j) = 1$.

Without loss of generality we assume $\gamma$ preserves the origin. Let $F$ be a sheaf over $\R$. Consider the map
\begin{equation} \label{eq:restriction-R2}
\HH^0(\R^2, \gamma^*F) \longrightarrow \HH^0(\R^2-\{0\}, \gamma^*F).
\end{equation}
We will first show that it is injective for any choice of sheaf $F$. If not, take two global sections $s_1 \neq s_2$ of $\gamma^*F$ that agree on $\R^2-\{0\}$. Now consider the fiber $Y =\gamma^{-1}(0) \subseteq \R^2$, which is itself homeomorphic to $\R$. The restrictions of $s_1$ and $s_2$ to $Y$ are then distinct sections, and if we further restrict to $Y-\{0\}$ they become equal (to see this, note that the stalks of a restricted sheaf are the same as in the original sheaf). This is a contradiction because $Y-\{0\} \to Y$ is dense and $\gamma^*F$ restricted to $Y$ is a constant sheaf. We conclude that \eqref{eq:restriction-R2} is injective for each choice of $F$.

Now consider the sheaf $F$ on $\R$ with as stalks a singleton everywhere, except above the origin, where the stalk contains two distinct elements (this is a particular case of a skyscraper sheaf). Then the map
\begin{equation} \label{eq:restriction-R2-b}
\HH^0(\R^2, \gamma^*F) \longrightarrow \HH^0(\R^2-\{0\}, \gamma^*F).
\end{equation}
is not surjective: the left hand side contains two elements, while the right hand side contains four elements.

As a result, $j$ is $(-1)$-pure but not $0$-pure, i.e.\ $\dim(j)=1$. It follows that $\Sh(\R^2)$ is $1$-dimensional (without boundary) over $\Sh(\R)$.

\section{Calculation of dimension for petit \'etale toposes}

\subsection{First considerations}

In this section we consider the petit \'etale topos $X_\etale$ for $X$ a locally Noetherian scheme. Since $X_\etale$ has a generating family consisting of Noetherian affine schemes, we know that $X_\etale$ is a locally Noetherian topos \cite[Expos\'e VI, Exemple 1.22.1, D\'efinition 2.11]{sga4-2}.

For a locally Noetherian topos, every subtopos is again locally Noetherian, which by Deligne's Theorem implies that each subtopos has enough points \cite[Expos\'e IV, Exercice 9.1.11]{sga4-1}. In particular, this holds for the toposes $(X_\etale)_{\leq n}$. In order to determine these subtoposes, it is enough to determine their points.

\begin{definition}
Let $f: Y \to X$ be a morphism of schemes. Then for any $x \in X$, we set
\begin{equation*}
W_{f,x} ~\simeq~ \Spec(\Ocal_{X,x}^\mathrm{sh}) \times_X Y
\end{equation*}
and we call $W_{f,x}$ the \emph{\'etale disk fiber} of $f$ over $x$. Note that $W_{f,x}$ is only defined up to isomorphism, because the strict henselization is only determined up to isomorphism.
\end{definition}

If $f: Y \to X$ is quasi-compact quasi-separated (qcqs), then for $x \in X$ we can use the formula
\begin{equation} \label{eq:fiberwise-cohomology}
R^qf_*(A)_x ~\simeq~ \HH^q(W_{f,x}, A)
\end{equation}
for every Eilenberg--Mac Lane object $K(A,n)$ in $X_\etale$ \cite[Expos\'e VIII, Th\'eor\`eme 5.2, Remarque 5.3]{sga4-2}. As a result:

\begin{proposition} \label{prop:pure-iff-connected-fiber}
Let $f: Y \to X$ be a qcqs morphism of schemes. Then the induced geometric morphism $Y_\etale \to X_\etale$ is $n$-pure if and only if the \'etale disk fibers $W_{f,x}$ are $n$-connected for all $x \in X$.
\end{proposition}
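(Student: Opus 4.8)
The plan is to prove both directions at once by converting each side into a condition on the derived pushforwards $R^i f_*$ and comparing them stalk by stalk. I work over the base topos $\T = \Set$, so that $n$-pureness of the geometric morphism $f : Y_\etale \to X_\etale$ is tested only against the constant coefficient objects $e^* T$ for $T$ a set, where $e : X_\etale \to \Set$ denotes the global sections morphism. Since the family of all $K(e^*T,m)$ with $m \le n$ decomposes as the union over $T$ of the downward closed families $\{K(e^*T,0),\dots,K(e^*T,n)\}$, Theorem \ref{thm:characterization-derived-pushforwards} shows that $f$ is $n$-pure if and only if, for every $T$, the unit $e^*T \to f_* f^* e^* T$ is an isomorphism and $R^i f_*(f^* e^* T) \simeq 0$ for $1 \le i \le n$.

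Next I would test these conditions at the points of $X_\etale$. As recorded in the First Considerations, $X_\etale$ is locally Noetherian and therefore has enough points, so its stalk functors form a conservative family; a map of sheaves is an isomorphism, and a sheaf of (abelian) groups is trivial, exactly when this holds on every stalk. Because $f$ is qcqs, the fiber formula \eqref{eq:fiberwise-cohomology} applies, and since the constant sheaf $e^*T$ pulls back along $f$ and restricts to the constant sheaf $T$ on the \'etale disk fiber $W_{f,x}$, it yields
\[
(f_* f^* e^* T)_x \simeq \HH^0(W_{f,x}, T), \qquad (R^i f_*(f^* e^* T))_x \simeq \HH^i(W_{f,x}, T).
\]
Under the first identification the stalk of the unit becomes the canonical map $T \to \HH^0(W_{f,x}, T)$. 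Hence $f$ is $n$-pure precisely when, for every set $T$ and every $x \in X$, the map $T \to \HH^0(W_{f,x}, T)$ is an isomorphism and $\HH^i(W_{f,x}, T) = 0$ for $1 \le i \le n$.

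It remains to recognize this as $n$-connectedness of $W_{f,x}$. Applying Theorem \ref{thm:characterization-derived-pushforwards} a second time, now to the global sections morphism $\delta : (W_{f,x})_\etale \to \Set$ whose codomain is the base, shows that $W_{f,x}$ is $n$-connected if and only if $A \to \delta_* \delta^* A$ is an isomorphism and $R^i \delta_*(\delta^* A) = 0$ for $1 \le i \le n$ and every coefficient object $A$, that is, $\HH^0(W_{f,x}, A) \cong A$ and $\HH^i(W_{f,x}, A) = 0$. As the codomain is $\Set$, the objects $\delta^* A$ are exactly the constant sheaves, so this is literally the system of conditions produced in the previous step, with $A$ in the role of $T$. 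The two sides therefore coincide for each $x$, which gives the proposition for $n \ge 0$; the case $n = -1$ is handled in the same way, using that $(-1)$-pureness and $(-1)$-connectedness are the monomorphism and surjectivity variants of the $\HH^0$ condition.

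I expect the main difficulty to be bookkeeping rather than anything conceptual. One must track the coefficients precisely: on the source side they are constant sheaves over $\Set$, pulled back to $Y$ and restricted to $W_{f,x}$, while on the fiber side they are the constant sheaves arising from the codomain $\Set$ of the global sections morphism, and the argument only closes because these two families coincide. The other points needing care are the legitimacy of passing to stalks, which rests on $X_\etale$ having enough points (hence the locally Noetherian hypothesis), and the precise qcqs hypothesis under which the SGA4 fiber formula \eqref{eq:fiberwise-cohomology} is valid.
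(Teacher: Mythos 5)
Your argument is correct and follows essentially the same route the paper intends: the proposition is stated as an immediate consequence of Theorem \ref{thm:characterization-derived-pushforwards} combined with the fiber formula \eqref{eq:fiberwise-cohomology} and stalkwise checking (using that $X_\etale$ has enough points), which is exactly what you spell out, including the correct matching of constant coefficients on both sides. The only cosmetic remark is that the conservativity of geometric-point stalks holds for the petit \'etale topos of any scheme, so the appeal to the locally Noetherian hypothesis is not strictly needed for that step.
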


If $X$ and $Y$ are locally Noetherian, $f$ is automatically quasi-separated. Further, $f$ is quasi-compact whenever $X$ is Noetherian or $f$ is an immersion \cite[\href{https://stacks.math.columbia.edu/tag/01OU}{Section 01OU}]{stacks}. To verify in practice that the \'etale disk fibers are $n$-connected, we can use the following lemma.

\begin{lemma} \label{lmm:finite-coefficients-enough}
Let $X$ be a normal, locally Noetherian scheme. Then for $X_\etale$ to be $n$-connected it is sufficient that
\begin{equation} \label{eq:n-connected}
\begin{split}
\HH^i(X_\etale, A) &~\simeq~ \begin{cases}
A ~&\text{for }i=0 \\
0 ~&\text{for }1 \leq i \leq n
\end{cases}
\end{split}\quad
\end{equation}
for all Eilenberg--Mac Lane objects $K(A,i)$ in $\s\Set$ with $A$ finite. For $i\geq 2$, it is even enough to check that \eqref{eq:n-connected} holds for $A=\Z/p\Z$ for all primes $p$.
\end{lemma}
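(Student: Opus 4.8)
The plan is to unwind $n$-connectedness into a statement purely about cohomology with constant coefficients on $X_\etale$, and then run a coefficient dévissage that pushes everything down to $\Z/p\Z$. Taking $\Set$ as base topos, $X_\etale$ is $n$-connected precisely when its global sections morphism $e : X_\etale \to \Set$ is pure with respect to every family $\{K(A,0),\dots,K(A,m)\}$ with $m \le n$ (Definition \ref{def:n-pure}). Applying Theorem \ref{thm:characterization-derived-pushforwards} to each such family, this is equivalent to demanding that $\HH^0(X_\etale,A) \simeq A$ and $R^ie_*A = \HH^i(X_\etale,A) \simeq 0$ for $1 \le i \le n$, where $A$ ranges over all sets (for degree $0$), all groups (for degree $1$), and all abelian groups (for degrees $\ge 2$). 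This already disposes of the reduction from arbitrary slices to the whole topos. So the remaining problem is to promote the finite-coefficient vanishing \eqref{eq:n-connected} to arbitrary coefficients, degree by degree.

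In degree $0$, $\HH^0(X_\etale,A)$ is the set $A^{\pi_0}$ of locally constant $A$-valued functions, and the hypothesis for a two-element set already forces $X$ to be connected and nonempty, after which $\HH^0(X_\etale,A) \simeq A$ for every set $A$. In degree $1$, I would use that a normal scheme is geometrically unibranch, so that $X_\etale$ is locally connected, and combine connectedness with the fact that a locally Noetherian topos has enough points and is therefore pointed. Proposition \ref{prop:h1-fundamental-group} then identifies $\HH^1(X_\etale,A)$ with $\Hom(\pi_1(X_\etale),A)/A$, where $\pi_1(X_\etale)$ is the profinite \'etale fundamental group. Since continuous homomorphisms from a profinite group to a discrete group factor through a finite quotient, vanishing of $\HH^1$ for all finite groups $A$ forces $\pi_1(X_\etale)$ to have no nontrivial finite quotient, hence to be trivial; then $\HH^1(X_\etale,A) \simeq 0$ for every group $A$.

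For degrees $2 \le i \le n$ the work is longer. A composition-series dévissage through the long exact cohomology sequence upgrades the vanishing from $\Z/p\Z$ (all $p$) to all finite abelian groups, and passing to the filtered colimit over finite subgroups — using that $X_\etale$ is coherent, so that $\HH^i(X_\etale,-)$ commutes with filtered colimits of abelian sheaves — upgrades it to all torsion groups; in particular $\HH^i(X_\etale,\Q/\Z) \simeq 0$ in this range. A general abelian group is an extension of a torsion-free group by a torsion group, and a torsion-free group is a filtered colimit of finite free groups $\Z^r$, so everything reduces to killing $\HH^i(X_\etale,\Z)$. For this I would feed the sequence $0 \to \Z \to \Q \to \Q/\Z \to 0$ into its long exact sequence: the term $\HH^{i-1}(X_\etale,\Q/\Z)$ vanishes for $2 \le i \le n$ by the torsion computation, so it suffices to prove that $\HH^i(X_\etale,\Q)$ vanishes for $i \ge 1$.

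The main obstacle, and the only place where normality is essential beyond local connectedness, is precisely this vanishing of $\underline{\Q}$-cohomology: I claim that $\HH^i(X_\etale,\Q) \simeq 0$ for all $i \ge 1$ whenever $X$ is normal and connected, hence irreducible. The plan is to compare with the Zariski topos along $\epsilon : X_\etale \to X_{\mathrm{Zar}}$ and show $R^q\epsilon_*\underline{\Q} \simeq 0$ for $q > 0$: its stalk at a point $x$ is the $\underline{\Q}$-cohomology of the henselian local scheme $\Spec(\Ocal_{X,x}^h)$, which, since the \'etale cohomology of a henselian local ring reduces to the Galois cohomology of its residue field, computes $\HH^q(\mathrm{Gal}(\kappa(x)),\Q)$, and this vanishes for $q \ge 1$ because $\Q$-cohomology of a profinite group vanishes in positive degrees. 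Granting $R^q\epsilon_*\underline{\Q} \simeq 0$, the Leray spectral sequence gives $\HH^i(X_\etale,\Q) \simeq \HH^i(X_{\mathrm{Zar}},\Q)$, and on the irreducible space $X$ the constant sheaf $\underline{\Q}$ is flasque and therefore acyclic in positive degrees. This closes the argument; the delicate step to justify carefully is the henselian-to-Galois comparison for the non-torsion sheaf $\underline{\Q}$, which is where I expect the real effort to go.
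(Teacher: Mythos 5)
Your reduction of $n$-connectedness to the constant-coefficient conditions \eqref{eq:n-connected}, the degree-$0$ and degree-$1$ arguments, and the d\'evissage for $i \geq 2$ through finite abelian, torsion, and torsion-free groups all match the paper's proof in substance (the paper tensors the torsion-free quotient with $0 \to \Z \to \Q \to \Q/\Z \to 0$ directly, using flatness of torsion-free groups, where you first reduce to $\Z$ via filtered colimits of finitely generated subgroups; this is an equivalent bookkeeping choice). The one place where the two arguments genuinely diverge is the key input $\HH^i(X_\etale,\Q) \simeq 0$ for $i \geq 1$, and your proposed route for it has a real gap. The paper outsources this to Deninger, whose argument goes through the generic point: for $j : \Spec(K) \hookrightarrow X$ the inclusion of the generic point of the normal integral scheme $X$, one has $\underline{\Q}_X \simeq j_*\underline{\Q}_K$ (this is where normality enters) and $R^qj_*\underline{\Q} = 0$ for $q \geq 1$ because the stalks are continuous cohomology of profinite Galois groups with uniquely divisible coefficients; the Leray spectral sequence then gives $\HH^i(X_\etale,\Q) \simeq \HH^i(\mathrm{Gal}(K^{\sep}/K),\Q) = 0$ for $i\geq 1$.

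Your alternative via $\epsilon : X_\etale \to X_{\mathrm{Zar}}$ has two problems. First, the stalk of $R^q\epsilon_*\underline{\Q}$ at $x$ is $\HH^q((\Spec(\Ocal_{X,x}))_\etale, \Q)$ --- the \'etale cohomology of the Zariski local scheme, not of its henselization --- so even before any Galois comparison you are facing the $\Q$-acyclicity of a normal local scheme, which is essentially the statement you set out to prove. Second, the comparison ``\'etale cohomology of a henselian local ring equals Galois cohomology of its residue field'' is Gabber's affine analogue of proper base change and is a theorem about \emph{torsion} coefficients; its extension to non-torsion sheaves is precisely the subject of Deninger's paper, so invoking it for $\underline{\Q}$ is circular. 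You correctly flag this as the delicate step, but as proposed it does not close; replace it with the generic-point argument above (or cite Deninger directly), and the rest of your proof goes through as in the paper.
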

\begin{proof}
For each $0 \leq i \leq n$, we show that if \eqref{eq:n-connected} holds for $A$ finite, then it holds for general $A$. For $i=0$, we use that each set is a filtered colimit of its finite subsets and apply \cite[\href{https://stacks.math.columbia.edu/tag/0738}{Lemma 0738}]{stacks}.

In the remainder of the proof we can assume that $X$ is connected. In our setting, the fundamental group of $X_\etale$ agrees with the \'etale fundamental group of $X$ \cite{etale-fundamental-group}, in particular it is a profinite group. So, using the formula from Proposition \ref{prop:h1-fundamental-group}, if $\HH^1(X_\etale, A)$ vanishes for finite groups $A$, then the fundamental group vanishes, and then $\HH^1(X_\etale, A)$ vanishes for all groups $A$.

We now consider the case $i\geq 2$ and $A$ abelian. We assume that $\HH^i(X_\etale, A) = 0$ whenever $A$ is finite. If $A$ is a torsion group, then we can write $A$ as a filtered colimit of finite groups, so $\HH^i(X_\etale, A)$ vanishes as well, because $\HH^i$ preserves filtered colimits. For a general abelian group $A$, we have a short exact sequence
\begin{equation*}
0 \to A_\mathrm{tors} \to A \to A/A_\mathrm{tors} \to 0
\end{equation*}
inducing a long exact sequence in cohomology. It is enough to show that $\HH^i$ vanishes for $A/A_\mathrm{tors}$, which is a torsionfree group. Because torsionfree groups are flat as $\Z$-modules, we get an exact sequence
\begin{equation*}
0 \to A/A_\mathrm{tors} \to A/A_\mathrm{tors} \otimes \Q \to A/A_\mathrm{tors} \otimes \Q/\Z \to 0,
\end{equation*}
again inducing a long exact sequence in cohomology. The cohomology of $A/A_\mathrm{tors} \otimes \Q$ vanishes, using the argument as in \cite[2.1]{deninger}, and $A/A_\mathrm{tors} \otimes \Q/\Z$ is a torsion group so its cohomology vanishes as well. We conclude that the cohomology of $A/A_\mathrm{tors}$ vanishes.

Because finite abelian groups are solvable, we can use induction and short exact sequences as above to show that the cohomology vanishes for all finite abelian groups whenever it vanishes for the simple finite abelian groups, i.e.\ the groups of the form $\Z/p\Z$ with $p$ prime.
\end{proof}

\begin{corollary}
Let $j: V \hookrightarrow U$ be an open immersion with $U$ locally Noetherian and normal. Then for $j$ to be $n$-pure it is sufficient that
\begin{equation} \label{eq:n-pure}
R^ij_*(A) ~\simeq~ \begin{cases}
A &\text{for }i=0 \\
0 &\text{for }1 \leq i \leq n
\end{cases}
\end{equation}
for all Eilenberg--Mac Lane objects with $A$ finite. For $i\geq 2$, it is even enough to check that \eqref{eq:n-pure} holds for $A=\Z/p\Z$ for all primes $p$.
\end{corollary}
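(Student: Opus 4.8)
The plan is to deduce this corollary by concatenating the three preceding results: Proposition \ref{prop:pure-iff-connected-fiber}, the stalk formula \eqref{eq:fiberwise-cohomology}, and Lemma \ref{lmm:finite-coefficients-enough}. First I would observe that an open immersion into a locally Noetherian scheme is automatically qcqs: it is quasi-separated because $U$ is locally Noetherian, and quasi-compact because it is an immersion (as recorded in the discussion after Proposition \ref{prop:pure-iff-connected-fiber}). Hence Proposition \ref{prop:pure-iff-connected-fiber} applies and reduces the $n$-pureness of $j$ to the $n$-connectedness of the \'etale disk fibers $W_{j,x}$ for every $x \in U$.

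Next I would pin down the geometry of $W_{j,x} = \Spec(\Ocal_{U,x}^{\sh}) \times_U V$. Since $j$ is an open immersion, this fiber product is simply the open subscheme of $\Spec(\Ocal_{U,x}^{\sh})$ that is the preimage of $V$. The point to verify carefully is that $W_{j,x}$ is again normal and locally Noetherian, so that Lemma \ref{lmm:finite-coefficients-enough} is applicable with $W_{j,x}$ in the role of $X$: the strict henselization of a normal Noetherian local ring is again normal and Noetherian, and both properties descend to open subschemes.

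Then I would translate the hypothesis into the fibers by taking stalks. Applying \eqref{eq:fiberwise-cohomology} gives $\HH^i(W_{j,x}, A) \simeq R^ij_*(A)_x$ for each Eilenberg--Mac Lane object. The assumption $R^0j_*(A) \simeq A$ (the constant sheaf) yields $\HH^0(W_{j,x},A) \simeq A$ upon taking stalks, while $R^ij_*(A) \simeq 0$ for $1 \leq i \leq n$ yields the vanishing of $\HH^i(W_{j,x},A)$ in that range. These are precisely conditions \eqref{eq:n-connected} for the scheme $W_{j,x}$ with finite coefficients, and the refinement allowing one to restrict to $A=\Z/p\Z$ for $i \geq 2$ transports verbatim. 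Lemma \ref{lmm:finite-coefficients-enough} then certifies that each $W_{j,x}$ is $n$-connected, and Proposition \ref{prop:pure-iff-connected-fiber} closes the argument.

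I do not anticipate a serious obstacle, as the corollary is essentially a bookkeeping combination of already-established statements. The only genuine content is the stability of ``normal locally Noetherian'' under strict henselization and passage to open subschemes, which is what guarantees fiberwise that the hypotheses of Lemma \ref{lmm:finite-coefficients-enough} are satisfied. A secondary point worth stating explicitly is the identification of the stalk of the zeroth pushforward $R^0 j_*(A) = j_* A$ with $\HH^0(W_{j,x}, A)$, so that the degree $i=0$ case is handled on the same footing as the higher degrees.
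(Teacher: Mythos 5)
Your proposal is correct and follows essentially the same route as the paper's own proof: reduce $n$-pureness to $n$-connectedness of the \'etale disk fibers via Proposition \ref{prop:pure-iff-connected-fiber} and the stalk formula \eqref{eq:fiberwise-cohomology}, observe that these fibers are normal and locally Noetherian because $U$ is and $j$ is an open immersion, and then invoke Lemma \ref{lmm:finite-coefficients-enough}. The paper's proof is just a terser version of the same bookkeeping, so there is nothing to add.
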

\begin{proof}
For a downwards closed family $\A$ of Eilenberg--Mac Lane objects, we have that $j$ is $\A$-pure if and only if the \'etale disk fibers are $\A$-connected. Taking into account Lemma \ref{lmm:finite-coefficients-enough}, it remains to prove that the \'etale disk fibers are normal and locally Noetherian. This follows from $U$ being locally Noetherian and normal and $j$ being an open immersion.
\end{proof}

\subsection{Excellent, regular schemes}

\begin{proposition}
Let $X$ be an excellent scheme of characteristic $0$ and let $x \in X$ be a regular point of codimension $c$. Then $x$ is contained in $(X_\etale)_{\leq n}$ if and only if $2c \leq n$.
\end{proposition}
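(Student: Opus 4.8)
The plan is to use that each $(X_\etale)_{\le n}$ has enough points, so that it suffices to decide, for the geometric point $\bar x$ lying over $x$, whether $\bar x$ factors through $(X_\etale)_{\le n}$; equivalently, whether the stalk functor $x^*$ inverts every $(n-1)$-pure monomorphism. Since the $(n-1)$-pure topology is generated by monomorphisms with representable codomain, I would reduce to open immersions $j : V \hookrightarrow U$ of \'etale $X$-schemes. For such $j$ the stalk $x^*(j)$ is the inclusion of geometric fibres $V_{\bar x} \hookrightarrow U_{\bar x}$, so $x^*(j)$ is a bijection precisely when every point of $U$ lying over $x$ belongs to $V$. Writing $S = \Spec(\Ocal_{X,x}^{\sh})$, which is the spectrum of a strictly henselian regular local ring of dimension $c$ because $X$ is excellent of characteristic $0$, the whole question localizes at $S$: by Proposition \ref{prop:pure-iff-connected-fiber} the $(n-1)$-purity of $j$ is governed by the $(n-1)$-connectedness of its \'etale disk fibres, each of which is an open subscheme of a copy of $S$.

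The crux is then a single cohomological computation about $S$. Let $S^\circ = S \smallsetminus \{s\}$ be the punctured spectrum. I would invoke absolute cohomological purity together with the fact that a strictly henselian local ring is cohomologically trivial in positive degrees to identify, through the local cohomology long exact sequence, $\HH^q(S^\circ, A) \cong \HH^{q+1}_s(S, A)$ for $q \ge 1$ and $A$ finite; purity for the regular closed point of codimension $c$ (available in characteristic $0$) concentrates $\HH^{\bullet}_s(S,A)$ in degree $2c$. Hence $S^\circ$ has the cohomology of a $(2c-1)$-sphere: it is $(2c-2)$-connected but not $(2c-1)$-connected. By Lemma \ref{lmm:finite-coefficients-enough} it is enough to test connectedness on finite coefficients, so this computation is legitimate.

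From the sphere computation the ``only if'' direction is immediate: for $n < 2c$ the puncture $j : S^\circ \hookrightarrow S$ is $(n-1)$-pure (as $n-1 \le 2c-2$), yet $s^*(j)$ is the map $\varnothing \to \{*\}$, which is not a bijection; therefore $\bar x$ does not factor through $(X_\etale)_{\le n}$, i.e.\ $\bar x \in (X_\etale)_{\le n}$ forces $2c \le n$. For the ``if'' direction I would prove the sharper statement that any nonempty proper open $W \subseteq S$ with $s \notin W$ fails to be $(2c-1)$-connected. Writing $e$ for the least codimension in $S$ of a point removed in passing from $S^\circ$ to $W$ (with $e = c$ when $W = S^\circ$), the Gysin/purity sequence for the regular scheme $S^\circ$ and its closed subset $S^\circ \smallsetminus W$ produces a nonzero class in $\HH^{2e-1}(W, A)$, and $1 \le 2e-1 \le 2c-1$. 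Granting this, suppose $2c \le n$ and let $j : V \hookrightarrow U$ be $(n-1)$-pure; if some point $u$ of $U$ over $x$ were not in $V$, its \'etale disk fibre would be such a $W$ inside a copy of $S$, hence not $(2c-1)$-connected and a fortiori not $(n-1)$-connected (as $n-1 \ge 2c-1$), contradicting Proposition \ref{prop:pure-iff-connected-fiber}. Thus the whole fibre over $\bar x$ lies in $V$, the map $x^*(j)$ is a bijection, and $\bar x \in (X_\etale)_{\le n}$.

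The main obstacle I expect is the cohomological input: both the $(2c-1)$-sphere cohomology of the punctured \'etale disk and the Gysin-sequence connectivity bound rest on absolute cohomological purity for regular schemes, which is precisely why the hypotheses ``excellent'', ``regular'' and ``characteristic $0$'' enter (they guarantee that the relevant torsion coefficients are invertible and that purity applies). The remaining bookkeeping --- reducing arbitrary $(n-1)$-pure monomorphisms to open immersions of \'etale schemes, and translating the stalk condition into containment of the closed fibre --- is routine given the enough-points reduction and Proposition \ref{prop:pure-iff-connected-fiber}.
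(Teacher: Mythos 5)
Your overall strategy is the paper's: reduce to \'etale disk fibres via Proposition \ref{prop:pure-iff-connected-fiber}, then feed in absolute cohomological purity in characteristic $0$ (SGA4, Exp.\ XIX, Th.\ 3.2) to see that the punctured strictly henselian disk behaves like a $(2c-1)$-sphere. The ``only if'' half matches the paper almost exactly, modulo bookkeeping you defer but which is not entirely free: $S^\circ \hookrightarrow S$ is not itself a monomorphism of $X_\etale$, so one must exhibit an actual open immersion $U-\overline{\{x\}} \hookrightarrow U$ and verify $(n-1)$-connectedness of its disk fibres at \emph{every} point of $\overline{\{x\}}$, not only at $x$ (this forces the shrinking of $U$ so that $\overline{\{x\}}$ is regular); and in degree $1$ the local cohomology sequence only sees abelian coefficients, so the vanishing of $\HH^1$ for arbitrary finite groups has to come from Zariski--Nagata purity of the branch locus (finite \'etale covers extend over regular codimension $\geq 2$), which is how the paper argues that case.

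The genuine gap is in your ``if'' direction. You invoke ``the Gysin/purity sequence for the regular scheme $S^\circ$ and its closed subset $S^\circ \smallsetminus W$'', but that closed subset is an arbitrary closed set containing points of various codimensions and need not be regular, so absolute purity does not apply to it as stated. This is also exactly where excellence enters: its role is not to make torsion invertible (characteristic $0$ already does that) but to make regular loci open, so that one can shrink and reduce the purity computation to a regular closed subscheme. The paper sidesteps your general claim about arbitrary $W$ altogether: since the pure topology is upwards closed, an $(n-1)$-pure $V \hookrightarrow U$ missing a point $y$ over $x$ forces $i : U-\overline{\{y\}} \hookrightarrow U$ to be $(n-1)$-pure as well; one then shrinks $U$ (excellence) so that $\overline{\{y\}}$ is regular of codimension $c$ and applies purity to get $R^{2c-1}i_*(\Z/p\Z) \neq 0$, a contradiction. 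Your route can be repaired by the same two moves (replace $V$ by the complement of the closure of a single removed point, then shrink to its regular locus, localizing at a generic point of minimal codimension if you insist on general $W$), but as written the purity input is applied to a subscheme for which it is not available.
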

\begin{proof}
\underline{If}.~ Suppose $2c \leq n$. To show that $x$ is contained in $(X_\etale)_{\leq n}$, it is enough to show that every $(n-1)$-pure map $j: V \hookrightarrow U$ in $X_\etale$ induces an isomorphism on the fibers above $x$. It is enough to prove this for $U$ in a family of generators of $X_\etale$, so we can assume that $U$ is representable by an \'etale scheme over $X$. Let $y \in U$ be a point above $x$ that is not contained in $V$. We want to derive a contradiction. Note that $y$ is again regular of codimension $c$ \cite[\href{https://stacks.math.columbia.edu/tag/07QL}{Section 07QL}]{stacks}. With $\overline{\{y\}}$ the closure of $y$ in $U$, we have $V \subseteq U-\overline{\{y\}} \subseteq U$, and therefore 
\begin{equation*}
i: U-\overline{\{y\}} \hookrightarrow U
\end{equation*}
is $(n-1)$-pure as well.

Because $X$ is excellent, $U$ and $\overline{\{y\}}$ are excellent as well, in particular their regular loci are open. So we can take an open subset $U' \subseteq U$ containing $y$ such that both $U'$ and $U' \cap \overline{\{y\}}$ are regular schemes. We use here that the quotient of a regular local ring is regular, and therefore if $y$ is regular in $U$ then it is regular in $\{y\}$. By taking a base change along $U' \to U$ if necessary, we can assume without loss of generality that $U$ and $\overline{\{y\}}$ are themselves regular. For $c > 0$, \cite[Expos\'e XIX, Th\'eor\`eme 3.2]{sga4-3} applies and we find that $R^{2c-1}i_*(\Z/p\Z) \neq 0$, so $i$ is not $(2c-1)$-pure. A fortiori it can't be $(n-1)$-pure. For $c=0$, we use that for a locally Noetherian scheme, an open subset is dense, i.e.\ $(-1)$-pure, if and only if it contains all points of codimension $0$ \cite{dense-open-locally-noetherian}.

\underline{Only if}.~ Now suppose $2c > n$. To show that $x$ is not contained in $(X_\etale)_{\leq n}$, it is enough to construct an $(n-1)$-pure map $j: V \hookrightarrow U$ in $X_\etale$ that does not induce an isomorphism on the fibers above $x$. As above we can take $U \subseteq X$ a regular open subscheme containing $x$ such that the closure $\overline{\{x\}}$ in $U$ is regular as well. We claim that $j: U-\overline{\{x\}} \subseteq U$ is $(n-1)$-pure. For each $x$ in $U$, $\Spec(\Ocal_{X,x}^\mathrm{sh})$ is normal so it is a domain; it follows that the \'etale disk fibers are $0$-connected, so $j$ is $0$-pure (for $c \geq 1$). For $c \geq 2$ and $x \in U$, the inclusion $W_{j,x} \hookrightarrow \Spec(\Ocal_{U,x}^\mathrm{sh})$ induces an equivalence of categories between finite \'etale schemes over $\Spec(\Ocal_{U,x}^\mathrm{sh})$ and finite \'etale schemes over $W_{j,x}$ \cite[\href{https://stacks.math.columbia.edu/tag/0BMA}{Lemma 0BMA}, \href{https://stacks.math.columbia.edu/tag/0EY7}{Lemma 0EY7}]{stacks}. In particular we get an equivalence of categories for the categories of $G$-torsors on these schemes, for $G$ a finite group. So we get $\HH^1(W_{j,x},G) \simeq \HH^1(\Spec(\Ocal_{U,x}^\mathrm{sh}),G) \simeq 0$, and using Lemma \ref{lmm:finite-coefficients-enough} we conclude that the \'etale disk fibers are $1$-connected, so $j$ is $1$-pure. In higher degrees we can apply \cite[Expos\'e XIX, Th\'eor\`eme 3.2]{sga4-3} again, and see that $j$ is $(2c-2)$-pure, in particular $(n-1)$-pure (we can restrict to finite cyclic groups because of Lemma \ref{lmm:finite-coefficients-enough}).
\end{proof}

\begin{corollary}
Let $X$ be an excellent, regular scheme of characteristic $0$. Then $(X_\etale)_{\leq n}$ is the subtopos corresponding to the subset $X' \subseteq X$ of points of codimension at most $\left\lfloor \frac{n}{2} \right\rfloor$. In particular, $(X_\etale)_{\leq n-1} = (X_\etale)_{\leq n}$ for $n$ odd.
\end{corollary}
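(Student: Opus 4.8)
The plan is to read off the point set of $(X_\etale)_{\leq n}$ directly from the preceding proposition, and then to invoke the principle, recalled at the start of this section, that a subtopos of the locally Noetherian topos $X_\etale$ is determined by its points.

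First, since $X$ is regular, every $x \in X$ is a regular point, so the proposition above applies to all points of $X$ simultaneously: a point $x$ of codimension $c$ lies in $(X_\etale)_{\leq n}$ if and only if $2c \leq n$. As $c$ is a non-negative integer and $n$ is an integer, the inequality $2c \leq n$ is equivalent to $c \leq \lfloor n/2 \rfloor$. Hence the set of points of $(X_\etale)_{\leq n}$ is exactly the subset $X' \subseteq X$ of points of codimension at most $\lfloor n/2 \rfloor$.

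Next I would promote this equality of point sets to an equality of subtoposes. As recalled at the beginning of the section, $X_\etale$ is locally Noetherian and every subtopos of it is again locally Noetherian, hence has enough points by Deligne's theorem; one may take as a conservative family the points given by the strict henselizations $\Spec(\Ocal_{X,x}^\sh) \to X$, indexed by $x \in X$. A subtopos with enough points is determined by its set of points: if two such subtoposes $\mathcal{F}_1, \mathcal{F}_2$ have the same points, their meet $\mathcal{F}_1 \wedge \mathcal{F}_2$ again has enough points, and the common conservative family factors through each inclusion $\mathcal{F}_1 \wedge \mathcal{F}_2 \hookrightarrow \mathcal{F}_i$; the inverse image of such an inclusion is then conservative, so the inclusion is a surjection and hence an equivalence, giving $\mathcal{F}_1 = \mathcal{F}_2$. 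Applying this, $(X_\etale)_{\leq n}$ is identified as the subtopos corresponding to $X'$.

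Finally, the ``in particular'' clause is a floor computation: for $n$ odd, say $n = 2k+1$, one has $\lfloor (n-1)/2 \rfloor = k = \lfloor n/2 \rfloor$, so $(X_\etale)_{\leq n-1}$ and $(X_\etale)_{\leq n}$ are attached to the same subset of $X$ and therefore coincide. The only genuine subtlety is the determination of a subtopos by its points, which is exactly why the local Noetherianity of $X_\etale$ and Deligne's theorem were brought in at the start of the section; everything else follows immediately from the proposition.
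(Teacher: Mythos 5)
Your proposal is correct and is exactly the argument the paper intends: the corollary is stated without proof precisely because it follows from the preceding proposition applied to all points (all of which are regular since $X$ is), combined with the observation from the start of Section 9 that subtoposes of the locally Noetherian topos $X_\etale$ have enough points by Deligne's theorem and are therefore determined by their points. Your fleshing-out of that last step via the meet of two subtoposes, and the floor computation for odd $n$, are both fine.
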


\begin{corollary} \label{cor:dimension-of-excellent-regular}
Let $X$ be an excellent, regular scheme of characteristic $0$ and Krull dimension $d$. Then $X_\etale$ is of dimension $2d$.
\end{corollary}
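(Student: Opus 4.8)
The plan is to read the dimension off directly from the preceding corollary, which identifies $(X_\etale)_{\leq n}$ with the subtopos supported on the points of codimension at most $\lfloor n/2 \rfloor$. Since $X$ is regular, every point is a regular point, so this description applies to all of $X$, and the only remaining work is combinatorial bookkeeping together with the identification of the content.

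First I would determine the content. As $n$ ranges over all natural numbers, the subsets of points of codimension at most $\lfloor n/2\rfloor$ exhaust every point of $X$, because each point has codimension at most the Krull dimension $d < \infty$. Hence $(X_\etale)_{<\infty} = \bigcup_n (X_\etale)_{\leq n}$ is the subtopos supported on all points of $X$. Using that $X_\etale$ is locally Noetherian and therefore has enough points (as recalled in the first considerations, via Deligne's theorem), a subtopos containing all the points of $X_\etale$ must be the whole topos. Thus $(X_\etale)_{<\infty} = X_\etale$, and in particular $X_\etale$ is without boundary.

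It then remains to compute the smallest $n$ with $(X_\etale)_{\leq n} = X_\etale$. By the corollary this holds precisely when $\lfloor n/2 \rfloor \geq d$, i.e.\ when $n \geq 2d$. Here I would invoke two standard facts about Krull dimension: every point of $X$ has codimension at most $d$, and, since $X$ has Krull dimension exactly $d$, there is a chain of irreducible closed subsets of length $d$ and hence a point of codimension $d$. The existence of such a point forces $(X_\etale)_{\leq n} \neq X_\etale$ for $n < 2d$; for instance $(X_\etale)_{\leq 2d-1}$ omits the codimension-$d$ point since $\lfloor (2d-1)/2 \rfloor = d-1$. Combining, the smallest admissible integer is $n = 2d$, so $\dim(X_\etale) = 2d$.

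No serious obstacle remains once the preceding proposition and corollary are in hand; the only point requiring care is the bridge between the lattice-theoretic definition of dimension (the smallest $n$ with $(X_\etale)_{\leq n} = (X_\etale)_{<\infty}$) and the codimension combinatorics, namely that Krull dimension $d$ simultaneously bounds all codimensions by $d$ and produces a point of codimension exactly $d$.
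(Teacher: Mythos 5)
Your argument is correct and is exactly the deduction the paper intends: the corollary is stated without proof as an immediate consequence of the preceding description of $(X_\etale)_{\leq n}$ as the subtopos of points of codimension at most $\lfloor n/2\rfloor$, and you supply precisely the missing bookkeeping (all points have codimension at most $d$, so the content is everything and there is no boundary; a point of codimension exactly $d$ exists, so $n=2d$ is the least value with $\lfloor n/2\rfloor\geq d$). Your appeal to enough points to pass from ``contains all points'' to ``is the whole subtopos'' matches the paper's own remark at the start of Section 9 that these subtoposes are determined by their points.
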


If we weaken some of the assumptions we at least get an inequality.

\begin{proposition} \label{prop:dimension-bound-locally-noetherian}
Let $X$ be a locally Noetherian scheme of characteristic $0$ and Krull dimension $d$. Then the dimension of $X_\etale$ is at most $2d$.
\end{proposition}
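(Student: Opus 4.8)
The plan is to reduce the bound $\dim(X_\etale)\le 2d$ to a vanishing statement for the étale cohomology of the étale disk fibers, and then to feed in a cohomological dimension estimate that is available in characteristic $0$. By definition, $\dim(X_\etale)\le 2d$ means $(X_\etale)_{\le 2d}=(X_\etale)_{<\infty}$, that is, every $(2d-1)$-pure monomorphism in $X_\etale$ is $k$-pure for all $k$. Since $X_\etale$ has a generating family of Noetherian affine étale schemes, it suffices to treat open immersions $j\colon V\hookrightarrow U$ with $U$ in this family, and Proposition~\ref{prop:pure-iff-connected-fiber} then rephrases the question in terms of fibers: $j$ is $n$-pure precisely when all étale disk fibers $W_{j,x}$ are $n$-connected. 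So the whole statement comes down to showing that a $(2d-1)$-connected étale disk fiber is automatically $\infty$-connected.

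First I would record the numerics and the role of the characteristic. For $x$ of codimension $c$ in $U$, the ring $\Ocal_{U,x}^{\mathrm{sh}}$ is a strictly henselian Noetherian local ring of Krull dimension $c\le d$, and $W_{j,x}$ is an open subscheme of $\Spec(\Ocal_{U,x}^{\mathrm{sh}})$ with closed complement $Z$. Because $X$ has characteristic $0$, every residue field in sight has characteristic $0$, so every prime $p$ is invertible on the fibers; this is exactly what keeps the coefficients $\Z/p\Z$ tame and lets cohomological dimension estimates apply uniformly in $p$ (it is the positive and mixed characteristic cases, where $p$ equals a residue characteristic, that the article avoids). The key input is then the bound: for $R$ strictly henselian Noetherian local of dimension $c$ with $p$ invertible, any open $W\subseteq\Spec(R)$ satisfies $\HH^i(W,\Z/p\Z)=0$ for $i\ge 2c$. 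I would obtain this from the localization long exact sequence together with the vanishing of the higher étale cohomology of the strictly local scheme $\Spec(R)$, which yields isomorphisms $\HH^i(W,\Z/p\Z)\simeq\HH^{i+1}_Z(\Spec(R),\Z/p\Z)$ for $i\ge 1$, reducing the claim to the vanishing of cohomology with supports above degree $2c$, i.e.\ to the statement that the étale cohomological dimension is at most twice the Krull dimension. This refines the purity computation \cite[Expos\'e~XIX, Th\'eor\`eme~3.2]{sga4-3} used in Corollary~\ref{cor:dimension-of-excellent-regular}, where for regular $R$ of dimension $c$ the top nonvanishing degree is exactly $2c-1$.

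Granting the bound, the proof concludes as follows. The case $d=0$ is immediate, as then the fibers live on a $0$-dimensional strictly local scheme with only trivial open subschemes; so assume $d\ge 1$. If $j$ is $(2d-1)$-pure, each $W_{j,x}$ is $(2d-1)$-connected, which gives $\HH^0(W_{j,x},A)\simeq A$ and $\HH^i(W_{j,x},A)=0$ for $1\le i\le 2d-1$ and all coefficients $A$. It remains to kill $\HH^i$ for $i\ge 2d$; these degrees are $\ge 2$, so the coefficients are abelian, and I would split a general abelian group $A$ into its torsion and rational parts as in the proof of Lemma~\ref{lmm:finite-coefficients-enough}. For torsion coefficients---hence, by that d\'evissage, for $A=\Z/p\Z$---the key bound gives $\HH^i(W_{j,x},\Z/p\Z)=0$ for $i\ge 2c$, and since $c\le d$ this covers all $i\ge 2d$. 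For the rational part, the comparison $\HH^i(W_{j,x},\Q)\simeq\HH^i_{\mathrm{Zar}}(W_{j,x},\Q)$ together with Grothendieck's bound on Zariski cohomological dimension gives vanishing for $i>\dim(W_{j,x})$, hence for $i\ge 2d$ since $\dim(W_{j,x})\le c\le d<2d$. Here it is essential that $W_{j,x}$ may fail to be normal or irreducible when $X$ is only locally Noetherian, so Lemma~\ref{lmm:finite-coefficients-enough} does not apply verbatim; the Zariski bound is what replaces the irreducibility input used in its proof. Combining the two parts, $\HH^i(W_{j,x},A)=0$ for all $i\ge 2d$ and all abelian $A$, so each $W_{j,x}$ is $\infty$-connected, $j$ is $\infty$-pure by Proposition~\ref{prop:pure-iff-connected-fiber}, and therefore $\dim(X_\etale)\le 2d$.

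I expect the main obstacle to be establishing the cohomological dimension bound $\HH^{j}_Z(\Spec(R),\Z/p\Z)=0$ for $j>2c$ in the stated generality, i.e.\ without the excellence and regularity hypotheses of Corollary~\ref{cor:dimension-of-excellent-regular}. The natural route is a niveau spectral sequence, bounding the contribution of each codimension-$p$ point of $\Spec(R)$ by $2p$ plus the cohomological dimension of its residue field; in characteristic $0$ those residue fields behave like function fields over a characteristic-$0$ base, with cohomological dimension controlled by transcendence degree, which should push the total below $2c$. Making this uniform over all (possibly non-excellent) strictly henselian Noetherian local rings is the delicate point.
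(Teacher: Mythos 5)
Your reduction is exactly the one the paper uses: dispose of $d=0$ separately, and for $d\geq 1$ show that a $(2d-1)$-pure open immersion $j\colon V\hookrightarrow U$ is $\infty$-pure by proving that its $(2d-1)$-connected \'etale disk fibers $W_{j,x}\subseteq\Spec(\Ocal_{U,x}^{\sh})$ are $\infty$-connected, via Proposition~\ref{prop:pure-iff-connected-fiber} and the coefficient d\'evissage of Lemma~\ref{lmm:finite-coefficients-enough}. The gap is precisely the step you flag as ``the delicate point'': the vanishing of $\HH^i(W,\Z/p\Z)$ above roughly twice the dimension of the strictly henselian local ring, uniformly over all Noetherian strictly local rings and with no excellence or regularity hypotheses. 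This cannot be extracted from a niveau spectral sequence in the stated generality --- controlling the cohomological dimension of the residue fields of a possibly non-excellent Noetherian local ring is exactly where that route breaks down, which is why the bound is a theorem of Gabber rather than an exercise. The paper closes the gap by citing \cite[Expos\'e XVIII-A, Theorem 1.1]{travaux-de-gabber}, which yields $\HH^q(W_{j,x},\Z/p\Z)=0$ for $q>2d-1$ and all primes $p$; you should cite this rather than attempt to reprove it. With that input in place, your argument is complete and coincides with the paper's.

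Two of your side remarks are worth keeping. First, you are right that $W_{j,x}$ need not be normal when $X$ is merely locally Noetherian, so Lemma~\ref{lmm:finite-coefficients-enough} does not apply verbatim (the paper invokes it anyway). In the degrees that matter here, namely $i\geq 2d\geq 2$, only the reduction from arbitrary abelian $A$ to $\Z/p\Z$ is needed --- the $i=0,1$ clauses are vacuous since $(2d-1)$-connectedness with arbitrary coefficients is already part of the hypothesis --- and the only place normality enters that reduction is the vanishing of cohomology with $\Q$-coefficients. Your repair via the comparison with Zariski cohomology and Grothendieck's bound $\mathrm{cd}_{\mathrm{Zar}}(W_{j,x})\leq\dim(W_{j,x})\leq d<2d$ is legitimate and tightens the paper's argument. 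Second, your bound $i\geq 2c$ with $c=\dim\Ocal_{U,x}^{\sh}\leq d$ is formally sharper than what is needed; the proof only uses vanishing for $q\geq 2d$, so nothing is lost by weakening it to the form in which Gabber's theorem is quoted.
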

\begin{proof}
For $d=0$, we can replace $X$ by its maximal reduced subscheme without changing the associated petit \'etale topos, and then locally $X$ is the prime spectrum of a field. So the dimension of $X_\etale$ is $0$. In the remainder of the proof we assume $d \geq 1$.

Let $j: V \hookrightarrow U$ be an open immersion with $U$ \'etale over $X$. We have to show that if $j$ is $(2d-1)$-pure, then it is $\infty$-pure. If $j$ is $(2d-1)$-pure, then the \'etale disk fibers $W_{j,x}$ are $(2d-1)$-connected for all $x \in U$. It follows from \cite[Expos\'e XVIII-A, Theorem 1.1]{travaux-de-gabber} that for $W_{f,x} \subseteq \Spec(\Ocal_{U,x}^\mathrm{sh})$ the cohomology groups $\HH^q(W_{f,x}, \Z/p\Z)$ vanish, for $q > 2d-1$ and $p$ prime. Using Lemma \ref{lmm:finite-coefficients-enough} we find that $W_{f,x}$ is $\infty$-connected for each $x \in U$, so $j$ is $\infty$-pure.
\end{proof}

If $X$ is a variety over a field of characteristic $0$, then its regular locus is an open subscheme $U$ that has the same Krull dimension as $X$. Since $U$ is excellent and regular, we can apply Corollary \ref{cor:dimension-of-excellent-regular} and Proposition \ref{prop:dimension-bound-locally-noetherian} to get
\begin{equation*}
2d = \dim(U_\etale) \leq \dim(X_\etale) \leq 2d
\end{equation*}
for $d$ the Krull dimension of $X$. We conclude:
\begin{corollary}
Let $X$ be a variety over a field of characteristic $0$ and Krull dimension $d$. Then $\dim(X_\etale) = 2d$.
\end{corollary}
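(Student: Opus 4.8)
The plan is to establish $\dim(X_\etale)=2d$ by proving the two inequalities $2d\le\dim(X_\etale)$ and $\dim(X_\etale)\le 2d$ and squeezing. Both bounds are harvested from results already proved: the upper bound comes directly from the general locally Noetherian estimate, while the lower bound is obtained by restricting to the regular locus of $X$, where the dimension is known exactly, and then transporting that value along an étale slice. The only genuinely new work is this reduction to the regular locus.

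For the upper bound I would invoke Proposition \ref{prop:dimension-bound-locally-noetherian}. A variety over a field is of finite type over a Noetherian ring, hence Noetherian and in particular locally Noetherian; it has characteristic $0$ and Krull dimension $d$ by hypothesis. Therefore $\dim(X_\etale)\le 2d$ with no further argument.

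For the lower bound, let $U\hookrightarrow X$ be the inclusion of the regular locus. Since $X$ is excellent (being of finite type over a field), $U$ is open, and since $X$ is a variety the generic points of its top-dimensional components are regular and so lie in $U$; thus $U$ is dense with $\dim U=d$. As an open subscheme of the excellent scheme $X$, the scheme $U$ is itself excellent, it is regular by construction, and it has characteristic $0$, so Corollary \ref{cor:dimension-of-excellent-regular} yields $\dim(U_\etale)=2d$. It remains to transport this to $X_\etale$. Because $U\hookrightarrow X$ is an open immersion, $U$ is étale over $X$ and the petit étale topos $U_\etale$ is the étale slice $(X_\etale)/U$. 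Writing $\F=(X_\etale)/U$ and using the slice identity $(\E/W)_{\le n}\simeq\E_{\le n}/W$ for the smallest $(n-1)$-pure subtopos (established earlier in the paper, where $W$ on the right denotes its restriction), one argues as follows: if $\dim(X_\etale)=N$ then $(X_\etale)_{\le m}=(X_\etale)_{\le N}$ for all $m\ge N$, whence $\F_{\le m}=(X_\etale)_{\le m}/U=(X_\etale)_{\le N}/U=\F_{\le N}$ for all $m\ge N$; thus $\F_{\le N}=\F_{<\infty}$ and $\dim(\F)\le N$. Consequently $2d=\dim(U_\etale)=\dim(\F)\le\dim(X_\etale)$. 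Combining the two bounds gives
\[
2d \;=\; \dim(U_\etale) \;\le\; \dim(X_\etale) \;\le\; 2d,
\]
so $\dim(X_\etale)=2d$.

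The load-bearing inputs — Corollary \ref{cor:dimension-of-excellent-regular} in the regular case and Proposition \ref{prop:dimension-bound-locally-noetherian} for the upper bound — are already in hand, so the main point to get right is the monotonicity of dimension under étale slicing, $\dim((X_\etale)/U)\le\dim(X_\etale)$. I would be careful to phrase this through the slice formula for $(-)_{\le n}$ together with the elementary observation that $\dim(X_\etale)=N$ means the subtoposes $(X_\etale)_{\le m}$ stabilize for $m\ge N$, so that one never needs to commute the union $(-)_{<\infty}$ with slicing directly. One could instead attempt to derive the lower bound from Proposition \ref{prop:dimension-locally}, but that statement requires a jointly surjective étale family, whereas the single regular piece $U$ is not surjective onto $X$; the slice argument is therefore the cleaner route. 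I expect no obstacle beyond this, since the openness, density, and dimension of the regular locus of a variety are standard facts about excellent schemes.
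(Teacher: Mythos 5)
Your proposal is correct and follows essentially the same route as the paper: squeeze $\dim(X_\etale)$ between $\dim(U_\etale)=2d$ for the regular locus $U$ (via Corollary \ref{cor:dimension-of-excellent-regular}) and the upper bound $2d$ from Proposition \ref{prop:dimension-bound-locally-noetherian}. The only difference is that you spell out the monotonicity $\dim((X_\etale)/U)\le\dim(X_\etale)$ via the slice formula for $(-)_{\le n}$, a step the paper leaves implicit, and your justification of it is sound.
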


\subsection{Pro-\'etale local structure} 

Let $X$ be a scheme and take $x \in X$. If we take the \'etale disk fiber of the identity map $X \to X$ over the point $x$, then we get $\Spec(\Ocal_{X,x}^\sh)$.

\begin{definition}
For a scheme $X$ and a point $x \in X$, we define the \emph{\'etale disk} $D_x$ at $x$ as the scheme
\begin{equation*}
D_x = \Spec(\Ocal_{X,x}^\mathrm{sh}).
\end{equation*}
\end{definition}

Because the identity map is $\infty$-pure, we conclude from Proposition \ref{prop:pure-iff-connected-fiber} that the \'etale cohomology of $D_x$ (with constant coefficients) is trivial, i.e.\ $\HH^0((D_{x})_\etale,A) = A$ and the cohomology in degrees $\geq 1$ vanishes.

We can write $\Spec(\Ocal_{X,x}^\sh)$ as a cofiltered limit in the category of affine schemes (and therefore also in the category of schemes) of affine \'etale neighborhoods $X_i$ of $X$. The transition maps $X_j \to X_i$ are then affine as well. We claim that at the level of Grothendieck toposes we have
\begin{equation} \label{eq:limit-as-toposes-D-x}
(D_x)_\etale ~\simeq~ \varprojlim_{i \in I}\, (X_i)_\etale
\end{equation}
and
\begin{equation} \label{eq:limit-of-cohomologies-D-x}
\HH^n((D_x)_\etale, A) ~\simeq~ \varinjlim_{i \in I}\, \HH^n((X_i)_\etale, A).
\end{equation}
To prove this, we need to leverage the fact that each $X_i$ is affine, as follows.

Let $X_\etale$ be the petit \'etale topos on a scheme $X$. Then $X_\etale$ is a coherent topos if and only if $X$ is qcqs (quasi-compact quasi-separated) \cite[\S 1.22.1]{sga4-2}. For a morphisms of schemes $f: Y \to X$, we similarly have that the induced geometric morphism $f: Y_\etale \to X_\etale$ is coherent if and only if $f$ is qcqs \cite[Exemple 3.10]{sga4-2}.

Now let $(X_i)_{i \in I}$ be a cofiltered diagram of qcqs schemes over a scheme $X$ and qcqs morphisms between them. Then by \cite[Expos\'e VI, Corollaire 8.7.7]{sga4-2}, there is a natural isomorphism
\begin{equation} \label{eq:cohomology-of-cofiltered-limit}
\HH^n(\varprojlim_{i\in I} (X_i)_\etale, A) ~\simeq~ \varinjlim_{i \in I} \HH^n((X_i)_\etale, A)
\end{equation}
for each abelian group object $A$ in $X_\etale$.
If moreover the transition morphisms $X_j \to X_i$ are affine, then $\varprojlim_{i \in I}X_i$ exists and by \cite[Expos\'e VII, Lemme 5.6]{sga4-2} we have
\begin{equation} \label{eq:limit-as-toposes-general}
(\varprojlim_{i \in I} X_i)_\etale ~\simeq~ \varprojlim_{i \in I} (X_i)_\etale.
\end{equation}
In particular, \eqref{eq:cohomology-of-cofiltered-limit} can be rewritten as
\begin{equation} \label{eq:limit-of-cohomologies-general}
\HH^n((\varprojlim_{i \in I} X_i)_\etale, A) ~\simeq~ \varinjlim_{i \in I}\HH^n((X_i)_\etale, A),
\end{equation}
see also \cite[Expos\'e VII, Corollaire 5.8]{sga4-2}. The analogous result holds for $A$ a group object in $X_\etale$ and $n=1$, or $A$ any object in $X_\etale$ and $n=0$, see \cite[Expos\'e VII, Remarques 5.14(a)]{sga4-2}.

Formula \eqref{eq:limit-as-toposes-D-x} and \eqref{eq:limit-of-cohomologies-D-x} then follow from formulas \eqref{eq:limit-as-toposes-general} and \eqref{eq:limit-of-cohomologies-general}, respectively.

Above, we considered $D_x$ as the \'etale fiber of the identity map $X \to X$ over the point $x$. We can similarly look at the \'etale fiber of the open subscheme $X-\overline{\{x\}} \subseteq X$.

\begin{definition}
For a scheme $X$ and a point $x \in X$, we define the \emph{punctured \'etale disk} $U_x$ at $x$ as the open subscheme
\begin{equation*}
U_x = D_x - \{x\}
\end{equation*}
of the \'etale disk $D_x$.
\end{definition}

If we write $D_x \simeq \varprojlim_{i \in I} X_i$ as above, then after base change we find
\begin{equation*}
U_x ~\simeq~ \varprojlim_{i \in I} U_i
\end{equation*}
with $U_i \simeq X_i \times_X (X-\overline{\{x\}})$. If $X$ is locally Noetherian, then for each $i \in I$, $X_i$ is Noetherian, so $U_i$ is quasi-compact and quasi-separated (as an open subset of an affine Noetherian scheme). As a result, the formulas \eqref{eq:limit-as-toposes-general} and \eqref{eq:limit-of-cohomologies-general} apply, and we get:
\begin{equation} \label{eq:limit-for-U-x}
\begin{split}
(U_x)_\etale ~&\simeq~ \varprojlim_{i \in I} (U_i)_\etale \\
\HH^n((U_x)_\etale, A) ~&\simeq~ \varinjlim_{i \in I} \HH^n((U_i)_\etale, A),
\end{split}
\end{equation}

\begin{remark}
Let $g: X' \to X$ be an \'etale morphism, and let $x' \in X'$ be a point with $g(x') = x$. Then $\Ocal_{X',x'}^\sh$ is isomorphic to $\Ocal_{X,x}$. As a result, the punctured \'etale disks associated to $(X',x')$ resp.\ $(X,x)$ are isomorphic too.
\end{remark}

\subsection{\'Etale-dimension at a point}

\begin{definition}
For a scheme $X$ and a point $x \in X$, we define the \emph{\'etale-dimension} at the point $x$ to be the dimension of the puncture type $(U_x)_\etale \hookrightarrow (D_x)_\etale$. Equivalently, it is the smallest number $n$ such that $(U_x)_\etale$ fails to be $(n-1)$-connected, or $\infty$ if $(U_x)_\etale$ is $\infty$-connected.
\end{definition}

\begin{proposition} \label{prop:dimension-of-scheme}
Let $X$ be a locally Noetherian scheme. Then the dimension of $X_\etale$ is the supremum of the \'etale-dimensions at the locally closed points of $X$, taking into account only those locally closed points for which the \'etale-dimension is finite.
\end{proposition}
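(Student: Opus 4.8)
The plan is to reduce the computation of $\dim(X_\etale)$ to an explicit description of the points of the subtoposes $(X_\etale)_{\leq n}$ and $(X_\etale)_{<\infty}$. Since $X_\etale$ is locally Noetherian, each of its subtoposes is again locally Noetherian and hence has enough points by Deligne's theorem, so every $(X_\etale)_{\leq n}$ is determined by the set of points it contains. A point of $X_\etale$ amounts to a point $x\in X$ together with a separable closure of its residue field, and its localization is the étale disk $D_x=\Spec(\Ocal_{X,x}^{\sh})$; by étale-invariance of the strict henselization this disk, and hence the punctured disk $U_x=D_x-\{x\}$, depends only on $x$. So the whole problem is to decide, for each point $x$, in which of the nested subtoposes it lies.

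The engine of the proof will be a per-point characterization: a point $x$ lies in $(X_\etale)_{\leq n}$ if and only if its étale-dimension is at most $n$ (equivalently, $x$ lies in the content $(X_\etale)_{<\infty}$ if and only if its étale-dimension is finite). For the direction that a point of large étale-dimension can be removed, I would assume $U_x$ is $(n-1)$-connected. When $x$ is locally closed I pass to an étale chart $U=\Spec R$ in which $\{x\}$ is closed; then the open immersion $j_x\colon U-\{x\}\hookrightarrow U$ has all of its étale disk fibers equal to full disks except the one over $x$, which is exactly $U_x$. By Proposition~\ref{prop:pure-iff-connected-fiber} together with the pro-étale colimit formula \eqref{eq:limit-for-U-x} (using finiteness and constructibility of étale cohomology to realize the required vanishing at a single étale stage), the map $j_x$ becomes $(n-1)$-pure, and this $(n-1)$-pure open immersion excludes $x$, so $x\notin (X_\etale)_{\leq n}$. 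For the converse, if $U_x$ fails to be $(n-1)$-connected then any $(n-1)$-pure monomorphism $V\hookrightarrow U$ whose image avoids $x$ would, again by Proposition~\ref{prop:pure-iff-connected-fiber}, force the fiber $D_x-Z$ (with $Z$ a closed subset containing the closed point) to be $(n-1)$-connected; comparing $D_x-Z$ with $U_x=D_x-\{x\}$ via the excision long exact sequence for local cohomology, together with the acyclicity of $D_x$, then yields that $U_x$ is $(n-1)$-connected as well, a contradiction.

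Granting this characterization, the conclusion is formal: the nested subtoposes satisfy $(X_\etale)_{\leq n}=(X_\etale)_{<\infty}$ exactly when every point of finite étale-dimension already has étale-dimension $\leq n$, so $\dim(X_\etale)$ equals the supremum of the finite étale-dimensions over \emph{all} points of $X$. It then remains to see that this supremum is unchanged if we restrict to locally closed points. The inequality $\geq$ is trivial; for $\leq$ I would take a point $y$ of finite étale-dimension $m$, use \eqref{eq:limit-for-U-x} to realize a nonzero class of $\HH^{m-1}(U_y,A)$ with $A$ finite (Lemma~\ref{lmm:finite-coefficients-enough}) already on a Noetherian affine étale chart, and then invoke constructibility to localize the relevant local-cohomology sheaf at a closed point of that chart. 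Such a closed point has locally closed image in $X$, and its étale-dimension is finite and at least $m$, giving the reverse inequality.

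The hard part will be the extremality statement underlying the converse direction of the characterization: among all closed subsets $Z\ni\{x\}$ of the disk, the puncture $D_x-\{x\}$ is the most connected, so that $(n-1)$-connectivity of $D_x-Z$ propagates to $U_x$. This is a codimension-monotonicity for local cohomology on the (possibly singular) strictly henselian disk, which I would extract from the excision long exact sequence relating the supports $\{x\}$, $Z$ and $Z-\{x\}$, combined with the acyclicity of $D_x$ and the reductions of Lemma~\ref{lmm:finite-coefficients-enough}. The same local-cohomology bookkeeping is what lets a nonvanishing class on a punctured disk be detected at a locally closed point in the final step, so organizing these support sequences carefully is the genuine crux of the argument.
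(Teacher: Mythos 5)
Your overall strategy --- describing the points of each subtopos $(X_\etale)_{\leq n}$ and reading off the dimension from a per-point invariant --- contains a genuine gap at exactly the place you flag as the ``hard part''. In the converse direction of your characterization, the existence of an $(n-1)$-pure open immersion missing $x$ only tells you that \emph{some} $D_x - Z$ is $(n-1)$-connected, where $Z$ is a closed subset of $D_x$ containing the closed point but possibly much larger, and you must deduce that $U_x = D_x - \{x\}$ is $(n-1)$-connected. The excision sequence does not deliver this: for the pair $\{x\} \subseteq Z$ the relevant exact sequence reads
\begin{equation*}
\HH^{i-1}_{Z - \{x\}}(U_x, A) \longrightarrow \HH^i_{\{x\}}(D_x, A) \longrightarrow \HH^i_{Z}(D_x, A),
\end{equation*}
so vanishing of the right-hand term only exhibits $\HH^i_{\{x\}}(D_x,A)$ as a quotient of a group supported on $Z - \{x\}$, which there is no reason to kill. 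In the excellent regular characteristic-$0$ setting such a codimension-monotonicity can be extracted from absolute cohomological purity (that is essentially what the paper does in its separate analysis of that case), but the present proposition is stated for arbitrary locally Noetherian schemes, where no purity theorem is available. Two secondary issues: you argue the characterization only for locally closed $x$ but then use it for all points when computing $\dim(X_\etale)$, and you implicitly identify the points of the join $(X_\etale)_{<\infty} = \bigcup_n (X_\etale)_{\leq n}$ with the union of the points of the $(X_\etale)_{\leq n}$, which is not automatic for a join of subtoposes.

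The paper's proof sidesteps all of this by never asking which points lie in $(X_\etale)_{\leq n}$. Instead it exhibits the inclusions $(Y - \{x\})_\etale \hookrightarrow Y_\etale$, for $Y$ affine \'etale over $X$ and $x$ a \emph{closed} point of $Y$, as a complete family of puncture types: every nonempty closed subset of a locally Noetherian scheme contains a closed point, so every proper open $V \subseteq Y$ is contained in some $Y - \{x\}$. Completeness only requires the easy comparison --- a puncture type contained in a larger one has dimension at most that of the larger one --- which goes in the opposite (and harmless) direction from the monotonicity your argument needs. The dimension of each such puncture type is then computed exactly as in your forward direction (Theorem \ref{thm:characterization-derived-pushforwards} together with \eqref{eq:fiberwise-cohomology}: the only nontrivial \'etale disk fiber is $U_x$), the locally closed points of $X$ appear as the images of closed points of \'etale charts, and Proposition \ref{prop:dimension-is-dimension-of-puncture-types} finishes the proof. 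The cleanest repair of your argument is to abandon the per-point characterization and run this complete-family argument instead.
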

\begin{proof}
The topos $X_\etale$ has a family of generators consisting of the \'etale maps $Y \to X$ with $Y$ affine. Here $Y$ is again locally Noetherian  \cite[\href{https://stacks.math.columbia.edu/tag/01T6}{Lemma 01T6}]{stacks} (even Noetherian, because affine schemes are quasi-compact). Since every nonempty closed subset of a locally Noetherian scheme contains a closed point \cite[\href{https://stacks.math.columbia.edu/tag/02IL}{Lemma 02IL}]{stacks}, every open subset of $Y$ is contained in an open subset of the form $Y-\{x\}$ for some closed point $x$ of $Y$. It follows that the inclusions
\begin{equation} \label{eq:locally-noetherian-puncture-type}
j~:~ (Y-\{x\})_\etale ~\longrightarrow~ Y_\etale,
\end{equation}
with $Y$ and $x$ as above, form a complete family of puncture types for $X_\etale$. Because $x$ is closed in $Y$ and \'etale morphisms are open, the points $x$ that appear here are precisely the locally closed points of $X$.

By checking the criterion from Theorem \ref{thm:characterization-derived-pushforwards} in each point $y \in Y$, we find that the puncture type \eqref{eq:locally-noetherian-puncture-type} is $n$-pure if and only if
\begin{equation} \label{eq:n-pure-in-each-point}
R^q j_*(A)_y ~\simeq~ \begin{cases}
A \quad& \text{for }q=0 \\
0 \quad& \text{for }1 \leq q \leq n,
\end{cases}
\end{equation}
for $A$ an abelian group and for $A$ any group if $q=1$ and for $A$ any object if $q=0$. We take stalks here for the \'etale topology, which are only defined up to isomorphism for a point $y \in Y$, so there is some abuse of notation. Note that the conditions from \eqref{eq:n-pure-in-each-point} are trivially satisfied for $y \neq x$, so we only have to check the case $y = x$.

Taking into account \eqref{eq:fiberwise-cohomology}, we see that $j$ is $n$-pure if and only if $U_x$ is $n$-connected. In other words, the dimension of the puncture type $j$ agrees with the \'etale-dimension at the point $x$. The statement now follows from Proposition \ref{prop:dimension-is-dimension-of-puncture-types}.
\end{proof}

Note that the \'etale-dimension at $x$ is the same for $X$, $\Spec(\Ocal_{X,x})$, $\Spec(\Ocal_{X,x}^\mathrm{h})$ and $\Spec(\Ocal_{X,x}^\mathrm{sh})$, because it only depends on the strict henselization at $x$.

Below we will see that we can look at the completed local rings as well, under some additional conditions. First, we need the following lemma.

\begin{proposition} \label{prop:completion}
Let $A$ be a Noetherian, normal local ring and let $\widehat{A}$ be its completion. Then the \'etale-dimensions of $\Spec(A)$ and $\Spec(\widehat{A})$ at their closed points are the same.
\end{proposition}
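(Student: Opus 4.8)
The plan is to convert the statement into a comparison of \'etale cohomology of punctured spectra under completion, and then to appeal to a rigidity theorem. Write $X = \Spec(A)$ and $\widehat X = \Spec(\widehat A)$, with closed points $x$ and $\widehat x$. By definition the \'etale-dimension at $x$ is the dimension of the puncture type $(U_x)_\etale \hookrightarrow (D_x)_\etale$ with $D_x = \Spec(\Ocal_{X,x}^\sh)$ strictly henselian, so $(D_x)_\etale$ has trivial cohomology and the \'etale-dimension at $x$ is simply the least $n$ for which $U_x$ fails to be $(n-1)$-connected. Combining the criterion of Proposition \ref{prop:pure-iff-connected-fiber} (via \eqref{eq:fiberwise-cohomology}) with Lemma \ref{lmm:finite-coefficients-enough}, this number is determined by the groups $\HH^q(U_x, \mathcal{F})$ for $\mathcal{F}$ finite, together with $\HH^0$ for arbitrary $\mathcal{F}$ and $\HH^1$ for arbitrary finite $\mathcal{F}$. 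The same holds at $\widehat x$, so it suffices to compare these cohomology groups for $A$ and for $\widehat A$.

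First I would bring both sides to a single complete ring. Since the \'etale-dimension at a point depends only on the strict henselization (as noted after Proposition \ref{prop:dimension-of-scheme}), the relevant rings are $A^\sh = \Ocal_{X,x}^\sh$ and $(\widehat A)^\sh = \Ocal_{\widehat X,\widehat x}^\sh$. Both are Noetherian local with separably closed residue field, and they share a completion, $\widehat{A^\sh} \cong \widehat{(\widehat A)^\sh} =: C$, because the completion of a strict henselization depends only on the completion of the ring and the separable closure of its residue field. The ring $C$ is complete and strictly henselian, and the natural maps $A^\sh \to C$ and $(\widehat A)^\sh \to C$ are faithfully flat completion maps. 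The whole proposition therefore reduces to the following single input: for a strictly henselian Noetherian normal local ring $B$ with completion $\widehat B$, the completion map induces an isomorphism
\begin{equation*}
\HH^q(\Spec(B) - \{\mathrm{pt}\}, \mathcal{F}) \longrightarrow \HH^q(\Spec(\widehat B) - \{\mathrm{pt}\}, \mathcal{F})
\end{equation*}
for all finite $\mathcal{F}$, and correspondingly for $\HH^0$ and $\HH^1$. Here normality is what makes the low-degree terms tractable: for $\dim B \leq 1$ the ring $B$ is a separably closed field or a strictly henselian discrete valuation ring and the comparison is immediate from Hensel's lemma, while for $\dim B \geq 2$ the punctured spectrum is a connected normal integral scheme, so $\HH^0$ is forced and finite \'etale covers (hence $\HH^1$ with finite coefficients, i.e.\ the local \'etale fundamental group) are controlled by Zariski--Nagata purity.

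The hard part is the higher-degree comparison, i.e.\ the displayed isomorphism for $q \geq 2$. The natural approach is a rigidity argument: write $\widehat B$ as a filtered colimit of smooth $B$-algebras via N\'eron--Popescu desingularization \cite{stacks}, and combine this with the limit formula \eqref{eq:limit-of-cohomologies-general} to reduce the comparison to smooth base change for \'etale cohomology with finite coefficients, matching the vanishing ranges on the two sides using Gabber's bounds \cite{travaux-de-gabber}. The delicate point — and the main obstacle — is that N\'eron--Popescu desingularization of $B \to \widehat B$ requires $B$ to be a $G$-ring; one must therefore check that the normal strictly henselian local rings occurring here (namely $A^\sh$ and $(\widehat A)^\sh$, with common completion $C$) are good enough for the comparison to hold, rather than only their completion $C$ being well-behaved. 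Verifying that the bare normality hypothesis suffices to run this rigidity argument, without separately invoking excellence, is where the real work of the proof lies.
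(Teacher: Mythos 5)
Your reduction of the statement to a comparison of \'etale cohomology of punctured spectra under completion, filtered through Lemma \ref{lmm:finite-coefficients-enough}, matches the shape of the paper's argument. But the central step --- the isomorphism $\HH^q(\Spec(B)-\{\mathrm{pt}\},\mathcal{F}) \simeq \HH^q(\Spec(\widehat{B})-\{\mathrm{pt}\},\mathcal{F})$ --- is attacked with the wrong tool, and the gap you flag at the end is not a technical loose end but a fatal one. N\'eron--Popescu desingularization applies to $B \to \widehat{B}$ only when this map is regular, i.e.\ only when $B$ is a $G$-ring. A Noetherian normal local ring need not be a $G$-ring (there are classical examples, due to Nagata and others, of Noetherian normal local domains whose completions are not normal), so under the hypotheses of the proposition the colimit-of-smooth-algebras presentation of $\widehat{B}$ simply does not exist, and the smooth base change argument cannot be run. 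Your closing sentence, that the real work is ``verifying that the bare normality hypothesis suffices,'' is in effect an admission that the proof is not complete; along the N\'eron--Popescu route it cannot be completed.

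The paper's proof uses a different input that requires no excellence: Gabber's formal base change theorem (Fujiwara's affine analogue, \cite[Corollary 6.6.4]{fujiwara} for abelian coefficients and \cite[XX, Th\'eor\`eme 2.1.1]{travaux-de-gabber} for degrees $0$ and $1$), which compares the \'etale cohomology of the punctured spectrum of a Noetherian henselian local ring with that of its completion directly. To put itself in a position to apply this, the paper first replaces $A$ by its henselization $A^h$ (same completion, same \'etale-dimension), writes $A^{\sh} \simeq \varinjlim A_i$ as a filtered colimit of finite \'etale extensions of $A$, observes that $\widehat{A}^{\sh} \simeq \varinjlim \widehat{A_i}$ via $\widehat{A_i} = A_i \otimes_A \widehat{A}$, and then commutes cohomology with the colimit using \eqref{eq:limit-for-U-x}, applying Fujiwara--Gabber to each pair $(A_i, \widehat{A_i})$. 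A secondary issue in your sketch: Zariski--Nagata purity is a statement about regular (or at least complete intersection) schemes and does not control the local fundamental group of the punctured spectrum of a general normal local ring --- quotient singularities already have nontrivial local $\pi_1$ --- so the degree-$1$ comparison also needs the Fujiwara--Gabber-type equivalence of finite \'etale sites rather than purity.
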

\begin{proof}
The henselization $A^h$ of $A$ is again Noetherian and normal \cite[\href{https://stacks.math.columbia.edu/tag/07QL}{Section 07QL}]{stacks} and has the same completion as $A$ \cite[Th\'eor\`eme 18.6.6(iv)]{ega4-4}. Moreover, the \'etale-dimensions of $\Spec(A)$ and $\Spec(A^h)$ at their closed points are the same. So by replacing $A$ by $A^h$ if necessary, we can assume that $A$ is henselian.

Because $A$ is henselian, we can compute the strict henselization as a filtered colimit $A^\mathrm{sh} \simeq \varinjlim_{i \in I} A_i$ of finite \'etale extensions $A \subseteq A_i$. The analogous statement holds for $\widehat{A}$, which is also henselian because it is complete. Moreover, the base change $- \otimes_A \widehat{A}$ establishes an equivalence of categories between the finite \'etale extensions of $A$ and the finite \'etale extensions of $\widehat{A}$, using \cite[\href{https://stacks.math.columbia.edu/tag/04GK}{Lemma 04GK}]{stacks}. Each of the $A_i$'s are finitely generated as $A$-module, so we get that $\widehat{A_i} = A_i \otimes_A \widehat{A}$ \cite[III, \S 3.4, Theorem 3]{bourbaki-comm-alg}. This shows that $\widehat{A}^\mathrm{sh} \simeq \varinjlim_{i\in I} \widehat{A_i}$.

Let $x$ and $\widehat{x}$ be the points corresponding to $A \to A^\mathrm{sh}$ and $\widehat{A} \to \widehat{A}^\sh$, respectively. We write $U_i$ and $\widehat{U_i}$ for the schemes that you get after removing the closed point from $\Spec(A_i)$ resp.\ $\Spec(\widehat{A_i})$. With these notations, we find for finite $A$ that
\begin{equation*}
\begin{split}
\HH^n((U_{\widehat{x}})_\etale, A) &~\simeq~ \varinjlim_{i \in I}~ \HH^n((\widehat{U_i})_\etale, A) \\
&~\simeq~ \varinjlim_{i \in I}~ \HH^n((U_i)_\etale, A) \\
&~\simeq~ \HH^n((U_x)_\etale, A).
\end{split}
\end{equation*}
Here the second isomorphism is the essence of the proof. We get it by applying Gabber's formal base change theorem. For a proof of this theorem, see \cite[Corollary 6.6.4]{fujiwara} for the case where $A$ is an abelian group and \cite[XX, Theor\`eme 2.1.1]{travaux-de-gabber} for $n=0,1$. The first and third isomorphisms follow from \eqref{eq:limit-for-U-x}.

By Lemma \ref{lmm:finite-coefficients-enough}, we conclude that $U_{\widehat{x}}$ is $n$-connected if and only if $U_x$ is $n$-connected (using that $A$ is normal). In other words, the \'etale-dimensions of $\Spec(A)$ and $\Spec(\widehat{A})$ at $x$ resp.\ $\widehat{x}$ are the same.
\end{proof}

\begin{proposition}
The topos $\Spec(\Z)_\etale$ is two-dimensional. 
\end{proposition}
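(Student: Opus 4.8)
The plan is to apply Proposition \ref{prop:dimension-of-scheme}, which expresses $\dim(\Spec(\Z)_\etale)$ as the supremum of the \'etale-dimensions at the locally closed points of $\Spec(\Z)$, taken over those points where the \'etale-dimension is finite. So the first task is to identify the relevant points. Since $\Z$ is a one-dimensional Noetherian domain and \'etale morphisms preserve dimension, any affine $Y$ \'etale over $\Spec(\Z)$ is one-dimensional, and its fibre over the generic point $(0)$ consists of the generic (codimension-$0$) points of its components. Hence every closed point of $Y$ lies over a closed point $(p)$ of $\Spec(\Z)$, while the generic point $(0)$ is itself not locally closed (its closure is all of $\Spec(\Z)$, and a single non-closed point is not open). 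Thus the locally closed points appearing in the complete family of puncture types of Proposition \ref{prop:dimension-of-scheme} are precisely the closed points $(p)$, and it remains to compute the \'etale-dimension at each of them.

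Next I would unwind the definition of the \'etale-dimension at $(p)$. The punctured \'etale disk is $U_{(p)} = \Spec(K)$ with $K = \mathrm{Frac}(\Ocal_{\Spec(\Z),(p)}^{\sh}) = \mathrm{Frac}(\Z_{(p)}^{\sh})$, where $\Z_{(p)}^{\sh}$ is a henselian discrete valuation ring with algebraically closed residue field $\overline{\mathbb{F}_p}$. If convenient, Proposition \ref{prop:completion} permits replacing the Noetherian normal ring $\Z_{(p)}$ by its completion $\Z_p$, so that one works instead with $K = \Q_p^{\mathrm{ur}}$, whose absolute Galois group is the inertia group $I_p$. By definition the \'etale-dimension at $(p)$ is the smallest $n$ for which $\Spec(K)_\etale$ fails to be $(n-1)$-connected. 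Because $K$ is a field, $\Spec(K)$ is connected, so $\Spec(K)_\etale$ is $0$-connected (and $(-1)$-connected), which already forces the \'etale-dimension to be at least $2$.

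To pin the \'etale-dimension to exactly $2$ I would exhibit a nontrivial degree-one cohomology class. Fix a prime $\ell \neq p$. Since $\overline{\mathbb{F}_p}$ contains $\mu_\ell$ and $\Z_{(p)}^{\sh}$ is henselian, we have $\mu_\ell \subseteq K$, and adjoining an $\ell$-th root of a uniformizer yields a cyclic tamely ramified extension of degree $\ell$. This is a nontrivial $\Z/\ell\Z$-torsor, so $\HH^1(\Spec(K)_\etale, \Z/\ell\Z) \neq 0$; equivalently, the absolute Galois group $G_K$ surjects onto $\hat{\Z}^{(p')}$ and hence onto $\Z/\ell\Z$, which one reads off from the interpretation of $\HH^1$ via the fundamental group in Proposition \ref{prop:h1-fundamental-group} (noting that $\Spec(K)_\etale$ is atomic, hence connected, pointed and locally connected). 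Therefore $\Spec(K)_\etale$ is not $1$-connected, the first failure of connectivity occurs at $n=2$, and the \'etale-dimension at $(p)$ equals $2$.

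Since every locally closed point $(p)$ has finite \'etale-dimension equal to $2$, Proposition \ref{prop:dimension-of-scheme} gives $\dim(\Spec(\Z)_\etale) = \sup_p 2 = 2$; moreover there are no infinite-dimensional puncture types, so $\Spec(\Z)_\etale$ is in fact two-dimensional without boundary. I expect the main conceptual point to be the observation that the dimension is already determined at the level of $\HH^1$: this is exactly what makes the mixed-characteristic points $(p)$ tractable, since one never needs to compute higher \'etale cohomology with $p$-torsion coefficients (the difficult wild-ramification calculations avoided in the general positive- and mixed-characteristic setting). The remaining care lies in correctly identifying the locally closed points and in verifying that the generic point contributes nothing.
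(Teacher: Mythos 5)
Your proposal is correct and follows essentially the same route as the paper: reduce via Proposition \ref{prop:dimension-of-scheme} (and Proposition \ref{prop:completion}) to the \'etale-dimension at the closed points $(p)$, identify the punctured \'etale disk as the spectrum of the maximal unramified extension, and detect failure of $1$-connectedness by a nontrivial torsor coming from a ramified extension. The only cosmetic difference is that the paper uses the $\Z/2\Z$-torsor $K(\sqrt{p})$ while you use a tame Kummer $\Z/\ell\Z$-torsor for $\ell\neq p$; your added remarks on why the generic point is not locally closed and on the absence of a boundary are correct and slightly more explicit than the paper.
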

\begin{proof}
We start by computing the \'etale-dimension of $\Spec(\Z_p)$ at its closed point $x$, for a prime number $p$. Here the punctured spectrum $U_x$ is given by $\Spec(K)$ for $K$ the maximal unramified extension of $\Q_p$. Clearly, $U_x$ is $0$-connected. It also fails to be $1$-connected: for example the extension $K(\sqrt{p})$ of $K$ determines a $\Z/2\Z$-torsor over $U_x$. So the \'etale-dimension of $\Spec(\Z_p)$ at the closed point is $2$, for each prime $p$. Combining Proposition \ref{prop:dimension-of-scheme} and Proposition \ref{prop:completion}, we conclude that $\Spec(\Z)_\etale$ is two-dimensional.
\end{proof}


\appendix

\section{Product toposes and evaluation}
\label{appendix:base-change-to-presheaf-topos}

Let $\T$ be a Grothendieck topos over $\Set$.
For each topos $\F$, we then write
\begin{equation*}
\tilde{\F} = \F \times \T.
\end{equation*}
For a geometric morphism $\rho : \F \to \E$, we will consider the induced geometric morphism
\begin{equation*}
\tilde{\rho} : \tilde{\F} \longrightarrow \tilde{\E}.
\end{equation*}
This is a morphism over $\T$. 
More precisely, if $e : \E \to \Set$ and $f : \F \to \Set$
are the global sections geometric morphisms,
then the diagram
\begin{equation*}
\begin{tikzcd}
\tilde{\F} \ar[rr,"{\tilde{\rho}}"] \ar[rd,"{\tilde{f}}"'] & & \tilde{\E} \ar[ld,"{\tilde{e}}"] \\
& \T &
\end{tikzcd}
\end{equation*}
commutes.

Now let $T$ be an object in $\T$. We can consider the diagram
\begin{equation} \label{eq:eval-in-T-base}
\begin{tikzcd}
\Set & \ar[l,"{\delta}"'] \T/T \ar[r,"{\pi}"] & \T
\end{tikzcd}
\end{equation}
with $\delta$ the global sections geometric morphism and 
$\pi$ the \'etale geometric morphism associated to $T$. 
Taking the base changes of $\rho$ along the diagram \eqref{eq:eval-in-T} then leads to a diagram of the form
\begin{equation} \label{eq:eval-in-T-big}
\begin{tikzcd}
\F \ar[d,"{\rho}"'] & \ar[l,"{\delta_\F}"'] \tilde{\F}/\tilde{f}^*T \ar[r,"{\pi_\F}"] \ar[d,"{\dot{\rho}}"] & \tilde{\F} \ar[d,"{\tilde{\rho}}"] \\
\E & \ar[l,"{\delta_\E}"'] \tilde{\E}/\tilde{e}^*T \ar[r,"{\pi_\E}"] & \tilde{\E}
\end{tikzcd}
\end{equation}
with both squares pullbacks. For an object $X$ in $\tilde{\F}$, we define its evaluation in $T$ as
\begin{equation} \label{eq:eval-in-T}
X(T) = (\delta_\F)_*\pi_\F^*X.
\end{equation}
In this way, we can interpret objects in $\tilde{\F}$ 
as certain presheaves on $\T$ with values in $\F$.

It turns out that we can compute $\tilde{\rho}_*$ in a ``pointwise'' way.
More precisely:

\begin{proposition} \label{prop:direct-image-pointwise}
With notations as above, there is an isomorphism
\begin{equation*}
(\tilde{\rho}_*X)(T) \simeq \rho_*(X(T))
\end{equation*}
natural in $X$ and $T$.
\end{proposition}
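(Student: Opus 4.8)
The plan is to unfold both sides using the definition of evaluation in \eqref{eq:eval-in-T} and then bridge them with a Beck--Chevalley isomorphism coming from the right-hand pullback square in \eqref{eq:eval-in-T-big}. By definition one has $(\tilde{\rho}_*X)(T) = (\delta_\E)_*\pi_\E^*\tilde{\rho}_*X$, while $\rho_*(X(T)) = \rho_*(\delta_\F)_*\pi_\F^*X$. So it suffices to produce a natural isomorphism between these two composites of functors.

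First I would treat the right square of \eqref{eq:eval-in-T-big}, which is a pullback and in which the horizontal morphisms $\pi_\E$ and $\pi_\F$ are \'etale, hence locally connected. The associated Beck--Chevalley transformation is then an isomorphism by \cite[Theorem C3.3.15]{EL} (this is exactly the pattern already used in Subsection \ref{ssec:derived-pushforward-as-sheaf}), giving
\begin{equation*}
\pi_\E^*\tilde{\rho}_* \simeq \dot{\rho}_*\pi_\F^*.
\end{equation*}
Next, the left square of \eqref{eq:eval-in-T-big} commutes (being itself a pullback), so $\delta_\E\dot{\rho} \cong \rho\delta_\F$ as geometric morphisms, and hence $(\delta_\E)_*\dot{\rho}_* \cong (\delta_\E\dot{\rho})_* \cong (\rho\delta_\F)_* \cong \rho_*(\delta_\F)_*$ purely by functoriality of direct image. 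Combining the two facts yields
\begin{equation*}
(\tilde{\rho}_*X)(T) = (\delta_\E)_*\pi_\E^*\tilde{\rho}_*X \simeq (\delta_\E)_*\dot{\rho}_*\pi_\F^*X \simeq \rho_*(\delta_\F)_*\pi_\F^*X = \rho_*(X(T)),
\end{equation*}
which is the desired isomorphism.

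For the naturality claim, naturality in $X$ is immediate: the only nontrivial step is the Beck--Chevalley isomorphism, which is a natural transformation of functors in $X$, and the remaining equalities are identities of functors. Naturality in $T$ is the point requiring the most care, and I expect it to be the main (though routine) obstacle. A morphism $T \to T'$ in $\T$ induces a map of slices $\T/T \to \T/T'$ and hence a morphism between the entire diagrams \eqref{eq:eval-in-T-big} attached to $T$ and $T'$; one must check that the Beck--Chevalley isomorphisms assemble compatibly along these. This follows from the naturality of the Beck--Chevalley transformation with respect to the \'etale data defining $\pi_\E$ and $\pi_\F$, together with the functoriality in $T$ of the morphisms $\delta_\E,\delta_\F,\dot{\rho}$, so no new ideas are needed beyond bookkeeping.
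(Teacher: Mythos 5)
Your proof matches the paper's: the same Beck--Chevalley isomorphism $\pi_\E^*\tilde{\rho}_* \simeq \dot{\rho}_*\pi_\F^*$ (valid because $\pi$ is \'etale, hence locally connected), combined with functoriality of direct images along the commuting left square, is exactly how the paper obtains the isomorphism and its naturality in $X$. The only difference is that the paper actually executes the naturality-in-$T$ check as an explicit unit/counit calculation --- the ``bookkeeping'' you defer is where the paper spends essentially all of its effort --- but your description of how that verification is organized is correct.
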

\begin{proof}
We have to show that $(\delta_\E)_*(\pi_\E)^*\tilde{\rho}_* \simeq \rho_* (\delta_\F)_* \pi_\F^*$. 
This follows from the Beck--Chevalley condition $\pi_\E^*\tilde{\rho}_* \simeq \dot{\rho}_* \pi_\F^*$ in \eqref{eq:eval-in-T-big},
which holds because $\pi$ is \'etale (in particular locally connected). This shows the existence of the isomorphism and naturality in $X$.

Proving naturality in $T$ involves a more complicated argument. As a shortcut, we assume that if two geometric morphisms commute, then the commutator is the identity (for both the left adjoints and the right adjoints). For working with natural transformations, we follow the common convention that the identity natural transformation of a functor $F$ is again denoted by $F$, that horizontal composition is denoted with concatenation, and that vertical composition is denoted by $\circ$. We use the notations from diagram \eqref{eq:eval-in-T}, and for the analogous diagram with $T'$ instead of $T$ we use the notations $\dot{\rho}'$, $\delta_\F'$, $\pi_\F'$, $\delta_\E'$, $\pi_\E'$ instead of $\dot{\rho}$, $\delta_\F$, $\pi_\F$, $\delta_\E$ and $\pi_\E$. For a geometric morphism $f$, we use the common notations $\eta_f: 1 \to f_*f^*$ and $\epsilon_f: f^*f_* \to 1$ for the unit resp.\ counit.

Consider a morphism $T' \to T$ in $\T$, and let
\begin{equation*}
\xi: \tilde{\F}/\tilde{f}^*T' \to \tilde{\F}/\tilde{f}^*T
\end{equation*}
and 
\begin{equation*}
\zeta: \tilde{\E}/\tilde{f}^*T' \to \tilde{\E}/\tilde{f}^*T
\end{equation*}
be the induced geometric morphisms. The restriction maps $(\tilde{\rho}X)(T') \to (\tilde{\rho}X)(T)$ and $\rho_*(X(T')) \to \rho_*(X(T))$ can be written as 
$(\delta_\E)_*\eta_{\zeta}\pi_\E^*\tilde{\rho}_*$ resp.\
$\rho_*(\delta_\F)_*\eta_\xi\pi_\F^*$, i.e.\ they are induced by the adjunction units $\eta_\zeta$ and $\eta_\xi$ of $\zeta^* \dashv \zeta_*$ resp.\ $\xi^* \dashv \xi_*$.

The Beck--Chevalley map $\pi_\E^*\tilde{\rho}_* \to \dot{\rho}_* (\pi_\F)^*$ is defined as $\dot{\rho}_*\pi_\F^*\epsilon_{\tilde{\rho}} \circ \eta_{\dot{\rho}}\pi_\E^*\tilde{\rho}_*$.

The fact that the isomorphism in the Proposition statement is natural in $T$ can then be verified as follows:
\begin{equation*}
\begin{split}
& (\delta_{\E}')_* \dot{\rho}_*' \pi_{\F}'^* \epsilon_{\tilde{\rho}} 
\circ (\delta_{\E}')_* \eta_{\dot{\rho}'} \pi_{\E}'^* \tilde{\rho}_* 
\circ (\delta_{\E})_* \eta_{\zeta} \pi_{\E}^* \tilde{\rho}_* \\
& \simeq (\delta_{\E})_* \dot{\rho}_* \xi_* \xi^* \pi_{\F}^* \epsilon_{\tilde{\rho}} 
\circ (\delta_{\E})_* \zeta_* \eta_{\dot{\rho}'} \zeta^* \pi_{\E}^* \tilde{\rho}_* 
\circ (\delta_{\E})_* \eta_{\zeta} \pi_{\E}^* \tilde{\rho}_* \\
& \simeq (\delta_{\E})_* \dot{\rho}_* \xi_* \xi^* \pi_{\F}^* \epsilon_{\tilde{\rho}} 
\circ (\delta_{\E})_* \left(\zeta_* \eta_{\dot{\rho}'} \zeta^* \circ \eta_{\zeta}\right) \pi_{\E}^* \tilde{\rho}_* \\
& \simeq (\delta_{\E})_* \dot{\rho}_* \xi_* \xi^* \pi_{\F}^* \epsilon_{\tilde{\rho}} 
\circ (\delta_{\E})_* \dot{\rho}_* \eta_{\xi} \dot{\rho}^{\, *} \pi_{\E}^* \tilde{\rho}_* 
\circ (\delta_{\E})_* \eta_{\dot{\rho}} \pi_{\E}^* \tilde{\rho}_* \\
& \simeq \rho_* (\delta_{\F})_* \xi_* \xi^* \pi_{\F}^* \epsilon_{\tilde{\rho}} 
\circ \rho_* (\delta_{\F})_* \eta_{\xi} \pi_{\F}^* \tilde{\rho}^{\, *} \tilde{\rho}_* 
\circ (\delta_{\E})_* \eta_{\dot{\rho}} \pi_{\E}^* \tilde{\rho}_* \\
& \simeq \rho_* (\delta_{\F})_* \eta_{\xi} \pi_{\F}^* 
\circ \rho_* (\delta_{\F})_* \pi_{\F}^* \epsilon_{\tilde{\rho}} 
\circ (\delta_{\E})_* \eta_{\dot{\rho}} \pi_{\E}^* \tilde{\rho}_* \\
& \simeq \rho_* (\delta_{\F})_* \eta_{\xi} \pi_{\F}^* 
\circ (\delta_{\E})_* \dot{\rho}_* \pi_{\F}^* \epsilon_{\tilde{\rho}} 
\circ (\delta_{\E})_* \eta_{\dot{\rho}} \pi_{\E}^* \tilde{\rho}_*.
\end{split}
\end{equation*}
\end{proof}

There is a similar result for $\tilde{\rho}^*$, provided that 
$\delta : \T/T \to \Set$
is tidy.
By \cite[Proposition 4.4]{nlab-proper}, $\delta$ is tidy if and only if the
object $T$ is finitely presented, in the sense that the functor
\begin{equation*}
\Hom_\T(T,-) : \T \to \Set
\end{equation*}
preserves filtered colimits.

\begin{proposition} \label{prop:inverse-image-pointwise}
With notations as above, if $\rho$ is locally connected or $T$ is finitely presented, then there is an isomorphism
\begin{equation*}
\tilde{\rho}^*(X)(T) \simeq \rho^*(X(T))
\end{equation*}
natural in $X$ and $T$.
\end{proposition}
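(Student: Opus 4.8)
The plan is to reduce the statement to a single Beck--Chevalley isomorphism for the left square of \eqref{eq:eval-in-T-big}, and then to verify that this square satisfies Beck--Chevalley under either of the two hypotheses. First I would unwind the definition of evaluation: since $\tilde\rho^* X$ is an object of $\tilde\F$, its evaluation at $T$ is $(\tilde\rho^* X)(T) = (\delta_\F)_*\pi_\F^*\tilde\rho^* X$. Commutativity of the right-hand square of \eqref{eq:eval-in-T-big} gives $\pi_\F^*\tilde\rho^* \simeq \dot\rho^*\pi_\E^*$ (pure functoriality of inverse images, requiring no hypothesis), so that
\begin{equation*}
\tilde\rho^*(X)(T) ~\simeq~ (\delta_\F)_*\dot\rho^*\pi_\E^* X.
\end{equation*}
On the other hand $\rho^*(X(T)) = \rho^*(\delta_\E)_*\pi_\E^* X$. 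Thus the proposition amounts to the Beck--Chevalley isomorphism $(\delta_\F)_*\dot\rho^* \simeq \rho^*(\delta_\E)_*$, evaluated on the object $\pi_\E^* X$.

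Next I would observe that the left square of \eqref{eq:eval-in-T-big}, transposed, is the pullback square
\begin{equation*}
\begin{tikzcd}
\tilde\F/\tilde f^*T \ar[r,"{\dot\rho}"] \ar[d,"{\delta_\F}"'] & \tilde\E/\tilde e^*T \ar[d,"{\delta_\E}"] \\
\F \ar[r,"{\rho}"'] & \E
\end{tikzcd}
\end{equation*}
and that the canonical mate $\rho^*(\delta_\E)_* \to (\delta_\F)_*\dot\rho^*$ is precisely the Beck--Chevalley transformation for this square. The whole point is to show that this mate is an isomorphism. Here the two classical Beck--Chevalley theorems enter: a pullback square of toposes satisfies Beck--Chevalley for the $*$-direction, on all objects, as soon as either the leg one pulls back along is locally connected, or the leg one pushes forward along is tidy. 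In our square these two legs are $\rho$ (bottom, pulled back along) and $\delta_\E$ (right, pushed forward along), which correspond exactly to the two alternatives in the hypothesis.

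In the case that $\rho$ is locally connected, its base change $\dot\rho$ is again locally connected, and \cite[Theorem C3.3.15]{EL}---the same theorem already applied to the right square in the proof of Proposition \ref{prop:direct-image-pointwise}---yields the isomorphism directly. In the case that $T$ is finitely presented, $\delta : \T/T \to \Set$ is tidy by \cite[Proposition 4.4]{nlab-proper}; since $\delta_\E$ is a base change of $\delta$ along $e : \E \to \Set$ and tidiness is stable under base change, $\delta_\E$ is tidy as well, so the tidy base change theorem (see \cite{nlab-proper}) gives the isomorphism. Either way we obtain $(\delta_\F)_*\dot\rho^* \simeq \rho^*(\delta_\E)_*$, and hence $\tilde\rho^*(X)(T) \simeq \rho^*(X(T))$.

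Naturality in $X$ is immediate, as the Beck--Chevalley transformation is natural in its argument. Naturality in $T$ is the one genuinely laborious point, and I expect it to be the main obstacle: it requires checking that, for a morphism $T' \to T$, the Beck--Chevalley isomorphisms are compatible with the restriction maps induced on evaluations. This should be handled by a diagram chase of adjunction units and counits entirely dual to the one carried out for $\tilde\rho_*$ at the end of the proof of Proposition \ref{prop:direct-image-pointwise}, so I would simply invoke that the same formal manipulation applies here.
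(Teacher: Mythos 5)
Your proposal is correct and follows essentially the same route as the paper: reduce to the Beck--Chevalley isomorphism $(\delta_\F)_*\dot{\rho}^* \simeq \rho^*(\delta_\E)_*$ for the left square of \eqref{eq:eval-in-T-big}, which holds when $\rho$ is locally connected or $\delta$ is tidy (the latter being equivalent to $T$ finitely presented), exactly as in the paper's one-line computation. The only difference is that the paper actually carries out the unit/counit diagram chase for naturality in $T$, which you correctly identify as the laborious step and defer to the analogous calculation in Proposition \ref{prop:direct-image-pointwise}.
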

\begin{proof}
We compute $(\delta_\F)_*\pi_\F^* \tilde{\rho}^* \simeq (\delta_\F)_*\dot{\rho}^*\pi_\E^* \simeq \rho^*(\delta_\E)_*\pi_\E^*$, 
where in the second isomorphism we use the Beck--Chevalley condition $(\delta_\F)_*\dot{\rho}^* \cong \rho^*(\delta_\E)_*$,
which holds whenever $\rho$ is locally connected or $\delta$ is tidy 
(and $\delta$ is tidy whenever $T$ is finitely presented).

To show naturality in $T$ we again need a more complicated calculation. We use the same strategy and notations as in the proof of Proposition \ref{prop:direct-image-pointwise}.

The isomorphism in the Proposition statement is given by
\begin{equation*}
(\delta_\F)_*\dot{\rho}^*\epsilon_{\delta_\E}\pi_\E^* \circ \eta_{\delta_\F}\rho^*(\delta_\E)_*\pi_\E^*.
\end{equation*}
We can then show naturality in $T$ through the following calculation:
\begin{equation*}
\begin{split}
& (\delta_\F')_*\dot{\rho}'^*\epsilon_{\delta_\E'}\pi_\E'^*
\circ \eta_{\delta_\F'}\rho^*(\delta_\E')_*\pi_\E'^*
\circ \rho^*(\delta_\E)_*\eta_\zeta\pi_\E^* \\
\simeq~& (\delta_\F)_*\xi_*\dot{\rho}'^*\epsilon_{\delta_\E'}\pi_\E'^*
\circ \eta_{\delta_\F'}\rho^*(\delta_\E)_*\zeta_*\zeta^*\pi_\E^*
\circ \rho^*(\delta_\E)_*\eta_\zeta\pi_\E^* \\
\simeq~& (\delta_\F)_*\xi_*\dot{\rho}'^*\epsilon_{\delta_\E'}\pi_\E'^*
\circ \eta_{\delta_\F'}\rho^*(\delta_\E)_*\eta_\zeta\pi_\E^* \\
\simeq~& (\delta_\F)_*\left(\vphantom{\int}\xi_*\dot{\rho}'^*\epsilon_\zeta\zeta^*
\circ \xi_*\dot{\rho}'^*\zeta^*\epsilon_{\delta_\epsilon}\zeta_*\zeta^*
\circ \eta_\xi\delta_\F^*\rho^*(\delta_\E)_*\eta_\zeta \right)\pi_\E^*
\circ \eta_{\delta_\F}\rho^*(\delta_\E)_*\pi_\E^* \\
\simeq~& (\delta_\F)_*\left(\vphantom{\int}\xi_*\dot{\rho}'^*\epsilon_\zeta\zeta^*
\circ \xi_*\dot{\rho}'^*\zeta^*\epsilon_{\delta_\epsilon}\zeta_*\zeta^*
\circ \eta_\xi\dot{\rho}^*\delta_\E^*(\delta_\E)_*\eta_\zeta \right)\pi_\E^*
\circ \eta_{\delta_\F}\rho^*(\delta_\E)_*\pi_\E^* \\
\simeq~& (\delta_\F)_*\left(\vphantom{\int}\xi_*\dot{\rho}'^*\epsilon_\zeta\zeta^*
\circ \xi_*\dot{\rho}'^*\zeta^*\eta_\zeta
\circ \eta_\xi\dot{\rho}^*\epsilon_{\delta_\E} \right)\pi_\E^*
\circ \eta_{\delta_\F}\rho^*(\delta_\E)_*\pi_\E^* \\
\simeq~& (\delta_\F)_*\left(\vphantom{\int}\xi_*\dot{\rho}'^*\zeta^*
\circ \eta_\xi\dot{\rho}^*\epsilon_{\delta_\E} \right)\pi_\E^*
\circ \eta_{\delta_\F}\rho^*(\delta_\E)_*\pi_\E^* \\
\simeq~& (\delta_\F)_*\left(\vphantom{\int}\xi_*\xi^*\dot{\rho}^*
\circ \eta_\xi\dot{\rho}^*\epsilon_{\delta_\E} \right)\pi_\E^*
\circ \eta_{\delta_\F}\rho^*(\delta_\E)_*\pi_\E^* \\
\simeq~& (\delta_\F)_*\left(\vphantom{\int}\eta_\xi\dot{\rho}^*
\circ \dot{\rho}^*\epsilon_{\delta_\E} \right)\pi_\E^*
\circ \eta_{\delta_\F}\rho^*(\delta_\E)_*\pi_\E^* \\
\simeq~& (\delta_\F)_*\eta_\xi\dot{\rho}^*\pi_\E^*
\circ (\delta_\F)_*\dot{\rho}^*\epsilon_{\delta_\E}\pi_\E^*
\circ \eta_{\delta_\F}\rho^*(\delta_\E)_*\pi_\E^* \\
\simeq~& (\delta_\F)_*\eta_\xi\pi_\F^*\tilde{\rho}^*
\circ (\delta_\F)_*\dot{\rho}^*\epsilon_{\delta_\E}\pi_\E^*
\circ \eta_{\delta_\F}\rho^*(\delta_\E)_*\pi_\E^*.
\end{split}
\end{equation*}
Essential here are the formulas
\begin{equation*}
\epsilon_{\delta_\E'} \simeq \epsilon_\zeta\circ\zeta^*\epsilon_{\delta_\E}\zeta_*\quad\text{and}\quad\eta_{\delta_\F'} \simeq (\delta_\F)_*\eta_\zeta\delta_\F^*\circ \eta_{\delta_\F}.
\end{equation*}
\end{proof}

In order to be able to calculate $\tilde{\rho}_!$ in a pointwise way, 
we need that
\begin{equation*}
\delta : \T/T \to \Set
\end{equation*}
is local. 
By \cite[Proposition 1.6, Proposition 2.6]{nlab-tiny}, 
this is the case if and only if $T$ is tiny, 
in the sense that the functor $\Hom_\T(T,-)$ preserves colimits.
\begin{proposition} \label{prop:connected-components-pointwise}
With notations as above, if $\rho$ is locally connected and $T$ tiny, 
then there is an isomorphism
\begin{equation*}
\tilde{\rho}_!(X)(T) \simeq \rho_!(X(T))
\end{equation*}
natural in $X$ and $T$.
\end{proposition}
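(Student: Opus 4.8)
The plan is to imitate the proofs of Proposition \ref{prop:direct-image-pointwise} and Proposition \ref{prop:inverse-image-pointwise}, obtaining the isomorphism as a composite of two Beck--Chevalley isomorphisms attached to the two pullback squares of diagram \eqref{eq:eval-in-T-big}, and then checking naturality in $T$ by the same kind of $2$-categorical bookkeeping. Unwinding the definitions, $\tilde{\rho}_!(X)(T) = (\delta_\E)_* \pi_\E^* \tilde{\rho}_! X$ while $\rho_!(X(T)) = \rho_! (\delta_\F)_* \pi_\F^* X$, so the content of the statement (naturality in $X$) is the functor isomorphism
\begin{equation*}
(\delta_\E)_* \pi_\E^* \tilde{\rho}_! ~\simeq~ \rho_! (\delta_\F)_* \pi_\F^*.
\end{equation*}
It is worth noting in advance that the two hypotheses map cleanly onto the two steps: the local connectedness of $\rho$ powers the first, and the tininess of $T$ powers the second, exactly as $\pi$ being étale powered Proposition \ref{prop:direct-image-pointwise} and $\rho$ being locally connected (or $\delta$ tidy) powered Proposition \ref{prop:inverse-image-pointwise}.

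First I would treat the right-hand square. Since $\rho$ is locally connected, so is its base change $\tilde{\rho} = \rho \times \mathrm{id}_\T$, and hence so is $\dot{\rho}$; in particular the left adjoints $\tilde{\rho}_!$ and $\dot{\rho}_!$ exist. As $\pi_\E$ and $\pi_\F$ are étale, the locally connected Beck--Chevalley condition (concretely, the projection formula $\rho_!(V \times \rho^* B) \simeq \rho_! V \times B$, cf.\ \cite[Theorem C3.3.15]{EL}) gives $\pi_\E^* \tilde{\rho}_! \simeq \dot{\rho}_! \pi_\F^*$. This is the shriek-analogue of the isomorphism $\pi_\E^* \tilde{\rho}_* \simeq \dot{\rho}_* \pi_\F^*$ used in Proposition \ref{prop:direct-image-pointwise}, and it reduces the goal to the isomorphism $(\delta_\E)_* \dot{\rho}_! \simeq \rho_! (\delta_\F)_*$ for the left square.

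This mixed Beck--Chevalley isomorphism is the genuinely new ingredient. By \cite{nlab-tiny} the tininess of $T$ makes $\delta$ local, and this is preserved by the base changes defining $\delta_\E$ and $\delta_\F$: under the identification $\tilde{\E}/\tilde{e}^*T \simeq \E \times (\T/T)$ one has $\delta_\E \simeq \mathrm{id}_\E \times \delta$ (and similarly for $\delta_\F$), which is again local. Hence $(\delta_\E)_*$ and $(\delta_\F)_*$ acquire further right adjoints $\delta_\E^!$, $\delta_\F^!$ and, in particular, preserve colimits. Consequently both $\rho_! (\delta_\F)_*$ and $(\delta_\E)_* \dot{\rho}_!$ are composites of left adjoints, with respective right adjoints $\delta_\F^! \rho^*$ and $\dot{\rho}^* \delta_\E^!$; it therefore suffices to produce a canonical isomorphism $\delta_\F^! \rho^* \simeq \dot{\rho}^* \delta_\E^!$, since two left adjoints with isomorphic right adjoints are themselves canonically isomorphic.

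I expect this last isomorphism to be the main obstacle. It is the mate of the isomorphism $(\delta_\F)_* \dot{\rho}^* \simeq \rho^* (\delta_\E)_*$ from Proposition \ref{prop:inverse-image-pointwise} with respect to the adjunctions $(\delta_\F)_* \dashv \delta_\F^!$ and $(\delta_\E)_* \dashv \delta_\E^!$, and the mate of an isomorphism need not be invertible in general; indeed, merely transposing the adjunctions produces $\dot{\rho}_* \delta_\F^! \simeq \delta_\E^! \rho_*$ rather than the isomorphism wanted, so invertibility here is a real point to settle. I would establish it in one of two ways: either directly, by checking that the canonical comparison $\rho_!(\delta_\F)_* \Rightarrow (\delta_\E)_* \dot{\rho}_!$ assembled from this mate together with the unit of $\delta_\E^* \dashv (\delta_\E)_*$ and the counit of $\delta_\F^* \dashv (\delta_\F)_*$ is an isomorphism, or concretely through the product description $\delta_\E \simeq \mathrm{id}_\E \times \delta$, where $\delta^!$ is explicitly computable and the tininess of $T$ is exactly what forces the relevant comparison to be an isomorphism. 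Once the functor isomorphism is in hand, naturality in $T$ would be verified by the same lengthy but routine manipulation of units and counits carried out in Propositions \ref{prop:direct-image-pointwise} and \ref{prop:inverse-image-pointwise}, now for the triple $\rho_!$, $(\delta)_*$, $\pi^*$.
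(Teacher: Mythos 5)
Your overall architecture (two Beck--Chevalley squares, with the \'etale one handled first) is sound, and the first step $\pi_\E^*\tilde{\rho}_! \simeq \dot{\rho}_!\pi_\F^*$ is correct. But the step you yourself flag as ``the main obstacle'' --- the mixed Beck--Chevalley isomorphism $(\delta_\E)_*\dot{\rho}_! \simeq \rho_!(\delta_\F)_*$ --- is exactly the mathematical content of the proposition, and neither of your two suggested strategies is carried out. Passing to right adjoints does not help by itself: as you correctly observe, the formal mate of the known isomorphism $(\delta_\F)_*\dot{\rho}^* \simeq \rho^*(\delta_\E)_*$ produces $\dot{\rho}_*\delta_\F^! \simeq \delta_\E^!\rho_*$ rather than what you need, and showing that $\delta_\F^!\rho^* \to \dot{\rho}^*\delta_\E^!$ is invertible requires the same nonformal input as the original statement. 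So the proposal has a genuine gap at its central step.

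The missing idea, which is how the paper closes this gap, is the \emph{center} of the local geometric morphism. Since $T$ is tiny, $\delta: \T/T \to \Set$ is local and hence admits a center $c$ with $c^* \simeq \delta_*$; this is preserved by base change, giving centers $c_\E$ and $c_\F$ with $c_\E^* \simeq (\delta_\E)_*$ and $c_\F^* \simeq (\delta_\F)_*$. Substituting these converts the problematic direct-image functors into inverse-image functors, and the whole claim collapses to $c_\E^*\pi_\E^*\tilde{\rho}_! \simeq \rho_!\, c_\F^*\pi_\F^*$, which is the standard Beck--Chevalley condition for $(-)_!$ against inverse images along the pullback square with horizontal legs $\pi_\E c_\E$ and $\pi_\F c_\F$, valid because $\tilde{\rho}$ is locally connected; the paper does this in a single application, without even needing your intermediate \'etale square. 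Your instinct that tininess of $T$ is what forces the comparison to be invertible is right, but the way it enters is through $c^*\simeq\delta_*$, not merely through $(\delta_\E)_*$ preserving colimits. Note also that the naturality in $T$, which you defer to routine bookkeeping, occupies most of the paper's proof and is only routine once the isomorphism has been written as an explicit composite of units and counits --- which again presupposes the center.
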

\begin{proof}
If $\delta : \T/T \to \Set$ is local, then it has a center $c$ with $c^* \simeq \delta_*$.
The base change of $\delta$ along $e : \E \to \Set$ or $f : \F \to \Set$ is 
again local, with as center the corresponding pullback of $c$.
The center will be denoted by $c_\E$ and $c_\F$ respectively.
We then compute: 
\begin{equation*}
(\delta_\E)_*\pi_\E^*\tilde{\rho}_! \simeq c_\E^*\pi_\E^*\tilde{\rho}_! \simeq \rho_! c_\F^*\pi_\F^* \simeq \rho_! (\delta_\F)_* \pi_\F^*.
\end{equation*}
In the first and third isomorphism we use $c^*\simeq\delta_*$.
The second isomorphism is a Beck--Chevalley condition 
that holds because $\tilde{\rho}$ is locally connected.

To prove naturality in $T$ we proceed as in Proposition \ref{prop:direct-image-pointwise} and Proposition \ref{prop:inverse-image-pointwise}, with the difference in notation that now $\eta_\rho$ and $\epsilon_\rho$ denote the unit and counit of $\rho_! \dashv \rho^*$ (and similarly for $\dot{\rho}$ and $\tilde{\rho}$). The isomorphism from the Proposition statement is given by
\begin{equation*}
\epsilon_\rho c_\F^*\pi_\F^*\tilde{\rho}_! \circ \rho_! c_\F^*\pi_\F^*\eta_{\tilde{\rho}}~.
\end{equation*}
We now calculate
\begin{equation*}
\begin{split}
& \epsilon_\rho c_\F'^*\pi_\F'^*\tilde{\rho}_!
\circ \rho_! c_\F'^* \pi_\F'^*\eta_{\tilde{\rho}}
\circ \rho_!(\delta_\F)_*\eta_\xi\pi_\F^* \\
\simeq~& \epsilon_\rho c_\F^*\xi_*\xi^*\pi_\F^*\tilde{\rho}_!
\circ \rho_!c_\F^*\xi_*\xi^*\pi_\F^*\eta_{\tilde{\rho}}
\circ \rho_!(\delta_\F)_*\eta_\xi\pi_\F^* \\
\simeq~& \epsilon_\rho c_\F^*\xi_*\xi^*\pi_\F^*\tilde{\rho}_!
\circ \rho_!(\delta_\F)_*\xi_*\xi^*\pi_\F^*\eta_{\tilde{\rho}}
\circ \rho_!(\delta_\F)_*\eta_\xi\pi_\F^* \\
\simeq~& \epsilon_\rho (\delta_\F)_*\xi_*\xi^*\pi_\F^*\tilde{\rho}_!
\circ \rho_!(\delta_\F)_*\eta_\xi\pi_\F^*\eta_{\tilde{\rho}} \\
\simeq~& \epsilon_\rho(\delta_\F)_*\eta_\xi\pi_\F^*\tilde{\rho}_!
\circ \rho_!(\delta_\F)_*\pi_\F^*\eta_{\tilde{\rho}} \\
\simeq~& (\delta_\F)_*\eta_\xi\pi_\F^*\tilde{\rho}_!
\circ \epsilon_\rho(\delta_\F)_*\pi_\F^*\tilde{\rho}_!
\circ \rho_!(\delta_\F)_*\pi_\F^*\eta_{\tilde{\rho}} \\
\simeq~& (\delta_\F)_*\eta_\xi\pi_\F^*\tilde{\rho}_!
\circ \epsilon_\rho c_\F^*\pi_\F^*\tilde{\rho}_!
\circ \rho_! c_\F^*\pi_\F^*\eta_{\tilde{\rho}}.
\end{split}
\end{equation*}
\end{proof}

\begin{remark} \label{rem:remark-appendix}
The above results have internal versions as well,
obtained by replacing $\delta$ by $\pi$ in the above:
\begin{itemize}
\item there is a natural isomorphism $\tilde{\rho}_*(X^{\tilde{f}^*(T)}) \simeq \tilde{\rho}_*(X)^{\tilde{e}^*(T)}$;
\item there is a natural isomorphism $\tilde{\rho}^*(X^{\tilde{e}^*(T)}) \simeq \tilde{\rho}^*(X)^{\tilde{f}^*(T)}$,
whenever $\rho$ is locally connected or $\pi : \T/T \to \T$ is tidy;
\item there is a natural isomorphism $\tilde{\rho}_!(X^{\tilde{f}^*(T)}) \simeq \tilde{\rho}_!(X)^{\tilde{e}^*(T)}$,
whenever $\rho$ is locally connected and $\pi : \T/T \to \T$ is local.
\end{itemize}
The geometric morphism $\pi$ is tidy if and only if $T$ is Kuratowski-finite and decidable 
\cite[Chapter III, Examples 1.4(1)]{moerdijk-vermeulen} \cite{nlab-proper}.
Further, $\pi$ is local if and only if $T$ is internally tiny.
\end{remark}

\section{Eilenberg--Mac Lane objects and loop spaces}
\label{app:em-objects-and-loop-spaces}

Let $\E$ be a topos. For the definition of Eilenberg--Mac Lane objects, see Subsection \ref{ssec:em-objects-cohomology}. We will show that Eilenberg--Mac Lane objects exist for $\E$ and that they are unique up to homotopy equivalence.

We will use the functors
\begin{equation} \label{eq:adjunction-G-W}
\begin{tikzcd}
\s_0\mathbf{Gpd} \ar[r,bend right,"{\overline{W}}"'] & \s\Set \ar[l,bend right,"{G}"']
\end{tikzcd}
\end{equation}
as discussed in \cite[Chapter V, Lemma 7.7]{goerss-jardine}. Here $\s_0\mathbf{Gpd}$ denotes the category of groupoids enriched in simplicial sets (i.e.\ groupoid objects in the category of simplicial sets such that the simplicial set of objects is discrete).

If $X$ is a reduced simplicial set, in the sense that $X_0 = 1$, then $G(X)$ is a simplicial group \cite[p.~293]{jardine-book}. In fact, the adjunction $G \dashv \overline{W}$ restricts to an adjunction between reduced simplicial sets and simplicial groups \cite[Chapter V, Proposition 6.3]{goerss-jardine}. We write this adjunction as
\begin{equation} \label{eq:adjunction-G-W-restricted}
\begin{tikzcd}
\s\mathbf{Grp} \ar[r, bend right, "{\overline{W}}"'] & \s_+\Set \ar[l, bend right, "{G}"'],
\end{tikzcd}
\end{equation}
i.e.\ using the notation $\s\mathbf{Grp}$ for the category of simplicial groups, and the notation $\s_+\Set$ for the category of reduced simplicial sets. Both $\s\mathbf{Grp}$ \cite[Chapter II, \S 4]{goerss-jardine} and $\s_+\Set$ \cite[Chapter V, Proposition 6.2]{goerss-jardine} can be equipped with a model structure such that a map is a weak equivalence if and only if the underlying map of simplicial sets is a weak equivalence. In fact, \eqref{eq:adjunction-G-W-restricted} is a Quillen equivalence for these model structures \cite[Chapter V, Proposition 6.3]{goerss-jardine}. The fibrations in $\s\mathbf{Grp}$ are precisely the morphisms such that the underlying morphism of simplicial sets is a fibration, for the classical (Quillen) model structure. By Moore's Theorem \cite[Chapter I, Lemma 3.4]{goerss-jardine}, every object in $\s\mathbf{Grp}$ is fibrant. Similarly, a morphism in $\s_+\Set$ is a cofibration if the underlying morphism in $\s\Set$ is a cofibration, in particular all objects in $\s_+\Set$ are cofibrant.

Because every object in $\s_+\Set$ is cofibrant and every object in $\s\mathbf{Grp}$ is fibrant, we do not need to take cofibrant or fibrant replacements in calculating derived functors. In particular, the derived adjunction unit and counit can be written as $X \to \overline{W}(G(X))$ and $G(\overline{W}(H)) \to H$, respectively. Because $G \dashv \overline{W}$ is a Quillen equivalence, these morphisms are weak equivalences.

If $X$ is a reduced simplicial set, then $G(X)$ models the loop space of $X$ \cite[Chapter V, Corollary 5.11]{goerss-jardine}. So in particular, we find
\begin{equation} \label{eq:homotopy-groups-of-loop-spaces}
\pi_n(G(X)) ~\simeq~ \pi_{n+1}(X)
\end{equation}
(the base points are omitted; on the left the choice of base point does not matter because of the group structure, on the right there is only one possibility for the base point). Using the Quillen equivalence \eqref{eq:adjunction-G-W-restricted}, we can then also calculate
\begin{equation} \label{eq:homotopy-groups-of-deloop-spaces}
\begin{split}
\pi_{n+1}(\overline{W}(H)) 
~&\simeq~ \pi_n(G(\overline{W}(H))) \\
~&\simeq~ \pi_n(H).
\end{split}
\end{equation}

The functors $G$ and $\overline{W}$ have sheaf-theoretic counterparts, see e.g. \cite{jardine-book}. We briefly discuss these counterparts in our setting, following a similar approach to the one in \cite[\S9.4]{jardine-book}. We define $G$ and $\overline{W}$ for sheaves by first applying these functors pointwise, and then sheafifying.

Just like in the original setting, we can restrict to sheaves of groups on one side and sheaves of reduced simplicial sets on the other side. In this way we get an adjunction
\begin{equation} \label{eq:adjunction-topos-G-W}
\begin{tikzcd}
\mathbf{Grp}(\s\E)  \ar[r,bend right,"{\overline{W}}"'] & \s_+\E \ar[l,bend right,"{G}"']
\end{tikzcd}
\end{equation}
with $\mathbf{Grp}(\s\E)$ denoting the group objects in $\s\E$ (or equivalently, simplicial group objects in $\E$) and $\s_+\E$ denoting the reduced simplicial objects in $\E$.

Homotopy groups of sheaves are defined by calculating them in a pointwise way and then sheafifying. By writing out the definitions in detail, we see that both \eqref{eq:homotopy-groups-of-loop-spaces} and \eqref{eq:homotopy-groups-of-deloop-spaces} still hold for sheaves. Also note that sheaves of reduced simplicial sets necessarily have trivial $\pi_0$. So, for a sheaf of simplicial groups $H$, the homotopy groups of $\overline{W}(H)$ are completely determined by the homotopy groups of $H$.

As a special case, any group object $H$ in $\E$ can be interpreted as a (discrete) object in $\mathbf{Grp}(\s\E)$. In this case, we get $\pi_0(H) = H$ and $\pi_{k}(H) = 1$ for $k\geq 1$. As a result, we find
\begin{equation*}
\pi_k(\overline{W}(H)) ~=~ \begin{cases}
H &{k=1} \\
1 &{k \neq 1}
\end{cases}
\end{equation*}
so any fibrant replacement of $\overline{W}(H)$ is an Eilenberg--Mac Lane object $K(H,1)$. Similarly, an abelian group object $H$ in $\E$ can be thought of as a (discrete) object in $\mathbf{Ab}(\mathbf{Grp}(\s\E))$. In this case, $\overline{W}(H)$ is a abelian group object in $\s_+\E \subseteq \s\E$, which means we can iterate the $\overline{W}$ construction. As homotopy groups, we then find
\begin{equation*}
\pi_k(\overline{W}\vphantom{W}^n(H)) ~=~ \begin{cases}
H &k=n \\
1 &k\neq n,
\end{cases}
\end{equation*}
with $\overline{W}\vphantom{W}^n$ denoting $\overline{W}$ iterated $n$ times. So any fibrant replacement of $\overline{W}\vphantom{W}^n(H)$ is a $K(H,n)$.

Above we showed that Eilenberg--Mac Lane objects in $\E$ exist. We now claim that they are unique up to homotopy equivalence. In other words, we want to show that any object $X$ which is a $K(H,n)$ is homotopy equivalent to $\overline{W}^n(H)$.

We proceed by induction. If $n=0$, then because $\pi_0(X) = H$ there is a map $X \to H$, where $H$ is equipped with the discrete simplicial object structure, and this map is an homotopy equivalence. To construct the map, note that the functor $\pi_0: \s\E \to \E$ has as right adjoint the inclusion $\E \to \s\E$ of discrete simplicial objects, and then $X \to H$ is a unit for this adjunction.

For $n\geq 1$, we have that $\pi_0(X)$ is trivial. Using a sheafified version of the first Eilenberg subcomplex \cite{nlab:eilenberg_subcomplex}, we can find an object $X_+$ that is reduced as simplicial object, homotopy equivalent to $X$. We now find weak equivalences
\begin{equation*}
X 
\sim X_+
\sim \overline{W}(G(X_+))
\sim \overline{W}^n(H)
\end{equation*}
where at the end we use the induction step. Because both $X$ and $\overline{W}^n(H)$ are fibrant, we get that $X$ is homotopy equivalent to $\overline{W}^n(H)$.

Finally, we mention an alternative construction of the loop space. Let $X$ be a sheaf of reduced simplicial sets, and consider the diagram
\begin{equation*}
\begin{tikzcd}
& 1 \ar[d] \\
1 \ar[r] & X
\end{tikzcd}
\end{equation*}
in $\s\E$ (because $X$ is reduced, there is a unique map $1 \to X$). We define the loop space $\Omega(X)$ as the homotopy limit of the above diagram.

More generally, if $X$ is a simplicial sheaf with $\pi_0(X) \simeq 1$, then using again the sheafified version of the first Eilenberg subcomplex \cite{nlab:eilenberg_subcomplex}, we see that $X$ is homotopy equivalent to a reduced simplicial sheaf, so we can define $\Omega(X)$ in this case as well.

Let us say that $X$ is a simplicial sheaf over a category $\C$. For each object $U$ in $\C$, the evaluation functor $F \mapsto F(U)$ from simplicial sheaves to simplicial sets preserves finite homotopy limits, so we see that $(\Omega X)(U)$ is the homotopy limit of the diagram
\begin{equation*}
\begin{tikzcd}
& 1 \ar[d] \\
1 \ar[r] & X(U),
\end{tikzcd}
\end{equation*}
which is a standard definition of the loop space of $X(U)$. As a result, we get the equality
\begin{equation*}
\pi_n(\Omega(X)) ~\simeq~ \pi_{n+1}(X).
\end{equation*}

We write $\Omega^k$ for $\Omega$ iterated $k$ times. Note that in order for $\Omega^kX$ to exist, we need that $\Omega^{k-1}X$ exists and that $\pi_0(\Omega^{k-1}X) \simeq 1$. Inductively, we see that $\Omega^k(X)$ exists whenever $\pi_i(X) \simeq 1$ for all $0 \leq i \leq k-1$.

\begin{proposition} \label{prop:looping-commutes-with-pushforward}
Let $\rho: \F \to \E$ be a geometric morphism and take $X$ in $\s\E$ such that $\Omega X$ exists. Then $(\s \rho)_*(\Omega X)$ and $\Omega( (\s \rho)_* X)$ are homotopy equivalent.
\end{proposition}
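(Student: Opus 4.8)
The plan is to exploit that $\Omega X$ is a finite homotopy limit --- namely the homotopy pullback of the cospan $1 \to X \leftarrow 1$ --- and that $\s\rho_*$, being the right adjoint of the Quillen adjunction $\s\rho^* \dashv \s\rho_*$ (Proposition \ref{prop:pullback-preserves-homotopy-groups}), is a right Quillen functor. Right Quillen functors preserve homotopy limits, so morally $\s\rho_*$ commutes with $\Omega$; the work is to make this precise through an explicit model. First I would replace $X$ by a fibrant model and present the loop space via the path-space construction: factor the basepoint $1 \to X$ as a trivial cofibration $1 \xrightarrow{\sim} PX$ followed by a fibration $p : PX \twoheadrightarrow X$, and set $\Omega X = 1 \times_X PX$. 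Since $p$ is a fibration between fibrant objects, this strict pullback is a homotopy pullback and $\Omega X$ is itself fibrant.

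Next I would apply $\s\rho_*$. As a right adjoint it preserves the pullback, so
\[
\s\rho_*(\Omega X) \;\simeq\; 1 \times_{\s\rho_* X} \s\rho_* PX,
\]
using $\s\rho_* 1 \simeq 1$. Because $\s\rho_*$ is right Quillen, $\s\rho_* p$ is again a fibration and $\s\rho_* X$, $\s\rho_* PX$ remain fibrant; moreover the weak equivalence $1 \xrightarrow{\sim} PX$ between fibrant objects is sent by $\s\rho_*$ to a weak equivalence $1 \xrightarrow{\sim} \s\rho_* PX$ (Ken Brown's lemma). Hence $\s\rho_* p : \s\rho_* PX \to \s\rho_* X$ is a fibration with contractible total space, and its fibre over the basepoint --- which is exactly the pullback displayed above --- is by the standard delooping argument weakly equivalent to $\Omega(\s\rho_* X)$. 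This identifies $\s\rho_*(\Omega X)$ with $\Omega(\s\rho_* X)$ up to weak equivalence. Since all objects are cofibrant and both objects in question are fibrant, this weak equivalence is in fact a homotopy equivalence by Remark \ref{rem:all-objects-cofibrant}, which is the desired conclusion.

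The step I expect to be the main obstacle is the bookkeeping around fibrant replacement: one has to verify that applying the point-set functor $\s\rho_*$ to the fibrant path-space model genuinely computes the derived pushforward, and that the resulting fibration with contractible total space presents $\Omega$ of $\s\rho_* X$ rather than of some other model. In the main application $X$ is an Eilenberg--Mac Lane object, hence already fibrant, so $\s\rho_* X$ is fibrant and these comparisons are immediate; for general $X$ the homotopy-invariance of $\Omega$ reduces matters to the fibrant case. The conceptual content is simply that the right Quillen functor $\s\rho_*$ preserves the finite homotopy limit defining the loop space, and the path-space presentation is the device that turns this preservation into the transparent fact that right adjoints preserve pullbacks along fibrations.
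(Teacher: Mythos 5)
Your proof is correct and follows essentially the same route as the paper, whose entire argument is the one-line observation that $\Omega$ is a derived finite limit and that $\s\rho_*$ preserves derived limits. Your path-space presentation together with Ken Brown's lemma is simply a careful unpacking of that observation.
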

\begin{proof}
This follows because $\Omega$ is constructed as a derived limit, and $(\s e)_*$ preserves derived limits (up to homotopy equivalence).
\end{proof}

Using induction, we then also see that $(\s \rho)_*(\Omega^k X)$ and $\Omega^k(\s\rho_*X)$ are homotopy equivalent, whenever $\Omega^kX$ exists.

\begin{proposition} \label{prop:homotopy-groups-of-loop-space}
Let $\E$ be a Grothendieck topos and let $X$ be an object in $\s\E$ such that $\Omega^kX$ exists. Then for $n \geq k$, we get $\pi_n(X) = \pi_{n-k}(\Omega^kX)$.
\end{proposition}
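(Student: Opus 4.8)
The plan is to reduce everything to the single-loop case $k=1$, which was already established above: from the pointwise homotopy-limit description of $\Omega$ together with the fact that evaluation at an object $U$ preserves finite homotopy limits, we obtained the identity $\pi_m(\Omega Y) \simeq \pi_{m+1}(Y)$ for any $Y$ in $\s\E$ for which $\Omega Y$ exists (that is, whenever $\pi_0(Y) \simeq 1$). The proposition is then a purely formal induction on $k$ built on top of this identity.

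First I would record the existence bookkeeping. As noted just before the statement, $\Omega^k X$ exists precisely when $\pi_i(X) \simeq 1$ for all $0 \leq i \leq k-1$; in particular each intermediate loop space $\Omega^j X$ with $j \leq k-1$ exists and satisfies $\pi_0(\Omega^j X) \simeq 1$. This guarantees that the single-loop identity may legitimately be applied to $Y = \Omega^j X$ at every stage of the induction, since the reducedness hypothesis needed to form the next loop space is always in force.

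The induction itself runs as follows. The base case $k=0$ is the tautology $\pi_n(X) = \pi_n(\Omega^0 X)$. For the inductive step, assume the claim for $k-1$, i.e.\ $\pi_m(X) \simeq \pi_{m-(k-1)}(\Omega^{k-1}X)$ for all $m \geq k-1$. Writing $\Omega^k X = \Omega(\Omega^{k-1}X)$ and applying the single-loop identity with $Y = \Omega^{k-1}X$ (permissible by the previous paragraph) gives $\pi_{n-k}(\Omega^k X) \simeq \pi_{n-k+1}(\Omega^{k-1}X)$. Setting $m = n$ in the induction hypothesis, which is valid since $n \geq k \geq k-1$, identifies the right-hand side $\pi_{n-k+1}(\Omega^{k-1}X) = \pi_{n-(k-1)}(\Omega^{k-1}X)$ with $\pi_n(X)$. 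This closes the induction and yields $\pi_n(X) \simeq \pi_{n-k}(\Omega^k X)$.

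I do not expect a genuine obstacle here: the only points requiring care are entirely formal, namely matching the index shifts correctly and verifying at each step that the standing assumption ``$\Omega^k X$ exists'' supplies the triviality of $\pi_0$ of the intermediate loop space needed to invoke the $k=1$ identity. Once these are in place, the statement follows immediately by iterating the single-loop case.
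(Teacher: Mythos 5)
Your proof is correct and matches the paper's (implicit) argument: the paper states this proposition without a written proof, treating it as an immediate iteration of the identity $\pi_n(\Omega(X)) \simeq \pi_{n+1}(X)$ established just beforehand, which is exactly the induction you carry out. Your bookkeeping of when the intermediate loop spaces exist is the right point to check and is handled correctly.
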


\begin{corollary} \label{cor:loop-space-of-em}
Let $\E$ be a Grothendieck topos and take an Eilenberg--Mac Lane object $K(A,n)$. Then for each $k \leq n$, the object $\Omega^kK(A,n)$ exists and is a $K(A,n-k)$.
\end{corollary}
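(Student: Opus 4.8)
The plan is to combine the existence criterion for iterated loop spaces with the homotopy-group computation of Proposition \ref{prop:homotopy-groups-of-loop-space}, and then to observe that looping preserves fibrancy.

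First I would settle existence. Recall from the discussion preceding Proposition \ref{prop:looping-commutes-with-pushforward} that $\Omega^k(X)$ exists as soon as $\pi_i(X) \simeq 1$ for all $0 \leq i \leq k-1$. Applying this to $X = K(A,n)$: since $k \leq n$, every index $i$ with $0 \leq i \leq k-1$ satisfies $i < n$, and by definition of an Eilenberg--Mac Lane object $\pi_i(K(A,n)) \simeq 1$ for every $i \neq n$. Hence $\Omega^k K(A,n)$ exists for each $k \leq n$.

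Next I would compute the homotopy sheaves. By Proposition \ref{prop:homotopy-groups-of-loop-space} applied to $X = K(A,n)$, for every $m \geq 0$ we have
\begin{equation*}
\pi_m(\Omega^k K(A,n)) \simeq \pi_{m+k}(K(A,n)).
\end{equation*}
The right-hand side equals $A$ precisely when $m+k = n$, that is $m = n-k$, and is trivial for all other values of $m$. Thus $\Omega^k K(A,n)$ has exactly the homotopy sheaves of a $K(A,n-k)$: trivial outside degree $n-k$ and equal to $A$ in degree $n-k$. The group, resp.\ abelian group, structure on $A$ required in degrees $n-k \geq 1$, resp.\ $n-k \geq 2$, is inherited from the structure already carried by $A$ in degree $n \geq n-k$.

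It remains to verify fibrancy, which is the only point requiring a little care and thus the main obstacle. By construction $\Omega X$ is the homotopy limit of the diagram $1 \to X \leftarrow 1$, i.e.\ a homotopy pullback; since the terminal object is fibrant and $K(A,n)$ is fibrant (being an Eilenberg--Mac Lane object), this homotopy pullback is again fibrant, and iterating shows each $\Omega^k K(A,n)$ is fibrant. A fibrant object with the homotopy sheaves computed above is by definition an Eilenberg--Mac Lane object $K(A,n-k)$, which completes the argument. The subtle step is precisely this one: one must ensure that the iterated homotopy-limit construction of $\Omega^k$ produces a genuinely fibrant representative rather than merely an object of the correct weak homotopy type. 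Once fibrancy is secured, the uniqueness of Eilenberg--Mac Lane objects up to homotopy equivalence identifies $\Omega^k K(A,n)$ with $K(A,n-k)$.
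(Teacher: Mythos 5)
Your proposal is correct and follows exactly the route the paper intends: existence of $\Omega^k K(A,n)$ from the criterion that $\Omega^k X$ exists whenever $\pi_i(X)\simeq 1$ for $0\leq i\leq k-1$, and the identification of homotopy sheaves via Proposition \ref{prop:homotopy-groups-of-loop-space}; the paper states this as an immediate corollary without further argument. Your extra remark on fibrancy is a reasonable point the paper leaves implicit (the homotopy pullback of $1\to X\leftarrow 1$ admits a fibrant model, e.g.\ via the path-space factorization), and does not change the substance of the argument.
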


\section{Truncation and \texorpdfstring{$n$-types}{n-types}}
\label{app:truncation}

Simplicial sets are presheaves on the simplex category $\Delta$, which has objects indexed by the natural numbers $0$, $1$, $2$, \ldots. We will establish some further notations following \cite{nlab-simplicial-skeleton}. Let $\Delta_{\leq n}$ the full subcategory of $\Delta$ on the objects $0$, $1$, \dots, $n$. The category of presheaves on $\Delta_{\leq n}$ will be denoted by $\s_n\Set$. It is an essential subtopos of $\s\Set$, more precisely there are functors
\begin{equation*}
\begin{tikzcd}[row sep=large, column sep=large]
\s_n\Set \ar[r, bend left=30, "{\mathrm{sk}_n}"] \ar[r, bend right=30, "{\mathrm{cosk}_n}"',pos=.48] & \ar[l, "{\mathrm{tr}_n}"] \s\Set
\end{tikzcd}
\end{equation*}
with $\mathrm{sk}_n \dashv \mathrm{tr}_n \dashv \mathrm{cosk}_n$, and with $\mathrm{sk}_n$ and $\mathrm{cosk}_n$ fully faithful. Here $\mathrm{tr}_n$ takes a presheaf on $\Delta$ and restricts it to a presheaf on $\Delta_{\leq n}$, the other two functors are uniquely determined by the adjunctions. For a simplicial set $X$, we define its $n$-skeleton as $\mathbf{sk}_n(X) = \mathrm{sk}_n(\mathrm{tr}_n(X))$ and its $n$-coskeleton as $\mathbf{cosk}_n(X) = \mathrm{cosk}_n(\mathrm{tr}_n(X))$.

Following Jardine \cite[\S5.6]{jardine-book}, the $n$-th \textbf{Postnikov section} $P_n(X)$ of simplicial set $X$ is defined as the image of the unit map $X \to \mathbf{cosk}_n(X)$, i.e.\ the following is an epi--mono factorization:
\begin{equation*}
X \to P_n(X) \to \mathbf{cosk}_n(X).
\end{equation*}
Viewing $\s_n\Set$ as a subtopos of $\s\Set$, we know that there is a Grothendieck topology $J_n$ on $\Delta$ such that the objects of $\s_n\Set$ are precisely the $J_n$-sheaves, and then the unit map $X \to \mathbf{cosk}_n(X)$ is precisely the universal map to the $J_n$-sheafification. With this interpretation in mind, we have that $P_n(X)$ is the universal $J_n$-separated presheaf associated to $X$.

It can be checked that if $X$ is fibrant, then $P_n(X)$ has the same homotopy groups as $X$ in degrees $\leq n$, and that it has trivial homotopy groups in degrees $> n$, see \cite[\S5.6]{jardine-book}. As explained there, in general we have to derive the functor $P_n$, so we define
\begin{equation*}
\mathbf{P}_n(X) ~=~ P_n(X^f)
\end{equation*}
for $X^f$ a fibrant replacement of $X$ (Jardine uses the $\mathrm{Ex}^\infty$-construction). We can compose the map to the fibrant replacement $X \to X^f$ with the natural map $X^f \to P_n(X^f)$ to get a map $X \to \mathbf{P}_n(X)$.

Jardine in \cite[\S5.6]{jardine-book} extends the above definition of Postnikov sections to simplicial presheaves $X$ via the formula $(P_nX)(U) = P_n(X(U))$. We can then define it for simplicial sheaves by first using the construction for presheaves and then sheafifying.
For $X$ a fibrant object of $\s\E$, we still find that $P_n(X)$ has the same homotopy groups as $X$ in degrees $\leq n$, and trivial homotopy groups in degrees $>n$. If $X$ is a general object of $\s\E$, we can again look at the derived version $\mathbf{P}_n(X) = P_n(X^f)$ where $X^f$ is a fibrant resolution for $X$.

We can also describe Postnikov sections in $\s\E$ as follows. The pullback of the subtopos $\s_n\Set$ of $\s\Set$ along $\s e: \s\E \to \s\Set$ gives a subtopos
\begin{equation*}
\begin{tikzcd}[row sep=large, column sep=large]
\s_n\E \ar[r,bend right,"{\cosk_n}"'] & \s\E \ar[l,bend right,"{\mathrm{tr}_n}"']
\end{tikzcd}
\end{equation*}
and then we can alternatively define $P_n(X)$ via the epi--mono factorization
\begin{equation*}
X \to P_n(X) \to \mathbf{cosk}_n(X)
\end{equation*}
as before, with $\mathbf{cosk}_n = \cosk_n \circ \mathrm{tr}_n$. We will reuse the name $J_n$ for the (Lawvere--Tierney) topology on $\s\E$ corresponding to this subtopos. Note that $\mathrm{tr}_n(X)$ is completely determined by $\mathrm{tr}_n(X)_i = X_i$. Using this description, we can confirm that $\mathrm{tr}_n(X)$ is computed pointwise, i.e.\ 
\begin{equation*}
\mathrm{tr}_n(X)(U) = \mathrm{tr}_n(X(U)).
\end{equation*}
Further, we can show that $\cosk_n$ is computed pointwise as well, using Proposition \ref{prop:direct-image-pointwise}. It then follows that this alternative construction for $P_n(X)$ agrees with the one introduced earlier.

The properties that we discussed for simplicial sets, generalize to simplicial sheaves.

\begin{lemma} \label{lmm:truncation}
For an object $X$ of $\s\E$, the following are equivalent:
\begin{enumerate}
\item $X$ has trivial homotopy groups in degrees $>n$;
\item the map
\begin{equation*}
X \to \mathbf{P}_n(X)
\end{equation*}
is a weak equivalence;
\item $X$ is weakly equivalent to a $J_n$-separated object.
\end{enumerate}
\end{lemma}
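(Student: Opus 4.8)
The plan is to run the cycle $(1)\Rightarrow(2)\Rightarrow(3)\Rightarrow(1)$, after first reformulating $(1)$: by the characterization of $n$-truncated objects recalled in Section 4, saying that $X$ has trivial homotopy groups in degrees $>n$ is the same as saying that $X$ is $n$-truncated, i.e.\ that the maps $\pi_k(X)\to X_0$ are isomorphisms for $k>n$. This reformulation is convenient because $n$-truncatedness is manifestly invariant under weak equivalence, which will be needed for the last implication.

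For $(1)\Rightarrow(2)$ I would pass to a fibrant replacement $q\colon X\to X^f$, which is a weak equivalence, so $X^f$ again has trivial homotopy groups in degrees $>n$. Since $\mathbf{P}_n(X)=P_n(X^f)$ and, for a fibrant object, the Postnikov section $P_n(X^f)$ agrees with $X^f$ on homotopy groups in degrees $\le n$ and has trivial homotopy groups in degrees $>n$, the natural map $X^f\to P_n(X^f)$ induces isomorphisms on every $\pi_k$ (in degrees $\le n$ by Jardine's Postnikov property, in degrees $>n$ because both sides vanish), and is therefore a weak equivalence. Composing with $q$ shows that $X\to\mathbf{P}_n(X)$ is a weak equivalence. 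The converse $(2)\Rightarrow(1)$ is then immediate: a weak equivalence preserves the homotopy groups, and $\mathbf{P}_n(X)=P_n(X^f)$ has trivial homotopy groups in degrees $>n$, hence so does $X$.

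For $(2)\Rightarrow(3)$ there is essentially nothing to do: the object $\mathbf{P}_n(X)=P_n(X^f)$ sits inside the $J_n$-sheaf $\mathbf{cosk}_n(X^f)$ as a subobject, via the monomorphism coming from the epi--mono factorization $X^f\to P_n(X^f)\to\mathbf{cosk}_n(X^f)$, and a subobject of a $J_n$-sheaf is $J_n$-separated. Under hypothesis $(2)$ the object $X$ is weakly equivalent to this $J_n$-separated object, which is exactly $(3)$.

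The hard part is $(3)\Rightarrow(1)$. Since being $n$-truncated is invariant under weak equivalence, it suffices to prove that a $J_n$-separated object $Y$ is $n$-truncated. The natural strategy is to exploit the monomorphism $Y\hookrightarrow\mathbf{cosk}_n(Y)$ into the $J_n$-sheafification together with the facts that $\mathbf{cosk}_n(Y)$ is $n$-truncated and that $n$-truncated objects are stable under subobjects (equivalently, that $n$-truncated maps compose, applied to $Y\to\mathbf{cosk}_n(Y)\to 1$). The main obstacle here is precisely the point flagged in Section 4, that in a model topos the $1$-categorical notion of monomorphism need not coincide with the homotopical notion of a $(-1)$-truncated map: the inclusion $Y\hookrightarrow\mathbf{cosk}_n(Y)$ is a $1$-categorical monomorphism, and what must be controlled is its homotopical truncatedness, alongside verifying that coskeletal objects are indeed $n$-truncated. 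I expect this to be cleanest after reducing to a fibrant representative, where $J_n$-separatedness forces $P_n(Y)=Y$ and Jardine's Postnikov property then yields vanishing of the homotopy groups above degree $n$ directly; the remaining work is to arrange this fibrant reduction so that the separatedness information bounding the higher homotopy is retained.
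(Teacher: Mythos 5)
Your $(1)\Leftrightarrow(2)$ and $(2)\Rightarrow(3)$ are fine and use exactly the inputs recorded before the lemma (Jardine's description of $P_n$ on fibrant objects, and the epi--mono factorization through the $J_n$-sheaf $\mathbf{cosk}_n(X^f)$). The paper does not actually supply a proof of this lemma, so the only question is whether your $(3)\Rightarrow(1)$ can be finished, and it cannot in the form you describe: the ``remaining work'' of transporting separatedness through a fibrant replacement is not a technicality but the whole content, and the underlying claim ``$J_n$-separated $\Rightarrow$ $n$-truncated'' is false without a fibrancy hypothesis. Concretely, in $\s\Set$ with $n=0$, let $Y$ be the simplicial circle with two vertices $0,1$ and two nondegenerate edges $(0,1)$ and $(1,0)$. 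Every simplex of $Y$ is determined by its vertex sequence, so $Y\to\mathbf{cosk}_0(Y)$ is a monomorphism and $Y$ is $J_0$-separated; yet $|Y|\cong S^1$, so $\pi_1(Y)=\Z$. (Worse: every homotopy type is realized by the nerve of a poset, and nerves of posets are $J_0$-separated, so condition $(3)$ with $n=0$ as literally written holds for every object of $\s\Set$.)

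What is true, and what you should prove instead, is that a $J_n$-separated object that is moreover \emph{sectionwise fibrant} is $n$-truncated: for $k>n$ one has $\mathbf{sk}_n\Delta^k\subseteq\partial\Delta^k$, so any class in $\pi_k(Y(U),y)$ is represented by a $k$-simplex $\alpha:\Delta^k\to Y(U)$ whose restriction to $\mathbf{sk}_n\Delta^k$ is constant at $y$; injectivity of $Y(U)_k\to\Hom(\mathbf{sk}_n\Delta^k,Y(U))$ then forces $\alpha$ to be the constant simplex, so $\pi_k(Y(U),y)$ is trivial and the homotopy sheaves of $Y$ vanish above degree $n$. (Equivalently, for such $Y$ one has $P_n(Y)=Y$ with $Y$ sectionwise Kan, which is the route you sketch.) The object $\mathbf{P}_n(X)=P_n(X^f)$ produced in your step $(2)\Rightarrow(3)$ does have this extra property, since $X^f$ is sectionwise Kan by Corollary~\ref{cor:fibration-is-sectionwise-fibration} and $P_n$ preserves sectionwise Kan objects. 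So your cycle closes once condition $(3)$ is read as ``weakly equivalent to a sectionwise fibrant $J_n$-separated object''; as literally stated, $(3)$ is too weak and the implication $(3)\Rightarrow(1)$ fails.
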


\begin{definition}
We say that an object $X$ of $\s\E$ is \textbf{$n$-truncated} if its homotopy groups are trivial in degrees $>n$. If in addition $X$ is fibrant, then we say that $X$ is an \textbf{$n$-type}.
\end{definition}

\begin{example}[Eilenberg--Mac Lane objects] \label{eg:eilenberg-mac-lane-n-type}
An Eilenberg--Mac Lane object $K(A,i)$ is an $n$-type if and only if $i \leq n$.
\end{example}

\begin{lemma} \label{lmm:limits-of-n-types}
A limit of $n$-types is $n$-truncated.
\end{lemma}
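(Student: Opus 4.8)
The plan is to read ``limit'' here as homotopy limit (equivalently, as a limit of an injectively fibrant diagram, which is the only case we ever apply it to: homotopy products and the homotopy pullbacks defining loop spaces and mapping spaces), and to reduce everything to a statement about the homotopy sheaves $\pi_k$. Recall from the characterization stated just before Definition \ref{def:n-connected} and from Lemma \ref{lmm:truncation} that an object $F$ of $\s\E$ is $n$-truncated exactly when the maps $\pi_k(F)\to F_0$ of \eqref{eq:nth-homotopy-map} are isomorphisms for all $k>n$; since the $\pi_k$ are sheafifications of the pointwise homotopy groups, this condition is invariant under weak equivalence, so it is legitimate to test it on a homotopy limit. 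Conceptually, the cleanest reason the lemma holds is that Lemma \ref{lmm:truncation} identifies the $n$-truncated objects with the local objects of the (idempotent) Postnikov truncation $\mathbf{P}_n$, and local objects for any left Bousfield localization are closed under homotopy limits: for a diagram $\{X_i\}$ of local fibrant objects one has $R\HOM(Y,\operatorname{holim}_i X_i)\simeq \operatorname{holim}_i R\HOM(Y,X_i)$, and for each localizing map $A\to B$ the comparison $R\HOM(B,\operatorname{holim}_i X_i)\to R\HOM(A,\operatorname{holim}_i X_i)$ is then a homotopy limit of weak equivalences, hence a weak equivalence.

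If one prefers a hands-on verification that does not lean on identifying the localizing set, I would instead use that every homotopy limit over a small diagram is built from products and iterated homotopy pullbacks, and check closure under each. For products this is immediate, since a product of fibrant objects computes the homotopy product and $\pi_k$ commutes with products, so trivial homotopy sheaves above degree $n$ are inherited by the product. For a homotopy pullback $X\times^{\mathrm{h}}_Z Y$ of $n$-types I would pass to the fibre sequence of $X\to Z$: writing $P$ for the homotopy fibre, the long exact sequence of homotopy sheaves gives, for $k>n$, the segment $\pi_{k+1}(Z)\to\pi_k(P)\to\pi_k(X)$ with both outer terms trivial, so $\pi_k(P)=0$; a second application of the long exact sequence to $X\times^{\mathrm{h}}_Z Y\to Y$, whose homotopy fibre is again $P$, then forces the homotopy pullback to have trivial homotopy sheaves above degree $n$. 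Thus $n$-truncated objects are closed under products and homotopy pullbacks, hence under all homotopy limits.

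The main obstacle is the one genuine input not stated in the excerpt, namely the long exact sequence of homotopy sheaves associated to a fibre sequence in $\s\E$. I would obtain it by sheafifying the classical pointwise long exact sequence: by Corollary \ref{cor:fibration-is-sectionwise-fibration} a fibration of $\s\E$ evaluates to a Kan fibration of simplicial sets at every object $U$, so each section gives the usual long exact sequence of a fibration, and since sheafification is exact these assemble into a long exact sequence of the homotopy sheaves $\pi_k$. The only other point requiring care is the interpretation of ``limit'': the homotopy-group bookkeeping above computes the homotopy limit, so one should either state the lemma for homotopy limits or note that a limit of an injectively fibrant diagram of $n$-types realizes it, which is all that is needed in the applications.
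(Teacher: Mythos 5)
Your argument is essentially correct as a proof that $n$-truncated objects are closed under homotopy limits, but it takes a genuinely different and heavier route than the paper. The paper's proof is a short piece of $1$-category theory: replace each $X_i$ in the diagram by its Postnikov section $\mathbf{P}_n(X_i)$, observe that these are $J_n$-separated objects (a condition asserting that certain restriction maps are monomorphisms, hence stable under arbitrary $1$-categorical limits), and conclude via Lemma \ref{lmm:truncation} that the limit, being weakly equivalent to a $J_n$-separated object, is $n$-truncated. This buys two things your proof does not. First, it needs no identification of a localizing set, no decomposition of limits into products and pullbacks, and no long exact sequence of homotopy sheaves -- the last of which, in a topos, requires some care with basepoints when phrased via the bundled maps $\pi_k(F)\to F_0$ rather than for a single global section. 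Second, and more importantly, it applies to honest $1$-categorical limits of arbitrary diagrams with no fibrancy hypothesis on the diagram. Your two routes are both viable in principle -- the first is the Rezk/Lurie statement that truncated objects are the local objects of an accessible localization and local objects are closed under homotopy limits, the second is the reduction of all homotopy limits to products and homotopy pullbacks together with the fibre-sequence argument -- but each imports a nontrivial external input that the paper's argument avoids entirely.

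The one point I would flag as a genuine gap is your reinterpretation of ``limit'' as ``homotopy limit of an injectively fibrant diagram, which is all that is needed in the applications.'' That is not accurate: in the proof of Proposition \ref{prop:hom-to-n-type} the lemma is applied to the actual limit of the diagram of simplicial sets $\HOM(\gamma^*U\times(\s e)^*\Delta^m,X)$ obtained by writing $Y$ as a colimit of generators. Nothing there guarantees that this diagram is injectively fibrant, and it is neither a product nor a homotopy pullback, so your version of the lemma does not directly cover this application without an extra argument (for instance, arranging the colimit presentation of $Y$ to be a homotopy colimit so that the resulting limit of mapping spaces computes the homotopy limit of the derived mapping spaces). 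The paper's proof sidesteps this entirely because $J_n$-separatedness is preserved by all $1$-categorical limits.
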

\begin{proof}
If $\{X_i\}_{i \in I}$ is a diagram of $n$-types, then $\{\mathbf{P}_n(X_i)\}_{i \in I}$ is a homotopy equivalent diagram, and each $\mathbf{P}_n(X_i)$ is $J_n$-separated. Because homotopy equivalent diagrams have homotopy equivalent limits, we can assume that the $X_i$ were $J_n$-separated to begin with. A limit of $J_n$-separated objects is again $J_n$-separated, in particular $n$-truncated. 
\end{proof}

\begin{proposition} \label{prop:hom-to-n-type}
Let $X$ be an $n$-type in $\s\E$. Then $\HOM(Y,X)$ is an $n$-type as well, for any object $Y$ of $\s\E$.
\end{proposition}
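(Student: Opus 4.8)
The plan is to reduce the statement to the mapping-space characterisation of $n$-truncatedness from Section 4, exploiting that $\s\E$ with the injective model structure is a simplicial model category in which every object is cofibrant (Remark \ref{rem:all-objects-cofibrant}). For such a model category the derived mapping space is $R\HOM(Y,X) = \HOM(Y^c, X^f)$; since $Y^c = Y$, and since $X$ is an $n$-type and hence fibrant so that $X^f = X$, the honest simplicial mapping space $\HOM(Y,X)$ already computes $R\HOM(Y,X)$ on the nose.

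First I would observe that, $X$ being an $n$-type, its homotopy groups vanish in degrees $>n$, so $X$ is $n$-truncated in the sense of this appendix. By Remark \ref{rem:rezk-truncation} this coincides, for objects of the model topos $\s\E$, with being an $n$-truncated object in the simplicial-model-category sense (the definition in the subsection on the factorization into $n$-connected and $n$-truncated parts). By that definition $R\HOM(Z,X)$ is an $n$-truncated simplicial set for every $Z$ in $\s\E$. Taking $Z = Y$ and invoking the identification $R\HOM(Y,X) \simeq \HOM(Y,X)$ from the previous paragraph, I conclude that $\HOM(Y,X)$ is an $n$-truncated simplicial set, homotopy groups being invariant under weak equivalence.

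It then remains to promote ``$n$-truncated simplicial set'' to ``$n$-type'', i.e. to verify that $\HOM(Y,X)$ is a Kan complex. This is the simplicial enrichment axiom SM7: applied to the cofibration $\varnothing \hookrightarrow Y$ and the fibration $X \to 1$, it yields that $\HOM(Y,X) \to \HOM(\varnothing,X) \times_{\HOM(\varnothing,1)} \HOM(Y,1) \cong 1$ is a fibration of simplicial sets, so $\HOM(Y,X)$ is fibrant. Combined with the preceding step, $\HOM(Y,X)$ is a fibrant $n$-truncated simplicial set, that is, an $n$-type.

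The only genuinely non-formal input is the identification of the homotopy-group notion of $n$-truncatedness with the $R\HOM$-detecting one, which is exactly Remark \ref{rem:rezk-truncation}; everything else is cofibrant/fibrant bookkeeping together with SM7. Should one instead want an argument internal to this appendix, the alternative is to show directly that the internal hom $X^Y$ inherits $n$-coskeletality from $X$ — using Lemma \ref{lmm:truncation} to replace $X$ by a weakly equivalent $J_n$-separated object and the fact that both $(-)^Y$ and $\mathbf{cosk}_n$ are right adjoints — and then apply $(\s e)_*$ to pass from $X^Y$ to $\HOM(Y,X)$. I expect this coskeletality-transfer step to be the main technical obstacle in the self-contained route.
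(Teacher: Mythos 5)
Your argument is circular. The step where you pass from ``$X$ has trivial homotopy groups in degrees $>n$'' to ``$R\HOM(Z,X)$ is an $n$-truncated simplicial set for every $Z$'' is precisely the content of the proposition you are asked to prove. The definition of $n$-truncated used in the statement (Appendix C) is the homotopy-sheaf one; Rezk's definition is the mapping-space one; and Remark \ref{rem:rezk-truncation} --- the ``only genuinely non-formal input'' you invoke --- asserts that the two agree \emph{by Proposition \ref{prop:hom-to-n-type}}. So that remark is a corollary of the proposition, not an independent fact you may feed into its proof; the sentence in Section 4 claiming that truncatedness in $\s\E$ can be tested on homotopy sheaves cites the same remark. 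Note also that the direction you need (homotopy-sheaf truncated $\Rightarrow$ mapping-space truncated) is the non-trivial one; the converse is the easy direction, obtained by testing $R\HOM(-,X)$ against discrete generators.

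The fibrancy half of your argument (SM7 applied to $\varnothing \hookrightarrow Y$ and $X \to 1$) is correct and matches the paper. To repair the truncatedness half an actual argument is needed, and the paper's runs as follows: write $Y$ as a colimit of generators $\gamma^*U \times (\s e)^*\Delta^m$, so that $\HOM(Y,X)$ is a limit of simplicial sets $\HOM(\gamma^*U \times (\s e)^*\Delta^m, X) \simeq \HOM(\Delta^m, X(U))$; each of these is an $n$-type because $X(U)$ is one (this uses only the simplicial-set case of the statement, from \cite[\S 7.1]{rezk}); and a limit of $n$-types is $n$-truncated by Lemma \ref{lmm:limits-of-n-types}, proved by replacing each $n$-type with a weakly equivalent $J_n$-separated object and using that $J_n$-separated objects are closed under limits. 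Your proposed ``self-contained'' alternative --- transferring $J_n$-separatedness across the internal hom $(-)^Y$ --- is in the right spirit, but the transfer is exactly where the difficulty sits: $X$ itself is only weakly equivalent to a $J_n$-separated object, the $J_n$-separated replacement need not be fibrant, and $(-)^Y$ only preserves weak equivalences between fibrant objects, so some reduction to generators (or an equivalent homotopy-limit argument) appears unavoidable.
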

\begin{proof}
The topos $\s\E$ is generated (under colimits) by the objects of the form
\begin{equation*}
\gamma^*U \times (\s e)^*\Delta^n
\end{equation*}
for $U$ in a set of generators for $\E$ and $n$ a natural number. So we can write $\HOM(Y,X)$ as a limit of simplicial sets of the form $\HOM(\gamma^*U\times (\s e)^* \Delta^n, X)$. We claim that the latter kind of object is an $n$-type. It then follows from Lemma \ref{lmm:limits-of-n-types} that $\HOM(Y,X)$ is $n$-truncated. Because the functor $\HOM(Y,-) = \gamma_*((-)^Y)$ preserves fibrations, we also know that $\HOM(Y,X)$ is fibrant, so it is an $n$-type.

It remains to show that $\HOM(\gamma^*U\times (\s e)^* \Delta^n, X)$ is an $n$-type. We calculate
\begin{equation*}
\begin{split}
\HOM(\gamma^*U\times \s e^* \Delta^n, X) &\simeq \s e_*\left(X^{(\gamma^*U\times \s e^* \Delta^n)}\right) \\
&\simeq \s e_*\left((X^{\gamma^*U})^{\s e^*\Delta^n}\right) \\
&\simeq \s e_*(X^{\gamma^*U})^{\Delta^n} \\
&\simeq \HOM(\Delta^n, X(U)),
\end{split}
\end{equation*}
and because $X(U)$ is an $n$-type, $\HOM(\Delta^n, X(U))$ is an $n$-type as well, see \cite[\S 7.1]{rezk}.
\end{proof}

\begin{remark} \label{rem:rezk-truncation}
In \cite[\S7.1]{rezk}, an object $X$ in a simplicial model category $\mathbf{M}$ is called $n$-truncated if for each $Y$ in $\mathbf{M}$ we have that the derived mapping space $R\HOM(Y,X)$ is $n$-truncated (see Definition \ref{def:derived-mapping-space}). By Proposition \ref{prop:hom-to-n-type}, our definition agrees with Rezk's for $\mathbf{M} = \s\E$.
\end{remark}

\end{document}